\newtheorem{theorem}{Theorem}
\newtheorem{proposition}[theorem]{Proposition}
\newtheorem{lemma}[theorem]{Lemma}
\newtheorem{corollary}[theorem]{Corollary}
\numberwithin{figure}{section}
\theoremstyle{definition}
\newtheorem{definition}[theorem]{Definition}
\newtheorem{remark}[theorem]{Remark}
\newcommand{\la}{\langle}
\newcommand{\ra}{\rangle}
\newcommand{\mcH}{\mathcal{H}}
\newcommand{\mcX}{\mathcal{X}}
\newcommand{\omcX}{\overline{\mathcal{X}}}
\newcommand{\mcG}{\mathcal{G}}
\newcommand{\mcK}{\mathcal{K}}
\newcommand{\mcL}{\mathcal{L}}
\newcommand{\omcL}{\overline{\mathcal{L}}}
\newcommand{\mcJ}{\mathcal{J}}
\newcommand{\mcE}{\mathcal{E}}
\newcommand{\mcM}{\mathcal{M}}
\newcommand{\PP}{\mathbb{P}}
\newcommand{\CC}{\mathbb{C}}
\newcommand{\NN}{\mathbb{N}}
\newcommand{\RR}{\mathbb{R}}
\newcommand{\ZZ}{\mathbb{Z}}
\newcommand{\QQ}{\mathbb{Q}}
\newcommand{\TT}{\mathbb{T}}
\newcommand{\bS}{\mathbb{S}}
\newcommand{\NA}{{\rm NA}}
\newcommand{\MA}{{\rm MA}}
\newcommand{\lc}{{\rm lc}}
\newcommand{\bfE}{{\bf E}}
\newcommand{\bfJ}{{\bf J}}
\newcommand{\bfL}{{\bf L}}
\newcommand{\bfD}{{\bf D}}
\newcommand{\bfM}{{\bf M}}
\newcommand{\bfI}{{\bf I}}
\newcommand{\bfH}{{\bf H}}
\newcommand{\bfF}{{\bf F}}
\newcommand{\Fut}{{\bf Fut}}
\renewcommand{\DH}{{\bf DH}}
\newcommand{\ddc}{{\rm dd^c}}
\renewcommand{\div}{{\rm div}}
\newcommand{\vk}{{ \vec{k}}}
\newcommand{\bvk}{{[\vec{k}]}}
\renewcommand{\max}{{\rm max}}
\newcommand{\FS}{{\rm FS}}
\newcommand{\Bl}{{\rm Bl}}
\newcommand{\eq}{{\rm eq}}
\newcommand{\reff}{{\rm ref}}
\newcommand{\tX}{{\tilde{X}}}
\newcommand{\llg}{{\rm log}}
\newcommand{\w}{\omega}
\newcommand{\oo}{\mathcal{O}}
\newcommand*{\sheafhom}{\mathscr{H}\kern -.5pt om}
\newcommand*{\sheafext}{\mathscr{E}\kern -.5pt xt}
\newcommand{\bC}{\mathbb{C}}
\newcommand{\Aut}{{\rm Aut}}
\newcommand{\vphi}{\varphi}
\newcommand{\vtheta}{\vartheta}
\newcommand{\Lam}{{\bf \Lambda}}
\newcommand{\bP}{\mathbb{P}}
\newcommand{\bR}{\mathbb{R}}
\newcommand{\bT}{\mathbb{T}}
\newcommand{\bQ}{\mathbb{Q}}
\newcommand{\bN}{\mathbb{N}}
\newcommand{\bfm}{{\bf m}}
\newcommand{\bX}{\mathbb{X}}
\newcommand{\bG}{\mathbb{G}}
\newcommand{\triv}{{\rm triv}}
\newcommand{\mcF}{\mathcal{F}}
\newcommand{\cF}{\mathcal{F}}
\newcommand{\vol}{{\rm vol}}
\newcommand{\DHM}{{\bf DH}}
\newcommand{\mcD}{\mathcal{D}}
\newcommand{\ac}{{\rm ac}}
\newcommand{\bV}{\mathbb{V}}
\newcommand{\bZ}{\mathbb{Z}}
\newcommand{\mcQ}{\mathcal{Q}}
\newcommand{\fa}{\mathfrak{a}}
\newcommand{\fr}{\mathfrak{r}}
\newcommand{\bB}{\mathbb{B}}
\numberwithin{equation}{section}
\numberwithin{theorem}{section}
\numberwithin{table}{section}
\numberwithin{table}{section}
\begin{document}
\bibliographystyle{amsalpha}
\title[Yau-Tian-Donaldson conjecture for generalized K\"{a}hler-Ricci solitons]{On the Yau-Tian-Donaldson conjecture for generalized K\"{a}hler-Ricci soliton equations}
\author{Jiyuan Han}
\address{Department of Mathematics, Purdue University, West Lafayette,
IN, 47907}
\email{han556@purdue.edu}
\author{Chi Li}
\address{Department of Mathematics, Purdue University, West Lafayette,
IN, 47907}
\email{li2285@purdue.edu}
\thanks{}
\date{\today}
\begin{abstract}
Let $(X, D)$ be a polarized log variety with an effective holomorphic torus action, and $\Theta$ be a closed positive torus invariant $(1,1)$-current. 
For any smooth positive function $g$ defined on the moment polytope of the torus action, we study the Monge-Amp\`{e}re equations
that correspond to generalized and twisted K\"{a}hler-Ricci $g$-solitons. 
We prove a version of Yau-Tian-Donaldson (YTD) conjecture for these general equations, showing that the existence of solutions is always equivalent to an equivariantly uniform $\Theta$-twisted $g$-Ding-stability. When $\Theta$ is a current associated to a torus invariant linear system, we further show 
that equivariant special test configurations suffice for testing the stability. 
Our results allow arbitrary klt singularities and generalize most of previous results on (uniform) YTD conjecture for (twisted) K\"{a}hler-Ricci/Mabuchi solitons or K\"{a}hler-Einstein metrics.
\end{abstract}
\maketitle
\setcounter{tocdepth}{1}
\tableofcontents
\section{Introduction and main results}
\label{intro}


In this paper, we will use the variational approach to study generalized and twisted K\"{a}hler-Ricci soliton equations on log Fano varieties, which generalize the usual (twisted) K\"{a}hler-Ricci soliton equations on Fano manifolds which we first recall. Let $X$ be a smooth Fano manifold and let $\Theta$ be a closed and positive $(1,1)$-current. By a twisted K\"{a}hler-Ricci soliton (see \cite{TZ99, Zhu00, DaS16}), we mean a Hermitian metric $e^{-\vphi}$ on $-K_X$ such that $\ddc\vphi=\frac{\sqrt{-1}}{2\pi}\partial\bar{\partial}\vphi$ is a K\"{a}hler metric and satisfies the equation:
\begin{equation}\label{eq-KRsm}
Ric(\ddc\vphi)=\ddc\vphi+\mathfrak{L}_\xi \ddc\vphi+\Theta,
\end{equation}
where $\xi$ is a holomorphic vector field and $\mathfrak{L}_\xi$ denotes the Lie derivative. Based on the works in \cite{CTZ05, DR17}, we know that the solvability of this equation is equivalent to appropriate coercivity condition of associated energy functionals. 
It is also possible to show the corresponding version of Yau-Tian-Donaldson conjecture which states that the solvability is equivalent to appropriate (twisted) K/Ding-stability of the data $(X, \xi, \Theta)$. By now there is a long list of such results under different conditions of $(X, \xi, \Theta)$. We refer to \cite{Tia97,Berm15,CDS15,Tia15,CSW18,BBJ18,His19,Li19} for the K\"{a}hler-Einstein case (i.e. $\xi=\Theta=0$), \cite{Der16, DaS16,BBJ18,FS19,RS19,LTW19,TW19} for the twisted/conical K\"{a}hler-Einstein case (i.e. $\xi=0$) and to \cite{BW14, DaS16} for the general K\"{a}hler-Ricci soliton case.  
 Moreover, recently there have been many parallel studies on Mabuchi-soliton metrics (see \cite{Mab01, Mab03, Yao17, Yao19, His19, LZ19} and references therein). Recall that a K\"{a}hler metric $\ddc\vphi\in c_1(X)$ is a Mabuchi soliton (\cite{Mab01, Mab03}) on $(X, \xi)$ (with $\xi$ holomorphic) if it satisfies the equation: 
\begin{equation}\label{eq-Msoliton}
\bar{\partial}\left( \frac{e^{-\vphi}}{(\ddc\vphi)^n}\right)=\iota_\xi (\ddc\vphi).
\end{equation}

Our main purpose in this paper is to generalize these results to a much more general setting. First, we will work with any log pair $(X, D)$ allowing arbitrary (klt) singularities. Second, we will use the set-up of Berman-Witt-Nystr\"{o}m in \cite{BW14} by considering more general complex Monge-Amp\`{e}re equation of the following form:
\begin{equation}\label{eq-KRT1}
g(\bfm_\vphi)\frac{(\ddc\vphi)^n}{n!}=e^{-\vphi-\psi},
\end{equation}
where $\bfm_\vphi$ is the moment map associated to an effective torus action, $g$ is any positive smooth function defined on the associated moment polytope $\bfm_\vphi(X)$. 
We will explain more of notations shortly. As is well known, \eqref{eq-KRT1} reduces to \eqref{eq-KRsm} when $\log g$ is affine, $(X, D)=(X, \emptyset)$ and $\Theta=\ddc \psi$. In particular, when $g$ is a positive constant, we get the twisted K\"{a}hler-Einstein equation. Moreover, when $g$ is affine, then we get the Mabuchi-soliton equation \eqref{eq-Msoliton}.

Our method is based on the variational approach, which applies well to both smooth and singular varieties. Indeed, the variational approach to solve complex Monge-Amp\`{e}re equations on possibly singular varieties were developed in recent years, especially in the works of \cite{BBGZ, BBEGZ, BW14}. Based such variational approach and the study of the space of K\"{a}hler metrics, Tian's properness conjecture has been resolved in \cite{DR17, Dar17, DNG18}. Moreover the variation approach to YTD conjecture via pluripotential theory and non-Archimedean geometry have been successfully carried out in \cite{BBJ18, LTW19, His19, Li19}. 

As we will show in this paper, by working harder to generalize the techniques from these works, we can indeed achieve a version of the Yau-Tian-Donaldson conjecture for more general equation \eqref{eq-KRT1} which gives sufficient and necessary algebrao-geometric conditions for the existence of solutions.
We will also show that the MMP process developed in \cite{LX14, BBJ15, Fuj19a, BLZ19} works equally well for the more general equation \eqref{eq-KRT1}, which allows us to test generalized-twisted K/Ding-stability using only special test configurations. 

To state our results, we introduce some notations. 
Let $X$ be a projective variety of dimension $n$, $D$ an effective divisor such that $K_X+D$ is $\QQ$-Cartier.
Let $\Theta$ be a closed positive $(1,1)$-current which is the curvature of  a possibly singular Hermitian metric $e^{-\psi}$ on a $\QQ$-line bundle $B$. We assume that $L = -(K_X + D)-B$ is ample.
\begin{remark}
By incorporating the divisorial part of $\Theta$ into $D$, we can usually assume that in the Siu-decomposition of $\Theta$, there is no divisorial part.
\end{remark}
Let $\omega_0$ be a smooth K\"ahler metric on $L$, as the curvature of a Hermitian metric $e^{-\vphi_0}$ on $L$, i.e. $\omega_0=\ddc\vphi_0$. 
Then $e^{-\vphi_0-\psi}$ is a Hermitian metric on $-K_X-D$ and we get a globally defined measure on $X$ (see \cite{BBEGZ}):
\begin{equation}
d\mu_0=e^{-\vphi_0-\psi}=|s|^{2/m}  e^{-\vphi_0-\psi} (\sqrt{-1}^{mn^2}s^*\wedge \bar{s}^*)^{1/m},
\end{equation}
where $s$ is any no-where zero local holomorphic section of $m(-K_X-D)$ over an open set for some sufficiently divisible $m$, and $s^*$ is the dual to $s$.

Let $T\cong (S^1)^r$ be a real torus of dimension $r$ with the complexification $T_\bC\cong (\bC^*)^r$. We assume that $T_\bC$ acts effectively and holomorphically on $X$, preserving the divisor $D$. Moreover, we assume the $T_\bC$ action lifts to an action on the $\bQ$-line bundle $B$, which means it lifts to act on the line bundle $mB$ for some $m\in \bN$, and $e^{-\psi}$ is a $T$-invariant Hermitian metric so that $\Theta$ is also $T$-invariant. Note that since $T_\bC$ naturally acts on $K_X+D$, it is equivalent to require that $T_\bC$ lifts to an action on the ample $\bQ$-line bundle $L=-(K_X+D)-B$.

For any K\"{a}hler form $\ddc\vphi \in c_1(L)$, there is an associated moment map
\begin{align}
\bfm_\vphi: X \rightarrow \RR^r.
\end{align}

Let $P\subset \RR^r$ be the image of $\bfm_\vphi$, which is known to be a convex polytope and independent of the choice of $\ddc\vphi$.
Let $g$ be any smooth positive function defined on $P$. For any $x\in X$, for simplicity, we set $g_\vphi(x):= g(\bfm_\vphi(x))$. Then we can re-write the equation \eqref{eq-KRT1} as 
\begin{equation}
\label{MA_equation}
g_\vphi  \frac{(\ddc\vphi)^n}{n!} = {e^{-(\vphi-\vphi_0)}d\mu_0}.
\end{equation}
By \cite{BW14} (see section \ref{functionals} for an alternative way of definition), for any $\vphi\in \mcE^1_T(X, L)$, the left-hand-side of \eqref{MA_equation} is a well-defined Radon measure and the equation can be considered in the pluripotential sense. 
When $\vphi$ is sufficiently smooth, one can easily see that, over $X^{\rm sm}$, the equation \eqref{MA_equation} is equivalent to
\begin{equation}
Ric_\vphi:=Ric(\ddc\vphi) = \ddc\vphi + [D] + \Theta + \ddc \log g_\vphi,
\end{equation}
where $[D]$ is the current of integration along the divisor $D^{\rm sm}$. 
\begin{definition}
We say $\bX:=(X, D+\Theta, T)$ admits a K\"{a}hler-Ricci $g$-soliton (or simply KR $g$-soliton) if there exists a solution $\vphi$ in the finite energy space $\mcE^1_T(X, L)$ (see Definition \ref{eq-cE1T}) to the equation \eqref{MA_equation}.
\end{definition}



In order to get a good existence theory using variational approach and pluripotential theory, we need to require some control of the measure $d\mu_0$. To express this, we choose a $T$-equivariant log resolution of singularities $\rho: \tilde{X}\rightarrow X$ such that $\rho^{-1}(D+X^{\rm sing})$ is supported on a simple normal crossing divisor. We can then write down the ramification formula:
\begin{equation}
K_Y + D' = \rho^*(K_X+D)+\sum_i a_i E_i,
\end{equation}
where $D'=\rho_*^{-1}D$ is the strict transform of $D$ and $E_i$ are exceptional divisors of $\rho$. Choose a smooth volume form $\Omega$ on $\tilde{X}$. Then for any local holomorphic chart $\{U_\alpha, z_j\}$, there exists nowhere zero smooth functions $f(z)$ and local psh function $\tilde{\psi}_\alpha=\rho^*\psi$, such that 
\begin{equation}
\rho^* d\mu_0=\prod_{i} |z_i|^{2 a_i} e^{-\tilde{\psi}_\alpha} f(z) \Omega.
\end{equation}
Following \cite{BBEGZ, BBJ18}, we define:
\begin{definition}
We say that $(X, D+\Theta)$ is klt if the in the above representation $\prod_i |z_i|^{2a_i}e^{-\psi_\alpha}\in L^p(U_\alpha, \Omega)$ for some $p>1$.
\end{definition}

By slightly generalizing the previous work, we show that solutions to \eqref{MA_equation} are critical points of two functionals $\bfD=\bfD_{g,\Theta}$ and $\bfM=\bfM_{g,\Theta}$ (see \eqref{eq-Ding} and \eqref{eq-Mabuchi})
defined on the space $\mcE^1_T(X, L)$ of $T$-invariant finite energy potentials. 
As in the usual K\"{a}hler-Einstein case, we first derive an analytic criterion for the existence of solutions to equation \eqref{MA_equation}.




Let $\Aut(X, D)$ be the automorphism of $(X, D)$ and $\mathfrak{aut}(X, D)$ be its Lie algebra. Let $\mathfrak{t}$ (resp. $\mathfrak{t}_\bC$) be the Lie algebra of $T$ (resp. $T_\bC$)
\begin{definition}\label{def-AutT}
Let $(X, D, \Theta)$ be the same as above. We define:
\begin{equation}\label{eq-autT}
\mathfrak{aut}_T(X, D, \Theta)=\left\{\xi\in \mathfrak{aut}(X, D); \iota_\xi \Theta=0, [\xi, c]=0, \forall c\in \mathfrak{t}_\bC\right\}. 
\end{equation}
Let $\Aut_T(X, D, \Theta)$ be the connected subgroup of $\Aut(X, D)$ generated by $\mathfrak{aut}_T(X, D, \Theta)$. 
\end{definition}
\begin{remark}
If $\Theta=0$, i.e. $[\Theta]=0$, then $T_\bC\subseteq \Aut_T(X, D):=\Aut_T(X, D, 0)$. If $\Theta$ is K\"{a}hler then $\Aut_T(X, D, \Theta)$ is trivial.
\end{remark}

Then we have:
\begin{theorem}
\label{theorem_A}
Let $\bX:=(X,D+\Theta, T)$ be the data as specified above.
Assume that $\bG$ is a reductive subgroup of $\Aut_T(X, D, \Theta)$. 
If $\bfD$ is $\bG$-coercive over $\mcE^1_{T\times K}$ (see Definition \ref{def-Gcoer}), then $\bX$ admits a KR $g$-soliton.

Conversely, if $\bX$ admits a KR $g$-soliton, then $\Aut_T(X, D, \Theta)$ is reductive, and for any reductive subgroup $\bG$ of $\Aut_T(X, D, \Theta)$ that contains a maximal torus of $\Aut_T(X, D, \Theta)$, $\bfD$ is $\bG$-coercive over $\mcE^1_{T\times K}$.
\end{theorem}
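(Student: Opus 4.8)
The plan is to treat Theorem~\ref{theorem_A} as the analytic (coercivity $\Leftrightarrow$ existence) core of the YTD program and to prove it by the variational method in the complete geodesic metric space $(\mcE^1_T(X,L), d_1)$ with its $L^1$-Finsler structure. The starting point is the fact recorded above that a KR $g$-soliton is precisely a critical point, in fact a minimizer, of the Ding functional $\bfD=\bfD_{g,\Theta}$. Since $\bfD$ is invariant under $\Aut_T(X,D,\Theta)$ and under the compact group $T\times K$ (with $K$ a maximal compact subgroup of $\bG$), a symmetrization argument reduces the whole problem to the invariant subspace $\mcE^1_{T\times K}$, on which $\bG$ acts isometrically for $d_1$. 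In this language both directions become instances of the Darvas--Rubinstein properness principle: $\bG$-coercivity of the $\bG$-invariant, $d_1$-lower semicontinuous, geodesically convex functional $\bfD$ is equivalent to the existence of a minimizer whose $\bG$-orbit exhausts the set of minimizers.

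For the forward direction I would take a minimizing sequence $\vphi_j$ for $\bfD$ in $\mcE^1_{T\times K}$. The $\bG$-coercivity produces elements $h_j\in\bG$ for which the potentials $h_j\cdot\vphi_j$ have uniformly bounded energy $\bfE$ and bounded $d_1$-distance to a fixed base point; compactness of $d_1$-bounded, finite-entropy sublevel sets of $\bfD$, which relies on the klt hypothesis through the $L^p$-integrability of $d\mu_0$, then yields a $d_1$-limit $\vphi_*$ that is a minimizer by lower semicontinuity. The remaining step is to promote this minimizer to an honest solution of \eqref{MA_equation}: this is the regularity step, where I use that the first variation of $\bfD$ at a finite-energy minimizer is represented by the difference of the weighted measure $g_\vphi(\ddc\vphi)^n/n!$ and $e^{-(\vphi-\vphi_0)}d\mu_0$, so that stationarity forces the Euler--Lagrange equation in the pluripotential sense, which is exactly \eqref{MA_equation}.

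For the converse I would first establish uniqueness of the minimizer modulo $\Aut_T(X,D,\Theta)_0$ from the strict convexity of $\bfD$ along $d_1$-geodesics transverse to the group orbits: a weighted Berndtsson-type convexity argument for the logarithmic term, combined with the behaviour of the weighted Monge--Amp\`ere energy along finite-energy geodesics, identifies the geodesics on which $\bfD$ is affine and shows that these are generated by one-parameter subgroups in $\mathfrak{aut}_T(X,D,\Theta)$. This is the input to a Matsushima--Berndtsson argument giving the reductivity of $\Aut_T(X,D,\Theta)$. With reductivity established, and with the minimizing set known to be a single $\bG$-orbit whenever $\bG$ contains a maximal torus, the Darvas--Rubinstein principle upgrades the existence of a minimizer to $\bG$-coercivity over $\mcE^1_{T\times K}$.

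The main obstacle I expect is the convexity theory underlying both directions in the presence of the weight $g$. One must show that $\bfD_{g,\Theta}$ is convex, with the right strict convexity transverse to $\bG$, along weak geodesics in $\mcE^1$, and this must be re-established from scratch because the weight $g$ destroys the exact affineness of the energy that the unweighted Ding functional enjoys. Since $g$ is an arbitrary positive smooth function of the moment map rather than an exponential of an affine function, the usual plurisubharmonic-variation and Pr\'ekopa-type arguments do not apply verbatim; controlling the moment-map dependence of the weighted energy along geodesics, while simultaneously accommodating the klt singularities and the singular twist $\Theta$, is the technical heart of the proof.
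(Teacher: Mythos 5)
Your overall route---realizing KR $g$-solitons as minimizers of $\bfD=\bfD_{g,\Theta}$ on $\mcE^1_{T\times K}$, proving uniqueness modulo the automorphism group by a Berndtsson-type convexity argument, deducing reductivity, and converting existence into $\bG$-coercivity via a Darvas--Rubinstein-type properness principle---is the same architecture the paper follows (its Theorem \ref{equivalent}, built on Lemmas \ref{minimizer} and \ref{lem-unique}; the paper implements the last step as an explicit destabilizing-geodesic-ray contradiction rather than by quoting an abstract principle). However, you have misdiagnosed the technical heart. The weight $g$ does \emph{not} destroy the affineness of the energy along weak geodesics: by Lemma \ref{d-closed}, $\ddc\,\bfE_g(\Phi)=\int_X g_\vphi(\ddc\Phi)^{n+1}/n!$, which vanishes identically on a geodesic, so $\bfE_g$ is exactly affine there, just as in the unweighted case. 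Moreover the logarithmic term $\bfL_\Theta(\vphi)=-\log\int_X e^{-(\vphi-\vphi_0)}d\mu_0$ does not involve $g$ at all, so Berndtsson's subharmonicity theorem (in the BBEGZ Appendix C form on a resolution, which is where the klt hypothesis and the vanishings $h^0(\tX,K_\tX+Q)=1$, $h^1(\tX,K_\tX+Q)=0$ enter) applies verbatim to give convexity of $\bfD$ and the identification of the affine directions with holomorphic vector fields in $\mathfrak{aut}_T(X,D,\Theta)$. This decoupling of the $g$-weight from the twisting is precisely the key observation advertised in the introduction; a ``weighted Pr\'ekopa argument'' is neither needed nor available, and pursuing it would stall the proof. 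The genuinely new convexity work in the paper (Section \ref{generalized-Mabuchi}) concerns the Mabuchi functional $\bfM$, which is not required for Theorem \ref{theorem_A}.

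Second, your Euler--Lagrange step is a real gap as stated: for $\vphi\in\mcE^1_T$ and a general continuous $v$, the path $\vphi+tv$ leaves the space of psh potentials, so one cannot directly ``represent the first variation of $\bfD$ at a finite-energy minimizer'' by differentiating along such variations. The paper's Lemma \ref{minimizer} fixes this with the projection trick: set $d(t)=-\bfE_g(P(\vphi+tv))+\bfL_\Theta(\vphi+tv)$ with $P$ the psh envelope, observe $d(t)\le\bfD(P(\vphi+tv))$ with equality at $t=0$, and invoke the Berman--Boucksom differentiability of $\bfE_g\circ P$ to conclude $d'(0)=\int_X v\,\bigl(-\MA_g(\vphi)+e^{-(\vphi-\vphi_0)}d\mu_0\bigr)=0$. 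Two smaller points: coercivity in Definition \ref{def-Gcoer} involves the infimum over the center $\bT$ only, so the normalizing group elements should be taken in $\bT$ rather than in all of $\bG$; and for the Ding functional the compactness of a translated, sup-normalized minimizing sequence follows from weak (Hartogs) compactness of $\bfJ_g$-bounded sets together with upper semicontinuity of $\bfE_g$ and continuity of $\bfL_\Theta$ under the klt $L^p$-condition---no finite-entropy sublevel-set compactness is needed, that being an artifact of the Mabuchi-functional formulation.
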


Our second main result is on the Yau-Tian-Donaldson type conjecture for the general equation \eqref{MA_equation}.
\begin{theorem}
\label{thm-YTD}
Let $\bX$ be as above and $\bG$ be a reductive subgroup of $\Aut_T(X, D, \Theta)$. 

If $\bX$ is $\bG$-uniformly $g$-Ding-stable over $(T_\bC\times \bG)$-equivariant test configurations, then $\bX$ admits a KR $g$-soliton. 

Conversely if $\bX$  admits a KR $g$-soliton and $\bG$ contains a maximal torus of $\Aut_T(X, D, \Theta)$, then $\bX$ is $\bG$-uniformly $g$-Ding-stable over $(T_\bC\times\bG)$-equivariant test configurations. 

Moreover, when $\Theta$ is a generic current associated to a $T$-invariant linear system (see Lemma \eqref{lem-character}), it is enough to test the $\bG$-uniform $g$-Ding-stability over $(T_\bC\times\bG)$-equivariant special test configurations.

\end{theorem}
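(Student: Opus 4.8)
The plan is to deduce Theorem~\ref{thm-YTD} from the analytic criterion of Theorem~\ref{theorem_A}, reducing the Yau--Tian--Donaldson correspondence to matching the Archimedean positivity of the Ding functional $\bfD=\bfD_{g,\Theta}$ (namely $\bG$-coercivity over $\mcE^1_{T\times K}$) with its non-Archimedean counterpart (namely $\bG$-uniform $g$-Ding-stability). The bridge in both directions is the asymptotic behaviour of $\bfD$ and of the $\bG$-twisted energy $\bfJ_\bG$ along geodesic rays, combined with $g$-weighted slope formulas identifying these asymptotic slopes with the non-Archimedean invariants $\bfD^{\NA}$ and $\bfJ^{\NA}_\bG$ of a test configuration.

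For the converse direction (existence $\Rightarrow$ stability), I would first invoke Theorem~\ref{theorem_A}: since $\bX$ admits a KR $g$-soliton and $\bG$ contains a maximal torus of $\Aut_T(X,D,\Theta)$, the functional $\bfD$ is $\bG$-coercive over $\mcE^1_{T\times K}$. To each $(T_\bC\times\bG)$-equivariant test configuration $(\mcX,\mcL)$ I would attach the associated geodesic ray $\{\vphi_t\}$ in $\mcE^1_{T\times K}$ emanating from a fixed base potential, and prove the $g$-weighted slope identities $\lim_{t\to\infty}\bfD(\vphi_t)/t=\bfD^{\NA}(\mcX,\mcL)$ and $\lim_{t\to\infty}\bfJ_\bG(\vphi_t)/t=\bfJ^{\NA}_\bG(\mcX,\mcL)$. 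Coercivity then immediately forces $\bfD^{\NA}\ge\epsilon\,\bfJ^{\NA}_\bG$ on all such test configurations, which is precisely $\bG$-uniform $g$-Ding-stability.

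The bulk of the work is the forward direction (stability $\Rightarrow$ existence), which I would argue by contraposition. If $\bfD$ fails to be $\bG$-coercive, then there is a sequence $\vphi_j\in\mcE^1_{T\times K}$ with $\bfJ_\bG(\vphi_j)\to\infty$ while $\bfD(\vphi_j)$ grows strictly sublinearly in $\bfJ_\bG(\vphi_j)$; after normalization and a limiting procedure in $\mcE^1$, this yields a destabilizing geodesic ray $\Phi$ of unit speed with non-positive $\bfD$-slope and positive $\bfJ_\bG$-slope. The crucial and hardest step is then to associate to $\Phi$ a non-Archimedean metric $\Phi^{\NA}$ and to show, via the lower semicontinuity of the non-Archimedean $g$-Ding functional together with the approximability of $\Phi^{\NA}$ by $(T_\bC\times\bG)$-equivariant test configurations, that the destabilization $\bfD^{\NA}\le 0<\epsilon\,\bfJ^{\NA}_\bG$ is detected on genuine test configurations, contradicting stability. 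I expect this passage from a transcendental destabilizing ray to honest equivariant test configurations---in the presence of the weight $g$, the singular twist $\Theta$, and arbitrary klt singularities---to be the principal obstacle, since it requires upgrading the slope comparison and the envelope/regularization arguments of \cite{BBJ18,Li19,LTW19} to the $g$-weighted, $\Theta$-twisted and fully $T\times\bG$-equivariant setting.

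Finally, for the reduction to special test configurations when $\Theta$ comes from a generic $T$-invariant linear system, I would run the $T_\bC$-equivariant minimal model program on a given test configuration as in \cite{LX14,BBJ15,Fuj19a,BLZ19}, checking at each elementary step (base change, divisorial contraction, flip, and passage to the canonical model) that the weighted invariant $\bfD^{\NA}$ does not increase while $\bfJ^{\NA}_\bG$ stays controlled, the genericity of the linear-system current $\Theta$ guaranteeing that the twisting term remains admissible along the MMP. This would show that the infimum of $\bfD^{\NA}/\bfJ^{\NA}_\bG$ over all equivariant test configurations is approached by special ones, so that it suffices to test $\bG$-uniform $g$-Ding-stability over $(T_\bC\times\bG)$-equivariant special test configurations.
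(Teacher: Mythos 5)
Your overall architecture matches the paper's: the converse direction via Theorem~\ref{theorem_A} (equivalently Theorem~\ref{equivalent}) plus the $g$-weighted slope formulas of Proposition~\ref{F_slope}, the forward direction by contraposition through a destabilizing geodesic ray (Lemma~\ref{destablizer}), and the reduction to special test configurations via an equivariant MMP (Theorem~\ref{thm-MMP}), where your remark on genericity is essentially the paper's device of replacing $v(\Theta)$ by $v(\Delta)$ for a general member $\Delta$ of the linear system so the MMP runs on an honest log pair. Those parts are fine.

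The step that is both different from the paper and, as stated, would not close is the passage from the destabilizing ray $\Phi$ to a contradiction with stability. You propose to approximate $\Phi^{\NA}$ by $(T_\bC\times\bG)$-equivariant test configurations and invoke ``lower semicontinuity of the non-Archimedean $g$-Ding functional.'' But the multiplier-ideal approximants $\phi_{\epsilon,m}$ satisfy only one-sided bounds: $\bfL_\Theta^{\NA}(\phi_{\epsilon,m})\to\bfL_\Theta'^{\infty}(\Phi)$ while $\bfE^{\NA}_g(\phi_{\epsilon,m})\ge\bfE_g'^{\infty}(\Phi)$ by monotonicity. This gives $\bfD^{\NA}(\phi_{\epsilon,m})\le\bfD'^{\infty}(\Phi)+o(1)\le o(1)$, but it yields only an \emph{upper} bound on $\bfJ^{\NA}_{g,\bT}(\phi_{\epsilon,m})$, so the inequality $\bfD^{\NA}\ge\gamma\,\bfJ^{\NA}_{g,\bT}$ is not contradicted: it merely forces $\bfJ^{\NA}_{g,\bT}(\phi_{\epsilon,m})\to0$, and nothing in your outline transports this back to contradict $\inf_{\xi}\bfJ'^{\infty}(\Phi_\xi)=1$. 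The paper avoids this by not testing stability on the approximants at all; instead it converts $\bG$-uniform $g$-Ding-stability into the valuative criterion of Theorem~\ref{thm-valDing}, picks divisorial valuations $v_k$ nearly computing $\bfL_\Theta'^{\infty}(\Phi)\le-1$, applies $A_{(X,D+\Theta)}(v_{k,\xi_k})\ge\delta S_g(v_{k,\xi_k})$ together with the key inequality $\inf_v\bigl(S_g(v)+(\phi-\phi_\triv)(v)\bigr)\ge\bfE^{\NA}_g(\phi)$ of Lemma~\ref{lem-NAlegend}, and uses the vanishing of $\Fut_g$ to absorb the twists $\xi_k$, arriving at $\bfL^{\NA}\ge-(\delta')^{-1/C}>-1$, a contradiction. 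If you want to keep your route you would need the non-Archimedean Monge--Amp\`ere/Legendre-duality machinery of \cite{BBJ18} upgraded to the $g$-weighted, $\Theta$-twisted, equivariant setting; the paper deliberately sidesteps that by following the valuative approach of \cite{Li19}. You should also say how the infimum over $\xi\in N_\bR$ in $\bfJ^{\NA}_{g,\bT}$ is controlled along the ray (the paper's Propositions~\ref{J_trivial} and~\ref{J_product}), which your outline leaves implicit.
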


\begin{remark}\label{rem-Wang}
{
We point out that the above results work for all log varieties with klt singularities. When $X$ and $\Theta$ are smooth, it might be possible to generalize the metric techniques (including Cheeger-Colding-Tian theory and partial $C^0$-estimates) in \cite{WZ13, JWZ17, Sze16, DaS16, TW19, LS18, RS19} to prove similar results, although F. Wang pointed out to us some difficulty in such approach for non-KR soliton cases.

It is standard to get more regularity for the solution to \eqref{MA_equation} (see Proposition \ref{higher_regularity}). On the other hand, if we just consider weak finite energy solutions, it is possible to allow $g\in C^0(P)$ for Theorem \ref{theorem_A}, and Theorem \ref{thm-YTD} (at least when $X$ is smooth) to hold true.
}
\end{remark}
As mentioned above, the proof of our results depend on generalizing and unifying the techniques in previous works. There are however various new ingredients in our arguments. For example, we get the generalized Mabuchi functional and its generalized Chen-Tian formula for {\it all} smooth $g$ and for {\it all} K\"{a}hler classes (see \ref{Mabuchi_alternative}).
Moreover we would like to emphasize two key observations that make our arguments to work successfully as in the usual K\"{a}hler-Einstein case. The first is that the difficulty in dealing the $g$-soliton equation and the difficulty caused by the twisting $D+\Theta$ are de-coupled, at least when we use the Ding-functional/stability. This could be seen from the formula for the Archimedean/non-Archimedean Ding functional (see \eqref{eq-Ding} and \eqref{eq-DNA}). 
The second is that the well-known construction in the study of equivariant de-Rham cohomology together with the Stone-Weierstrass theorem can be applied in our calculations for various Archimedean/non-Archimedean functionals.

We end this introduction with an outline of the paper. In the next section, we will write down various functionals needed in the variational study of the equation \eqref{MA_equation}, and study their relations. In section \ref{existence_and_properness}, we prove the analytic criterion for the existence of solutions to \eqref{MA_equation}. In section \ref{generalized-Mabuchi}, we prove the technical result that the generalized Mabuchi functional is convex along geodesic rays in the space of $T$-invariant metrics. In section \ref{non-Archimedean functionals}, we introduce various non-Archimedean functionals and show the valuative criterion for the $\bG$-uniform stability. In section \ref{stable_and_proper}, we prove the Yau-Tian-Donaldson conjecture for the equation \eqref{MA_equation}. In section \ref{sec-MMP}, we point out that the stability can be tested over special test configurations when $\Theta$ is associated from a $T$-invariant linear subsystem.
In section \ref{sec-exmp}, we point out some immediate examples for which our results apply.

\subsection{Acknowledgements} 
J. Han would like to thank Jeff Viaclovsky for his teaching and support over many years.
C. Li is partially supported by NSF (Grant No. DMS-1810867) and an Alfred P. Sloan research fellowship, and he would like to thank Yuchen Liu for helpful discussions about equivariant K-stability. We would like to thank Feng Wang and Bin Zhou for helpful comments.
We would like to thank E. Inoue for pointing to us his related work \cite{Ino19b} and the earlier work of A. Lahdili \cite{Lahdili19}, and T. Delcroix for bringing the work in \cite{DeHu18} to our attention.

\section{Functionals and their dualities} 
\label{functionals}
In this section, we will first review a result in \cite{BW14} which defines the Radon measure $g_\vphi (\ddc\vphi)^n$ for all
$T$-invariant $L$-psh Hermitian metrics. We will give an alternative definition of these measures which are more adapted to our discussion. 
After that, we will define the functional $\bfE_g, \bfJ_g, \bfI_g$ which are generalized versions of  the usual  functionals $\bfE,\bfJ,\bfI$.
Then we will define the generalized Ding functional $\bfD$ and generalized Mabuchi functional $\bfM$ by essentially generalizing some definitions from 
\cite{TZ02}. We will show that $\bfD$ and $\bfM$ are convex along weak geodesics. At last, we will show that the duality formalism in \cite{BBGZ,BBEGZ} also apply to our functionals here.

\subsection{$g$-Monge-Amp\`{e}re measure}\label{sec-gMA}
In this subsection, we will work on manifold $\tX$ together with a smooth semi-positive closed (1,1)-form $\omega_0=\ddc\vphi_0=\frac{\sqrt{-1}}{2\pi}\partial\bar{\partial}\vphi_0$.

Define the space of $T$-invariant psh metrics on $L$ as:
\begin{equation}
{\rm Psh}_T(\tX, L)=\{\vphi=\vphi_0+u; u\in L^1_{\rm loc}, \text{and locally $\vphi$ is a $T$-invariant plurisubharmonic function} \}
\end{equation}
and smooth $T$-invariant {\it semipositive} potentials as:
\begin{equation}
\mcH_T(\tX, L) = \{ \vphi=\vphi_0+u \in {\rm Psh}_T(\tilde{X}, L);  u\in C^\infty(\tX, \bR) \}.
\end{equation}
We will use
$\omega = \omega_0 = \ddc\vphi_0=\frac{\sqrt{-1}}{2\pi}\partial\bar{\partial}\vphi_0$, $\ddc\vphi = \ddc(\vphi_0 + u)$.

For each $1\le\alpha\le r$, assume that $\xi_\alpha$ is the holomorphic $(1, 0)$-vector field generating the action of the $\alpha$-th factor of $T_\bC\cong (\bC^*)^r$ and let $\theta_\alpha=\theta_\alpha(\vphi)\in C^\infty(\tilde{X}, \bR)$ be the associated Hamiltonian function for any $\ddc\vphi$. Then we have:
\begin{equation}
\iota_{\xi_\alpha}\ddc\vphi=\frac{\sqrt{-1}}{2\pi} \bar{\partial} \theta_\alpha(\vphi).
\end{equation}
We have the formula $\theta_\alpha(\vphi)=\theta_\alpha(\vphi_0)+\xi_\alpha(\vphi-\vphi_0)$ and the moment map $\bfm_\vphi: \tilde{X}\rightarrow \bR^r$ and moment polytope are then given by:
\begin{equation}
\bfm_\vphi(x)=(\theta_1(\vphi)(x), \dots, \theta_{r}(\vphi)(x)), \quad P=\bfm_\vphi(\tilde{X}).
\end{equation} 
For any smooth function $g$ on $P$, set 
\begin{equation}
g_\vphi=g(\theta_{1}(\vphi), \dots, \theta_{r}(\vphi)).
\end{equation}
When $g>0$, we will often use the function:
\begin{equation}\label{eq-etavphi}
f=\log g \text{ (defined on P)}, \quad\quad  f_\vphi =\log g_\vphi \text{ (defined on $\tilde{X}$)}.
\end{equation}
So we have the formula:
\begin{eqnarray*}
\frac{\sqrt{-1}}{2\pi}\bar{\partial}g_\vphi=\frac{\sqrt{-1}}{2\pi}\sum_{\alpha}\frac{\partial g}{\partial \theta_\alpha}\bar{\partial}\theta_\alpha
=\sum_\alpha g_\alpha \iota_{\xi_\alpha}(\ddc\vphi)=\iota_{V_{g,\vphi}}\ddc\vphi,
\end{eqnarray*}
where we denoted 
\begin{equation}
g_\alpha=D_\alpha g=\frac{\partial g}{\partial \theta_\alpha}, \quad
V_{g,\vphi}:=\sum_\alpha g_\alpha(\bfm_\vphi(x)) \cdot \xi_\alpha.
\end{equation}
Note that $V_{g,\vphi}$ is {\it not} necessarily holomorphic. 
Similarly, for the smooth function $f=\log g$ on $P$, we set $f_\vphi=f(\bfm_\vphi)$ and 
\begin{equation}\label{eq-Vfvphi}
V_{f,\vphi}=\sum_\alpha \frac{\partial f}{\partial \theta_\alpha} \cdot \xi_\alpha=\sum_\alpha f_\alpha \xi_\alpha
\end{equation} 
which satisfy the identity:
\begin{equation}\label{eq-HamVfvphi}
\frac{\sqrt{-1}}{2\pi}\bar{\partial}f_\vphi=\iota_{V_{f,\vphi}}\ddc\vphi.
\end{equation}

It is well-known that both the moment polytope $P={\rm Im}(\bfm_\vphi)$ and the Duistermaat-Heckmann measure $\DHM_T(X, L)=(\bfm_\vphi)_*\frac{(\ddc\vphi)^n}{n!}$ are independent of the choice of $\vphi$ (see \cite[Proposition 4.1]{BW14}). As a consequence, the following integral gives the weighted volume that we will use in the following discussion:
\begin{equation}\label{eq-Vg}
\int_X g_\vphi \frac{(\ddc\vphi)^n}{n!}=\int_P g(y) \DHM_T(X, L):=\bV_g.
\end{equation}
Note that $\bV_1=\frac{c_1(L)^{\cdot n}}{n!}$.
For general $g>0$ on $P$, by multiplying $g$ by a positive constant, we can usually assume the normalization to simplify the notations in our following discussion:
\begin{equation}
\label{normalization}
\bV_g=1, \quad \int_X d\mu_0 = 1.
\end{equation}

For any interval $I \subset \bR$, set $\Sigma_I=\{s+\sqrt{-1}b; s\in I, b\in \bR\}\subset \bC$. 
\begin{lemma}\label{lem-etaPhi}
Let $I\subset \bR$ be any interval and $\Phi=\{\vphi(s)\}: s\in I \rightarrow \vphi(s)\in \mcH_T(\tX, L)$ be a differentiable map. Consider $\Phi$ as a Hermitian metric on $p_2^*L$ where $p_2: \Sigma_I\times \tilde{X}\rightarrow \tilde{X}$ is the natural projection. 
For any smooth function $f$ on $P$, we have the identity:
\begin{equation}
\iota_{V_{f,\vphi}}(\ddc\Phi)=\frac{\sqrt{-1}}{2\pi}\bar{\partial}f_\Phi,
\end{equation}
where $f_\Phi$ is a function on $\Sigma_I\times \tilde{X}$ defined as:
\begin{equation}
f_\Phi(s+\sqrt{-1} b, x)=f_{\vphi(s)}(x).
\end{equation}
Moreover we have the identity:
\begin{equation}
\frac{d}{ds}f_\vphi=V_{f,\vphi}\left(\frac{\partial\vphi}{\partial s}\right).
\end{equation}

\end{lemma}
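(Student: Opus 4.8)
The plan is to reduce the identity on the total space $\Sigma_I\times\tX$ to the fiberwise Hamiltonian identity \eqref{eq-HamVfvphi}, which already accounts for all $\tX$-directions, plus one computation in the base ($\Sigma_I$) direction that turns out to be exactly the ``moreover'' identity. For this reason I would prove the second identity first. Since $f_\vphi=f(\theta_1(\vphi),\dots,\theta_r(\vphi))$, the chain rule gives $\frac{d}{ds}f_\vphi=\sum_\alpha f_\alpha\,\frac{d}{ds}\theta_\alpha(\vphi)$ with $f_\alpha=\partial f/\partial\theta_\alpha$. Differentiating $\theta_\alpha(\vphi)=\theta_\alpha(\vphi_0)+\xi_\alpha(\vphi-\vphi_0)$ in $s$ and using that $\vphi_0$ is independent of $s$, one gets $\frac{d}{ds}\theta_\alpha(\vphi)=\xi_\alpha(\partial_s\vphi)$. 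Substituting and recalling \eqref{eq-Vfvphi} yields $\frac{d}{ds}f_\vphi=\sum_\alpha f_\alpha\,\xi_\alpha(\partial_s\vphi)=V_{f,\vphi}(\partial_s\vphi)$, the second identity.

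For the main identity I would compute in a local chart $(w,z_1,\dots,z_n)$ on $\Sigma_I\times\tX$ with $w=s+\sqrt{-1}b$, where $V_{f,\vphi}$ is understood on the product as $\sum_\alpha f_\alpha(\bfm_\vphi)\xi_\alpha$, the $\xi_\alpha$ acting only on the $\tX$-factor. Two structural facts drive the calculation. First, because $T$ acts only along $\tX$, the field $V_{f,\vphi}$ is of type $(1,0)$ with no $\partial_w$-component, so in the contraction $\iota_{V_{f,\vphi}}(\ddc\Phi)$, with $\ddc\Phi=\frac{\sqrt{-1}}{2\pi}\partial\bar\partial\Phi$, only the $dz_j\wedge d\bar z_k$ and $dz_j\wedge d\bar w$ parts of $\ddc\Phi$ survive (the $dw\wedge\,\cdot\,$ terms are annihilated). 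Second, $\Phi(s+\sqrt{-1}b,x)=\vphi(s)(x)$ is independent of $b$, hence $\partial_{\bar w}\Phi=\tfrac12\,\partial_s\Phi$.

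Collecting coefficients of the resulting $(0,1)$-form, the $d\bar z_k$-part of $\iota_{V_{f,\vphi}}(\ddc\Phi)$ is precisely the restriction to the slice $\{w\}\times\tX$ of $\iota_{V_{f,\vphi(s)}}\ddc_{\tX}\vphi(s)$, which by the fiberwise identity \eqref{eq-HamVfvphi} matches the $d\bar z_k$-part of $\frac{\sqrt{-1}}{2\pi}\bar\partial f_\Phi$. For the base direction, contracting $V_{f,\vphi}$ against the $dz_j\wedge d\bar w$ terms yields the $d\bar w$-coefficient $\frac{\sqrt{-1}}{2\pi}\,V_{f,\vphi}(\partial_{\bar w}\Phi)$, which by the second structural fact equals $\frac{\sqrt{-1}}{2\pi}\cdot\tfrac12\,V_{f,\vphi}(\partial_s\vphi)$; on the other hand the $d\bar w$-coefficient of $\frac{\sqrt{-1}}{2\pi}\bar\partial f_\Phi$ is $\frac{\sqrt{-1}}{2\pi}\partial_{\bar w}f_\Phi=\frac{\sqrt{-1}}{2\pi}\cdot\tfrac12\,\frac{d}{ds}f_\vphi$. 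By the first step these agree, so both $(0,1)$-forms coincide.

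The only genuinely new content beyond the fiberwise Hamiltonian identity is the base-direction ($d\bar w$) term, and this is where I expect the care to lie: one must verify that the cross term $\Phi_{z_j\bar w}$ contributes exactly the $d\bar w$-component of $\bar\partial f_\Phi$. The two structural facts reduce this precisely to the first step, so there is no real obstacle once the second identity is established. Equivalently, I could package the argument by first proving the total-space Hamiltonian identity $\iota_{\xi_\alpha}(\ddc\Phi)=\frac{\sqrt{-1}}{2\pi}\bar\partial\theta_\alpha(\Phi)$, whose base-direction check is the first step applied with $f=\theta_\alpha$, and then summing against $f_\alpha(\bfm_\Phi)$ using the chain rule $\bar\partial f_\Phi=\sum_\alpha f_\alpha(\bfm_\Phi)\,\bar\partial\theta_\alpha(\Phi)$.
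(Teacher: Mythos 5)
Your proposal is correct and follows essentially the same route as the paper: the paper computes $\frac{\sqrt{-1}}{2\pi}\bar{\partial}f_\Phi$ by the chain rule, splits $\bar{\partial}$ into the $\tilde{X}$-direction (handled by the fiberwise identity \eqref{eq-HamVfvphi}) and the $d\bar{t}$-direction (which produces $\xi_\alpha(\dot{\vphi})\,d\bar t$, i.e.\ your ``moreover'' identity), and recognizes the sum as $\iota_{V_{f,\vphi}}\ddc\Phi$ — exactly the packaging you describe in your final sentence. Your coefficient-by-coefficient comparison in the $d\bar z_k$ and $d\bar w$ directions is the same computation read in the opposite direction, with the factors of $\tfrac12$ from $\partial_{\bar w}=\tfrac12\partial_s$ cancelling on both sides as you note.
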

\begin{proof}
Recall that we have $f_{\vphi(s)}(x)=f(\theta_\alpha(\vphi))$ and $\theta_\alpha(\vphi)=\theta_\alpha(\vphi_0)+\xi_\alpha(\vphi-\vphi_0)$. Set $\dot{\vphi}=\frac{\partial \vphi}{\partial s}$ and $t=s+\sqrt{-1}b$. We get:
\begin{eqnarray*}
\frac{\sqrt{-1}}{2\pi}\bar{\partial}f_\Phi&=&\frac{\sqrt{-1}}{2\pi}\sum_\alpha f_\alpha \left(\bar{\partial}_{\tilde{X}}\theta_\alpha(\vphi)+\xi_\alpha(\dot{\vphi})d \bar{t}\right)\\
&=&\sum_\alpha f_\alpha \iota_{\xi_\alpha}\left(\ddc \vphi+\frac{\sqrt{-1}}{2\pi}\partial{\dot{\vphi}}\wedge d\bar{t}\right)\\
&=&\sum_\alpha f_\alpha \iota_{\xi_\alpha} (\ddc\Phi)=\iota_{V_{f,\vphi}}\ddc\Phi.
\end{eqnarray*}
\end{proof}
We have the following result proved by Berman-Witt-Nystr\"{o}m.
\begin{proposition}[\cite{BW14}]
\label{MA_eta}
Let $g=g(y)$ be a continuous positive function on the moment polytope $P$.
For any $\vphi \in {\rm Psh}_T(\tX, L)$, 
$\MA_g(\vphi) := g(\bfm_\vphi) \frac{(\ddc\vphi)^n}{n!}$
is a well-defined a Radon measure.
\end{proposition}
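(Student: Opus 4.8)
The plan is to reduce the statement to two facts: that $g$, being continuous on the compact polytope $P$, is bounded, and that the non-pluripolar Monge--Amp\`ere measure $\frac{(\ddc\vphi)^n}{n!}$ is a finite positive Radon measure for every $\vphi\in{\rm Psh}_T(\tX,L)$. The only genuine subtlety is that $\bfm_\vphi$ ceases to be continuous once $\vphi$ is merely psh, so the first task is to make sense of $g\circ\bfm_\vphi$ as a bounded Borel function defined off a set that the measure ignores. I would begin with the trivial smooth case: for $\vphi\in\mcH_T(\tX,L)$ the map $\bfm_\vphi$ is smooth with image $P$ and $\frac{(\ddc\vphi)^n}{n!}$ is a smooth measure, so $\MA_g(\vphi)=(g\circ\bfm_\vphi)\,\frac{(\ddc\vphi)^n}{n!}$ is manifestly Radon, of total mass $\bV_g$ by \eqref{eq-Vg}.

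The heart of the argument is to control $\bfm_\vphi$ for singular $\vphi$. Writing $u=\vphi-\vphi_0$, recall $\theta_\alpha(\vphi)=\theta_\alpha(\vphi_0)+\xi_\alpha(u)$. Since $u$ is $T$-invariant, its derivative along the angular (imaginary) direction of the $\bC^*$ generated by $\xi_\alpha$ vanishes, so $\xi_\alpha(u)$ is, up to a positive constant, the radial derivative of $u$ along the real one-parameter subgroup $\Re\,\xi_\alpha$. Restricting $\vphi$ to a (maximal) $\bC^*$-orbit and using plurisubharmonicity together with $T$-invariance, $u$ becomes a convex function of the real log-coordinate $s_\alpha$; hence $\xi_\alpha(u)$ exists for a.e.\ point, is Borel, and is pinched between the asymptotic slopes of this convex function as $s_\alpha\to\pm\infty$, which are precisely the support numbers cutting out $P$. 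Running this orbitwise on the open dense union of maximal orbits (whose complement is a proper analytic, hence pluripolar, subset) shows that $\bfm_\vphi$ is a Borel map with image in the compact convex set $P$, defined $\frac{(\ddc\vphi)^n}{n!}$-a.e.\ because the non-pluripolar operator puts no mass on pluripolar sets.

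With this in hand the conclusion is immediate: $g\circ\bfm_\vphi$ is a bounded Borel function (as $g\in C^0(P)$ and $P$ is compact), and multiplying the finite positive Radon measure $\frac{(\ddc\vphi)^n}{n!}$ (of mass $\le\bV_1$) by a bounded Borel function yields a finite positive Radon measure. This defines $\MA_g(\vphi)$ and establishes well-definedness. To confirm that the definition is canonical, agrees with \cite{BW14}, and is continuous along decreasing smooth approximations $\vphi_j\searrow\vphi$, I would use Stone--Weierstrass to approximate $g$ uniformly on $P$ by finite $\RR$-linear combinations of exponentials $e^{\la\ell,y\ra}$ (the exponentials separate points and form an algebra), reducing to the case $g=e^{\la\ell,y\ra}$, where the weight $e^{\la\ell,\bfm_\vphi\ra}\frac{(\ddc\vphi)^n}{n!}$ is the Hamiltonian weight of the holomorphic field $\sum_\alpha\ell_\alpha\xi_\alpha$ (cf.\ Lemma~\ref{lem-etaPhi}) and can be controlled directly via the invariance of the Duistermaat--Heckman measure $\DHM$ and Bedford--Taylor convergence; uniform convergence in $g$ then upgrades this to weak convergence $\MA_g(\vphi_j)\to\MA_g(\vphi)$ and shows independence of the approximating sequence.

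I expect the main obstacle to be exactly the a.e.\ definedness and the containment $\bfm_\vphi(X)\subseteq P$ for singular $\vphi$, that is, the orbitwise convexity and monotonicity of the Hamiltonian, which must be run carefully in the \emph{non-toric, singular} setting where orbits can be lower-dimensional and where one must verify that the exceptional locus (non-maximal orbits, singularities, the polar set of $\vphi$) is negligible for the non-pluripolar Monge--Amp\`ere measure. The continuity statement used in the consistency check rests on the a.e.\ convergence $\bfm_{\vphi_j}\to\bfm_\vphi$ of radial derivatives of convex functions, combined with a dominated-convergence/Egorov argument against the weakly convergent measures $\frac{(\ddc\vphi_j)^n}{n!}$, and this is the technical point I would spell out in detail.
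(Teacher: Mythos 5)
There is a genuine gap at the heart of your argument, namely the claim that $g\circ\bfm_\vphi$ is defined $\frac{(\ddc\vphi)^n}{n!}$-almost everywhere because the non-pluripolar operator ignores pluripolar sets. The set where the orbitwise radial derivative $\xi_\alpha(u)$ of the convex restriction fails to exist is \emph{not} pluripolar in general: it is typically a countable union of real hypersurfaces, and the Monge--Amp\`ere measure can charge it with positive --- even full --- mass. The one-dimensional toric model already kills the approach: for $u=\max(s,0)$ with $s=\log|z|$ on $\PP^1$, the measure $u''(s)\,ds$ is $\delta_0$, concentrated exactly where $u'$ jumps from $0$ to $1$. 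Worse, no Borel extension of $\bfm_\vphi$ to the bad set rescues the definition: approximating by smooth convex $u_j\searrow u$ and changing variables $y=u_j'(s)$ gives $\int w\,g(u_j')u_j''\,ds\to w(0)\int_0^1 g(y)\,dy$, so the correct limit measure is $\bigl(\int_0^1 g\bigr)\delta_0$, which is \emph{not} of the form $g(c)\,\delta_0$ for any pointwise value $c$ in the subdifferential unless $g$ happens to average to one of its values there. In other words, on the singular part of the Monge--Amp\`ere measure the weighted measure $\MA_g(\vphi)$ is genuinely not a pointwise product of $\frac{(\ddc\vphi)^n}{n!}$ with a function of the (multivalued) moment map; it smears the mass over the whole subdifferential weighted by $g$, consistently with the invariance of the Duistermaat--Heckman measure and the mass identity $\int_X\MA_g(\vphi)=\bV_g$. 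This is precisely the difficulty that forces Berman--Witt-Nystr\"om to use Kiselman's partial Legendre transform, and that the paper circumvents differently: for monomial $g=\prod_\alpha y_\alpha^{k_\alpha}$ it realizes $\MA_g(\vphi)$ as the fiber integral of the honest Monge--Amp\`ere measure $(\ddc\vphi^{[\vec k]})^{n+k}$ on the associated bundle $\tX^{[\vec k]}=(\tX,L)\times_{(S^1)^r}\bS^{[\vec k]}$, where Bedford--Taylor convergence along decreasing sequences applies directly, and then passes to continuous $g$ by Stone--Weierstrass and a uniform $2\epsilon$-estimate.

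Your final paragraph correctly flags a.e.-definedness as the main obstacle, and your Stone--Weierstrass consistency check points in the right direction, but as written the definition you give (bounded Borel weight times a Radon measure) produces the wrong object on the singular locus rather than merely an ambiguous one. To repair the proof you would have to \emph{define} $\MA_g(\vphi)$ as the limit of $\MA_g(\vphi_j)$ along decreasing smooth approximations (or via the Legendre-transform/fibration device) and prove existence and independence of that limit --- which is the actual content of the proposition --- rather than derive it from a pointwise product.
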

The proof in \cite{BW14} uses Kiselman's partial Legendre transform and approximation by series of step functions.
We now present an alternative construction of the above measure by using a more direct fibration construction, which is more adapted to our following discussion, in particular to defining the non-Archimedean functions in Section \ref{non-Archimedean functionals}.
\begin{proof}
By Stone-Weierstrauss theorem, the continuous function $g $ on $P\subset \RR^r$ can be approximated uniformly by polynomials
$g_i$ such that $|g (y)-g_i(y)|\to 0$ for any $y = (y_1,\cdots,y_r)\in P$,  as $i\to\infty$.
For $u\in {\rm Psh}_T(\tX,\omega_0)$, set $u_M = \max(u, -M)$. Set $\vphi_M = \vphi_0+u_M$.
For each $\vphi_M$, there exists a decreasing sequence $u_{M,j}\in \mcH_T(\tX,\omega_0)$, 
such that $u_{M,j} \searrow u_M$ in the weak topology as $j\to\infty$.
We will first define $\MA_{g_i}(\vphi_{M,j})$, and then show that $\MA_{g_i}(\vphi_{M,j})$ converges as $j\to\infty, M\to\infty,i\to\infty$, and
define $\MA_g(\vphi)$ as the limit.

Assume that $\xi$ generates an effective $T_\bC$-action where $T_\bC\simeq (\CC^*)^r$. 
Set $N^T_\ZZ={\rm Hom}(\CC^*, T_\bC)$, $M^T_\ZZ={\rm Hom}(T_\bC, \CC^*)$, $N^T_\RR=N^T_\ZZ\otimes_\ZZ \RR$, $M^T_\RR=M^T_\ZZ\otimes_\ZZ \RR$. 
Fix a $\ZZ$-basis $\{e_1,\dots, e_r\}$ of $N^T_\ZZ$ and write $\xi=\sum_{i=1}^r a_i e_i$ such that
$\{a_i; i=1,\dots, r\}$ are $\QQ$-linearly independent.

First we will assume $g $ is a polynomial and show that $\MA_{g}$ is a well-defined Radon measure.
Without the loss of generality, we can furthermore  assume $g $ is a monomial $\prod_{\alpha=1}^r y_\alpha^{k_\alpha}$, $k_\alpha\in \NN,
y\in P$. We will also assume $\vphi$ is smooth at first, and then work on the general $\vphi$ by using the approximation as stated above.
When $\vphi$ is smooth, the moment map $\bfm_\vphi: X\rightarrow P\subset M_\RR$ is well-defined, 
and is determined up to a translation.
Set $\theta_\alpha = \theta_{e_\alpha}(\vphi)=\la \bfm_\vphi, e_\alpha\ra$.
By adding a translation to $\bfm_{\vphi_0}$, we can assume $\theta_{\alpha}(\vphi_0)$ are all positive. 
Consequently, $\theta_{\alpha}(\vphi)$ are all positive, since the polytope $P$ is invariant when $\vphi_0$ is changed by adding a potential function.
Then $0<a_\alpha<\theta_{\alpha}<b_\alpha$ for positive bound $a_\alpha, b_\alpha$.
$g_{\vphi} = \prod_{\alpha=1}^r \theta_{\alpha}^{k_\alpha}(\vphi)$.

For any $\vec{k}=(k_1,\dots, k_r)\in \bN^r$, we set:
\begin{equation}
\bS^{\bvk}=S^{2k_1+1}\times \cdots \times S^{2 k_r+1}.
\end{equation}
$\bS^{\bvk}$ is a natural $(S^1)^r$-principal bundle with the action given by:
\begin{equation}
(e^{i a_1}, \cdots, e^{i a_r})\cdot (z^{(1)}, \cdots, z^{(r)})=(e^{i a_1}z^{(1)}, \dots, e^{i a_r} z^{(r)})
\end{equation}
where $z^{(\alpha)}=(z^{(\alpha)}_0,\dots, z^{(\alpha)}_{k_\alpha})$ and we use the identification $S^{2k_\alpha+1}=\{z^{(\alpha)}\in \bC^{k_1+1}; |z^{(\alpha)}|=1\}$.
Since $(S^1)^r$-acts on $(\tX, L)$, we have the associated fibre bundle $(\tX^\bvk, L^\bvk):=(\tX, L) \times_{(S^1)^r} \bS^\bvk$.

Then we have the diagram:
\begin{equation}
\begin{tikzcd}[column sep = 2.1cm]
\label{projections}
\tX \arrow[leftarrow, "\tilde{\beta}^{\bvk}"]{r} \arrow[d, "(\gamma')^{\bvk}"] & \tX\times \bS^{\bvk} \arrow[r, "\tilde{\pi}^{\bvk}"] \arrow[d, "\gamma^{\bvk}"] & \bS^{\bvk} \arrow[d, "(\gamma'')^{\bvk}"]\\
\tX/S^1 \arrow[leftarrow,"\beta^{\bvk}"]{r}  & \tX^{\bvk}  
\arrow[r, "\pi^{\bvk}"] &  \PP^{\bvk},
\end{tikzcd}
\end{equation}
where each fiber of $\pi^\bvk$ is isomorphic to $\tX$. 


For the Hermitian metric $e^{-\vphi}$ on $L$, there exists a corresponding Hermitian metric $e^{-\vphi^\bvk}$ on $L^\bvk$.
And $\vphi^\bvk-\vphi_0^\bvk = \gamma^{\bvk}_*(\beta^{\bvk})^*(\vphi-\vphi_0)$.
The equivariant cohomology associated to the group $U(1)$ is described in \cite[Section 4]{AB84}. 
For the group $U(1)$, there exists a Weyl group $W = \RR[\zeta, v]$, where $\zeta$ represents the 
angular $1$-form of $U(1)$ assigned with a degree $1$; $v$ is a free variable assigned with degree $2$.
A derivative $D$ is also defined on $W$, which satisfies $D\zeta = v, D v = 0$. Then $(W, D)$ forms the de Rham model of $EU(1)\simeq S^\infty$,
and $H^*(W,D)\simeq \RR$.
The $S^1$-invariant elements in $W$, i.e, $\RR[v]$, coupled with the derivative $D$, forms the de Rham model of $BU(1)\simeq \PP^\infty$, and
$H^*(\RR[v],D)\simeq \RR[v]$.
From the embedding $S^{2k+1}\rightarrow S^\infty$, we can see $(\RR[\zeta,v]/(v^{k+1}),D)$ forms the de Rham model for $S^{2k+1}$, where $\zeta\wedge v^k$ is the
generator of $H^{2k+1}(S^{2k+1},\RR)$.
Similarly, we have the embedding $\PP^{k}\rightarrow \PP^\infty$, and $(\RR[v]/(v^k),D)$ forms the de Rham model of $\PP^k$ (in real coefficient).

Following the idea of the model of $U(1)$, the ring
\begin{equation}
\Omega_T^*(\tX) = \{a\in\Omega^*(\tX):  \mathfrak{L}_\xi a = 0 \text{ for } \xi\in \RR^r\}
\end{equation}
is defined, and
there exists an isomorphism 
\begin{equation}
H\{\Omega_T^*(\tX)[v_1,\cdots,v_r], d_{\rm twist}\} \simeq H^*_T(\tX), 
\end{equation}
where $v_1,\cdots,v_r$ are free variables, and for any $a\in \Omega_T^*(\tX)$, $d_{\rm twist} a = da + \sum_{\alpha=1}^r (\iota_{\xi_\alpha}a) v_\alpha$.
This implies an isomorphism 
\begin{equation}
H\{\Omega_T^*(\tX)[v_1/(v_1)^{k_1+1},\cdots,v_r/(v_r)^{k_r+1}] \}\simeq H^*(\tX^\bvk).
\end{equation}

We will construct the metric $\ddc\vphi_j^{\bvk}$ in the following, which will fit with the equivariant cohomology picture above.
Since we are going to work on complex field, it is more convenient to consider the following diagram:
\begin{equation}
\begin{tikzcd}[column sep = 2.1cm]
\label{projections}
\CC^* \arrow[r, ""] \arrow[d, ""] & L\times (\CC^{k_1}\setminus\{0\})\times \cdots\times (\CC^{k_r}\setminus\{0\}) \arrow[r, ""] \arrow[d, ""] & L^{\bvk} \arrow[d, ""]\\
\CC^* \arrow[r,""]  & \tX\times (\CC^{k_1}\setminus\{0\})\times \cdots\times (\CC^{k_r}\setminus\{0\})
\arrow[r, ""] &  \tX^{\bvk}.
\end{tikzcd}
\end{equation}
We will use the coordinate $(x, s, (z_0^1,\cdots,z_{k_1}^1),\cdots, (z_0^r,\cdots,z_{k_r}^r))$ for points in $L\times (\CC^{k_1}\setminus\{0\})\times\cdots\times (\CC^{k_r}\setminus\{0\})$. The action of $t = (t_1,\cdots,t_r)\in N^T_\bR$ on $L\times (\CC^{k_1}\setminus\{0\})\times\cdots\times(\CC^{k_r}\setminus\{0\})$ is applied by $t\rightarrow (\sigma_t^*x, \sigma_t^*s, (t_1 z_0^1,\cdots,t_1 z_{k_1}^1),\cdots, (t_r z_0^r,\cdots,t_r z_{k_r}^r))$.
Then $(x,s, [z_0^1,\cdots,z_{k_1}^1],\cdots,[z_0^r,\cdots,z_{k_r}^r])$ and $(\sigma_t^*x,\sigma_t^* s, [z_0^1,\cdots,z_{k_1}^1],\cdots,[z_0^r,\cdots,z_{k_r}^r])$
are the same point in $L^{\bvk}$.

We will consider a neighborhood of a fiber $\tX\rightarrow \tX^\bvk\rightarrow \PP^{\bvk}$. 
Then we have
\begin{equation}
d(\vphi^\bvk) = d_x(\sigma_t^*\vphi)+d_t(\sigma_t^*\vphi) = d_x(\sigma_t^*\vphi) + \sum_{\alpha=1}^r \mathfrak{L}_{\xi_\alpha}(\vphi) A^\alpha,
\end{equation}
where $d_x$ means the differentiation along $x$ direction, $\mathfrak{L}_{\xi_\alpha}(\vphi) = \theta_\alpha$, $A^\alpha\in T^*\tX^\bvk$ is a connection of the line bundle, and $d_t A^\alpha = \ddc\vphi_\alpha^\FS$,
where $\vphi^{\FS}_\alpha$ is the pullback of the Fubini-Study metric from $\PP^{k_\alpha}$. 
Furthermore,
\begin{align} 
\begin{split}
\ddc (\vphi^\bvk) &= \ddc\vphi - \sum_{\alpha=1}^r d_x (\theta_\alpha)\wedge A^\alpha + d_x(\theta_\alpha)\wedge A^\alpha + \sum_{\alpha=1}^r \theta_\alpha \ddc\vphi_\alpha^\FS \\
&= \ddc\vphi + \sum_{\alpha=1}^r \theta_\alpha \ddc\vphi_\alpha^\FS.
\end{split}
\end{align} 
Note that by our normalization of $\bfm_\vphi$, each $\theta_\alpha$ is positive. So $\ddc\vphi^{\bvk}$ is indeed a metric
on $\tX^{\bvk}$.
As shown in the formula, $\ddc\vphi^\bvk$ can be decomposed into two parts:
$\ddc\vphi$ is the projection along fiber $\tX$, and $\sum_{\alpha=1}^r \theta_\alpha\ddc\vphi_\alpha^\FS$ is the projection perpendicular to the fiber direction.
From the construction above, we can also see that
\begin{align}
\begin{split}
\vphi^\bvk - \vphi_0^\bvk &= (\vphi - \vphi_0)^\bvk \\
\ddc (\vphi-\vphi_0)^\bvk &= \ddc (\vphi-\vphi_0) + \sum_{\alpha=1}^r (\theta_\alpha(\vphi) - \theta_\alpha(\vphi_0)) \ddc\vphi^\FS_\alpha .
\end{split}
\end{align}
For any $T$-invariant continuous function $w(x)$ on $\tX$, we set
\begin{align}
\begin{split}
\int_\tX w \MA_g(\vphi) &= \int_\tX w g_\vphi \frac{(\ddc\vphi)^n}{n!}  \\
&= \int_{\PP^\bvk}(\int_\tX w^\bvk \prod_{\alpha=1}^r \theta_\alpha^{k_\alpha}\frac{(\ddc\vphi)^n}{n!})\wedge \frac{(\ddc\vphi^\FS_1)^{k_1}}{k_1!}\cdots \wedge \frac{(\ddc\vphi^\FS_r)^{k_r}}{k_r !} \\
&=  \int_{\tX^\bvk} w^\bvk \frac{(\ddc\vphi^\bvk)^{n+k}}{(n+k)!} .
\end{split}
\end{align}

Now consider the general $\vphi\in {\rm Psh}_T(\tX,\omega_0)$. 
The regularization of $\vphi_M$ induces a decreasing sequence of smooth metrics $\vphi^\bvk_{M,j}$. Then the sequence converges to a limit $\vphi^\bvk_M$, 
which is a singular metric on $\tX^\bvk$.
Since the Monge-Amp\`{e}re measure converges along a decreasing sequence, we have $(\ddc\vphi^\bvk_M)^{n+k} = \lim_{j\to\infty} (\ddc\vphi^\bvk_{M,j})^{n+k}$.
Similarly, as $M\to\infty$, $\vphi_M$ is also a decreasing sequence. Then the limit
$\vphi^\bvk$ is a singular metric, and define $(\ddc\vphi^\bvk)^{n+k} = \lim_{M\to\infty}(\ddc\vphi^\bvk_M)^{n+k}$.
For any $T$-invariant test function $w(x)\in C^\infty(\tilde{X})$,
\begin{equation}
\int_\tX w \MA_g(\vphi) = \lim_{M\to\infty}\lim_{j\to\infty} \int_{X^\bvk} w^\bvk \frac{(\ddc\vphi_{M,j}^\bvk)^{n+k}}{(n+k)!} .
\end{equation}
For any test function $w$, we set $w^T=\int_T w(\sigma \cdot x)d\sigma$ where $d\sigma$ is the unit volume Haar measure on $T$. Then we define:
\begin{equation}
\int_\tX w \MA_g(\vphi)=\int_\tX w^T \MA_g(\vphi).
\end{equation}

Then $\MA_{g}$ is well-defined for any monomial and hence for any polynomial $g$.

Now consider the general continuous function $g $. For any $\epsilon>0$, there exists an $i_0>0$, such that for $i > i_0$,
$|g -g_i|<\epsilon$. Let $i,i'$ be any two indices larger than $i_0$.
For any test function $w(x)$, there exist $M>0, j>0$, such that
$|\int_\tX w(x) (\MA_{g_i}(\vphi_{M,j}) - \MA_{g_i}(\vphi))|< \epsilon$,
$|\int_\tX w(x) (\MA_{g_{i'}}(\vphi_{M,j}) - \MA_{g_{i'}}(\vphi))|< \epsilon$.
Then
\begin{align*}
&|\int_\tX w(x) (\MA_{g_i}(\vphi) - \MA_{g_{i'}}(\vphi))| \leq |\int_X w(x) (\MA_{g_i}(\vphi)-\MA_{g_i}(\vphi_{M,j}))| + \\
&|\int_\tX w(x) (\MA_{g_{i'}}(\vphi)-\MA_{g_{i'}}(\vphi_{M,j}))| +
|\int_\tX w(x) (\MA_{g_{i}}(\vphi_{M,j})-\MA_{g_{i'}}(\vphi_{M,j}))| \\
&\leq 2\epsilon + |\int_\tX w (g_{i, \vphi_{M,j}}-g_{i', \vphi_{M,j}})\frac{(\ddc\vphi_{M,j})^n}{n!}|\\
&\leq 2\epsilon + 2\epsilon \int_\tX |w| \frac{(\ddc\vphi_{M,j})^n}{n!}.
\end{align*}
By the estimate above, $\MA_{g_i}(\vphi_{M,j})$ converges, as $j\to\infty, M\to\infty, i\to\infty$.
Henceforth we can see
\begin{equation}
\MA_g(\vphi) = \lim_{i\to\infty}\lim_{M\to\infty}\lim_{j\to\infty} \MA_{g_i}(\vphi_{M,j})
\end{equation}
is a well-defined Radon measure by Riesz representation theorem.
\end{proof}

\subsection{Finite energy space}
By pushing down the measure defined in the previous subsection from $\tX$ to $X$, 
we have the definition of $\MA_g$ as a Radon measure on $X$.
The normalization \eqref{normalization} corresponds to assuming that $\MA_g, d\mu_0$ are probablity measures on $X$ (i.e. $\bV_g=1$ in \eqref{eq-Vg}).
We can define the following finite energy space (see \eqref{eq-Vg})
\begin{align}
\mcE_T:=\mcE_T(X, L) = \left\{ \vphi \in {\rm Psh}_T(X, L): \int_X g_\vphi \frac{(\ddc\vphi)^n}{n!} = \bV_g \right\}. 
\end{align}
\begin{remark}
$\mcE_T$ is the subspace of ${\rm Psh}_T(X,\omega_0)$ that $\MA_g(\vphi)$ does not charge pluripolar subset $\{u=-\infty\}$.
Since there exists $C>0$  that depends only on $g$ such that $|g_\vphi|<C$ for all $\vphi\in \mcH_T(X, L)$, $\MA_g(\vphi)$ 
is absolutely continuous with respect to $\frac{(\ddc\vphi)^n}{n!}$. By approximation, this holds for any $\vphi\in {\rm Psh}_T(X, L)$. As a result,
$\mcE_T \cong \{ \vphi\in {\rm Psh}_T(X, L): \int_X \frac{(\ddc\vphi)^n}{n!} = \bV_1\}$.
\end{remark}
We will also define
\begin{equation}\label{eq-cE1T}
\mcE^1_T:=\mcE^1_T(X, L) =\left\{ \vphi\in \mcE_T: \int_X  (\vphi-\vphi_0) \MA_g(\vphi) > -\infty\right\}. 
\end{equation}

For $\vphi\in \mcE^1_T(X, L)$, $\vphi = \vphi_0+u$, set $\vphi_t=\vphi_0+ t u$ and define:
\begin{align}
\bfE_g(\vphi) &= \frac{1}{\bV_g}\int_0^1 \int_X  (\vphi-\vphi_0) g_{\vphi_t}\frac{(\ddc\vphi_t)^n}{n!}dt \\
\bfI_g(\vphi) &= \frac{1}{\bV_g}\int_X  (\vphi-\vphi_0) \left(g_{\vphi_0}  \frac{(\ddc\vphi_0)^n}{n!} - g_\vphi \frac{(\ddc\vphi)^n}{n!}\right) \\
\bfJ_g(\vphi) &= \frac{1}{\bV_g} \int_X  (\vphi-\vphi_0) g_{\vphi_0}  \frac{\omega_0^n}{n!} - \bfE_g(\vphi) =: \Lam_g( \vphi) - \bfE_g( \vphi).
\end{align}
{Similar as in  \cite{TZ99}}, 
\begin{align}
\label{I-J}
\frac{1}{C} (\bfI-\bfJ) \leq \bfI_{g}-\bfJ_{g} \leq C (\bfI-\bfJ).
\end{align}

For simplicity of notations, we assume $\bV_g=1$ (by rescaling $g$).
For $\bfJ_g$, let $ u_t = t  u$.
\begin{align}
\label{J_1}
\begin{split}
\bfI_g(\vphi)-\bfJ_g(\vphi) &= \int_0^1 \frac{d}{dt}(\bfI_g-\bfJ_g)(\vphi_t) dt \\
&= -\int_0^1 \int_X  u_t (V_{f,\vphi}( u) + \Delta_t  u) g_{\vphi_t} \frac{\omega_{ u_t}^n}{n!} dt\\
&= \int_0^1 \int_X t g_{\vphi_t} \frac{\sqrt{-1}}{2\pi}\partial  u \wedge \bar\partial  u \wedge \frac{\omega_{ u_t}^{n-1}}{(n-1)!} dt\\
&= \int_X \sum_{k=0}^{n-1} (\int_0^1 \frac{1}{k! (n-k-1)!}(1-t)^k t^{n-k} g_{\vphi_t} dt) \frac{\sqrt{-1}}{2\pi}\partial  u \wedge \bar\partial  u \wedge 
\omega^{k} \wedge \ddc\vphi^{n-k-1} ,
\end{split}
\end{align}
where the last equality is by using $\omega_{ u_t} = (1-t)\omega + t\ddc\vphi$.
Similarly, we have
\begin{align}
\label{J_2}
\begin{split}
\bfJ_g(\vphi) &= \int_0^1 \int_X  u(g \frac{\w^n}{n!}- g_{\vphi_t}\frac{\w_{ u_t}^n}{n!})dt \\
&= -\int_0^1 \int_X  u \int_0^t(V( u) + \Delta_s  u) g_{\vphi_s} \frac{\omega_{ u_s}^n}{n!} ds dt\\
&= \int_0^1 \int_0^t \int_X g_{\vphi_s}\frac{\sqrt{-1}}{2\pi}\partial u\wedge \bar\partial u\wedge \frac{\w_{ u_s}^{n-1}}{(n-1)!} ds dt\\
&= \frac{1}{\bV_g}\int_X \int_0^1\int_0^t g_{\vphi_s} \frac{1}{k!(n-k-1)!} (1-s)^k s^{n-k-1} ds dt \frac{\sqrt{-1}}{2\pi} \partial u\wedge\bar\partial u\wedge \w^k \wedge \w_u^{n-k-1}
.
\end{split}
\end{align}

Then we have
\begin{align}
\label{J}
\frac{1}{C} \bfJ \leq \bfJ_g \leq C \bfJ
\end{align}

and 
\begin{align}
\label{I_J}
\frac{1}{C} \bfJ_g \leq \bfI_g -\bfJ_g \leq C \bfJ_g.
\end{align}
Moreover, for any $t\in [0,1]$, if we let $\vphi_t=(1-t)\vphi_0+t\vphi$, then
\begin{align*}
\frac{d}{dt}\bfJ_g(\vphi_t) = \frac{1}{\bV_g}\int_X  (\vphi-\vphi_0) \left(g_{\vphi_0} \frac{(\ddc\vphi_0)^n}{n!} - g_{\vphi_t}\frac{(\ddc\vphi_t)^n}{n!}\right) \ge \frac{1}{t}(1+\frac{1}{C})\bfJ_g(\vphi_t) .
\end{align*}
This is equivalent to 
$\frac{d}{dt}\log(\bfJ_g(\vphi_t)) \geq (1+\frac{1}{C})\frac{1}{t}$ and hence implies
\begin{align}
\label{J_t}
t^{1+\frac{1}{C}} \bfJ_g(\vphi) \geq \bfJ_g (\vphi_t).
\end{align}

By \eqref{I-J} \eqref{J}, we also have
\begin{align}
\label{I}
\frac{1}{C} \bfI \leq \bfI_g \leq C \bfI .
\end{align}

\begin{remark}
\label{smooth_net}
The calculation in the middle steps of \eqref{J_1} and \eqref{J_2} is done for $ \vphi\in \mcH_T$.
This is sufficient. Indeed, since $\mcE^1_T$ is the completion of $\mcH_T$ under the strong topology for each $ \vphi\in\mcE^1_T$, we can choose a sequence $ \vphi_j\in\mcH_T$
that converges to $\vphi$ in strong topology. Then we only need to show \eqref{J_1} and \eqref{J_2} hold for each $ \vphi_j$.
\end{remark}

\begin{lemma}\label{lem-Hartogs}
There exists $C>0$ such that for any $\vphi\in \mcE^1_T(X, L)$, 
\begin{equation}
 \sup(\vphi-\vphi_0)-C\le \Lam_g(\vphi)\le \sup(\vphi-\vphi_0) .
\end{equation}
\end{lemma}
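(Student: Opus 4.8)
The plan is to read $\Lam_g(\vphi)$ as the integral of $u:=\vphi-\vphi_0$ against a fixed probability measure, and then to reduce both inequalities to the classical comparison between the supremum and the $L^1$-average of an $\omega_0$-psh function. After rescaling $g$ so that $\bV_g=1$ as in \eqref{normalization}, the measure $d\nu_0:=g_{\vphi_0}\frac{\omega_0^n}{n!}$ has total mass $\bV_g=1$, hence is a probability measure on $X$, and by definition $\Lam_g(\vphi)=\int_X u\,d\nu_0$ with $u\in\PSH_T(X,\omega_0)$.

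The upper bound is then immediate: since $u\le \sup_X u$ pointwise and $\nu_0$ is a probability measure, $\Lam_g(\vphi)=\int_X u\,d\nu_0\le \sup_X u=\sup(\vphi-\vphi_0)$. For the lower bound I would first normalize $u$. Because $\nu_0$ has mass one, $\Lam_g$ transforms affinely under adding constants: replacing $u$ by $u-\sup_X u$ subtracts exactly $\sup_X u$ from $\Lam_g(\vphi)$. Hence it suffices to produce a uniform constant $C>0$ with $\Lam_g(\vphi_0+u)\ge -C$ for every $u\in\PSH(X,\omega_0)$ normalized by $\sup_X u=0$, and reinstating the constant $\sup_X u$ afterwards recovers both stated inequalities at once.

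For such normalized $u$ I would combine two ingredients. First, since $g$ is positive and continuous on the compact moment polytope $P=\bfm_{\vphi_0}(X)$, there is a constant $c_2>0$ with $g_{\vphi_0}\le c_2$ on $X$; as $u\le 0$ this gives $\int_X u\,d\nu_0\ge c_2\int_X u\,\frac{\omega_0^n}{n!}$, the integrand $u(g_{\vphi_0}-c_2)$ being a product of two nonpositive factors. Second, I would invoke the standard Hartogs-type estimate, i.e. the $L^1$-compactness of normalized $\omega_0$-psh functions (see \cite{BBGZ, BBEGZ}): there is $C_0>0$, independent of $u$, with $\sup_X u-C_0\le \frac{1}{\bV_1}\int_X u\,\frac{\omega_0^n}{n!}$. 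With $\sup_X u=0$ this yields $\int_X u\,\frac{\omega_0^n}{n!}\ge -C_0\bV_1$, and combining the two bounds gives $\Lam_g(\vphi_0+u)\ge -c_2 C_0\bV_1=:-C$, as desired.

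The one genuinely nontrivial input is the Hartogs estimate $\frac{1}{\bV_1}\int_X u\,\frac{\omega_0^n}{n!}\ge \sup_X u-C_0$, encoding the compactness of $\{u\in\PSH(X,\omega_0):\sup_X u=0\}$ in $L^1$. On a smooth $X$ this is classical, but here $X$ may be singular, so I would deduce it by pulling everything back through the $T$-equivariant log resolution $\rho:\tX\to X$, where $\omega_0$ becomes a smooth semipositive and big form and the required compactness is available from \cite{BBEGZ}; the pushforward/pullback of the measures is compatible because $\rho$ is an isomorphism over the smooth locus and $\frac{\omega_0^n}{n!}$ does not charge the exceptional set. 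This resolution step, rather than the elementary measure-theoretic manipulations, is where the main care is needed.
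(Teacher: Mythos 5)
Your proof is correct and rests on the same key input as the paper's: the Hartogs-type $L^1$-compactness of sup-normalized $\omega_0$-psh functions (the upper bound being trivial in both). The only cosmetic difference is that you phrase the lower bound as a direct quantitative estimate, first passing to the unweighted measure via $g_{\vphi_0}\le c_2$ and the sign of $u-\sup u$, whereas the paper argues by contradiction directly with the weighted measure $g_{\vphi_0}\omega_0^n/n!$ (using that $g_{\vphi_0}$ is bounded so $L^1$-convergence transfers); these are interchangeable.
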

\begin{proof}
The second inequality is clear. 
The first inequality follows easily from the Hartogs' lemma. Indeed, if there is no such $C>0$, then there exists a sequence $\{\vphi_j\}_{j=1}^\infty$ such that $u_j=\vphi_j-\vphi_0$ satisfies
\begin{equation}
\frac{1}{\bV_g}\int_X (u_j-\sup u_j)g_{\vphi_0}\frac{(\ddc\vphi_0)^n}{n!}\le -j.
\end{equation}
However, by Hartogs' lemma, $u_j-\sup u_j$ converges in $L^1$ to some $u_\infty\in L^1((\ddc\vphi_0)^n)$. Letting $j\rightarrow+\infty$, we get a contradiction.
\end{proof}

\begin{proposition}
$\bfE_g$ is monotone increasing, concave and upper semi-continuous for $ \vphi\in \mcE_T^1(X,L)$.
Moreover, for any decreasing sequence $ \vphi_j$ that converges weakly to $ \vphi\in \mcE_T^1(X, L)$,
$\bfE_g(\vphi_j)$ converges to $\bfE_g(\vphi)$. 
\end{proposition}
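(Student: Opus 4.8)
The plan is to reduce all four assertions to the first- and second-order variation of $\bfE_g$ along affine segments, together with the comparison $0<c\le g_\vphi\le C$ (valid since $g$ is positive and continuous on the compact polytope $P$), which lets us borrow the pluripotential machinery for the unweighted energy.

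First I would establish the variation formulas for $\vphi\in\mcH_T$ and then extend to $\mcE^1_T$ by the approximation of Remark \ref{smooth_net}. Writing $\vphi_s=(1-s)\psi_0+s\psi_1$ with $u=\psi_1-\psi_0$, the key observation is that the measure-valued one-form $\vphi\mapsto \frac{1}{\bV_g}g_\vphi\frac{(\ddc\vphi)^n}{n!}$ is closed: its variation is the symmetric bilinear form $(\chi_1,\chi_2)\mapsto -\frac{1}{\bV_g}\int_X g_\vphi\,\frac{\sqrt{-1}}{2\pi}\partial\chi_1\wedge\bar\partial\chi_2\wedge\frac{(\ddc\vphi)^{n-1}}{(n-1)!}$, which is exactly the integration by parts carried out in \eqref{J_1} once one uses $V_{g,\vphi}=g_\vphi V_{f,\vphi}$ and the Hamiltonian identity $\iota_{V_{f,\vphi}}\ddc\vphi=\frac{\sqrt{-1}}{2\pi}\bar\partial f_\vphi$ of Lemma \ref{lem-etaPhi}. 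Since the defining integral of $\bfE_g$ is the integral of this one-form along the segment from $\vphi_0$, closedness gives the first variation
\[
\frac{d}{ds}\bfE_g(\vphi_s)=\frac{1}{\bV_g}\int_X \dot\vphi_s\, g_{\vphi_s}\frac{(\ddc\vphi_s)^n}{n!},
\]
and differentiating once more (using $\frac{d}{ds}g_{\vphi_s}=V_{g,\vphi_s}(u)$ from Lemma \ref{lem-etaPhi} applied to $g$) reproduces the computation \eqref{J_1}, namely
\[
\frac{d^2}{ds^2}\bfE_g(\vphi_s)=-\frac{1}{\bV_g}\int_X g_{\vphi_s}\,\frac{\sqrt{-1}}{2\pi}\partial u\wedge\bar\partial u\wedge\frac{(\ddc\vphi_s)^{n-1}}{(n-1)!}.
\]
With these in hand, monotonicity and concavity are immediate. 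If $\psi_1\ge\psi_0$ then $\dot\vphi_s=u\ge 0$ and $g_{\vphi_s}>0$, so the first variation is $\ge 0$; and the second variation is a negative multiple of $g_{\vphi_s}\,\frac{\sqrt{-1}}{2\pi}\partial u\wedge\bar\partial u\wedge(\ddc\vphi_s)^{n-1}\ge 0$, giving concavity along affine segments.

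The analytically substantive step, which I expect to be the main obstacle, is continuity along decreasing sequences; upper semicontinuity will then come for free. For $\vphi_j\searrow\vphi$ in $\mcE^1_T$ I would write the difference in cocycle form $\bfE_g(\vphi_j)-\bfE_g(\vphi)=\frac{1}{\bV_g}\int_0^1\int_X(\vphi_j-\vphi)\,g_{\phi_{j,t}}\,\frac{(\ddc\phi_{j,t})^n}{n!}\,dt$ along $\phi_{j,t}=(1-t)\vphi+t\vphi_j$, and show the right-hand side tends to $0$. Here $0\le\vphi_j-\vphi\searrow 0$, while $\phi_{j,t}\searrow\vphi$ forces $g_{\phi_{j,t}}\to g_\vphi$ and, by the Bedford--Taylor convergence of Monge--Amp\`ere measures along decreasing sequences in the finite-energy class (in the form of \cite{BBGZ,BBEGZ}), $\frac{(\ddc\phi_{j,t})^n}{n!}\to\frac{(\ddc\vphi)^n}{n!}$; the uniform bound $c\le g_{\phi_{j,t}}\le C$ is precisely what lets these unweighted results pass to the weighted measure. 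Alternatively, for a monomial $g=\prod_\alpha y_\alpha^{k_\alpha}$ one may invoke the fibration identity of Proposition \ref{MA_eta}: the map $\vphi\mapsto\vphi^{\bvk}$ is order-preserving, affine and continuous from $\mcE^1_T(\tX,L)$ into the finite-energy space of $(\tX^{\bvk},L^{\bvk})$ with $\bfE_g(\vphi)=\bfE(\vphi^{\bvk})$, so the claim descends from the classical statement for the unweighted energy, and the case of general positive $g$ follows from the polynomial approximation of Proposition \ref{MA_eta}.

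Finally, upper semicontinuity is deduced from monotonicity and decreasing-continuity in the standard way. If $\vphi_j\to\vphi$ in $L^1$, set $\psi_j=(\sup_{l\ge j}\vphi_l)^*$; Hartogs' lemma gives $\psi_j\searrow\vphi$, monotonicity gives $\bfE_g(\psi_j)\ge\bfE_g(\vphi_j)$, and decreasing-continuity gives $\bfE_g(\psi_j)\to\bfE_g(\vphi)$, whence $\limsup_j\bfE_g(\vphi_j)\le\bfE_g(\vphi)$. The only place where genuine care is needed is the passage of the Bedford--Taylor convergence to the weighted measure for unbounded potentials; everything else is formal consequence of the two variation formulas and the uniform positivity of $g_\vphi$.
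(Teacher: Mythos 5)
Your proposal is correct and follows essentially the same route as the paper: reduce to the first and second variation formulas on $\mcH_T$ (monotonicity and concavity), use the uniform two-sided bound on $g_\vphi$ to transfer the Bedford--Taylor decreasing-convergence of Monge--Amp\`ere measures to the weighted measure, and then derive upper semicontinuity formally from monotonicity plus decreasing-continuity. The only cosmetic difference is in the last step, where you use the upper envelopes $(\sup_{l\ge j}\vphi_l)^*$ while the paper runs a diagonal argument with decreasing smooth approximants $\vphi_i^j$; both rest on the same two ingredients.
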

\begin{proof}
The monotonicity is a direct result of the definition of $\bfE_g$. 
For $ \vphi\in\mcE^1_T$, 
it can be approximated by a decreasing sequence of $ \vphi_j = \vphi_0+u_j\in \mcH_T$. By a similar calculation as in \eqref{J_2}, with normalization $\bV_g=1$,
\begin{align*}
&\bfE_g(\vphi_j) =\\
& -\int_X \sum_{k=0}^{n-1} \Big( \frac{1}{k!(n-k-1)!}\int_0^1\int_0^t (1-s)^ks^{n-k-1} g_{\vphi_{j,s}} dsdt \Big) \frac{\sqrt{-1}}{2\pi}\partial u_j\wedge \bar\partial u_j
\wedge \omega^k \wedge (\ddc\vphi_j)^{n-k-1} ,
\end{align*}
where $\vphi_{j,s} = (1-s)\vphi_0+s\vphi_j$.
Since $g_{\vphi_{j,s}}$ is uniformly bounded, the convergence result of Monge-Amp\`{e}re measure along a decreasing sequence $\vphi_j$
can be adapted to show that
$\bfE_g(\vphi_j)$ converges to $\bfE_g(\vphi)$.
As a consequence of the convergence of $\bfE_g$, to show $\bfE_g$ is concave, it suffices to show $\bfE_g$ is concave for $ \vphi\in \mcH_T$.
The concavity is shown by the second variation $\bfE_g''(t u)|_{t=0} = -\int_X g_\vphi \frac{\sqrt{-1}}{2\pi} \partial u\wedge\bar\partial u\wedge\frac{(\ddc\vphi)^{n-1}}{(n-1)!} \leq 0$.
The upper-semicontinuity is a consequence of the monotonicity and approximation property:
for a sequence $ \vphi_i$ that converges to $\vphi$ weakly in $\mcE^1_T$,
we can choose decreasing sequences $ \vphi_i^j\searrow \vphi_i,  \vphi^j\searrow \vphi$, and $ \vphi_i^j\xrightarrow{i\to\infty}  \vphi^j$ smoothly. 
Then $\lim_{i\to\infty}\bfE_g(\vphi_i^i) \leq \bfE_g(\vphi^j)$ for any $j$. Let $j\to\infty$, the limit is $\leq \bfE_g(\vphi)$ and the upper semi-continuity is proved.
\end{proof}

We will use the notation $\bfE_{g,\vphi}$ to emphasize the reference metric is $\ddc\vphi$.
As in the case when $g=1$, 
it can be shown that $\bfE_g$ also satisfies the cocycle property:
\begin{equation}
\bfE_{g,\vphi_0} ( \vphi_1) - \bfE_{g,\vphi_0}( \vphi_2) = \bfE_{g, \vphi_2}( \vphi_1),
\end{equation}
where $ \vphi_0, \vphi_1, \vphi_2\in \mcE_T^1(X,L)$.


\begin{lemma}
$\bfI_g$ satisfies a quasi-trangle inequality:
\begin{equation}
c(n,g) \bfI_g( \vphi_1, \vphi_2) \leq \bfI_g(\vphi_1,\vphi_0) + \bfI_g(\vphi_0, \vphi_2) ,
\end{equation}
where $\vphi_0, \vphi_1, \vphi_2,\in \mcE^1_T(X, L)$, and the constant $c(n,g)$ depends on the dimension $n$ and the function $g$.
\end{lemma}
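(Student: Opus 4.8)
The plan is to reduce the weighted statement to the unweighted ($g\equiv 1$) quasi-triangle inequality for $\bfI$ and then invoke the latter. The crucial observation is that the two-sided comparison \eqref{I}, namely $\frac{1}{C}\bfI\le\bfI_g\le C\bfI$, was derived from the integral formulas \eqref{J_1} and \eqref{J_2} in which the only $g$-dependence enters through the weight $g_{\vphi_t}$, and this weight satisfies $\inf_P g\le g_{\vphi_t}\le\sup_P g$ uniformly. Hence the constant $C$ depends only on $\sup_P g$, $\inf_P g$, and the volume normalization $\bV_g,\bV_1$, and in particular is independent of the reference potential $\vphi_0$. Since the moment polytope $P$ and these bounds are unchanged under a change of base point, the same formulas and estimate remain valid when $\vphi_0$ is replaced by an arbitrary $\vphi\in\mcE^1_T(X,L)$; therefore
\begin{equation*}
\frac{1}{C}\,\bfI(\vphi_1,\vphi_2)\le\bfI_g(\vphi_1,\vphi_2)\le C\,\bfI(\vphi_1,\vphi_2)\qquad\text{for all }\vphi_1,\vphi_2\in\mcE^1_T(X,L),
\end{equation*}
with one and the same constant $C=C(n,g)$ for all three pairs occurring in the statement.

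Next I would record the unweighted quasi-triangle inequality $\bfI(\vphi_1,\vphi_2)\le C_n\big(\bfI(\vphi_1,\vphi_0)+\bfI(\vphi_0,\vphi_2)\big)$, which is a standard pluripotential estimate (see \cite{BBGZ,BBEGZ}). For a self-contained derivation one may start from the mixed-energy representation, valid on $\mcH_T$ and extended to $\mcE^1_T$ by the approximation in Remark \ref{smooth_net},
\begin{equation*}
\bfI(\vphi,\psi)=\frac{1}{\bV_1\,n!}\sum_{j=0}^{n-1}\int_X\frac{\sqrt{-1}}{2\pi}\partial(\vphi-\psi)\wedge\bar\partial(\vphi-\psi)\wedge(\ddc\vphi)^j\wedge(\ddc\psi)^{n-1-j},
\end{equation*}
write $\vphi_1-\vphi_2=(\vphi_1-\vphi_0)+(\vphi_0-\vphi_2)$, and use the elementary positivity inequality $\frac{\sqrt{-1}}{2\pi}\alpha\wedge\bar\alpha\le 2\frac{\sqrt{-1}}{2\pi}\beta\wedge\bar\beta+2\frac{\sqrt{-1}}{2\pi}\gamma\wedge\bar\gamma$ for $(1,0)$-forms with $\alpha=\beta+\gamma$ to split the Dirichlet energy of $\vphi_1-\vphi_2$. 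The resulting cross terms, carried by mixed products of $\ddc\vphi_1$ and $\ddc\vphi_2$, must then be dominated by the pure energies appearing in $\bfI(\vphi_1,\vphi_0)$ and $\bfI(\vphi_0,\vphi_2)$ via the comparison of mixed complex Monge-Amp\`ere energies.

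Finally, combining the two inputs and chaining $\bfI_g\le C\bfI$, the unweighted inequality, and $\bfI\le C\bfI_g$ gives
\begin{equation*}
c(n,g)\,\bfI_g(\vphi_1,\vphi_2)\le c(n,g)\,C^2C_n\big(\bfI_g(\vphi_1,\vphi_0)+\bfI_g(\vphi_0,\vphi_2)\big),
\end{equation*}
so that the choice $c(n,g)=(C^2C_n)^{-1}$ yields exactly the asserted bound. The main obstacle is the base-point independence in the first paragraph: one must verify that the representation formulas and the weight bounds underlying \eqref{I} genuinely transfer to an arbitrary reference, so that the comparison $\bfI_g\approx\bfI$ holds with uniform constants for all three pairs at once. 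The secondary difficulty is the mixed-energy domination in the self-contained route, where the cross terms are measured against $\ddc\vphi_1,\ddc\vphi_2$ rather than $\ddc\vphi_0$; if one wishes to avoid this comparison, it is cleanest simply to quote the unweighted quasi-triangle inequality and let the reduction above do the rest.
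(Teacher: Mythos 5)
Your proof is correct and follows essentially the same route as the paper: both reduce to the unweighted case via the two-sided comparison $\frac{1}{C}\bfI\le\bfI_g\le C\bfI$ applied to each pair, quote the quasi-triangle inequality for $\bfI$ from \cite{BBEGZ}, and chain the three estimates to get the constant $C^2/c(n)$. Your additional care about the base-point independence of the comparison constant is a reasonable point that the paper leaves implicit, but it does not change the argument.
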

\begin{proof}
By the inequality \eqref{I},
\begin{equation*}
\bfI_g( \vphi_1, \vphi_2) \leq C \bfI( \vphi_1, \vphi_2).
\end{equation*}
where $C$ depneds on $n,g$.
By \cite[Theorem 1.8]{BBEGZ}, there exists a constant $c(n)$ such that
\begin{equation*}
c(n)\bfI( \vphi_1, \vphi_2) \leq \bfI( \vphi_1,\vphi_0)+\bfI(\vphi_0, \vphi_2).
\end{equation*}
Use inequality \eqref{I} again,
then we have
\begin{equation*}
\bfI_g( \vphi_1, \vphi_2) \leq \frac{C^2}{c(n)}(\bfI_g( \vphi_1,\vphi_0)+\bfI_g(\vphi_0, \vphi_2)) .
\end{equation*}
\end{proof}

\begin{proposition}[see \cite{BBEGZ}]
$\mcE^1_{T,{\sup}}(X,L)=\{ \vphi\in \mcE^1_T(X,L):\int_X  (\vphi-\vphi_0) g \frac{\omega_0^n}{n!} = 0\}$ is complete under the topology induced by $\bfI_g$.
\end{proposition}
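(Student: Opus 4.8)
The plan is to deduce completeness from the already-known case $g=1$ treated in \cite{BBEGZ}, transferring it through the two-sided comparison \eqref{I}. The point of \eqref{I} is that it exhibits $\bfI_g$ and $\bfI$ as uniformly equivalent (quasi-)metrics on $\mcE^1_T(X,L)$: a sequence is $\bfI_g$-Cauchy if and only if it is $\bfI$-Cauchy, and $\bfI_g$-convergent if and only if $\bfI$-convergent. Hence it suffices to prove completeness for $\bfI$ on the given set and then invoke \eqref{I}. The structural facts needed to even speak of $\bfI_g$ as a quasi-metric are already available: symmetry is immediate from the definition, the quasi-triangle inequality is the lemma established just above, and $\bfI_g(\vphi_1,\vphi_2)=0$ forces $\vphi_1=\vphi_2$ once the normalization is imposed, again by \eqref{I} together with the corresponding property of $\bfI$.

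The first concrete step is to exploit the constant-invariance of both functionals: adding a constant $c$ to $\vphi$ leaves the measures $g_{\vphi_0}\frac{\omega_0^n}{n!}$ and $g_\vphi\frac{(\ddc\vphi)^n}{n!}$ unchanged while they carry the same total mass, so $c$ cancels and $\bfI_g(\vphi+c,\cdot)=\bfI_g(\vphi,\cdot)$, and likewise for $\bfI$. Consequently the shift $\vphi\mapsto\vphi-\Lam_1(\vphi)$ is a bijective $\bfI$-isometry from $\mcE^1_{T,\sup}(X,L)$ (normalized by $\Lam_g=0$) onto the $T$-invariant part of the standard normalized space $\{\int_X(\vphi-\vphi_0)\frac{\omega_0^n}{n!}=0\}$. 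An isometric bijection preserves completeness, so I may work with the standard normalization throughout.

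On the standard normalized space, \cite{BBEGZ} provides completeness of $(\mcE^1,\bfI)$ (equivalently, of the strong topology). I would then argue that the $T$-invariant part is a \emph{closed} subset: strong convergence yields $L^1$-convergence, with the supremum uniformly controlled via Lemma \ref{lem-Hartogs}, and an a.e.\ limit of $T$-invariant psh functions is again $T$-invariant. A closed subset of a complete metric space is complete, so $(\mcE^1_{T},\bfI)$ is complete in the standard normalization. Transferring back along the isometry of the previous paragraph, and then applying \eqref{I} once more, gives completeness of $(\mcE^1_{T,\sup}(X,L),\bfI_g)$.

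The step I expect to be the main obstacle is the verification that the limit furnished by the unweighted theory genuinely stays inside the $T$-invariant, correctly normalized subspace. This rests on knowing that $\bfI$-convergence delivers enough convergence (in $L^1$, with the sup controlled by Hartogs as in Lemma \ref{lem-Hartogs}) to pass the closed condition of $T$-invariance to the limit, and on keeping the bookkeeping between the $g$-normalization $\Lam_g=0$ and the standard normalization $\Lam_1=0$ clean, which is exactly what the constant-invariance of $\bfI_g$ makes harmless.
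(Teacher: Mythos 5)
Your proposal is correct and follows essentially the same route as the paper's proof: both reduce to the unweighted theory of \cite{BBEGZ} through the two-sided comparison between $\bfI_g$ and $\bfI$ (inequalities \eqref{I} and \eqref{I_J}), with Hartogs-type compactness supplying the candidate limit and constant-invariance handling the normalization. The paper packages this as a direct Cauchy-sequence argument (extract a weak limit $\vphi$, show $\bfI_g(\vphi_j,\vphi)\to 0$ via \eqref{I_J} and \cite{BBEGZ}, then verify $\bfE_g(\vphi_j)\to\bfE_g(\vphi)>-\infty$ so that $\vphi\in\mcE^1_{T,\sup}$), whereas you phrase it as closedness of the $T$-invariant normalized part inside the complete space of \cite{BBEGZ}; the underlying content is the same.
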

\begin{proof}
Let $ \vphi_j\in \mcE^1_{T,\sup}$ be a sequence such that $\bfI_g( \vphi_j)$ is a Cauchy sequence. 
By Hartogs' compactness lemma,
there exists a $ \vphi\in {\rm Psh}_T$, such that up to a subsequence, $ \vphi_j$ converges to $ \vphi$ in weak topology.
By \eqref{I_J} and \cite{BBEGZ}, $\bfI_g( \vphi_j, \vphi)$ converges to $0$, which implies $\bfJ_g( \vphi_j)-\bfJ_g( \vphi) = \int_X ( \vphi_j- \vphi)g \frac{\omega_0^n}{n!} +\bfE_g( \vphi_j)-\bfE_g( \vphi)$ converges to $0$.
Since weak convergence implies $L^1$-convergence, $\int_X ( \vphi_j- \vphi)g \frac{\omega_0^n}{n!}$ converges to $0$. This implies,
$\bfE_g( \vphi_j)$ converges to $\bfE_g( \vphi)$ which is $>-\infty$. Then $ \vphi\in \mcE^1_{T,\sup}(X,L)$.
\end{proof}

\subsection{Generalized Ding functional v.s generalized Mabuchi functional}
\begin{definition}
The generalized Ding functional is defined as
\begin{align}\label{eq-Ding}
\bfD(\vphi):=\bfD_{g,\Theta}(\vphi) = -\bfE_g(\vphi) + \bfL_\Theta(\vphi) ,
\end{align}
where $\bfL_\Theta(\vphi) = - \log(\int_X e^{-(\vphi-\vphi_0)}d\mu_0) = - \log(\int_X e^{-u+h_0}\frac{\omega_0^n}{n!})$.
\end{definition}

\begin{definition}
The generalized Mabuchi functional is defined as
\begin{align}
\label{eq-Mabuchi}
\bfM(\vphi) &:= \bfM_{g,\Theta}(\vphi)= \int_X \log(\frac{g_\vphi (\ddc\vphi)^n}{e^{h_0} \omega_0^n}) g_\vphi  \frac{(\ddc\vphi)^n}{n!} +\bfJ_g(\vphi) - \bfI_g(\vphi) .
\end{align}
\end{definition}
\begin{remark}
The generalized Mabuchi functional \eqref{eq-Mabuchi} can be considered as the ``integral'' of the Futaki invariant.
(More details about generalized Mabuchi functional can be found in section \ref{generalized-Mabuchi}.)
This is a generalization of the Mabuchi functional defined in \cite{TZ02}, but different from the one used in \cite{BW14} and \cite{His19}
(See Remark \ref{rem-Mabuchi}).
By the Moment map picture explained in Appendix \ref{moment}, $\bfM$ can be considered as a Kempf-Ness functional, which is expected to be convex
along a geodesic. (The convexity is proved in section \ref{generalized-Mabuchi}.) 
\end{remark}
Denote the probability measures
$d\nu = g_\vphi \frac{(\ddc\vphi)^n}{n!}, d\mu_0= e^{h_0} \frac{\omega_0^n}{n!}$.
Then 
\begin{align}
\label{Mabuchi_2}
\begin{split}
\bfM(\vphi) &:= \bfM_{g,\Theta}(\vphi):=\int_X \log(\frac{d\nu}{d\mu_0}) d\nu + \bfJ_g(\vphi) - \bfI_g(\vphi) \\
&= \bfH_{g,\Theta}(\vphi) + \bfJ_g(\vphi) - \bfI_g(\vphi) .
\end{split}
\end{align}

\begin{lemma}
\label{M_D}
For any $\vphi\in \mcE^1_T(X, L)$, 
$\bfM(\vphi) \geq \bfD(\vphi)$.
\end{lemma}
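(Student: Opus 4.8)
The plan is to reduce the inequality $\bfM(\vphi) \geq \bfD(\vphi)$ to the nonnegativity of a relative entropy, which then follows from a single application of Jensen's inequality.

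First I would assemble the two functionals into one expression by simplifying $\bfJ_g - \bfI_g$. Writing $u = \vphi - \vphi_0$ and $d\nu = g_\vphi \frac{(\ddc\vphi)^n}{n!}$, and using the normalization $\bV_g = 1$, the definitions give $\bfJ_g(\vphi) = \Lam_g(\vphi) - \bfE_g(\vphi)$ and, since $\ddc\vphi_0 = \omega_0$,
\[
\bfI_g(\vphi) = \Lam_g(\vphi) - \int_X u\, d\nu.
\]
Subtracting, the $\Lam_g$ terms cancel and I obtain
\[
\bfJ_g(\vphi) - \bfI_g(\vphi) = -\bfE_g(\vphi) + \int_X u\, d\nu.
\]
Substituting this into $\bfM(\vphi) = \bfH_{g,\Theta}(\vphi) + \bfJ_g(\vphi) - \bfI_g(\vphi)$ and into $\bfD(\vphi) = -\bfE_g(\vphi) + \bfL_\Theta(\vphi)$, the two copies of $-\bfE_g(\vphi)$ cancel in the difference, so that, using $\bfL_\Theta(\vphi) = -\log(\int_X e^{-u}\, d\mu_0)$,
\[
\bfM(\vphi) - \bfD(\vphi) = \bfH_{g,\Theta}(\vphi) + \int_X u\, d\nu + \log\left(\int_X e^{-u}\, d\mu_0\right).
\]

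Next I would recognize the right-hand side as a Gibbs-type inequality and prove its nonnegativity directly. Since $\bfH_{g,\Theta}(\vphi) = \int_X \log(d\nu/d\mu_0)\, d\nu$ and $\nu$ is a probability measure (by $\bV_g = 1$), I would write
\[
\int_X e^{-u}\, d\mu_0 = \int_X \exp\!\Big(-u - \log\tfrac{d\nu}{d\mu_0}\Big)\, d\nu
\]
and apply Jensen's inequality for the convex function $\exp$ against the probability measure $\nu$, obtaining
\[
\int_X e^{-u}\, d\mu_0 \geq \exp\!\Big(-\int_X u\, d\nu - \bfH_{g,\Theta}(\vphi)\Big).
\]
Taking logarithms and rearranging yields exactly $\bfM(\vphi) - \bfD(\vphi) \geq 0$.

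The main point to handle with care is well-definedness and integrability on the finite-energy class $\mcE^1_T(X,L)$ rather than merely for smooth potentials. The integral $\int_X u\, d\nu = \int_X (\vphi-\vphi_0)\,\MA_g(\vphi)$ is finite precisely by the defining property of $\mcE^1_T$. If $\int_X e^{-u}\, d\mu_0 = +\infty$ then $\bfL_\Theta(\vphi) = -\infty$, hence $\bfD(\vphi) = -\infty$ and the inequality is trivial; likewise if the relative entropy $\bfH_{g,\Theta}(\vphi) = +\infty$ then $\bfM(\vphi) = +\infty$ and there is nothing to prove. Thus I may assume both quantities are finite, in which case the Jensen step above applies verbatim. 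I expect the only genuine subtlety to be confirming these finiteness reductions and that $\nu$ has unit mass under the normalization $\bV_g = 1$; the algebraic cancellation and the Jensen estimate are then routine.
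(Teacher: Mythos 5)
Your proposal is correct and takes essentially the same route as the paper: both use the identity $\bfI_g(\vphi)-\bfJ_g(\vphi) = -\int_X (\vphi-\vphi_0)\,d\nu + \bfE_g(\vphi)$ to rewrite $\bfM-\bfD$ as a relative-entropy expression, and both conclude by a single application of Jensen's inequality against the probability measure $d\nu = g_\vphi\frac{(\ddc\vphi)^n}{n!}$. Your explicit treatment of the degenerate cases ($\bfL_\Theta(\vphi)=-\infty$ or $\bfH_{g,\Theta}(\vphi)=+\infty$) is a harmless addition the paper leaves implicit.
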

\begin{proof}
Note that 
\begin{align*}
{\bfI_g}(\vphi)-{\bfJ_g}(\vphi) 
&= -\int_X  (\vphi-\vphi_0) g_\vphi \frac{(\ddc\vphi)^n}{n!} + \bfE_g(\vphi).
\end{align*}
So we have the identity:
\begin{align*}
\bfM(\vphi) = -\int_X \log(\frac{e^{-(\vphi-\vphi_0)}d \mu_0}{d \nu}) d \nu - \bfE_g(\vphi) .
\end{align*}
By Jensen's inequality, we have
\begin{align*}
\bfM(\vphi) \geq -\log\left(\int_X e^{-(\vphi-\vphi_0)} d\mu_0\right) - \bfE_g(\vphi) = \bfL_\Theta(\vphi) - \bfE_g(\vphi) = \bfD(\vphi) .
\end{align*}
\end{proof}

\subsection{Duality}

Let $d\chi$ be a measure in $\mcM^1_T$. Using the Legendre transform, we define
\begin{align} 
\label{E^*}
\bfE_g^*(d\chi) &= \sup_{v\in \mcE^1_T} \left\{\bfE_g(v+\vphi_0) - \int_X v d\chi\right\}, \\
\bfL_\Theta^*(d\chi) &= \sup_{v\in \mcE^1_T}\left\{-\log(\int_X e^{-v} d\mu_0) - \int_X v d\chi\right\} .
\end{align} 

\begin{definition}
We define the space of energy bounded Radon measures $\mcM^1_T(X)$ as:
\begin{align}
\label{M^1_f}
\mcM^1_T(X) &= \{d\chi \text{ is a Radon measure }: \int_X d\chi = 1, d\chi \text{ is $T$-invariant}, \bfE_g^*(d\chi)<\infty\} .
\end{align}
\end{definition}
We have the following result: 
\begin{proposition}[\cite{BW14}]
\label{Berman_Nystrom}
Let $d\chi$ be a Radon measure on $X$, and $\int_X d\chi = 1$.There exists a unique (up to constant) $u\in \mcE^1_T(X,\omega)$ such that
$\MA_g(\vphi) = d\chi$ if and only if $d\chi \in \mcM^1_T(X)$.
\end{proposition}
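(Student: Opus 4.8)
The plan is to reformulate solvability variationally. A potential $\vphi=\vphi_0+v$ with $v\in\mcE^1_T$ solves $\MA_g(\vphi)=d\chi$ if and only if $\vphi$ maximizes the functional
\[
F_\chi(\vphi):=\bfE_g(\vphi)-\int_X(\vphi-\vphi_0)\,d\chi
\]
over $\mcE^1_T(X,L)$, and then $\sup_\vphi F_\chi=\bfE_g^*(d\chi)$ by \eqref{E^*}. The link is the first-variation formula $\frac{d}{dt}\big|_{t=0}\bfE_g(\vphi+tw)=\int_X w\,\MA_g(\vphi)$, which holds because $\bfE_g$ is by construction the primitive of $\MA_g$ along linear paths and we have normalized $\bV_g=1$ in \eqref{eq-Vg}; since $\int_X d\chi=1=\int_X\MA_g(\vphi)$, additive constants are irrelevant and the Euler--Lagrange equation of $F_\chi$ is exactly $\MA_g(\vphi)=d\chi$. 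The concavity of $\bfE_g$ proved above upgrades any critical point to a global maximizer, so the two problems are genuinely equivalent.

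\textbf{Necessity.} Suppose $v\in\mcE^1_T$ solves the equation. Then $\vphi$ maximizes $F_\chi$, so $\bfE_g^*(d\chi)=\bfE_g(\vphi)-\int_X(\vphi-\vphi_0)\,\MA_g(\vphi)$. The right-hand side is finite: $\bfE_g(\vphi)>-\infty$ since $\vphi\in\mcE^1_T$, while $\int_X(\vphi-\vphi_0)\MA_g(\vphi)$ is bounded below by the defining property \eqref{eq-cE1T} and above by $\sup(\vphi-\vphi_0)$ (as $\MA_g(\vphi)$ is a probability measure). Moreover $d\chi=\MA_g(\vphi)$ has unit mass and is $T$-invariant, being the $g$-Monge--Amp\`ere measure of a $T$-invariant potential, so $d\chi\in\mcM^1_T(X)$ by \eqref{M^1_f}.

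\textbf{Sufficiency.} Assume $\bfE_g^*(d\chi)<\infty$ and produce a maximizer of $F_\chi$. Using translation invariance $F_\chi(\vphi+c)=F_\chi(\vphi)$ (valid because $\bfE_g(\vphi+c)=\bfE_g(\vphi)+c$ and $\int_X d\chi=1$), I normalize on the slice $\{\sup(\vphi-\vphi_0)=0\}$ and choose a maximizing sequence $\vphi_j$. The finiteness of $\bfE_g^*(d\chi)$ forces this sequence to stay in a set of uniformly bounded energy: indeed, from $\bfE_g(\vphi_j)-\int_X(\vphi_j-\vphi_0)\,d\chi\ge \bfE_g^*(d\chi)-o(1)$, the inequalities \eqref{I}, \eqref{J} together with Lemma \ref{lem-Hartogs} bound $\bfJ_g(\vphi_j)$. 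Hartogs' compactness lemma then yields a subsequence converging in $L^1$ to some $\vphi_\infty\in{\rm Psh}_T(X,L)$, and the upper semicontinuity of $\bfE_g$ plus the completeness of $\mcE^1_{T,\sup}(X,L)$ under $\bfI_g$ give $\vphi_\infty\in\mcE^1_T$ with $F_\chi(\vphi_\infty)=\bfE_g^*(d\chi)$; hence $\vphi_\infty$ is a maximizer.

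\textbf{Main obstacle and uniqueness.} The genuinely non-formal step is to show that a maximizer actually satisfies $\MA_g(\vphi_\infty)=d\chi$ in the pluripotential sense, rather than being merely a formal critical point. I would treat this exactly as in \cite{BBGZ,BBEGZ}: perturb by the envelope operator $P(\psi)=\bigl(\sup\{\phi\in{\rm Psh}_T(X,L):\phi\le\psi\}\bigr)^*$, use the differentiability of $\bfE_g$ along such perturbations together with the orthogonality relation $\int_X(P(\psi)-\psi)\,\MA_g(P(\psi))=0$, and the fact that energy-bounded measures in $\mcM^1_T$ charge no pluripolar set. This is where the weighted setting needs the most care, though the uniform bound $0<c\le g_\vphi\le C$ allows the $g\equiv1$ arguments to carry over with essentially only notational changes; alternatively, for a monomial $g=\prod_\alpha\theta_\alpha^{k_\alpha}$ one may transport the problem to the fibration $\tX^{\bvk}$ of Proposition \ref{MA_eta}, where $\MA_g(\vphi)=d\chi$ becomes the unweighted equation $(\ddc\vphi^{\bvk})^{n+k}/(n+k)!=d\chi^{\bvk}$ and \cite{BBGZ} applies directly. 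Finally, uniqueness up to a constant follows from the domination principle for $\MA_g$, equivalently the strict concavity of $\bfE_g$ modulo constants, which reduces to the classical case precisely because $g_\vphi$ is pinched between positive constants.
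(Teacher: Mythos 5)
The paper does not prove this statement: it is quoted verbatim from \cite{BW14}, so there is no in-paper argument to compare against. Your variational scheme (maximize $F_\chi=\bfE_g-\int(\cdot-\vphi_0)\,d\chi$, identify $\sup F_\chi$ with $\bfE_g^*(d\chi)$, and pass from maximizer to solution via the envelope/orthogonality argument of \cite{BBGZ}) is indeed the route taken in \cite{BW14}, so the overall strategy is sound. However, three steps as written have real gaps.

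First, in the sufficiency step you deduce that the weak limit $\vphi_\infty$ is a maximizer from ``upper semicontinuity of $\bfE_g$ plus completeness of $\mcE^1_{T,\sup}$.'' This does not suffice: along an $L^1$-convergent sequence of sup-normalized potentials, $\vphi\mapsto\int_X(\vphi-\vphi_0)\,d\chi$ is \emph{upper} semicontinuous (by reverse Fatou and the fact that $(\limsup_j\vphi_j)^*=\vphi_\infty$ off a pluripolar set), so $F_\chi$ is a sum of an usc and an lsc term and its upper semicontinuity is not automatic. The missing ingredient is the finite-energy continuity lemma of \cite{BBGZ}: when $\bfE_g^*(d\chi)<\infty$, the functional $\vphi\mapsto\int_X(\vphi-\vphi_0)\,d\chi$ is continuous on sublevel sets $\{\bfJ_g\le C\}$ with respect to the weak topology. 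You need to state and prove the weighted version of that lemma (it does follow from \eqref{I} and \eqref{J} plus the unweighted case, but it must be invoked).

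Second, the proposed shortcut of transporting the equation to $\tX^{\bvk}$ does not work for the existence direction. The identity $\int_X w\,\MA_g(\vphi)=\int_{\tX^\bvk}w^\bvk\,(\ddc\vphi^\bvk)^{n+k}/(n+k)!$ only determines the pushforward of $(\ddc\vphi^\bvk)^{n+k}$ to $X$, so $\MA_g(\vphi)=d\chi$ is not equivalent to a single unweighted equation $(\ddc\Psi)^{n+k}=d\chi^\bvk$ on $\tX^\bvk$; and even if one solved such an equation there, the solution need not be of the form $\vphi^\bvk$ for a potential on $X$. The envelope argument must therefore be carried out directly for $\MA_g$, using the pinching $0<c\le g_\vphi\le C$ as you indicate in your first alternative.

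Third, the uniqueness claim ``reduces to the classical case because $g_\vphi$ is pinched'' is not a valid reduction: if $\MA_g(\vphi_1)=\MA_g(\vphi_2)$ the weights $g_{\vphi_1}$ and $g_{\vphi_2}$ are \emph{different} functions (each depends on its own moment map), so one cannot compare the unweighted measures $(\ddc\vphi_i)^n$. The correct argument is the one you hint at via concavity: both potentials maximize $F_\chi$, the restriction of $\bfE_g$ to the segment $t\mapsto(1-t)\vphi_1+t\vphi_2$ is concave with second derivative $-\int_X g_{\vphi_t}\,\frac{\sqrt{-1}}{2\pi}\partial u\wedge\bar\partial u\wedge(\ddc\vphi_t)^{n-1}/(n-1)!$, and attaining the maximum at both endpoints forces this to vanish, whence $u=\vphi_1-\vphi_2$ is constant; making this rigorous in $\mcE^1_T$ requires the finite-energy version of the argument as in \cite{BBGZ,BBEGZ}, not the domination principle as a black box.
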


The following duality lemma is essentially contained in \cite{BBEGZ}. For reader's convenience, we also
state a brief proof.
\begin{lemma}
Let $d\nu = g_\vphi \frac{(\ddc\vphi)^n}{n!}$. Then
\begin{align}
\bfE_g^*(d\nu) &= \bfI_g(\vphi) - \bfJ_g(\vphi) \\
\bfL_\Theta^*(d\nu) &= \bfH_{g,\Theta}(\vphi).
\end{align}
\end{lemma}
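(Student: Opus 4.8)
The two formulas are Legendre-duality identities of exactly the type worked out in \cite{BBGZ,BBEGZ}, and the plan is to transplant those computations to the $g$-weighted functionals, using that the properties of $\bfE_g$ established above mirror those of the unweighted energy and that (by the Remark following the definition of $\mcE_T$) the weight $g$ is bounded above and below so that $d\nu$ and $\frac{(\ddc\vphi)^n}{n!}$ are mutually absolutely continuous with uniformly controlled densities.

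For the first identity, the strategy is to exploit that $\bfE_g$ is concave (by the Proposition just proved) and Gateaux-differentiable with differential $d\bfE_g|_\vphi(v) = \frac{1}{\bV_g}\int_X v\, g_\vphi\frac{(\ddc\vphi)^n}{n!} = \int_X v\, d\nu$, which is the first-variation formula underlying the concavity computation above (with the normalization $\bV_g = 1$). Concavity gives the tangent-line inequality $\bfE_g(v+\vphi_0) \le \bfE_g(\vphi) + \int_X (v+\vphi_0-\vphi)\, d\nu$ for every $v$ with $v+\vphi_0\in\mcE^1_T$, hence $\bfE_g(v+\vphi_0) - \int_X v\, d\nu \le \bfE_g(\vphi) - \int_X(\vphi-\vphi_0)\, d\nu$; taking the supremum over $v$ bounds $\bfE_g^*(d\nu)$ from above by the right-hand side, while the choice $v = \vphi-\vphi_0$ shows the value is attained. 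It then remains only to recognize, from the definitions of $\bfI_g$ and $\bfJ_g$ (precisely the identity recorded at the start of the proof of Lemma \ref{M_D}), that $\bfE_g(\vphi) - \int_X(\vphi-\vphi_0)\, d\nu = \bfI_g(\vphi) - \bfJ_g(\vphi)$, which closes the argument.

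The second identity is the Gibbs (Donsker--Varadhan) variational principle for the relative entropy $\bfH_{g,\Theta}(\vphi) = \int_X \log(d\nu/d\mu_0)\, d\nu$, which is well defined since $d\nu$ is absolutely continuous with respect to $d\mu_0$. The inequality $\bfL_\Theta^*(d\nu) \le \bfH_{g,\Theta}(\vphi)$ is a direct application of Jensen's inequality: writing $\int_X e^{-v}\, d\mu_0 = \int_X e^{-v-\log(d\nu/d\mu_0)}\, d\nu$ and using convexity of the exponential against the probability measure $d\nu$ yields $-\log\int_X e^{-v}\, d\mu_0 - \int_X v\, d\nu \le \bfH_{g,\Theta}(\vphi)$ for every admissible $v$. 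For the reverse inequality one computes the first variation of the functional defining $\bfL_\Theta^*$ and finds that the formal maximizer is $v^* = -\log(d\nu/d\mu_0)$, normalized so that $e^{-v^*}d\mu_0/\int_X e^{-v^*}d\mu_0 = d\nu$; substituting $v^*$ returns exactly $\bfH_{g,\Theta}(\vphi)$.

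The main obstacle is precisely this reverse inequality: the formal maximizer $v^*$ is merely a measurable function and need not lie in $\mcE^1_T$, so it cannot simply be plugged in. The plan is to produce a maximizing sequence $v_j\in\mcE^1_T$ approximating $v^*$, following the regularization, truncation, and projection-onto-the-psh-class arguments of \cite{BBEGZ}, together with lower semicontinuity of $\bfH_{g,\Theta}$ along the sequence. Because $g$ is bounded above and below on $P$, all the relevant energy and entropy estimates reduce to the unweighted case treated in \cite{BBEGZ}, which is what legitimizes the citation; this approximation step is where I expect the most care to be needed.
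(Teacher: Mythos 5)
Your proposal is correct and takes essentially the same route as the paper: for $\bfE_g^*$ the paper likewise combines the choice $v=\vphi-\vphi_0$ with concavity of $\bfE_g$ and the vanishing of the first variation at $u$, and for $\bfL_\Theta^*$ it likewise uses Jensen in one direction and substitutes the formal maximizer $v^*=-\log(d\nu/d\mu_0)$ in the other. The only difference is that you flag (and propose to repair by approximation following \cite{BBEGZ}) the admissibility of $v^*$ in the class over which the supremum is taken, a point the paper passes over silently while deferring to \cite{BBEGZ}; this is a reasonable extra precaution rather than a divergence in method.
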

\begin{proof}
By letting $v = u = \vphi-\vphi_0$ in the definition \eqref{E^*}, we have $\bfE_g^*(d\nu) \geq \bfI_g(\vphi) - \bfJ_g(\vphi)$.
Since the first variation of $\bfE_g(v+\vphi_0) - \int_X v d\nu = 0$ at $v = u$,
and 
$\bfE_g(v+\vphi_0)$ is concave while $\int_X vd\nu$ is linear,
we have $\bfE_g^*(d\nu) \leq \bfI_g(\vphi) - \bfJ_g(\vphi)$.

By letting $v = -\log(\frac{d\nu}{d\mu_0})$, we have $\bfH_{g,\Theta}(u) \leq \bfL_\Theta^*(d\nu)$.
By Jensen's inequality, 
\begin{align*}
-\log(\int_X e^{-v-\log(\frac{d\nu}{d\mu_0})}d\nu) - \int_X v d\nu \leq \int_X \log(\frac{d\nu}{d\mu_0})d\nu ,
\end{align*}
we have the inequality of the other direction.
\end{proof}
 
This lemma implies the identity:
\begin{align}
\bfM = -\bfE_g^*(d\nu) + \bfL_\Theta^*(d\nu).
\end{align}

With the above discussion, we get the following two lemmas using the similar proof as in \cite{BBEGZ}:
\begin{lemma}
\label{H_compact}
For any $C>0$, $\{\vphi\in \mcE^1_T: \bfH_{g,\Theta}(\vphi)<C\}$ is precompact.
And $\bfH_{g,\Theta}$ is lower semicontinuous (l.s.c.) in $\mcE^1_T$ under strong topology. 
\end{lemma}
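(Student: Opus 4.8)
The plan is to recognize $\bfH_{g,\Theta}(\vphi)$ as the relative entropy $\int_X \log(d\nu/d\mu_0)\,d\nu$ of the probability measure $d\nu=\MA_g(\vphi)$ with respect to the fixed reference $d\mu_0$, as recorded in the duality lemma $\bfL_\Theta^*(d\nu)=\bfH_{g,\Theta}(\vphi)$ above. Since $d\nu$ depends only on $\ddc\vphi$, the functional $\bfH_{g,\Theta}$ is invariant under adding constants, so I would normalize $\vphi\in\mcE^1_{T,\sup}$ throughout and use the bijection $d\nu\leftrightarrow\vphi$ of Proposition \ref{Berman_Nystrom}. Both assertions then reduce to the statements that the relative entropy is lower semicontinuous in the strong topology and has strongly precompact sublevel sets, which is the weighted and twisted analogue of the corresponding result in \cite{BBEGZ}.

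For lower semicontinuity I would use the Legendre/Donsker--Varadhan representation already built into $\bfL_\Theta^*$, namely $\bfH_{g,\Theta}(\vphi)=\sup_{f\in C^0(X)}\{\int_X f\,d\nu-\log\int_X e^{f}\,d\mu_0\}$, which exhibits $\bfH_{g,\Theta}$ as a supremum of functionals that are continuous in $d\nu$ for the weak topology of measures. The remaining point is that strong ($\bfI_g$) convergence $\vphi_j\to\vphi$ forces weak convergence $\MA_g(\vphi_j)\to\MA_g(\vphi)$ of the weighted measures; this follows from the fibration construction of $\MA_g$ in Proposition \ref{MA_eta}, which reduces it to the continuity of the ordinary Monge--Amp\`ere operator along strongly convergent sequences on the total spaces $\tX^\bvk$. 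Granting this, each linear functional in the supremum is continuous along the sequence, and taking the supremum yields $\bfH_{g,\Theta}(\vphi)\le\liminf_j\bfH_{g,\Theta}(\vphi_j)$, so $\bfH_{g,\Theta}$ is l.s.c.

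For precompactness, a sup-normalized family of potentials is automatically weakly precompact by Hartogs' lemma, so the real content is upgrading weak convergence to strong convergence under the entropy bound. Given $\vphi_j$ with $\bfH_{g,\Theta}(\vphi_j)<C$, I would first establish the fundamental entropy-energy estimate $\bfE_g^*(d\nu_j)\le \bfH_{g,\Theta}(\vphi_j)+C'$, which holds precisely because the klt hypothesis guarantees $d\mu_0\in\mcM^1_T$, i.e. $\bfE_g^*(d\mu_0)<\infty$; this is the step where the klt assumption is essential and where one transplants the corresponding inequality of \cite{BBEGZ}, the weight $g$ causing no difficulty since it is bounded above and below and $\bfI_g,\bfJ_g$ are comparable to their $g=1$ versions by \eqref{I}, \eqref{J}, \eqref{I_J}. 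Since $\bfE_g^*(d\nu_j)=\bfI_g(\vphi_j)-\bfJ_g(\vphi_j)$, this bounds $\bfJ_g$ and hence $\bfI_g$, giving a uniform energy bound and weak precompactness. Passing to a weak limit $\vphi$ and using the lower semicontinuity just proved gives $\bfH_{g,\Theta}(\vphi)\le C$ and $\vphi\in\mcE^1_T$; the uniform integrability coming from the $L\log L$-bound that the entropy provides then rules out loss of energy in the limit, so that $\bfE_g(\vphi_j)\to\bfE_g(\vphi)$. Because strong convergence in $\mcE^1_T$ is equivalent to weak convergence together with convergence of $\bfE_g$, a subsequence converges strongly, which proves precompactness of the sublevel set.

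The main obstacle is exactly the pair of quantitative steps driving the precompactness: the entropy-energy comparison, which needs the klt integrability of $d\mu_0$ to make the constant $C'$ finite, and the resulting no-loss-of-energy statement that upgrades weak convergence to strong. These are the technical heart of the argument; once the weighted versions of the \cite{BBEGZ} estimates are verified the rest is formal. I would treat the lower semicontinuity as the routine part and concentrate the effort on checking that the $g$-weighting and the singular twisting through $d\mu_0$ do not disturb the uniform-integrability arguments.
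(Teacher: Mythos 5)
Your proposal is correct and follows exactly the route the paper intends: the paper gives no independent proof of Lemma \ref{H_compact}, deriving it from the duality identities $\bfL_\Theta^*(d\nu)=\bfH_{g,\Theta}(\vphi)$ and $\bfE_g^*(d\nu)=\bfI_g(\vphi)-\bfJ_g(\vphi)$ together with "the similar proof as in \cite{BBEGZ}", and your sketch (Donsker--Varadhan l.s.c., the klt-dependent entropy-controls-energy estimate, the comparability of $\bfI_g,\bfJ_g$ with $\bfI,\bfJ$, and the upgrade from weak to strong convergence via convergence of $\bfE_g$) is precisely the fleshed-out content of that transplanted argument.
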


\begin{lemma}
\label{minimizer_equal}
$\inf_{\vphi\in\mcE^1_T} \bfD(\vphi) = \inf_{\vphi_\in\mcE^1_T}\bfM(\vphi)$.
\end{lemma}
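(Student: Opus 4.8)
The plan is to prove the equality of infima $\inf_{\vphi} \bfD(\vphi) = \inf_{\vphi} \bfM(\vphi)$ by combining the inequality $\bfM \geq \bfD$ from Lemma \ref{M_D} with a matching inequality in the other direction for the infima. From Lemma \ref{M_D} we immediately get $\inf_{\vphi\in\mcE^1_T} \bfM(\vphi) \geq \inf_{\vphi\in\mcE^1_T}\bfD(\vphi)$, so the substantive content is the reverse inequality $\inf \bfM \leq \inf \bfD$. The natural strategy is to use the duality machinery just established: we have the identities $\bfM = -\bfE_g^*(d\nu) + \bfL_\Theta^*(d\nu)$ where $d\nu = \MA_g(\vphi)$, together with the Legendre-transform definitions \eqref{E^*} of $\bfE_g^*$ and $\bfL_\Theta^*$.

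The key computation I would carry out is to take the infimum of $\bfD$ over $\mcE^1_T$ and rewrite it using the Legendre duals. First I would write
\begin{align*}
\inf_{\vphi\in\mcE^1_T}\bfD(\vphi) = \inf_{\vphi\in\mcE^1_T}\left(-\bfE_g(\vphi) + \bfL_\Theta(\vphi)\right),
\end{align*}
and then recall that by Proposition \ref{Berman_Nystrom} every energy-bounded probability measure $d\chi\in\mcM^1_T(X)$ arises as $\MA_g(\vphi)$ for a unique (mod constants) $\vphi$. This correspondence is what lets me transfer an infimum over potentials into an infimum over measures. Formally, I would appeal to the general biduality principle: for the concave functional $\bfE_g$ and the functional $\bfL_\Theta$, the Legendre transform allows one to express $\inf_\vphi \bfD(\vphi)$ as an infimum over measures $d\chi$ of $\bfE_g^*(d\chi) - \bfL_\Theta^*(d\chi)$ (possibly after a sign bookkeeping), and the minimizing measure is exactly $d\nu = \MA_g(\vphi)$ for the $\vphi$ realizing it. Since $-\bfM(\vphi) = \bfE_g^*(d\nu) - \bfL_\Theta^*(d\nu)$, this identifies $\inf \bfD$ with $\inf \bfM$ once the measure ranges over $\mcM^1_T(X)$ and the correspondence with potentials is invoked.

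Concretely, I expect the argument to go as follows. Given any $\vphi\in\mcE^1_T$ with $d\nu = \MA_g(\vphi)$, the duality lemma gives $\bfM(\vphi) = -\bfE_g^*(d\nu) + \bfL_\Theta^*(d\nu)$. Unwinding the definitions \eqref{E^*}, the term $-\bfE_g^*(d\nu) = \inf_{v}\left(\int_X v\, d\nu - \bfE_g(v+\vphi_0)\right)$ and $\bfL_\Theta^*(d\nu) = \sup_{w}\left(-\log\int_X e^{-w}d\mu_0 - \int_X w\, d\nu\right)$; choosing the competitor $v=w=\vphi-\vphi_0$ in these two extremal problems and combining recovers $\bfD(\vphi)$ as a lower bound for $\bfM(\vphi)$ (this is just Lemma \ref{M_D} again). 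For the reverse direction at the level of infima, I would fix a minimizing sequence for $\bfD$, pass to the associated measures $d\nu_j = \MA_g(\vphi_j)$, and use the sup-attainment in the Legendre transform: the duality identity shows that $\bfM$ evaluated at the potential whose measure solves the relevant variational problem cannot exceed $\bfD$ along this sequence, which yields $\inf\bfM \leq \inf\bfD$.

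The main obstacle I anticipate is the exchange-of-order step, namely justifying that the infimum over $\vphi$ of $-\bfE_g(\vphi) + \bfL_\Theta(\vphi)$ equals an infimum over measures $d\nu\in\mcM^1_T(X)$ of the dual expression, which is a minimax/biduality statement requiring that the suprema defining $\bfE_g^*$ and $\bfL_\Theta^*$ be attained (or approximately attained) at the \emph{same} potential realizing the given measure. This is exactly where Proposition \ref{Berman_Nystrom} and the strict concavity/properness properties of $\bfE_g$ enter: one needs that for the optimal measure the Euler--Lagrange condition $\MA_g(\vphi)=d\nu$ is solvable in $\mcE^1_T$, and that the functionals are finite and lower semicontinuous enough (Lemma \ref{H_compact}) for the extrema to behave well. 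I would mirror the proof in \cite{BBEGZ} closely, checking that all the integrability and compactness hypotheses used there hold verbatim for the weighted functionals $\bfE_g, \bfL_\Theta$ under the klt assumption and the normalization $\bV_g=1$, $\int_X d\mu_0=1$.
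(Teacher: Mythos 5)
Your overall route is the one the paper intends: the paper gives no self-contained proof of Lemma \ref{minimizer_equal} (it simply invokes ``the similar proof as in \cite{BBEGZ}''), and that proof is exactly the combination you describe of Lemma \ref{M_D} for $\inf\bfM\ge\inf\bfD$ with Legendre duality, the duality lemma $\bfM=-\bfE_g^*(d\nu)+\bfL_\Theta^*(d\nu)$, and Proposition \ref{Berman_Nystrom} for the reverse inequality. So the strategy is right.

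There is, however, one concrete step in your ``Concretely'' paragraph that would fail as written: for the hard direction you propose to take a $\bfD$-minimizing sequence $\vphi_j$ and pass to the measures $d\nu_j=\MA_g(\vphi_j)$. Evaluating the duality identity at $\MA_g(\vphi_j)$ only reproduces $\bfM(\vphi_j)\ge\bfD(\vphi_j)$ (Jensen again), i.e.\ the direction you already have, so this loop does not close. The measure that does the work is the normalized tilted measure
\[
d\chi_j=\frac{e^{-(\vphi_j-\vphi_0)}\,d\mu_0}{\int_X e^{-(\vphi_j-\vphi_0)}\,d\mu_0}.
\]
For this choice the supremum defining $\bfL_\Theta^*$ is attained at $v=\vphi_j-\vphi_0$ (Gibbs variational principle), so $\bfL_\Theta^*(d\chi_j)=\bfL_\Theta(\vphi_j)-\int_X(\vphi_j-\vphi_0)\,d\chi_j$, while by definition $\bfE_g^*(d\chi_j)\ge\bfE_g(\vphi_j)-\int_X(\vphi_j-\vphi_0)\,d\chi_j$; adding these gives $\bfL_\Theta^*(d\chi_j)-\bfE_g^*(d\chi_j)\le\bfD(\vphi_j)$. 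Provided $d\chi_j\in\mcM^1_T(X)$ (this is the genuine technical input: finite entropy forces $\bfE_g^*<\infty$, which transfers from the unweighted case of \cite{BBEGZ} via the comparisons \eqref{I}--\eqref{J}), Proposition \ref{Berman_Nystrom} yields $\tilde\vphi_j\in\mcE^1_T$ with $\MA_g(\tilde\vphi_j)=d\chi_j$, and the duality lemma then gives $\bfM(\tilde\vphi_j)=-\bfE_g^*(d\chi_j)+\bfL_\Theta^*(d\chi_j)\le\bfD(\vphi_j)$, hence $\inf\bfM\le\inf\bfD$. With that substitution (and the integrability caveats you already flag, which are exactly where the klt hypothesis and the finite-entropy-implies-finite-energy estimate of \cite{BBEGZ} enter), your argument is complete and coincides with the paper's intended one.
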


\section{Existence and properness}
\label{existence_and_properness}
In this section, we will study the existence of the generalized Monge-Amp\`{e}re equation
\begin{align}
\label{MA}
\MA_g(\vphi) = e^{-(\vphi-\vphi_0)} d\mu_0 ,
\end{align}
where $\MA_g(\phi) = g_\vphi \frac{(\ddc\vphi)^n}{n!}$, $d\mu_0 =  e^{h_0}\frac{\omega_0^n}{n!}$ are probability measures on $X$. 

We use the notations $\Aut_T(X, D, \Theta), \bG$ defined in section \ref{intro}.  
Denote $\TT = C(\bG)\simeq (\bC^*)^\fr = ((S^1)^\fr)^\CC$ the center of $\bG$, where $(S^1)^\fr\subset K$;
and $\mathfrak{t}$ as the Lie algebra of $\TT$. And let $N_\RR$ be a $\fr$-dimensional real vector space. For any $\xi\in N_\RR$,
$\xi-\sqrt{-1}J\xi\in \mathfrak{t}$, where $J$ is the complex structure.


\begin{lemma}
\label{minimizer}
Any minimizer of the generalized Ding functional $\bfD$ is a solution to \eqref{MA}.
Conversely, any solution to \eqref{MA} is a minimizer of $\bfD$.
\end{lemma}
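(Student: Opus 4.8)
The plan is to prove the two implications separately: the forward one (minimizer $\Rightarrow$ solution) by a first--variation computation identifying the Euler--Lagrange equation, and the converse (solution $\Rightarrow$ minimizer) by invoking the geodesic convexity of $\bfD$ promised in Section~\ref{functionals}.

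For the forward implication, I would compute the directional derivative of $\bfD=-\bfE_g+\bfL_\Theta$ at a candidate $\vphi\in\mcE^1_T$ in an admissible (bounded, $T$--invariant) direction $v$. The energy part is read off from the earlier Proposition on $\bfE_g$: since its first variation is integration against the $g$--Monge--Amp\`ere measure, one gets $\frac{d}{dt}\big|_{t=0}\bfE_g(\vphi+tv)=\int_X v\,\MA_g(\vphi)$ (with $\bV_g=1$), the point being that the contributions coming from the $\vphi$--dependence of the weight $g_\vphi$ do not produce extra terms, exactly as encoded in the second--variation formula recorded there. The variation of $\bfL_\Theta$ is the standard logarithmic one, $\frac{d}{dt}\big|_{t=0}\bfL_\Theta(\vphi+tv)=\big(\int_X e^{-(\vphi-\vphi_0)}d\mu_0\big)^{-1}\int_X v\,e^{-(\vphi-\vphi_0)}d\mu_0$. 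Setting the total variation to zero for all $v$ forces, as an identity of Radon measures, $\MA_g(\vphi)=\big(\int_X e^{-(\vphi-\vphi_0)}d\mu_0\big)^{-1}e^{-(\vphi-\vphi_0)}d\mu_0$. Since $\bfD$ is invariant under $\vphi\mapsto\vphi+c$ (both $\bfE_g$ and $\bfL_\Theta$ shift by $c$, as $\MA_g$ is a probability measure and $g_\vphi$ is unchanged by adding a constant), a minimizer is only determined up to an additive constant; choosing $c$ so that $\int_X e^{-(\vphi-\vphi_0)}d\mu_0=1$ turns the Euler--Lagrange identity into exactly \eqref{MA}.

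For the converse, I would fix a solution $\vphi_*$ of \eqref{MA} and an arbitrary $\vphi\in\mcE^1_T$, and connect them by the weak geodesic $\{\vphi_t\}_{t\in[0,1]}$ with $\vphi_0=\vphi_*$ and $\vphi_1=\vphi$. Along this geodesic $t\mapsto\bfD(\vphi_t)$ is convex, by the convexity of $\bfD$ along weak geodesics established in Section~\ref{functionals}. Its right derivative at $t=0$ equals the first variation of $\bfD$ at $\vphi_*$ in the direction $\dot\vphi_{0^+}$, namely $\int_X \dot\vphi_{0^+}\big(e^{-(\vphi_*-\vphi_0)}d\mu_0-\MA_g(\vphi_*)\big)$; note that integrating \eqref{MA} gives $\int_X e^{-(\vphi_*-\vphi_0)}d\mu_0=\bV_g=1$, so the normalizing denominator is $1$ and the enclosed signed measure vanishes because $\vphi_*$ solves \eqref{MA}. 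Hence the right derivative is $0$, and convexity yields $\bfD(\vphi)=\bfD(\vphi_1)\ge\bfD(\vphi_0)=\bfD(\vphi_*)$, so $\vphi_*$ is a global minimizer.

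The main obstacle I expect is justifying these variational formulas in the finite--energy class $\mcE^1_T$ rather than on smooth potentials: one must check that $\bfD$ is Gateaux--differentiable along admissible directions and along weak geodesics, that the endpoint derivatives of $\bfE_g$ and $\bfL_\Theta$ are given by the stated integral formulas despite the limited regularity of $\vphi_*$ and of the geodesic, and---specific to this generalized setting---that the non-holomorphic field $V_{g,\vphi}$ attached to the weight $g$ introduces no error terms in the first variation of $\bfE_g$. These points I would handle by monotone approximation of $\mcE^1_T$ by $\mcH_T$ (Remark~\ref{smooth_net}), the continuity of $\bfE_g$ along decreasing sequences proved in the Proposition, and the duality/semicontinuity statements of Section~\ref{functionals}; the weight contributions cancel by the same Hamiltonian identity (Lemma~\ref{lem-etaPhi}) that underlies the second--variation formula for $\bfE_g$.
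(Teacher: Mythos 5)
Your converse direction (solution $\Rightarrow$ minimizer via geodesic convexity of $\bfD$ and vanishing of the one-sided derivative at the solution) is consistent with the paper, which simply invokes convexity of $\bfD$. The genuine gap is in the forward direction. You differentiate $t\mapsto \bfE_g(\vphi+tv)$ at $t=0$ for an \emph{arbitrary} bounded continuous $v$, but for such $v$ the perturbation $\vphi+tv$ is in general not a psh metric, so it does not lie in $\mcE^1_T$ and $\bfE_g(\vphi+tv)$ is simply not defined: $\bfE_g$ only makes sense on ${\rm Psh}_T(X,L)$. If you instead restrict to admissible directions, i.e.\ $v=\psi-\vphi$ with $\psi\in\mcE^1_T$ (so that the affine segment stays psh), you only obtain a one-sided variational inequality over a convex cone of directions, which is not enough to upgrade to the measure identity $\MA_g(\vphi)=e^{-(\vphi-\vphi_0)}d\mu_0$ by testing against all continuous functions. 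Your proposed remedies (monotone approximation of $\vphi$ by $\mcH_T$, continuity of $\bfE_g$ along decreasing sequences) address the low regularity of the minimizer $\vphi$, not the inadmissibility of the direction $v$, so they do not close this gap.

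The paper's proof (following Berman--Witt Nystr\"om) fixes exactly this point with the psh envelope $P(f)=\sup{}^*\{\varphi\in{\rm Psh}(X,L):\varphi\le f\}$, which it calls the key of the proof: one sets $d(t)=-\bfE_g(P(\vphi+tv))+\bfL_\Theta(\vphi+tv)$, notes that $P(\vphi+tv)\le\vphi+tv$ together with the monotonicity of $\bfL_\Theta$ gives $d(t)\ge \bfD(P(\vphi+tv))\ge \bfD(\vphi)=d(0)$, so $t=0$ minimizes $d$, and then invokes the nontrivial differentiability theorem of \cite{BB10} asserting that $t\mapsto\bfE_g(P(\vphi+tv))$ is differentiable with $\frac{d}{dt}\big|_{t=0}\bfE_g(P(\vphi+tv))=\int_X v\,\MA_g(\vphi)$. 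It is precisely this orthogonality property of the envelope that licenses testing against \emph{every} continuous $v$ and hence concluding the equation of measures. Without this device (or an equivalent substitute) your forward implication does not go through.
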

\begin{proof}
The proof is essentially the same as \cite[Proposition 2.16]{BW14}.
The key of the proof is to use the projection operator:
$P(f) = \sup^*\{\varphi\in {\rm Psh}(X, L): \varphi \leq f\}$.
Assume $\vphi$ is a minimizer of $\bfD$.
For any continuous function $v$, let $d(t) = -\bfE_g(P(\vphi+t v))+\bfL_\Theta(\vphi+tv)$.
Since $d(t)\leq \bfD(P(\vphi+tv))$, $d(0)$ is a minimizer.
By \cite{BB10}, $d(t)$ is differentiable at $t=0$, $d'(0) = \int_X v (-\MA_g(\vphi)+e^{-\vphi}d\mu) = 0$. Since $v$ can be any continuous function,
this implies $-\MA_g(\vphi) + e^{-\vphi}d\mu = 0$.
Then the minimizer $\vphi$ is a solution to \eqref{MA}.
The inverse direction is a straightforward result of the convexity of $\bfD$.
\end{proof}

The first part of the following lemma is similar to \cite[Theorem 5.2]{BBEGZ}, \cite[Theorem 6]{CDS15} or \cite[Proposition 7]{DaS16}. The second part is similar to \cite[Theorem 3.3]{His16b} and \cite[Theorem 2.15]{Li20}.
\begin{lemma}\label{lem-unique}
Assume that there exist $T$-invariant solutions to \eqref{MA}.
\begin{enumerate}
\item[i.)]
If $\vphi_1, \vphi_2$ are two $T$-invariant solutions to \eqref{MA}, there exists a 1-parameter subgroup $\lambda: (\bC, +)\rightarrow \Aut_T(X, D, \Theta)$ such that $\lambda(1)^*\vphi_2=\vphi_1$.
As a consequence, $\Aut_T(X, D, \Theta)$ is a reductive complex Lie group,
i.e, it is the complexification of a compact Lie group $K_\Aut$ and $K_\Aut$ is a subgroup of ${\rm Iso}(X, \ddc\vphi)$.

\item [ii.)]
If $\bG=K_\bC$ is a reductive subgroup that contains a maximal torus of $\Aut_T(X, D, \Theta)$, and $\vphi_0, \vphi_1\in \mcE^1_{T\times K}(X, L)$ are both solutions to \eqref{MA}, 
then there exists an $\sigma$ in the center of $\bG$,
such that $\ddc\vphi_1 = \sigma^*\ddc{\vphi_0}$.
\end{enumerate}
\end{lemma}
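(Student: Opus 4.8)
The plan is to run the Bando--Mabuchi/Berndtsson uniqueness scheme adapted to the generalized Ding functional, obtaining part (i) and its reductivity corollary, and then to upgrade it to part (ii) by a group-theoretic averaging argument.

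\emph{Part (i).} By Lemma~\ref{minimizer} every solution of \eqref{MA} is a minimizer of $\bfD$, so two $T$-invariant solutions $\vphi_1,\vphi_2$ both attain $\inf_{\mcE^1_T}\bfD$. I would join them by the weak $T$-invariant geodesic $t\mapsto\Phi_t$ in $\mcE^1_T$ with $\Phi_0=\vphi_1$ and $\Phi_1=\vphi_2$. Since $\bfD$ is convex along weak geodesics (Section~\ref{generalized-Mabuchi}, together with Lemma~\ref{minimizer_equal}) and equals its infimum at both endpoints, the function $t\mapsto\bfD(\Phi_t)$ is affine. The essential input is then the equality case of Berndtsson's subharmonicity theorem applied to the term $\bfL_\Theta(\Phi_t)=-\log\int_X e^{-(\Phi_t-\vphi_0)}d\mu_0$: the defect in its convexity measures the failure of $\{\Phi_t\}$ to be induced by a holomorphic flow, and affineness of $\bfD$ pins this defect to zero. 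Here one uses crucially that the weight $g$ enters $\bfD$ only through the energy $\bfE_g$, so that the automorphism is detected purely by the $\bfL_\Theta$-term; this is the ``decoupling'' emphasized in the introduction.

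\emph{Rigidity and the data.} The rigidity yields a holomorphic vector field generating a one-parameter subgroup $\lambda\colon(\bC,+)\to\Aut(X)$ with $\lambda(1)^*\vphi_2=\vphi_1$. I would then check that its generator $\xi$ preserves all the structure: because the construction is carried out on the klt pair $(X,D)$ with the (possibly singular) weight $e^{-\psi}$ defining $\Theta$, one gets $\xi\in\mathfrak{aut}(X,D)$ with $\iota_\xi\Theta=0$; and because both endpoints and the geodesic are $T$-invariant, $\xi$ commutes with $\mathfrak{t}_\bC$. Hence $\xi\in\mathfrak{aut}_T(X,D,\Theta)$ and $\lambda$ lands in $\Aut_T(X,D,\Theta)$. \emph{This rigidity step is the main obstacle}: the version of Berndtsson's theorem one needs must tolerate the klt singularities of $(X,D)$ and the merely positive singular current $\Theta$; I would obtain it by the argument of \cite[Theorem~5.2]{BBEGZ}. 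For reductivity, fix a solution $\vphi$ and observe that for any $\sigma\in\Aut_T(X,D,\Theta)$ the pullback $\sigma^*\vphi$ is again a $T$-invariant solution (the minimizer set of $\bfD$ is preserved by $\Aut_T(X,D,\Theta)$, since $\sigma$ preserves $D$, $\Theta$ and commutes with $T$). Applying part (i) to $\vphi$ and $\sigma^*\vphi$ exhibits $\Aut_T(X,D,\Theta)$ as the complexification of the compact isometry group $K_{\Aut}\subset\mathrm{Iso}(X,\ddc\vphi)$, whence it is reductive.

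\emph{Part (ii).} Now $\bG=K_\bC$ is reductive and contains a maximal torus of $\Aut_T(X,D,\Theta)$, and $\vphi_0,\vphi_1\in\mcE^1_{T\times K}$ are solutions. By part (i) there is $\xi\in\mathfrak{aut}_T(X,D,\Theta)$ with $\sigma=\exp(\xi)$ and $\sigma^*\vphi_0=\vphi_1$; the goal is to show $\sigma$ may be taken in $\TT=C(\bG)$. Since both endpoints are $K$-invariant, the connecting geodesic $\Phi_t$ is the unique $K$-invariant geodesic between them, hence is $K$-invariant, and it is generated by the real flow of $\xi$. Invariance under every $k\in K$ conjugates this flow to the flow of $\mathrm{Ad}(k)\xi$, forcing $\mathrm{Ad}(k)\xi=\xi$, i.e. $\xi$ commutes with $\mathfrak{k}=\mathrm{Lie}(K)$. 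Because $\xi$ is holomorphic it commutes with $J$, so it also commutes with $J\mathfrak{k}$ and hence with all of $\mathfrak{g}=\mathfrak{k}\oplus J\mathfrak{k}=\mathrm{Lie}(\bG)$; using that $\bG$ contains a maximal torus of $\Aut_T(X,D,\Theta)$, such an $\mathrm{Ad}(K)$-invariant field lies in $\mathfrak{g}$ and is therefore central in it. Consequently $\sigma=\exp(\xi)\in\TT=C(\bG)$, giving $\ddc\vphi_1=\sigma^*\ddc\vphi_0$ with $\sigma$ central, as claimed.
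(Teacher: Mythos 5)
Your part (i) follows the paper's route essentially verbatim: join the two solutions by a $T$-invariant weak geodesic, use convexity of $\bfD$ (equivalently of $\bfL_\Theta$, since $\bfE_g$ is affine along geodesics) plus the minimizing property to force affineness, and invoke the Berndtsson/BBEGZ rigidity to produce the holomorphic field $\xi$, then check $\iota_\xi\Theta=0$, preservation of $D$, and commutation with $\mathfrak{t}_\bC$ to land in $\Aut_T(X,D,\Theta)$, with reductivity following as in BBEGZ. The one substantive thing you wave at rather than do is the verification that the klt, twisted setting actually satisfies the hypotheses of the BBEGZ rigidity statement: the paper passes to the resolution, writes $\bfL_\Theta$ as an integral against a section of $-Q=\rho^*(K_X+D)-\Delta$, and checks $h^0(\tX,K_\tX+Q)=1$ and $h^1(\tX,K_\tX+Q)=0$ via Kawamata--Viehweg before applying \cite[Appendix C]{BBEGZ}; you correctly identify this as the main obstacle but should supply that cohomological check. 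For part (ii) you take a genuinely different (infinitesimal) route: the paper argues at the group level that $\sigma_\xi^{-1}K\sigma_\xi=K$ and then quotes the Cartan-decomposition argument of \cite[Theorem 2.15]{Li19} to write $\sigma_\xi=tk$ with $t\in C(\bG)$, $k\in K$, and sets $\sigma=t$; you instead show $\mathrm{Ad}(k)\xi=\xi$ for all $k\in K$ and deduce that $\xi$ itself is central, which is a slightly stronger conclusion. This works, but two steps deserve the care the paper gives to the analogous $T$-commutation: the implication from $K$-invariance of the geodesic to $\mathrm{Ad}(k)\xi=\xi$ should go through the relation $\xi^i=\vphi^{i\bar j}\dot\vphi_{\bar j}$ (invariance of the flow alone only shows $\mathrm{Ad}(k)\xi$ generates the same geodesic), and the assertion that an $\mathrm{Ad}(K)$-invariant $\xi\in\mathfrak{aut}_T(X,D,\Theta)$ lies in $\mathfrak{g}$ uses that the centralizer of the maximal torus contained in $\mathfrak{g}$ is that torus itself (self-centralization in the reductive algebra established in part (i)). With those two points made explicit, your argument is a clean alternative to the paper's appeal to \cite{Li19}.
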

\begin{proof}
In the following proof, we will denote $G = \Aut_T(X, D, \Theta)$, and by $C(G)$ its center.
Let $\vphi_0, \vphi_1$ be two $T$-invariant solutions to \eqref{MA}. 
Let $\vphi_t$ be geodesic segments connecting $\vphi_0$ and $\vphi_1$.

By \cite{Bern15}, $\bfL_\Theta$ (or $\bfD$ since $\bfE_g$ is affine) is convex along a geodesic. 
{\bf Claim}: if $\bfL_\Theta$ is affine, then the geodesic is induced
by a holomorphic vector field $\xi$, where the imaginary part of $\xi$ is a Killing vector field.
We will show that our setting fits into the framework of \cite[Appendix C]{BBEGZ}.
By assumption, $D+\Theta$ is a klt current, and $\Theta = \ddc\psi \geq 0$. Then $(X,D)$ is a klt pair.
We have the ramification formula
\begin{equation}
K_\tX + D' = \rho^*(K_X+D) - E^- + E^+ ,
\end{equation}
where $D'$ is the strict transform of $D$, $E^-,E^+$ are effective exceptional divisors. Denote $\Delta = D'+E^-$.
Then coeffecients of prime divisors in $\Delta$ is in $[0,1)$.
Recall that, $d\mu_0 = \frac{s\wedge\bar{s}}{|s|^2_{e^{\vphi_0+\psi}}}$, where $s$ is a local holomorphic section of $K_X+D$.
Without the loss of generality, here we let the multiplicity $r=1$ .
Then $\rho^*d\mu_0 = \frac{\sigma\wedge\bar{\sigma}}{e^{\vphi_0+\psi+\vphi_\Delta}}$, 
where $\sigma$ is a local holomorphic section of $-Q = \rho^*(K_X+D)-\Delta$,  $\ddc\vphi_\Delta = [\Delta]$.
Then
\begin{align}
\begin{split}
\bfL_\Theta(\vphi) &= -\log(\int_X e^{-(\vphi-\vphi_0)} d\mu_0) \\
&= -\log (\int_X e^{-(\vphi+\psi)} s\wedge \bar{s})\\
&= -\log (\int_\tX e^{-(\vphi+\psi+\vphi_\Delta)}\sigma\wedge \bar{\sigma}) .
\end{split}
\end{align}
We have $h^0(\tX,K_\tX+Q) = h^0(\tX, E^+) = 1$. Since $-\rho^*(K_X+D)$ is semi-ample, by
Kawamata-Viehweg vanishing theorem,
$h^1(\tX,K_\tX+Q) = h^1(\tX,K_\tX-\rho^*(K_X+D)+\Delta) = 0$.
Then we can apply \cite[Appendix C]{BBEGZ} to conclude the claim. 
Denote this holomorphic morphism induced by $\xi$ as $\sigma_\xi$.
Since $\iota_\xi ([D]+\Theta) = 0$, $\sigma_\xi$ fixes the divisor $D$ and positive current $\Theta$ and $\sigma_\xi\in \Aut_0(X,D,\Theta)$.
As the geodesic is induced by a holomorphic vector field, $\vphi(t)$ is smooth in $X^{\rm sm}$.
Since $\xi^i = \vphi^{i\bar{j}}\dot{\vphi}_{\bar{j}}$ in $X^{\rm sm}$, and $\vphi$ is $T$-invariant, $\mathfrak{L}_{\xi'}\xi = 0$
for any holomorphic vector field $\xi'$ in the Lie algebra of $T$.
Then $\sigma_\xi$ commutes with $T$ and $\sigma_\xi\in G$. The rest of argument is the same as \cite[Proof of Theorem 5.1]{BBEGZ}.

For ii), assume there are two $T\times K$ invariant solutions $\vphi_0,\vphi_1$ to \eqref{MA}. Then again the geodesic segment that connects $\vphi_0,\vphi_1$ is induced
by a holomorphic vector field $\xi$, which generates a holomorphic action $\sigma_\xi\in G$ such that $\sigma_\xi^*\ddc{\vphi_0} = \ddc{\vphi_1}$.
Furthermore, $\ddc{\vphi_0}, \ddc{\vphi_1}$ are both $T\times K$-invariant.
In addition, $K$ is a maximal subgroup of $\bG$, then $\sigma_\xi^{-1} K \sigma_\xi = K$. Arguing as in \cite[Proof of Theorem 2.15]{Li19},
there exist $t\in C(\bG)$, $k\in K$, such that $\sigma_\xi = t k$. We can define $\sigma = t$. This concludes (ii).
\end{proof}


\begin{definition}\label{def-Gcoer}
Let $\bG=K_\bC\subset \Aut_T(X, D, \Theta)$ be a reductive Lie group and $\bT$ be its center.
We say a functional $F$ on $\mcE_{T\times K}^1$ is $\bG$-coercive (over $\mcE_{T\times K}^1$) if there exist positive constants $\delta>0, C>0$, such that
for any $\vphi\in \mcE^1_{T\times K}$, 
\begin{align}
F(\vphi) \geq \delta \inf_{\sigma\in \bT}\{\bfJ_g(\sigma^* \vphi)\} -C.
\end{align}
\end{definition}
We will consider $F$ either as the generalized Ding functional or generalized Mabuchi functional.
Since $\bfJ_g$ is comparable with $\bfJ$, this definition of $\bG$-coercivity is compatible with the one defined in
\cite{CTZ05} in case of K\"{a}hler-Ricci soliton.

\begin{lemma}
We have the following properties for the functional $\bfJ_g$.
\label{J_convex}
\begin{enumerate}
\item[i)]
$\bfJ_g$ is convex and proper over $N_\RR$.
\item[ii)]
For any $\vphi \in \mcE^1_{T\times K}$, $\inf_{\sigma\in \bT}\{\bfJ_g(\sigma^*\vphi)\}$ can be obtained at some
$\sigma\in \TT$.
\end{enumerate}
\end{lemma}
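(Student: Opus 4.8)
The plan is to reduce everything to the behaviour of $\bfJ_g$ along the real one-parameter subgroups of $\TT$. I identify $\xi\in N_\RR$ with the element $\exp(\xi)\in\TT$ (via $\xi\mapsto\xi-\sqrt{-1}J\xi\in\mathfrak{t}$); since $\vphi\in\mcE^1_{T\times K}$ is invariant under the compact torus $(S^1)^\fr\subset K$, the pulled-back potential $\sigma^*\vphi$ depends on $\sigma\in\TT$ only through its image $\xi\in N_\RR$. Thus it suffices to study the single finite-valued function $F(\xi):=\bfJ_g(\exp(\xi)^*\vphi)=\Lam_g(\exp(\xi)^*\vphi)-\bfE_g(\exp(\xi)^*\vphi)$ on $N_\RR\cong\RR^\fr$ and to prove it is convex and proper (coercive). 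Part (ii) is then immediate: a finite, convex, coercive function on $\RR^\fr$ is continuous and attains its minimum at some $\xi_*$, whence $\sigma=\exp(\xi_*)\in\TT$ realizes the infimum.

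For convexity, since $\TT$ is abelian it suffices to prove that $s\mapsto F(\xi_0+s\xi')$ is convex for all $\xi_0,\xi'$; writing $\psi=\exp(\xi_0)^*\vphi$ and $\psi_s=\exp(s\xi')^*\psi$, this is the convexity of $s\mapsto\bfJ_g(\psi_s)$ along the geodesic ray generated by $\xi'$. I would show $\bfE_g(\psi_s)$ is affine and $\Lam_g(\psi_s)$ is convex. Affineness of $\bfE_g$: one has $\tfrac{d}{ds}\bfE_g(\psi_s)=\tfrac{1}{\bV_g}\int_X\theta_{\xi'}(\psi_s)\,\MA_g(\psi_s)$, where $\theta_{\xi'}(\psi_s)=\langle\bfm_{\psi_s},\xi'\rangle=\dot\psi_s$, and this is independent of $s$ by the change of variables $x\mapsto\exp(s\xi')(x)$, using that $\xi'$ commutes with the $\TT$-flow so that $\theta_{\xi'}(\sigma^*\psi)=\sigma^*\theta_{\xi'}(\psi)$, together with $\MA_g(\sigma^*\psi)=\sigma^*\MA_g(\psi)$ (the latter because $\sigma$ commutes with $T$ and hence preserves $g_\psi$). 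Convexity of $\Lam_g$: since $\Lam_g(\psi_s)=\tfrac{1}{\bV_g}\int_X(\psi_s-\vphi_0)\,g_{\vphi_0}\tfrac{\omega_0^n}{n!}$ integrates $\psi_s$ against a fixed positive measure, it is enough that $s\mapsto(\psi_s-\vphi_0)(x)$ is convex for each $x$; this holds because the complexified orbit map exhibits $\psi_s$ as the restriction of a metric plurisubharmonic in the $\bC^*$-parameter $\tau=s+\sqrt{-1}b$ which, by $(S^1)^\fr$-invariance, is independent of $b$, hence convex in $s$. Therefore $\bfJ_g(\psi_s)$ is convex minus affine, so convex, and $F$ is convex on $N_\RR$. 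The middle-step calculations are carried out for smooth $\psi$ and extended to $\mcE^1_{T\times K}$ by the approximation argument of Remark \ref{smooth_net}.

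For properness I would track the two slopes along the ray $\psi_s$. By the computation above $\tfrac{d}{ds}\bfE_g(\psi_s)=\langle\mathbf{b}_g,\xi'\rangle$ is constant, where $\mathbf{b}_g\in P_\TT$ is the barycenter of the normalized $g$-weighted Duistermaat--Heckman measure $\tfrac{1}{\bV_g}(\bfm_\vphi)_*\!\big(g_\vphi\tfrac{(\ddc\vphi)^n}{n!}\big)$ on the $\TT$-moment polytope $P_\TT$. For $\Lam_g$, the monotone quantity $\theta_{\xi'}(\psi_s)$ increases and, as $s\to+\infty$, the flow concentrates the moment map on the maximal face, so $\theta_{\xi'}(\psi_s)\to\lambda_{\max}(\xi'):=\max_{y\in P_\TT}\langle y,\xi'\rangle$ pointwise and boundedly; by dominated convergence and $\int_X g_{\vphi_0}\tfrac{\omega_0^n}{n!}=\bV_g$ this gives $\tfrac{d}{ds}\Lam_g(\psi_s)\to\lambda_{\max}(\xi')$. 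Hence $\lim_{s\to\infty}\tfrac{d}{ds}\bfJ_g(\psi_s)=\lambda_{\max}(\xi')-\langle\mathbf{b}_g,\xi'\rangle$, which is strictly positive for $\xi'\neq 0$: effectiveness of the $\TT$-action makes $P_\TT$ full-dimensional, and $g>0$ forces $\mathbf{b}_g$ into the interior of $P_\TT$. Since $\lambda_{\max}(\xi')-\langle\mathbf{b}_g,\xi'\rangle$ is positively homogeneous of degree one and positive on the unit sphere, it is bounded below by $c|\xi'|$ for some $c>0$; feeding this into the convexity of $s\mapsto\bfJ_g(\psi_s)$ yields $F(\xi)\ge c|\xi|-C$, i.e.\ coercivity. (Alternatively, properness could be deduced from the comparison $\tfrac1C\bfJ\le\bfJ_g\le C\bfJ$ of \eqref{J} together with the known properness of $\bfJ$ over $N_\RR$.)

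I expect the main obstacle to be purely technical: justifying the first-derivative formula and the change-of-variables identity for $\bfE_g$, and the pointwise convexity for $\Lam_g$, at the level of finite-energy potentials $\vphi\in\mcE^1_{T\times K}$ rather than smooth ones (which requires the approximation and continuity properties already established for $\bfE_g$ and $\MA_g$), together with the asymptotic concentration statement $\theta_{\xi'}(\psi_s)\to\lambda_{\max}(\xi')$ used in the properness estimate. Once the affineness of $\bfE_g$ and the concentration on the maximal face are in hand, both conclusions are routine.
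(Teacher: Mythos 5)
Your proof is correct. For convexity it coincides with the paper's argument: the paper likewise shows that $\bfE_g$ is affine along the $\TT$-orbit (via Lemma \ref{d-closed} and $(\ddc\Phi)^{n+1}=0$ for the pulled-back family, which is precisely your change-of-variables computation for $\tfrac{d}{ds}\bfE_g$), and that the remaining term $\Lam_g$ is convex because the potential is psh in the complexified orbit parameter, so $\bfJ_g$ is convex along $N_\RR$. Where you genuinely diverge is properness. The paper invokes its Proposition \ref{trivial} (a forward reference to Section \ref{non-Archimedean functionals}), which asserts $\bfJ_g^\NA(\phi_\xi)\neq 0$ for $\xi\neq 0$ and is itself proved by the comparison \eqref{J} together with the corresponding statement for $\bfJ$ from \cite{BHJ17}; it then concludes properness from ``nonzero slope in each direction plus convexity,'' leaving implicit the uniformity over directions needed for coercivity on all of $N_\RR$. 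Your argument supplies exactly that: you compute the asymptotic slope explicitly as $\lambda_{\max}(\xi')-\langle\mathbf{b}_g,\xi'\rangle$ (support function of the $\TT$-moment polytope minus the pairing with the barycenter of the $g$-weighted Duistermaat--Heckman measure), note that it is positive, continuous and $1$-homogeneous on $N_\RR\setminus\{0\}$, and deduce $F(\xi)\ge c|\xi|-C$ by compactness of the unit sphere. This buys a self-contained, quantitative proof with no forward reference and no non-Archimedean input, at the cost of having to justify the a.e.\ concentration $\theta_{\xi'}(\psi_s)\to\lambda_{\max}(\xi')$ and the interior position of $\mathbf{b}_g$ (both standard for effective torus actions, passing to a resolution in the singular case); your fallback via \eqref{J} and the properness of $\bfJ$ is essentially the paper's actual mechanism. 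The remaining technical points you flag — the first-variation formula for $\bfE_g$ and the pointwise convexity of $s\mapsto\psi_s-\vphi_0$ at the level of $\mcE^1_{T\times K}$ — are handled by the approximation of Remark \ref{smooth_net}, as you say, and part (ii) then follows as you state.
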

\begin{proof}
The proof of the convexity follows from the lines of \cite[Proposition 1.6]{His16b}.
We will work on the resolution $\pi:\tX\rightarrow X$. We will abbreviate $\pi^*\vphi$ as $\vphi$.
For $\xi\in N_\RR$, let $\sigma_{t\xi}\in \TT$ be the group action indued by $t\xi$.
The action $\sigma_{t\xi}$ induces a family $\tX\times\Delta$ and a metric $\Phi$ 
on the total space , where $\Phi|_{(x,t)} = \sigma_{t\xi}^*\vphi$.
By Lemma \ref{d-closed}, we have $\ddc \bfE_g = \int_\tX g_{\sigma_{t\xi}^*\vphi}\frac{(\ddc \Phi)^{n+1}}{n!}$ .
However, by the construction of $\Phi$, $(\ddc \Phi)^{n+1} = \sigma_{t\xi}^*(\ddc\vphi)^{n+1} = 0$.
This implies $\bfE_g$ is affine along $N_\RR$.
Then $\ddc \bfJ_g = \int_\tX \ddc(\Phi)\wedge g  \frac{\omega_0^n}{n!} \geq 0 $.
By Lemma \ref{trivial}, we have the slope at the infinity (along the trajectory of $\sigma_{t\xi}$) $\bfJ_g^{\NA} \neq 0$ if $\xi\neq 0$. 
This and the convexity implies the properness of $\bfJ_g$. 
And (ii) is a direct concequence of (i).
\end{proof}

The following generates the the analytic criterions in different situations as considered in \cite{Tia97, CTZ05,  PSSW, DR17, Dar17, DNG18, His16b, LZ19, Li19}.
\begin{theorem}
\label{equivalent}
Let $\bG\subseteq \Aut_T(X, D, \Theta)$ (see Definition \eqref{def-AutT}) be a connected reductive subgroup. 
Consider the following statements:
\begin{enumerate}
\item[i)]
the generalized Mabuchi functional $\bfM$ is $\bG$-coercive. 
\item[ii)]
the generalized Ding functional $\bfD$ is $\bG$-coercive. 
\item[iii)]
there exists a solution $\vphi\in \mcE_{T\times K}^1(X,\omega)$ to equation \eqref{MA}.
\end{enumerate}
Then i) $\Leftrightarrow$ ii) $\Rightarrow$ iii). 

Moreover, if $\bG\subseteq \Aut_T(X, D, \Theta)$ and contains a maximal torus of $\Aut_T(X, D, \Theta)$, then iii) $\Rightarrow$ i).
\end{theorem}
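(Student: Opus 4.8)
The plan is to run the cycle in four moves: the trivial comparison (ii)$\Rightarrow$(i), a single direct-method argument giving the two existence statements (i),(ii)$\Rightarrow$(iii), the reverse comparison (i)$\Rightarrow$(ii) through the Monge--Amp\`ere correspondence that decouples the soliton data from the twisting, and finally the converse (iii)$\Rightarrow$(i), which is the real heart of the matter. The direction (ii)$\Rightarrow$(i) is immediate: by Lemma \ref{M_D} we have $\bfM\ge\bfD$ on $\mcE^1_T$, so any bound $\bfD(\vphi)\ge\delta\inf_{\sigma\in\bT}\bfJ_g(\sigma^*\vphi)-C$ is inherited by $\bfM$.

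For (ii)$\Rightarrow$(iii) and (i)$\Rightarrow$(iii) I would apply the direct method to $F\in\{\bfD,\bfM\}$. Take a minimizing sequence $\vphi_j$; since $F$ is invariant under $\bT\subseteq\Aut_T(X,D,\Theta)$ and under adding constants, I use Lemma \ref{J_convex}(ii) to replace $\vphi_j$ by $\sigma_j^*\vphi_j$ so that $\bfJ_g(\vphi_j)=\inf_\sigma\bfJ_g(\sigma^*\vphi_j)$, and normalize $\sup(\vphi_j-\vphi_0)=0$. Coercivity bounds $\bfJ_g(\vphi_j)$, hence by \eqref{I_J} and Lemma \ref{lem-Hartogs} the energies $\bfE_g,\Lam_g$ are controlled and $\{\vphi_j\}$ is weakly precompact. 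For $\bfD=\bfL_\Theta-\bfE_g$ lower semicontinuity (upper semicontinuity of $\bfE_g$, continuity of $\bfL_\Theta$) produces a minimizer, which solves \eqref{MA} by Lemma \ref{minimizer}. For $\bfM=\bfH_{g,\Theta}+\bfJ_g-\bfI_g$ the bounds on $\bfJ_g,\bfI_g$ force $\bfH_{g,\Theta}(\vphi_j)$ bounded, so Lemma \ref{H_compact} gives precompactness and lower semicontinuity; the minimizer $\vphi_\infty$ satisfies $\bfM(\vphi_\infty)=\inf\bfM=\inf\bfD\le\bfD(\vphi_\infty)\le\bfM(\vphi_\infty)$ by Lemma \ref{minimizer_equal} and Lemma \ref{M_D}, whence equality in Jensen's inequality in the proof of Lemma \ref{M_D} forces $\MA_g(\vphi_\infty)=e^{-(\vphi_\infty-\vphi_0)}d\mu_0$, i.e. $\vphi_\infty$ solves \eqref{MA}.

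The reverse comparison (i)$\Rightarrow$(ii) is where the decoupling enters. Given $\vphi$ normalized so that $\int e^{-(\vphi-\vphi_0)}d\mu_0=1$, let $\psi=\psi(\vphi)\in\mcE^1_T$ be the Berman--Witt-Nystr\"om potential with $\MA_g(\psi)=e^{-(\vphi-\vphi_0)}d\mu_0$, which exists by Proposition \ref{Berman_Nystrom} after checking that the right-hand side lies in $\mcM^1_T$. Feeding $d\nu=\MA_g(\psi)$ into the duality formulas for $\bfE_g^*,\bfL_\Theta^*$ and using suboptimality of the test function $v=\vphi-\vphi_0$ yields the pointwise inequality $\bfM(\psi(\vphi))\le\bfD(\vphi)$, which is exactly the statement that, measured through the Ding functional, the $g$-soliton difficulty and the $D+\Theta$-twisting difficulty separate. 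Combining with $\bG$-coercivity of $\bfM$ gives $\bfD(\vphi)\ge\delta\inf_\sigma\bfJ_g(\sigma^*\psi(\vphi))-C$, so it remains to compare energies along the correspondence, i.e. $\inf_\sigma\bfJ_g(\sigma^*\psi(\vphi))\gtrsim\inf_\sigma\bfJ_g(\sigma^*\vphi)-C'$. I would argue this by contradiction and compactness: if it failed there would be $\vphi_j$ with $\bfJ_g(\vphi_j)\to\infty$ yet $\bfJ_g(\psi(\vphi_j))$ sublinear, forcing (via $\bfM(\psi(\vphi_j))\le\bfD(\vphi_j)$ and coercivity) $\bfH_{g,\Theta}(\psi(\vphi_j))$ sublinear, which contradicts the concentration of $e^{-(\vphi_j-\vphi_0)}d\mu_0$ dictated by $\bfJ_g(\vphi_j)\to\infty$.

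Finally, (iii)$\Rightarrow$(i) under the maximal-torus hypothesis is the main obstacle, proved by contradiction in the Darvas--Rubinstein framework and relying crucially on the geodesic convexity of the generalized Mabuchi functional from Section \ref{generalized-Mabuchi}. Suppose $\bfM$ is not $\bG$-coercive; after normalizing by the center via Lemma \ref{J_convex} and by constants, one gets $\vphi_j$ with $\bfJ_g(\vphi_j)\to\infty$ and $\bfM(\vphi_j)/\bfJ_g(\vphi_j)\to0$. Rescaling along the segments to $\vphi_0$ using \eqref{J_t} together with compactness and lower semicontinuity of $\bfH_{g,\Theta}$, I extract a finite-energy geodesic ray from the solution along which the slope of $\bfM$ is nonpositive while the slope of $\bfJ_g$ is strictly positive. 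Geodesic convexity then forces $\bfM$, hence $\bfL_\Theta$, to be affine along the ray; by the rigidity of Lemma \ref{lem-unique} (the argument via \cite{Bern15} and \cite[Appendix C]{BBEGZ}) the ray is generated by a holomorphic vector field $\xi\in\mathfrak{aut}_T(X,D,\Theta)$ commuting with $T\times K$. Because $\bG$ contains a maximal torus of $\Aut_T(X,D,\Theta)$, such $\xi$ lies in the Lie algebra of $\bT$, so the ray is a $\bT$-orbit direction along which $\inf_{\sigma\in\bT}\bfJ_g(\sigma^*\cdot)$ stays bounded, contradicting the positivity of the $\bfJ_g$-slope. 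The maximal-torus hypothesis is precisely what guarantees the destabilizing direction is absorbed into $\bT$, and this rigidity step, combined with the convexity of $\bfM$ proved in Section \ref{generalized-Mabuchi}, is the most delicate point of the whole argument.
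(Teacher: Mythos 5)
Your treatment of ii)$\Rightarrow$i) (via Lemma \ref{M_D}), of ii)$\Rightarrow$iii) (direct method plus Lemma \ref{minimizer}), and of iii)$\Rightarrow$i) (contradiction, destabilizing geodesic ray along which $\bfD$ is constant, every point a minimizer hence a solution, rigidity from Lemma \ref{lem-unique}(ii) forcing the ray into a $\bT$-orbit) agrees in substance with the paper's proof; your extra remark that a minimizer of $\bfM$ is automatically a minimizer of $\bfD$ because $\inf\bfM=\inf\bfD$ and $\bfD\le\bfM$ is correct, though redundant once i)$\Rightarrow$ii) is in place.

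The gap is in your i)$\Rightarrow$ii). Your inequality $\bfM(\psi(\vphi))\le\bfD(\vphi)$ for the Berman--Witt-Nystr\"om potential $\psi=\psi(\vphi)$ with $\MA_g(\psi)=e^{-(\vphi-\vphi_0)}d\mu_0$ is correct (it follows from $\bfM(\psi)=-\bfE_g^*(d\nu)+\bfL_\Theta^*(d\nu)$ by testing $\bfE_g^*$ with $v=\vphi-\vphi_0$), but it only yields $\bfD(\vphi)\ge\delta\inf_\sigma\bfJ_g(\sigma^*\psi(\vphi))-C$. The entire analytic content of the implication is then shifted onto the unproved comparison $\inf_\sigma\bfJ_g(\sigma^*\psi(\vphi))\gtrsim \inf_\sigma\bfJ_g(\sigma^*\vphi)-C'$, and the justification you sketch does not establish it: the assertion that $\bfJ_g(\vphi_j)\to\infty$ (under the normalization $\int e^{-(\vphi_j-\vphi_0)}d\mu_0=1$) forces $\bfH_{g,\Theta}(\psi(\vphi_j))$, equivalently the relative entropy of $e^{-(\vphi_j-\vphi_0)}d\mu_0$, to grow \emph{linearly} in $\bfJ_g(\vphi_j)$ is precisely a Moser--Trudinger-type inequality, i.e.\ a quantitative form of the very coercivity transfer you are trying to prove; ``concentration'' gives at best qualitative information. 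The paper avoids this loop entirely by a Legendre-duality computation: after normalizing $\Lam_g(\vphi)=0$ so that $\bfJ_g=-\bfE_g$, coercivity of $\bfM=-\bfE_g^*+\bfL_\Theta^*$ is read off as $\bfE_g^*(d\chi)\le\epsilon\,\bfL_\Theta^*(d\chi)+C$ for all $d\chi\in\mcM^1_T$ with $\epsilon=(1+\delta)^{-1}$; writing $\bfE_g((1-\epsilon)\vphi_0+\epsilon\vphi)$ as an infimum over measures of $\int\epsilon(\vphi-\vphi_0)\,d\chi+\bfE_g^*(d\chi)$ then gives $\bfE_g((1-\epsilon)\vphi_0+\epsilon\vphi)\le\epsilon\,\bfL_\Theta(\vphi)+C$, and the scaling inequality \eqref{J_t} converts this into $\bfD(\vphi)\ge(1-\epsilon^{1/C})\bfJ_g(\vphi)-C$. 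You should replace your correspondence argument by this duality computation (or supply an actual proof of the $\bfJ_g$-comparison, which I do not see how to do without it).
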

\begin{proof}
i) $\Rightarrow$ ii): 
Without the loss of generality, we may normalize $\int_X (\vphi-\vphi_0) e^{f_{\vphi_0}}\frac{(\ddc\vphi_0)^n}{n!} = 0$. Then $\bfJ_g(\vphi) = -\bfE_g(\vphi)$.
The constants $C$ in the following estimates may change from line by line.
By the $\bG$-coercivity of $\bfM$, we have
$\bfL_\Theta^*(d\chi) \geq  (1+\delta)\bfE_g^*(d\chi) -C$, for some $\delta > 0$.
Let $\epsilon = \frac{1}{1+\delta}$.
Then we have $\bfE_g^*(d\chi) \leq \epsilon \bfL_\Theta^*(d\chi) + C$, for $d\chi \in \mcM^1_T(X)$.
Then
\begin{eqnarray*}
\bfE_g((1-\epsilon)\vphi_0+\epsilon \vphi)&=& \inf_{d\chi\in\mcM^1_T} \left\{\int_X \epsilon(\vphi-\vphi_0) d\chi + \bfE_g^*(d\chi)\right\} \\
&\leq& \epsilon \inf_{d\chi\in \mcM^1_T}\left\{\int_X(\vphi-\vphi_0) d\chi + \bfL_\Theta^*(d\chi)\right\}+C\\
&\leq&\epsilon\cdot \bfL_\Theta(\vphi)+C .
\end{eqnarray*}
Then by \eqref{J_t},
\begin{align}
\bfL_\Theta(\vphi) \geq \epsilon^{-1} \bfE_g((1-\epsilon)\vphi_0+\epsilon\vphi) -C \geq \epsilon^{\frac{1}{C}} \bfE_g(\vphi)-C ,
\end{align}
which implies
\begin{align}
\bfD(\vphi) = \bfL_\Theta(\vphi)-\bfE_g(\vphi) \geq (1-\epsilon^{\frac{1}{C}}) \bfJ_g(\vphi) - C .
\end{align}
\\
ii) $\Rightarrow$ i):  This follows from Lemma \ref{M_D}. \\
ii) $\Rightarrow$ iii): If $\bfD$ is $\bG$-coercive, then it is bounded from below. Then any minimizing sequence which converges to a minimizer of $\bfD$ by the lower semicontinuity of $\bfD$ with respect to the weak topology. Then the implication follows by Lemma \ref{minimizer}. \\
Now assume $\bG$ contains a maximal torus of $\Aut_T(X, D, \Theta)$.\\
iii) $\Rightarrow$ i): 
Assume $\bfM$ is not $\bG$-coercive. Let $\vphi$ be a solution to \eqref{MA}. By Lemma \ref{minimizer_equal},
$\bfM(\vphi) = \bfD(\vphi)$.
By the assumption and Lemma \ref{J_convex}(ii), 
there exists a sequence of $\vphi_j\in \mcE^1_{T\times K}$, where $\bfJ_g(\vphi_j) = \inf_{\sigma\in \bT}\bfJ_g(\sigma^*\vphi_j)$, such that
$\bfM(\vphi_j)\leq \delta_j \bfJ_g(\vphi_j) - C_j$, where $\delta_j \to 0$, and $C_j\to \infty$.
Since the entropy $\bfH_{g,\Theta}(\vphi_j)$ is positive, we have $\bfJ_g(\vphi_j)\to \infty$ or else
$\bfH_{g,\Theta}(\vphi_j)\leq \delta_j \bfJ_g(\vphi_j)+(\bfI_g-\bfJ_g)(\vphi_j)-C_j\to -\infty$.
Let $\Phi_j(t)$ be the geodesic ray, that emanates from $\vphi$ and passes through $\vphi_j$. Denote the distance between $\vphi$ and $\vphi_j$
by $T_j$. Then $\Phi_j(T_j) = \phi_j$.
Since $\bfD\leq \bfM$, by the convexity of $\bfD$, for $t\in [0,T_j]$, we have
$\bfD(\Phi_j(t)) \leq \bfD(\vphi)+\frac{t}{T_j}(\bfD(\vphi_j)-\bfD(\vphi)) \leq (1-\frac{t}{T_j}) \bfD(\vphi) + (\delta_j-\frac{C_j}{T_j})t$.
As $j\to\infty$, up to choosing a subsequence, $\Phi_j$ converges weakly to a geodesic ray $\Phi$. 
From our construction, we can see that $\Phi$ is not an orbit of $\TT$ (see Lemma \ref{destablizer}).
By the lower semi-continuity of $\bfD$,
$\bfD(\Phi(t)) \leq \lim_{j\to\infty} \bfD(\Phi_j(t)) \leq \bfD(\vphi)$.
Then for any $t$, $\Phi(t)$ is a minimizer of $\bfD$. By Lemma \ref{minimizer}, $\Phi(t)$ is a solution of \eqref{MA}.
By Lemma \ref{lem-unique}.(ii), $\Phi$ is in an orbit of $\bT$, which is a contradiction.
\end{proof}

The proof of the corollary below essentially follows from \cite{BBEGZ}.
\begin{corollary}
\label{C^0}
Let 
\begin{equation}
\pi: \tilde{X}\rightarrow X
\end{equation}
be a resolution of $X$. If $\ddc\vphi$ is a weak solution to K\"{a}hler-Ricci soliton,
then $\pi^*\vphi \in L^\infty(\tilde{X})$.
\end{corollary}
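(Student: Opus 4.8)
The plan is to transfer \eqref{MA} to the resolution and then apply a Ko\l odziej-type $L^\infty$-estimate for complex Monge-Amp\`ere equations with $L^p$ density. First I would set $\theta:=\pi^*\omega_0$, a smooth \emph{semipositive} form which is big since $\int_{\tX}\theta^n=\int_X\omega_0^n>0$. Because $\theta\ge 0$, every constant is $\theta$-psh, so the envelope $V_\theta=\sup{}^*\{v\ \theta\text{-psh}:v\le 0\}$ vanishes identically; consequently a $\theta$-psh function has minimal singularities exactly when it is bounded. Writing $\tilde u:=\pi^*(\vphi-\vphi_0)$ (which is $\theta$-psh) and recalling that $\pi^*\vphi_0$ is continuous, the conclusion $\pi^*\vphi\in L^\infty(\tX)$ is therefore equivalent to $\tilde u$ having minimal singularities. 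Normalizing $\sup_{\tX}\tilde u=0$, which is possible since $\tilde u$ is usc on the compact $\tX$, gives the upper bound for free, so only a uniform lower bound on $\tilde u$ is needed.

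Next I would make the pulled-back equation explicit. From $\rho^*d\mu_0=\prod_i|z_i|^{2a_i}e^{-\tilde\psi_\alpha}f(z)\,\Omega$ and the klt hypothesis, the density $h:=\prod_i|z_i|^{2a_i}e^{-\tilde\psi_\alpha}f$ lies in $L^p(\tX,\Omega)$ for some $p>1$, where $\Omega$ is a fixed smooth volume form on $\tX$. Since $g$ is smooth and strictly positive on the compact polytope $P$, the factor $g_{\pi^*\vphi}$ is pinched between two positive constants. Hence \eqref{MA} reads on $\tX$ as
\begin{equation}
(\theta+\ddc\tilde u)^n=e^{-\tilde u}\,F\,\Omega,\qquad F:=\frac{n!}{g_{\pi^*\vphi}}\,h\in L^p(\tX,\Omega),\quad p>1,
\end{equation}
and since $\vphi$ is a finite-energy solution the left-hand side is a genuine non-pluripolar Monge-Amp\`ere measure of fixed total mass $\bV_g$.

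The core of the proof is to upgrade the integrability of $e^{-\tilde u}$ enough that the right-hand density becomes $L^{p'}$ for some $p'>1$. I would begin from the uniform Skoda--Zeriahi estimate, which provides a fixed $\alpha_0>0$ with $\int_{\tX}e^{-\alpha_0\tilde u}\,\Omega\le C_0$ for every normalized $\theta$-psh function. When $\alpha_0$ is below the conjugate exponent $p/(p-1)$ this is not yet sufficient, so I would bootstrap using the equation itself: on a sublevel set $\{\tilde u<-t\}$ one has $e^{t}\int_{\{\tilde u<-t\}}F\,\Omega\le\int_{\{\tilde u<-t\}}(\theta+\ddc\tilde u)^n\le\bV_g$, so the $F\,\Omega$-mass of the sublevel sets decays exponentially in $t$. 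Feeding this decay through the volume--capacity comparison of Ko\l odziej (which uses $F\in L^p$) successively enlarges the exponent $s$ for which $\int_{\tX}e^{-s\tilde u}\,\Omega<\infty$, until $\tfrac1s+\tfrac1p<1$; at that point H\"older's inequality gives $e^{-\tilde u}F\in L^{p'}(\Omega)$ with $p'>1$. Finally the $L^\infty$-estimate of Ko\l odziej, extended to big and nef classes in \cite{BBEGZ}, applies to $(\theta+\ddc v)^n=G\,\Omega$ with $G\in L^{p'}$ and $V_\theta$ bounded, yielding $\mathrm{osc}_{\tX}\tilde u\le C(p',\|e^{-\tilde u}F\|_{L^{p'}})$ and hence the sought lower bound $\tilde u\ge -C$.

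I expect the integrability bootstrap to be the main obstacle. The uniform Skoda estimate only delivers a fixed, possibly small, threshold $\alpha_0$, whereas the Ko\l odziej estimate needs $\int_{\tX}e^{-s\tilde u}\,\Omega<\infty$ for $s>p/(p-1)$, which may be much larger. Closing this gap is exactly where one must exploit that $\tilde u$ genuinely \emph{solves} the equation, through the coupling of the $e^{-\tilde u}$ weight with the finite total mass $\bV_g$ on sublevel sets, rather than treating $e^{-\tilde u}F$ as an externally prescribed $L^p$ density.
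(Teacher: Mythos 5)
Your endgame --- rewriting the equation on $\tX$ as $(\theta+\ddc\tilde u)^n=e^{-\tilde u}F\,\Omega$ with $F\in L^p(\Omega)$, $p>1$, and invoking the Ko\l odziej/EGZ $L^\infty$-estimate once the full density lies in $L^{p'}(\Omega)$ for some $p'>1$ --- is exactly how the paper concludes. The gap is in the step that is supposed to produce the extra integrability of $e^{-\tilde u}$. The sublevel-set inequality you extract from the equation, $\int_{\{\tilde u<-t\}}F\,\Omega\le Ce^{-t}$, only yields $\int_{\tX}e^{-s\tilde u}F\,\Omega<\infty$ for $s<1$; this is precisely the borderline case (at $s=1$ the integral is the total Monge--Amp\`ere mass), and no H\"older manipulation converts it into $e^{-\tilde u}F\in L^{p'}(\Omega)$ with $p'>1$: any such attempt needs either $\int e^{-s\tilde u}F\,\Omega<\infty$ for some $s>1$, or $\int e^{-s\tilde u}\,\Omega<\infty$ for $s$ large compared with $p/(p-1)$, and you have neither. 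The claimed ``successive enlargement of the exponent via volume--capacity comparison'' is asserted rather than proved, and it is not clear it can close: decay of the $F\,\Omega$-mass of $\{\tilde u<-t\}$ gives no control on its $\Omega$-mass in the region where $F$ is small (e.g.\ near the exceptional divisor or the singular locus of $\psi$), and Skoda--Zeriahi supplies only a fixed, possibly tiny, exponent $\alpha_0$.

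The missing input is variational, and it is the actual content of the paper's proof of Corollary \ref{C^0}: since a solution exists, Theorem \ref{equivalent}, iii) $\Rightarrow$ i), gives the coercivity $\bfM(\vphi)\ge \frac{\delta}{2}\bfJ(\sigma^*\vphi)-C$ for some $\sigma$ in the torus $\bT$; after replacing $\vphi$ by $\sigma^*\vphi$, Legendre duality (using $\bfL_\Theta^*(d\nu)=\bfH_{g,\Theta}(\vphi)$ and $\bfE_g^*(d\nu)=\bfI_g(\vphi)-\bfJ_g(\vphi)$) turns this into $\int_X e^{-pu}\,d\mu_0\le C\,e^{-p\bfE_g(u)}$ with $p=1+\epsilon>1$ determined by the coercivity constant. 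This gives $e^{-u}\in L^{p}(d\mu_0)$ in one stroke; combined with the klt condition and H\"older it produces an $L^{p'}$ density on $\tX$, and the EGZ machinery applies as you describe. You should therefore replace the capacity bootstrap by this coercivity-plus-duality step, or else supply a genuine proof that the iteration increases the exponent, which the present sketch does not contain.
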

\begin{proof}
Let $\bG = \Aut_T(X, D, \Theta)$.
By iii) $\Rightarrow$ i) in Theorem \ref{equivalent}, there exists a $\sigma\in \TT$, such that
\begin{align*}
\bfM(\vphi) \geq \frac{\delta}{2} \bfJ(\sigma^* \vphi) - C. 
\end{align*}
Since $\bfM$ is $\bG$-invariant, and the regularity of $\vphi, \sigma^* \vphi$ are the same, we will rename $\sigma^* \vphi$
as $\phi$, and have
\begin{align*}
\bfM(\vphi) \geq \frac{\delta}{2} \bfJ(\vphi) - C .
\end{align*}
Denote $d\nu = g_\vphi \frac{(\ddc\vphi)^n}{n!}$.
By duality, this implies
\begin{align*}
\bfL_\Theta^*(d\nu) - \bfE_g^*(d\nu) \geq \epsilon \bfE_g^*(d\nu) - C .
\end{align*}
Let $p = 1+\epsilon>1$. 

We have the Legendre transforms
\begin{align}
\bfL_\Theta^*(d\nu) &= \sup_{v\in \mcE^1_T}\left\{\int v d\nu - \log(\int_Xe^{v}d\mu)\right\},\\
\bfE_g(u) &= \inf_{d \nu\in \mcM^1_T}\left\{\int_X u d\nu + \bfE_g^*(d\nu)\right\} .
\end{align}
Together with the properness inequality, we have
\begin{align}
\log(\int_X e^{-p u}d\mu) &= \sup_{d\nu\in \mcM^1_T}\left\{\int_X -p u d\nu - \bfL_\Theta^*(d\nu)\right\}\\
&\leq \sup_{d\nu\in \mcM^1_T}\left\{-p\int_X u d\nu - p \bfE_g^*(d\nu)\right\}-C p\\
&\leq -p \inf_{d\nu\in \mcM^1_T}\left\{\int_X u d\nu + \bfE_g^*(d\nu) \right\}-C p
\leq -p \bfE_g(u) -C p ,
\end{align}

\begin{align}
\int_X e^{-p u}d\mu \leq C\cdot e^{-p \bfE_g(u)} .
\end{align}
This implies $e^{-u} \in L^p(d\mu)$. As $| f_\vphi |<C$, $\MA_g(\vphi)$ is comparable with $\MA(\vphi)$.
Then the machinary of \cite{EGZ09} can be applied to obtain the $L^\infty$-estimate. 
\end{proof}

\begin{proposition}
\label{higher_regularity}
Assume that $g>0$ is smooth on $P$ and let $\vphi$ be the solution  to \eqref{MA}. 
Let $S = E \cup D' \cup \{x\in \tX:\pi^*\psi \text{ is singular at $x$}\}$.
Then $\pi^*\vphi$ is smooth on $X\setminus S$.
\end{proposition}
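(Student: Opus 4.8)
The plan is to show that on the open set $U=\tX\setminus S$, over which $\pi$ restricts to an isomorphism onto an open subset of $X^{\mathrm{sm}}$ and all the data are smooth, the weak solution $\vphi$ satisfies a nondegenerate complex Monge-Amp\`ere equation to which the standard elliptic bootstrap applies. First I would rewrite \eqref{MA} on $U$ in the non-degenerate form
\[
(\ddc\vphi)^n = \frac{1}{g_\vphi}\, e^{-(\vphi-\vphi_0)+h_0}\,\omega_0^n .
\]
On $U$ the density $e^{h_0}$ is smooth and strictly positive: by the klt assumption the weight $\prod_i|z_i|^{2a_i}$ appearing in $\rho^*d\mu_0$ is smooth and nonvanishing away from $E\cup D'$, and $\pi^*\psi$ is smooth away from its singular locus, all of which are excluded by $S$. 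Since the moment polytope $P=\bfm_\vphi(\tX)$ is compact and $g>0$ is smooth on $P$, we have $c_1\le g_\vphi\le c_2$ for positive constants $c_1,c_2$, so the right-hand side is a bounded, strictly positive density. Combined with Corollary \ref{C^0}, which already gives $\pi^*\vphi\in L^\infty$, the equation is a genuine (a priori only $L^\infty$) complex Monge-Amp\`ere equation on $U$.

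Next I would run the usual chain of interior a priori estimates on relatively compact subsets of $U$: the $C^0$ bound (Corollary \ref{C^0}), a gradient $C^1$ bound, an Aubin--Yau Laplacian $C^2$ bound yielding uniform ellipticity, the complex Evans--Krylov theorem for an interior $C^{2,\alpha}$ bound, and finally Schauder estimates applied to the differentiated equation to bootstrap to $C^\infty$. The one structural feature that distinguishes this from the classical Aubin--Yau setting is that the right-hand side depends not only on $(x,\vphi)$ but also on the first derivatives of $\vphi$, through $g_\vphi=g(\theta_1(\vphi),\dots,\theta_r(\vphi))$ with $\theta_\alpha(\vphi)=\theta_\alpha(\vphi_0)+\xi_\alpha(\vphi-\vphi_0)$ being first order in $\vphi$.

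The main obstacle is therefore the second-order estimate: differentiating $\log\bigl((\ddc\vphi)^n/\omega_0^n\bigr)=-\log g_\vphi-(\vphi-\vphi_0)+h_0$ twice produces a term $\ddc\log g_\vphi=\ddc f_\vphi$ that formally carries third derivatives of $\vphi$, which the naive computation cannot absorb. To control it I would use the Hamiltonian identities $\iota_{\xi_\alpha}\ddc\vphi=\tfrac{\sqrt{-1}}{2\pi}\bar\partial\theta_\alpha$ and \eqref{eq-HamVfvphi}, namely $\tfrac{\sqrt{-1}}{2\pi}\bar\partial f_\vphi=\iota_{V_{f,\vphi}}\ddc\vphi$ with $V_{f,\vphi}=\sum_\alpha f_\alpha\xi_\alpha$, which show that $\bar\partial f_\vphi$ and the derivatives of the $\theta_\alpha$ are themselves controlled by $\ddc\vphi$ and by the fixed holomorphic vector fields $\xi_\alpha$; together with the fact that all derivatives of $\log g$ are bounded on the compact polytope $P$, this lets one absorb the problematic terms into the good negative term of the Laplacian inequality exactly as in the K\"ahler--Ricci soliton case (cf. \cite{Zhu00,CTZ05,DaS16}). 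Once the $C^2$ bound and hence uniform ellipticity are in hand, the continuity of $\vphi$ from \cite{EGZ09} and the complex Evans--Krylov theorem give the interior $C^{2,\alpha}$ bound, and differentiating the equation and iterating Schauder estimates yields smoothness of $\pi^*\vphi$ on $\tX\setminus S$, as claimed.
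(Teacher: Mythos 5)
Your plan is essentially the paper's: both identify the third-derivative term coming from $\ddc\log g_\vphi$ as the only new difficulty in the $C^2$ estimate, and both kill it using the Hamiltonian identity $\frac{\sqrt{-1}}{2\pi}\bar\partial f_\vphi=\iota_{V_{f,\vphi}}\ddc\vphi$ together with the fact that the moment functions $\theta_\alpha(\vphi)-\theta_\alpha(\vphi_0)=\xi_\alpha(u)$ are uniformly bounded because the moment polytope is fixed; the endgame ($C^0$ from Corollary \ref{C^0} and \cite{EGZ09}, Chern--Lu for $C^2$, Evans--Krylov, Schauder bootstrap) is identical.

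Two steps in your write-up are asserted rather than arranged, and the paper's proof exists precisely to arrange them. First, you propose to "run interior a priori estimates" on the solution $\vphi$ itself, but $\vphi$ is a priori only a bounded finite-energy solution, so it cannot be differentiated; the paper instead sets up a two-parameter continuity method $e^{tf_{\vphi_{\epsilon,t}}}(\ddc\vphi_{\epsilon,t})^n=d_{\epsilon,t}e^{\psi^+_\epsilon-\psi^-_\epsilon}\Omega$ with $\omega_\epsilon=\omega+\epsilon\tilde\omega$ and Demailly-regularized densities, proves uniform estimates for the smooth approximants, and passes to the limit. Second, the "interior" localization of the $C^2$ estimate is not automatic: the Aubin--Yau/Chern--Lu computation is a global maximum-principle argument, and the paper localizes it by adding a $T$-invariant quasi-psh exhaustion $\bar\psi$ (with $-\bar\psi\to+\infty$ at $\partial U$) to the test quantity $\log(n+\Delta u_{\epsilon,t})-A(u_{\epsilon,t}+\bar\psi)+\psi^-_\epsilon$. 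This barrier does double duty: it forces the maximum to occur at an interior point $x_0\in U$, and at $x_0$ the first-order condition $\xi_\alpha(\log(n+\Delta u)-A(u+\bar\psi)+\psi^-_\epsilon)=0$ converts the dangerous term $f_\alpha\xi_\alpha(n+\Delta u)/(n+\Delta u)$ into $Af_\alpha\xi_\alpha(u+\bar\psi)-f_\alpha\xi_\alpha(\psi^-_\epsilon)$, which is bounded by the moment-map argument (one must also check $\xi_\alpha(\bar\psi)$ and $\xi_\alpha(\psi^\pm_\epsilon)$ are bounded, which the paper does by viewing them as moment coordinates for auxiliary Kähler classes). Without some such device your absorption of the third-derivative term "into the good negative term" does not close, since that term is not signed. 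Neither omission is a wrong idea, but both must be supplied for the proof to be complete.
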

\begin{proof}
We will state the proof briefly which basically follows from \cite{BBEGZ}.
For simplicity, we will denote $\vphi$ as the solution to \eqref{MA} on $\tX$. 
Let $\tilde{\omega} = \ddc\tilde{\vphi}$ be a $K\times T$-invariant K\"ahler metric on $\tX$.
The right-hand side of \eqref{MA} can be rewritten into the form
$e^{-\vphi-\vphi_0}d\mu_0 = e^{\psi^+-\psi^-}\Omega$, where $\psi^+,\psi^-$ are quasi-psh functions, i.e, 
$\ddc\psi^+,\ddc\psi^- \geq -C \tilde{\omega}$ for some constant $C>0$, 
$\Omega$ is a smooth non-degenerate volume form.
By the $L^\infty$-bound of $\vphi$ and the klt condition, $e^{-\psi^-}\in L^p(\omega_0^n)$ for some $p>1$.
Furthermore, for any open subset $U\subset \tX\setminus S$,
there exists a quasi-psh function $\bar{\psi}$ such that $U\subset \tX\setminus \{\bar{\psi}=-\infty\}$.
Without the loss of generality, we can set $U = \tX\setminus \{\bar{\psi}=-\infty\}$.

Let $\omega_\epsilon = \omega + \epsilon\tilde{\omega}$.
By Demailly's approximation result, there exist smooth approximations $\psi^\pm_\epsilon$ of $\psi^\pm$ which decreases to $\psi^\pm$,
such that 
$\ddc\psi^\pm_\epsilon\geq -C\tilde{\omega}$, and converges to $\ddc\psi^\pm$  weakly in current sense.
And $\|e^{-\psi^-_\epsilon}\|_{L^p(\omega_0^n)}$ is uniformly bounded. It's not hard to see that $\psi^\pm$ can be chosen to be $T$-invariant.
Then $(C+1)\tilde{\vphi}+\psi^\pm_\epsilon$ is a $T$-invariant K\"ahler metric on $\tX$, and $\xi_\alpha ((C+1)\tilde{\vphi}+\psi^\pm_\epsilon)$
is a coordinate function of the moment polytope corresponding to the K\"ahler class $[\ddc((C+1)\tilde{\vphi}+\psi^\pm_\epsilon)]$.
This implies $\xi_\alpha(\psi^\pm_\epsilon)$ is uniformly bounded.
For the same reason, since $\bar{\psi}$ can be approximated by $\bar{\psi}_\epsilon$, $\ddc\bar{\psi}_\epsilon\geq -C\tilde{\omega}$,
and $\bar{\psi}_\epsilon$ converges to $\bar{\psi}$ smoothly on $U$, we have
$\xi_\alpha(\bar{\psi})$ is uniformly bounded on $U$.

We will consider the following continuity method
\begin{equation}
e^{t f_{\vphi_{\epsilon,t}}} (\ddc\vphi_{\epsilon,t})^n = d_{\epsilon,t} e^{\psi^+_\epsilon-\psi^-_\epsilon}\Omega,
\end{equation}
where $\ddc\vphi_{\epsilon,t} = \omega_\epsilon + \ddc u_{\epsilon,t}$, $0\leq t\leq 1$, $0\leq \epsilon \ll 1$, and $d_{\epsilon,t}$ is a constant to
balance the cohomologous equality.
At $t=0$, by \cite[Lemma 3.6]{BBEGZ}, there exists a solution $u_{\epsilon,0}$ which is smooth on $\tX\setminus S$.
The openness over $t$ and $\epsilon$ is clear. The uniform $C^0$-estimate follows from the similar proof of corollary \ref{C^0}.
Next we need to show the uniform $C^2$-estimate.

Let $\theta_{\alpha,\omega_\epsilon}$ (abbreviated as $\theta_\alpha$ below) be the Hamiltonian function that corresponds to the holomorphic vector field  $\xi_\alpha$ with respect to $\omega_\epsilon$. 
We have $\theta_{\alpha,\vphi_{\epsilon,t}} = \theta_{\alpha}+\xi_\alpha(u_{\epsilon,t})$,
$\partial_{\bar{j}} f_{\vphi_{\epsilon,t}} = f_\alpha \partial_{\bar{j}}\theta_{\alpha,\vphi_{\epsilon,t}} = f_\alpha (\partial_{\bar{j}}\theta_\alpha + \xi_\alpha(\partial_{\bar{j}}u_{\epsilon,t}))$. Then
\begin{equation}
\label{equation3}
\Delta  f_{\vphi_{\epsilon,t}} \geq f_\alpha \xi_\alpha(n+\Delta u_{\epsilon,t}) - C (n+\Delta u_{\epsilon,t}) - C,
\end{equation}
where $\Delta$ is with respect to $\omega_\epsilon$. 
By Chern-Lu's inequality, 
\begin{align}
\begin{split}
\Delta_{\vphi_{\epsilon,t}} \log(n+ \Delta u_{\epsilon,t})  &\geq \frac{\Delta f-\Delta\psi^-_\epsilon}{n+\Delta u_{\epsilon,t}} -C - \sum_i \frac{C}{1+\partial_{i\bar{i}}u_{\epsilon,t}}\\
&\geq \frac{f_\alpha \xi_\alpha(n+\Delta u_{\epsilon,t})}{n+\Delta u_{\epsilon,t}}-\frac{\Delta\psi^-_\epsilon}{n+\Delta u_{\epsilon,t}} -C - \sum_i \frac{C}{1+\partial_{i\bar{i}}u_{\epsilon,t}} ,
\end{split}
\end{align}
where the constant $C>0$ above may change from line by line, but is uniformly bounded.
We will apply the maximum principle to $\log(n+\Delta u_{\epsilon,t})-A(u_{\epsilon,t}+\bar{\psi})+\psi^-_{\epsilon}$ for some $A$ sufficiently large.
Since $-\bar{\psi}$ approaches to $+\infty$ near the boundary of $U$, 
$\log(n+\Delta u_{\epsilon,t})-A (u_{\epsilon,t}+\bar{\psi})-\psi^-_\epsilon$ will achieve its maximal value at a point $x_0\in U$.

At  $x_0$,  $\xi_\alpha(\log(n+\Delta u_{\epsilon,t})-A (u_{\epsilon,t}+\bar{\psi})+\psi^-_\epsilon) = 0$;
$\xi_\alpha u_{\epsilon,t} = \theta_{\alpha,\vphi_{\epsilon,t}}-\theta_\alpha$ is uniformly bounded, 
and $\xi_\alpha \psi^\pm_\epsilon, \xi_\alpha \bar{\psi}$ are uniformly bounded. 
Then  at $x_0$,
\begin{align*}
\frac{f_\alpha \xi_\alpha(n+\Delta u_{\epsilon,t})}{n+\Delta u_{\epsilon,t}} &= f_\alpha \xi_\alpha(\log(n+\Delta u_{\epsilon,t})-A (u_{\epsilon,t}+\bar{\psi})+\psi^-_\epsilon)+ \\
& A f_\alpha \xi_\alpha(u_{\epsilon,t}+\bar{\psi}) - f_\alpha \xi_\alpha(\psi^-_\epsilon) \\
&= A f_\alpha (\theta_\alpha(\vphi)-\theta_\alpha + \bar{\psi}) - f_\alpha \xi_\alpha(\psi^-_\epsilon), 
\end{align*}
which is bounded.
Then
\begin{equation}
\Delta_{\vphi} (\log(n+\Delta u_{\epsilon,t})-A (u_{\epsilon,t}+\bar{\psi}) + \psi^-_\epsilon) \geq -C + \sum_{i}\frac{C}{1+\partial_{i\bar{i}}u_{\epsilon,t}} .
\end{equation}

Then
\begin{equation}
[\frac{n+\Delta u_{\epsilon,t}}{\prod_{i=1}^n (1+\partial_{i\bar{i}}u_{\epsilon,t})}]^{\frac{1}{n-1}} \leq \sum_{i}\frac{1}{1+\partial_{i\bar{i}}u_{\epsilon,t}} \leq C .
\end{equation}
Since $ f_{\vphi_{\epsilon,t}}$ is uniformly bounded, $e^{\psi^+}$ is uniformly bounded above, 
we have $n+\Delta u_{\epsilon,t}\leq (\ddc\vphi_{\epsilon,t})^n \leq C e^{-\psi^-}$.
This gives the uniform $C^2$-estimate on $U$.

The uniform $C^2$-estimate implies equation \eqref{MA} is a uniform elliptic equation. Then we can apply the standard Krylov-Evans estimate (which is a local estimate) to obtain a uniform $C^{2,\alpha}$-bound. The standard maximum principle would then imply the higher order estimates.
\end{proof}

\section{Generalized Mabuchi functional and its convexity}
\label{generalized-Mabuchi}

In this section, we will discuss generalized Mabuchi functional and generalized Futaki invariant. In particular, we will show that $\bfM$ is convex along a weak geodesic.

\subsection{Generalized Mabuchi functional $\bfM$}
\label{convexity}

Generalized Mabuchi functional $\bfM$ can be reformulated into:
\begin{align}
\label{Mabuchi_2}
\bfM(\vphi) &= \int_X \log\left(\frac{g_\vphi (\ddc\vphi)^n}{e^{ f_0} \omega_0^n}\right)g_\vphi \frac{(\ddc\vphi)^n}{n!}
-(\bfI_g(u)-{\bfJ_g}(u)) - \int_X (h_0- f_0) g_\vphi \frac{(\ddc\vphi)^n}{n!} ,
\end{align}
where $e^{ f_0} = g_{\vphi_0}$,
\begin{align*}
{\bfI_g}(\vphi)-{\bfJ_g}(\vphi) 
&= -\int_X  (\vphi-\vphi_0) g_\vphi \frac{(\ddc\vphi)^n}{n!} + \bfE_g(\vphi).
\end{align*}
Then
\begin{align}
\label{Mabuchi_1}
\begin{split}
& \bfM(\vphi) = \int_X \log(\frac{(\ddc\vphi)^n}{e^{-u+h_0} \omega_0^n})g_\vphi \frac{(\ddc\vphi)^n}{n!} 
 + \int_X   f_\vphi  g_\vphi \frac{(\ddc\vphi)^n}{n!}  - \bfE_g(\vphi) \\
&= \int_X \log(\frac{(\ddc\vphi)^n/n!}{e^{-u} d\mu_0})g_\vphi \frac{(\ddc\vphi)^n}{n!} 
 + \int_X   f_\vphi  g_\vphi \frac{(\ddc\vphi)^n}{n!}  - \bfE_g(\vphi)  .
\end{split}
\end{align}

Following \cite{TZ99} which generalizes \cite{Fut83}, we define a generalized Futaki invariant as
\begin{definition}
For $\vphi\in \mcH(X, L)$, set $e^{h_\vphi}=\frac{n! \cdot e^{-\vphi-\psi}}{(\ddc\vphi)^n}$ and $ f_\vphi =\log g_{\vphi}$.
For any $\xi\in \mathfrak{aut}_T(X, D, \Theta)$ (see \eqref{eq-autT}), we define:
\begin{align}
\label{Futaki}
\Fut_g(\xi):=\Fut_g([\omega], \xi):=\Fut_g(\ddc\vphi, \xi) = \int_{X} \xi (h_\vphi -  f_\vphi ) e^{f_\vphi} \frac{(\ddc\vphi)^n}{n!}.
\end{align}
\end{definition}
To see that the Futaki invariant is well-defined, we need to verify that the integral does not depend on the choice of K\"{a}hler metrics. Before explaining this fact in Proposition \ref{prop-Fut}, which is in some sense well-known, we first introduce some notation.

For any $\xi\in \mathfrak{aut}_T(X, D, \Theta)$ (see \eqref{eq-autT}), by definition we have the vanishing:
\begin{equation}\label{eq-iotavan}
\iota_\xi \Theta=\frac{\sqrt{-1}}{2\pi}\bar{\partial}(\xi(\psi))=0.
\end{equation} 
This together with $\iota_\xi \{D\}=0$ implies that $\xi$ lifts to induce an infinitesimal action $\tilde{\xi}$ on
(Cartier multiples of the $\bQ$-)line bundles $L=-K_X-D-B$ and hence on $B$. Moreover the vanishing \eqref{eq-iotavan} implies that the function
\begin{equation}
\chi_\psi(\tilde{\xi}):=\frac{\mathcal{L}_{\tilde{\xi}} e^{-\psi}}{e^{-\psi}}=-\tilde{\xi}(\psi)
\end{equation}
is globally a constant. It is well-known that different liftings of $\xi$ differ by a rescaling vector field along the fibre of the line bundle. In particular, by choosing $\tilde{\xi}_*=\tilde{\xi}+\tilde{\xi}(\psi)w\frac{\partial}{\partial w}$ where $w$ is a linear variable along the $\bC$-fibre, we get
\begin{equation}\label{eq-canlift}
\chi_\psi(\tilde{\xi}_*)=0.
\end{equation}
We call the lifting $\tilde{\xi}_*$ that satisfies \eqref{eq-canlift} the canonical lifting of $\xi$.
\begin{proposition}\label{prop-Fut}
With the canonical lifting of $\xi$ satisfying \eqref{eq-canlift}, we have the following formula for the generalized Futaki invariant:
\begin{equation}\label{eq-Futform}
\Fut_g(\ddc\vphi, \xi)=-\int_X \theta_{\xi,\vphi}e^{f_\vphi}\frac{(\ddc\vphi)^n}{n!},
\end{equation}
where $\theta_{\xi, \vphi}$ is the moment function associated to the canonical lifting of $\xi$ with respect to $\ddc\vphi$.
As a consequence, the Futaki invariant is well-defined and independent of the choice of $\ddc\vphi$.
\end{proposition}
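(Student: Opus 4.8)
The plan is to first convert the definition \eqref{Futaki} into the moment-function form \eqref{eq-Futform} by a single global integration by parts, and then to deduce independence of $\ddc\vphi$ by recognizing both sides as equivariant-cohomological integrals handled through the fibration construction of Proposition \ref{MA_eta}. The starting point is the pointwise Ricci-potential identity on $\tX^{\rm sm}\setminus D$, namely $\ddc h_\vphi = Ric(\ddc\vphi) - \ddc\vphi - \Theta$, which is immediate from $e^{h_\vphi} = n!\,e^{-\vphi-\psi}/(\ddc\vphi)^n$ and $Ric(\ddc\vphi) = -\ddc\log(\ddc\vphi)^n$. Contracting with the holomorphic field $\xi$ and using the elementary identity $\iota_\xi\ddc u = \frac{\sqrt{-1}}{2\pi}\bar\partial(\xi u)$ (valid for holomorphic $\xi$), together with the vanishing $\iota_\xi\Theta=0$ of \eqref{eq-iotavan}, gives $\frac{\sqrt{-1}}{2\pi}\bar\partial\,\xi(h_\vphi) = \iota_\xi Ric(\ddc\vphi) - \frac{\sqrt{-1}}{2\pi}\bar\partial\theta_{\xi,\vphi}$. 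The canonical-lifting normalization \eqref{eq-canlift} then pins down the additive constant in $\theta_{\xi,\vphi}$ and forces the $\Theta$-moment contribution to vanish, so this encodes $\xi(h_\vphi) = \theta^{Ric}_{\xi,\vphi} - \theta_{\xi,\vphi}$, where $\theta^{Ric}_{\xi,\vphi}$ is the (canonically normalized) moment function of $Ric(\ddc\vphi)$.

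The key reduction is then as follows. Rewriting the measure via $e^{h_\vphi}\frac{(\ddc\vphi)^n}{n!} = e^{-\vphi-\psi} = d\mu_\vphi$, the integrand becomes $\xi(h_\vphi-f_\vphi)\,g_\vphi\frac{(\ddc\vphi)^n}{n!} = -\xi\!\left(e^{f_\vphi-h_\vphi}\right)d\mu_\vphi$, and Stokes' theorem in the form $\int_{\tX} \mathcal{L}_\xi(\,\cdot\, d\mu_\vphi)=0$ (valid on the compact resolution) turns $\Fut_g$ into $\int_{\tX}\mathrm{div}_{\mu_\vphi}(\xi)\,g_\vphi\frac{(\ddc\vphi)^n}{n!}$. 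It therefore remains to prove the integrated identity $\int_{\tX}\big(\mathrm{div}_{\mu_\vphi}(\xi)+\theta_{\xi,\vphi}\big)g_\vphi\frac{(\ddc\vphi)^n}{n!}=0$, which via $\mathrm{div}_{\mu_\vphi}(\xi)=\xi(h_\vphi)+\Delta_\w\theta_{\xi,\vphi}$ is equivalent to $\int_{\tX}\big(\theta^{Ric}_{\xi,\vphi}+\Delta_\w\theta_{\xi,\vphi}\big)g_\vphi\frac{\w^n}{n!}=0$. When $g\equiv1$ this is the classical Futaki--Calabi identity (there $\int\Delta_\w\theta_{\xi,\vphi}\,\w^n=0$ automatically and $\int\theta^{Ric}_{\xi,\vphi}\,\w^n=0$ by the standard computation). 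The genuinely new feature is the weight $g_\vphi=e^{f_\vphi}$, and I expect this to be the main obstacle: the identity $\mathrm{div}_{\mu_\vphi}(\xi)=-\theta_{\xi,\vphi}$ fails pointwise (already for $\xi=z\partial_z$ on $\PP^1$), so the statement is genuinely an integrated one and the weight must be absorbed globally.

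To handle the weight I would exploit the fibration construction from the proof of Proposition \ref{MA_eta}. For a monomial $g=\prod_\alpha y_\alpha^{k_\alpha}$, pass to the bundle $\tX^{\bvk}$ with $\ddc\vphi^{\bvk}=\ddc\vphi+\sum_\alpha\theta_\alpha\,\ddc\vphi^{\FS}_\alpha$; since $\xi$ commutes with $T_\bC$ it lifts acting trivially on the $\PP^{k_\alpha}$-factors, with moment function still $\theta_{\xi,\vphi}$, and the fibre integral of $(\ddc\vphi^{\bvk})^{n+k}/(n+k)!$ reproduces $g_\vphi\,\w^n/n!$ exactly as in Proposition \ref{MA_eta}. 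Under this lift the weighted identity on $\tX$ becomes the ordinary ($g\equiv1$) Futaki--Calabi identity for $\big(\tX^{\bvk},\xi\big)$, the relative canonical contribution of the $\PP^{k_\alpha}$-directions accounting precisely for the gap between $\theta^{Ric,\bvk}_{\xi}$ and the weighted Ricci-moment term downstairs. The case of a general polynomial $g$ follows by linearity, and arbitrary smooth $g>0$ by the same Stone--Weierstrass approximation as in Proposition \ref{MA_eta}, using the uniform bound $|g_\vphi|\le C$. This establishes \eqref{eq-Futform}.

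For independence of $\ddc\vphi$, the cleanest route is that, after the same fibration reduction, $-\int_{\tX}\theta_{\xi,\vphi}\,g_\vphi\frac{(\ddc\vphi)^n}{n!}$ is a limit of classical Futaki-type integrals over the fixed-class $\tX^{\bvk}$; such integrals are cohomological pairings, invariant under a change of K\"ahler potential within the class, and since the moment polytope and hence the whole construction are independent of $\vphi$ by \cite[Proposition 4.1]{BW14}, the value is unchanged. Alternatively one argues directly: differentiate $-\int_{\tX}\theta_{\xi,\vphi_s}g_{\vphi_s}\frac{(\ddc\vphi_s)^n}{n!}$ along a path $\vphi_s\in\mcH_T$, use $\dot\theta_{\xi,\vphi_s}=\xi(\dot\vphi_s)$ together with Lemma \ref{lem-etaPhi} for $\frac{d}{ds}g_{\vphi_s}=V_{g,\vphi_s}(\dot\vphi_s)$, and check that all terms assemble into the integral of a Lie derivative, which vanishes by Stokes. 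I expect the remaining delicate points to be the careful bookkeeping of the canonical-lift constants and the verification that the exceptional and boundary divisors of $\rho:\tX\to X$ contribute nothing (so that the integrations by parts are justified), but no new ideas beyond those already used in Proposition \ref{MA_eta} should be required.
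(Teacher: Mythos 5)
Your opening move coincides with the paper's: after the single integration by parts $\int_X \mathfrak{L}_\xi\bigl(e^{f_\vphi}\tfrac{(\ddc\vphi)^n}{n!}\bigr)=\int_X d\,\iota_\xi(\cdots)=0$, the invariant $\Fut_g(\xi)$ becomes $\int_X \tfrac{\mathfrak{L}_{\tilde{\xi}_*}e^{-\vphi-\psi}}{e^{-\vphi-\psi}}\,e^{f_\vphi}\tfrac{(\ddc\vphi)^n}{n!}$, and your independence argument (Duistermaat--Heckman measures of the $\la\xi\ra\times T_\bC$-action, equivariantly closed forms) is also the paper's. The problem is what you do in between.

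The genuine gap is your assertion that ``$\mathrm{div}_{\mu_\vphi}(\xi)=-\theta_{\xi,\vphi}$ fails pointwise (already for $\xi=z\partial_z$ on $\PP^1$)''. This is false: the logarithmic Lie derivative of the \emph{weighted} measure $d\mu_\vphi=e^{h_\vphi}\tfrac{(\ddc\vphi)^n}{n!}=e^{-\vphi-\psi}$ equals $-\theta_{\xi,\vphi}-\chi_\psi(\tilde{\xi}_*)$ \emph{pointwise}, the additive constant being exactly what the canonical lifting \eqref{eq-canlift} removes. (On $\PP^1$ with $\xi=z\partial_z$ and the Fubini--Study potential one finds $\mathfrak{L}_\xi d\mu/d\mu=\tfrac{1-|z|^2}{1+|z|^2}=-\theta_{\xi,\vphi}$ exactly; your example is not a counterexample.) Equivalently, in your notation the moment function of $Ric=-\ddc\log(\ddc\vphi)^n$ is $\theta^{Ric}_{\xi,\vphi}=-\mathfrak{L}_\xi(\ddc\vphi)^n/(\ddc\vphi)^n=-\Delta_\w\theta_{\xi,\vphi}$ up to the constant fixed by the lift, so your ``remaining integrated identity'' $\int_X(\theta^{Ric}_{\xi,\vphi}+\Delta_\w\theta_{\xi,\vphi})g_\vphi\tfrac{\w^n}{n!}=0$ is a pointwise triviality; with the opposite sign for $\theta^{Ric}$ it would instead read $\int_X 2\Delta_\w\theta_{\xi,\vphi}\,g_\vphi\tfrac{\w^n}{n!}=0$, which is false for nonconstant $g$ (already for $\xi=\xi_1$, $g=\theta_1$ the integral is $-\int|\partial\theta_1|^2_\w\,\w^n\neq0$). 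Because of this misdiagnosis, the fibration detour you build to ``absorb the weight'' is both unnecessary and, as written, not a proof: the crucial claim that ``the relative canonical contribution of the $\PP^{k_\alpha}$-directions accounts precisely for the gap'' is where all the content would sit and is never verified, and the classical identity you invoke on $\tX^{\bvk}$ does not apply verbatim there since $[\ddc\vphi^{\bvk}]\neq c_1(\tX^{\bvk})$ and $K_{\tX}^{\bvk}\neq K_{\tX^{\bvk}}$. The repair is to delete the detour entirely: once you have the Lie-derivative expression, the pointwise identity $\mathfrak{L}_{\tilde{\xi}_*}e^{-\vphi-\psi}/e^{-\vphi-\psi}=-\theta_{\xi,\vphi}-\chi_\psi(\tilde{\xi}_*)$ finishes the computation, which is exactly the paper's three-line proof.
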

\begin{proof}
We can calculate:
\begin{eqnarray*}
&&\int_X \xi(h_\vphi- f_\vphi )e^{f_\vphi}\frac{(\ddc\vphi)^n}{n!}=\int_X \xi\left(\log \frac{e^{-\vphi-\psi}}{(\ddc\vphi)^n/n!}- f_\vphi \right)e^{f_\vphi} \frac{(\ddc\vphi)^n}{n!}\\
&=&\int_X \frac{\mathfrak{L}_{\tilde{\xi}_*} e^{-\vphi-\psi}}{e^{-\vphi-\psi}} e^{f_\vphi}\frac{(\ddc\vphi)^n}{n!}=-\chi_{\psi}(\tilde{\xi}_*)-\int_X \theta_{\xi,\vphi} e^{f_\vphi}\frac{(\ddc\vphi)^n}{n!}.
\end{eqnarray*}
The above integration by parts on $X$ can be verified by lifting the integral to a resolution of $X$.
In case that $\xi$ generates a $\bC^*$-group $\la \xi\ra$ that commutes with $T_\bC$, the last integral can be calculated using the Duistermaat-Heckmann (DH) measure associated to the action of $\la \xi\ra\times T_\bC$ which does not depend on the choice of $\ddc\vphi$. The general case can be verified using the equivariantly closed differential forms as in \cite{Fut2}.

\end{proof}

\subsection{Alternative definition of generalized Mabuchi functional}
\label{Mabuchi_alternative}
In this subsection, we will discuss some equivalent formulas of $\bfM$. For simplicity, we require $X$ to be smooth.
We consider $\vphi$ as the metric of an ample $\RR$-line bundle of the form $L = -(K_X+D+\Theta)+H$, where $H$ is a $\RR$-line bundle.
An alternative definition of the generalized Mabuchi functional is by considering it as a Kempf-Ness functional: (see \cite{Mab86} and Appendix \ref{moment}.)
\begin{align}
\label{Mabuchi-double-integral}
\begin{split}
\bfM(\vphi) 
&= \int_0^1 \int_X \dot{\vphi}\Big(-R_\vphi + C_R +  \Delta f_\vphi  + {\rm tr}_\vphi(\ddc\psi+[D]) + V_{f,\vphi}( f_\vphi )\\
&\quad\quad +\frac{C_R}{n} \sum_\alpha f_\alpha  \mathfrak{L}_{\xi_\alpha}(\vphi) + \sum_\alpha  f_\alpha  \mathfrak{L}_{\xi_\alpha}(\log(\ddc\vphi)^n) \Big)g_\vphi \frac{(\ddc\vphi)^n}{n!} dt 
\end{split}
\end{align}
where 
$C_R$ is a constant such that $\int_X R_\vphi - {\rm tr}_\vphi(\ddc\psi+[D]) \frac{(\ddc\vphi)^n}{n!} =\int_X C_R \frac{(\ddc\vphi)^n}{n!}$,
which does not depend on the specific choice of $\vphi$. 
Note that we can write the integrand in terms of $\theta_\alpha:=\theta_\alpha(\vphi)=\mathfrak{L}_{\xi_\alpha}(\vphi)$ by the following identities:
\begin{eqnarray*}
&&\Delta  f_\vphi = f_{\alpha\beta}  (\theta_\alpha)_i (\theta_\beta)^i+ f_\alpha  \Delta \theta_\alpha, \quad
V_{f,\vphi}( f_\vphi )=  f_\alpha  f_\beta (\theta_\alpha)_i (\theta_\beta)^i\\
&&\mathfrak{L}_{\xi_\alpha}(\log (\ddc\vphi)^n)=\frac{\mathfrak{L}_{\xi_\alpha}(\ddc\vphi)^n}{(\ddc\vphi)^n}=\Delta \theta_\alpha.
\end{eqnarray*}
\begin{lemma}
When $L = -(K_X+D+\Theta)$, 
definition \eqref{Mabuchi-double-integral} is equivalent to \eqref{Mabuchi_2} and \eqref{Mabuchi_1}.
\end{lemma}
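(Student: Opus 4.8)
The assertion is the generalized Chen--Tian formula, and I would prove it by showing that the two expressions have the same first variation along a smooth path $\vphi_t\in\mcH_T(\tX,L)$ with $\dot\vphi_t=\eta$. Since \eqref{Mabuchi-double-integral} is by construction the line integral of the one-form $\eta\mapsto\int_X\eta\,(\cdots)\,g_\vphi\frac{(\ddc\vphi)^n}{n!}$ emanating from $\vphi_0$, it suffices to check that this integrand is exactly $\frac{d}{dt}$ of the explicit functional \eqref{Mabuchi_1}; the two then agree up to the additive constant $\bfM(\vphi_0)$, which is pinned down by the normalization \eqref{normalization}. Throughout I work on $X^{\rm sm}\setminus S$ and justify all integrations by parts by pulling back to the resolution $\pi:\tX\to X$ as in Proposition \ref{higher_regularity}, the klt hypothesis guaranteeing that the boundary contributions near $S=E\cup D'\cup\{\psi=-\infty\}$ vanish.

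First I would rewrite \eqref{Mabuchi_1} as $\bfM(\vphi)=\int_X(f_\vphi-h_\vphi)\,g_\vphi\frac{(\ddc\vphi)^n}{n!}-\bfE_g(\vphi)$, using $\log\big((\ddc\vphi)^n/(n!\,e^{-u}d\mu_0)\big)=-h_\vphi$ with $e^{h_\vphi}=n!\,e^{-\vphi-\psi}/(\ddc\vphi)^n$. Differentiating along $\vphi_t$ I use three ingredients: the defining property $\frac{d}{dt}\bfE_g=\int_X\eta\,g_\vphi\frac{(\ddc\vphi)^n}{n!}$; Lemma \ref{lem-etaPhi}, which gives $\frac{d}{dt}f_\vphi=V_{f,\vphi}(\eta)$ and hence $\frac{d}{dt}\big(g_\vphi\frac{(\ddc\vphi)^n}{n!}\big)=\big(V_{f,\vphi}(\eta)+\Delta\eta\big)g_\vphi\frac{(\ddc\vphi)^n}{n!}$; and the standard identity $\frac{d}{dt}h_\vphi=-\eta-\Delta\eta$ coming from $\frac{d}{dt}(\ddc\vphi)^n/n!=\Delta\eta\,(\ddc\vphi)^n/n!$, where $\Delta=\Delta_\vphi$ is the complex Laplacian of $\ddc\vphi$. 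After cancelling the $\int_X\eta\,g_\vphi\frac{(\ddc\vphi)^n}{n!}$ terms this yields the compact form
\[
\frac{d}{dt}\bfM(\vphi_t)=\int_X\big(1+f_\vphi-h_\vphi\big)\big(V_{f,\vphi}(\eta)+\Delta\eta\big)\,g_\vphi\frac{(\ddc\vphi)^n}{n!}.
\]

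The remaining and most delicate step is to integrate by parts so as to transfer $\Delta$ and $V_{f,\vphi}=\sum_\alpha f_\alpha\xi_\alpha$ off $\eta$. Writing $G=1+f_\vphi-h_\vphi$, self-adjointness of $\Delta$ with respect to $(\ddc\vphi)^n$ handles the $\Delta\eta$ term, while for the $V_{f,\vphi}(\eta)$ term I would use that each $\xi_\alpha$ is holomorphic with divergence $\mathrm{div}_{(\ddc\vphi)^n}\xi_\alpha=\mathfrak{L}_{\xi_\alpha}\log(\ddc\vphi)^n=\Delta\theta_\alpha$. To recognise the resulting density I substitute $\Delta(-h_\vphi)=n-R_\vphi+\tr_\vphi(\ddc\psi+[D])$, which follows from $\ddc\log(\ddc\vphi)^n=-\mathrm{Ric}_\vphi$ together with $\Delta\vphi=n$, $\Delta\psi=\tr_\vphi\ddc\psi$ and the current $[D]$ produced by the ramification term on $\tX$; I also use the elementary identities recorded before the lemma, namely $\Delta f_\vphi=f_{\alpha\beta}(\theta_\alpha)_i(\theta_\beta)^i+f_\alpha\Delta\theta_\alpha$, $V_{f,\vphi}(f_\vphi)=f_\alpha f_\beta(\theta_\alpha)_i(\theta_\beta)^i$ and $\mathfrak{L}_{\xi_\alpha}\log(\ddc\vphi)^n=\Delta\theta_\alpha$, to match the cross terms coming from $\Delta(Gg_\vphi)$ and from $\xi_\alpha(Gg_\vphi f_\alpha)$. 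Collecting everything, the density equals $g_\vphi$ times the bracketed integrand of \eqref{Mabuchi-double-integral}, with the constant $C_R$ entering precisely as the normalization $\int_X\big(R_\vphi-\tr_\vphi(\ddc\psi+[D])\big)\frac{(\ddc\vphi)^n}{n!}=\int_XC_R\frac{(\ddc\vphi)^n}{n!}$ and the weighted terms $\tfrac{C_R}{n}\sum_\alpha f_\alpha\theta_\alpha$, $\sum_\alpha f_\alpha\Delta\theta_\alpha$ appearing from the $V_{f,\vphi}$ integration by parts.

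I expect the main obstacle to be exactly this last identification: the bookkeeping of the weighted cross-terms and the emergence of $C_R$, and---more seriously---the justification of the integrations by parts in the singular, divisorial setting. For the latter I would carry out the computation on $\tX\setminus S$, approximating $\psi$ and the divisorial potentials by the smooth quasi-psh regularizations used in Proposition \ref{higher_regularity}, and control the boundary terms using the klt integrability $e^{-\psi}\in L^p$ and the uniform bounds on $\xi_\alpha(\psi^\pm_\epsilon)$ established there; a cut-off argument near $S$ then shows no boundary contribution survives. Once the first variations agree, both functionals are invariant under $\vphi\mapsto\vphi+c$ (for the double integral because $V_{f,\vphi}(1)=\Delta 1=0$, so its integrand annihilates constant directions), and integrating from $\vphi_0$ shows \eqref{Mabuchi-double-integral} and \eqref{Mabuchi_1} differ only by $\bfM(\vphi_0)$; with the normalization in force this reconciles the two formulas, completing the equivalence with \eqref{Mabuchi_2}, which was already identified with \eqref{Mabuchi_1}.
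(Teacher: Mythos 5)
Your proposal is correct and is essentially the paper's own computation run in reverse: the paper transforms \eqref{Mabuchi-double-integral} into \eqref{Mabuchi_1} by first using the identity $\sum_\alpha f_\alpha\big(\theta_\alpha+\Delta\theta_\alpha\big)=-V_{f,\vphi}(h_\vphi)$ (valid because $C_R=n$ here) and then integrating by parts in $X$ and in $t$, whereas you differentiate \eqref{Mabuchi_1} and move $(\Delta+V_{f,\vphi})$ onto $f_\vphi-h_\vphi$ using the same weighted self-adjointness --- the identical calculation read backwards. The only superfluous part is your discussion of boundary terms near the singular locus: this subsection explicitly assumes $X$ smooth, so those precautions, while harmless, are not needed for this lemma.
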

\begin{proof}
When $L = -(K_X+D+\Theta)$, $C_R = n$ and
\begin{eqnarray*}
\sum_\alpha  f_\alpha  \theta_\alpha+ f_\alpha  \Delta \theta_\alpha& =&\sum_\alpha  f_\alpha  \left(\mathfrak{L}_{\xi_\alpha}(\vphi)+ \mathfrak{L}_{\xi_\alpha} \log ((\ddc\vphi)^n)\right) \\
&=&\sum_\alpha  f_\alpha  \xi_\alpha \left(\log(\frac{(\ddc\vphi)^n/n!}{e^{-\vphi-\psi}})\right) = -V_{f,\vphi}(h_\vphi).
\end{eqnarray*}
Here we have used the canonical lifting $\mathfrak{L}_{\tilde{V}_\alpha}(\psi)=0$. 
Then \eqref{Mabuchi-double-integral} can be reduced to
\begin{align}
\begin{split}
\bfM(\vphi) &= \int_0^1 \int_X \dot{\vphi}(-R_\vphi + n +\Delta f_\vphi  + {\rm tr}_\vphi(\ddc\psi+[D]) - V_{f,\vphi}(h_\vphi- f_\vphi ))g_\vphi  \frac{(\ddc\vphi)^n}{n!} dt \\
&= -\int_0^1\int_X \dot{\vphi} \big( \Delta(h_\vphi- f_\vphi )+ V_{f,\vphi}(h_\vphi- f_\vphi ) \big) g_\vphi  \frac{(\ddc\vphi)^n}{n!}dt -C_0. 
\end{split}
\end{align}
The constant $C_0 = \int_X (h_0- f_0)e^{ f_0}\frac{\omega_0^n}{n!}$ in the second equality.
By using integration by parts, 
we can see the definition above is an integral of the generalized Futaki invariant (up to minus a constant).
Indeed, this alternative definition agrees with \eqref{Mabuchi_2} and \eqref{Mabuchi_1}. Since the proof is short, we will include it below.
\begin{eqnarray*}
&& -\int_0^1\int_X \dot{\vphi} \big( \Delta(h_\vphi- f_\vphi )+ V_{f,\vphi}(h_\vphi- f_\vphi ) \big) g_\vphi  \frac{(\ddc\vphi)^n}{n!}dt \\
&=& \int_0^1\int_X g_\vphi \frac{\sqrt{-1}}{2\pi} \partial(h_\vphi- f_\vphi )\wedge \bar\partial \dot{\vphi} \wedge \frac{(\ddc\vphi)^{n-1}}{(n-1)!} dt\\
&=& \int_0^1\int_X (h_\vphi- f_\vphi )(g_\vphi \frac{(\ddc\vphi)^n}{n!})'dt\\
&=& -\int_X (h_\vphi- f_\vphi ){g_\vphi}\frac{(\ddc\vphi)^n}{n!} + \int_X (h_0- f_0){g_{\vphi_0}}\frac{(\ddc\vphi_0)^n}{n!} + \int_0^1\int_X (h_\vphi- f_\vphi )' g_\vphi  \frac{(\ddc\vphi)^n}{n!} dt \\
&=& \int_X \big( -h_\vphi +  f_\vphi  \big) g_\vphi \frac{(\ddc\vphi)^n}{n!} - 
\int_0^1\int_X (-\Delta\dot{\vphi}-\dot{\vphi}-\dot{f}_\vphi) g_\vphi \frac{(\ddc\vphi)^n}{n!} dt +C_0\\
&=& \int_X \log(\frac{(\ddc\vphi)^n/n!}{e^{-u}d\mu_0}) g_\vphi  \frac{(\ddc\vphi)^n}{n!} + \int_X  f_\vphi  g_\vphi \frac{(\ddc\vphi)^n}{n!}
-\bfE_g(\vphi) + C_0 ,
\end{eqnarray*}
where  we used 
$\dot{f}_\vphi = V_{f,\vphi}(\dot{\vphi})$,
$(\Delta\dot{\vphi}+\dot{f}_\vphi)g_\vphi \frac{(\ddc\vphi)^n}{n!} = (g_\vphi \frac{(\ddc\vphi)^n}{n!})'$.
\end{proof}
The following generalized Chen-Tian's formula will be used in section \ref{convex_singular} on a {\it smooth} ambient space to approximate Mabuchi functional on singular spaces. Such type of formula has appeared in the study of K\"{a}hler-Ricci solitons (see \cite{JL19}) and the twisted K-energy formula in \cite{BDL17}. 
After we finish the first version of our paper, we notice that such type of Chen-Tian formula has also been established in a very general setting (at least for the un-twist case) in \cite[Theorem 5]{Lahdili19} for the so-called $(v, w)$-Mabuchi energy. However because the notations in \cite{Lahdili19} differ from ours significantly and that we are also dealing with the twisted case, we will keep our proof for the reader's convenience.


\begin{lemma}
For $\bfM$ defined as in \eqref{Mabuchi-double-integral}, we have generalized Chen-Tian's formula:
\begin{align}
\label{Chen-Tian}
\bfM(\vphi) &= \int_X \log(\frac{g_\vphi(\ddc\vphi)^n}{\omega_0^n})g_{\vphi}\frac{(\ddc\vphi)^n}{n!}dt
-\bfE_{g}^{Ric^\eq_0}(\vphi)+ C_R \bfE_g(\vphi) \\
& + \frac{C_R}{n}\int_0^1\int_X\dot{\vphi}\sum_\alpha  f_\alpha  \theta_\alpha g_\vphi\frac{(\ddc\vphi)^n}{n!}dt  
 +\bfE_g^{\ddc\psi}(\vphi)+\bfE_g^D(\vphi) \nonumber 
\end{align}
where
\begin{eqnarray*}
\bfE_{g}^{Ric^\eq_0}(\vphi)&=&
\int_0^1\int_X\dot{\vphi}g_\vphi (n Ric_0-\sum_\alpha  f_\alpha  (\mathfrak{L}_{\xi_\alpha}(\log\omega_0^n))(\ddc\vphi))\wedge \frac{(\ddc\vphi)^{n-1}}{n!}dt \\
&=&\int_0^1 \int_X \dot{\vphi} \left(n g Ric_0-\sum_\alpha g_\alpha (\mathfrak{L}_{\xi_\alpha}(\log \omega_0^n))(\ddc\vphi)\right)\wedge\frac{(\ddc\vphi)^{n-1}}{n!}dt\\
\bfE_g^D(\vphi) &=& \int_0^1\int_D \dot{\vphi}g_\vphi \frac{(\ddc\vphi)^{n-1}}{(n-1)!}dt, \;
\bfE_g^{\ddc\psi}(\vphi) = \int_0^1\int_X \dot{\vphi}g_\vphi \ddc\psi\wedge \frac{(\ddc\vphi)^{n-1}}{(n-1)!}dt.
\end{eqnarray*}
\end{lemma}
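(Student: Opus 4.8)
The plan is to differentiate the definition term by term along the path $\vphi_t$ and match each piece of the integrand of \eqref{Mabuchi-double-integral} with one of the functionals on the right-hand side of \eqref{Chen-Tian}. Throughout I work with smooth $\vphi\in\mcH_T(X,L)$ (the general case follows by the approximation recorded in Remark \ref{smooth_net}), write $\omega=\ddc\vphi$, $F=\log\frac{(\ddc\vphi)^n}{\omega_0^n}$, and set $\Psi=f_\vphi+F=\log\frac{g_\vphi(\ddc\vphi)^n}{\omega_0^n}$, so the target entropy term is $\int_X\Psi\,g_\vphi\frac{\omega^n}{n!}$. The two elementary identities I will use repeatedly are the curvature relation $Ric_\vphi=Ric_0-\ddc F$ and, for any $(1,1)$-form $\chi$, the trace identity $\tr_\vphi\chi\cdot\frac{(\ddc\vphi)^n}{n!}=\chi\wedge\frac{(\ddc\vphi)^{n-1}}{(n-1)!}$.

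Several summands are then immediate. The twisting terms give $\int_0^1\int_X\dot\vphi\,\tr_\vphi(\ddc\psi)\,g_\vphi\frac{\omega^n}{n!}dt=\bfE_g^{\ddc\psi}(\vphi)$ and $\int_0^1\int_X\dot\vphi\,\tr_\vphi([D])\,g_\vphi\frac{\omega^n}{n!}dt=\bfE_g^D(\vphi)$ directly from the trace identity, the current $[D]$ localizing the integral onto $D$. Since $\bfE_g$ has differential $\int_X\dot\vphi\,g_\vphi\frac{\omega^n}{n!}$, the constant term yields $C_R\bfE_g(\vphi)$, while the term $\frac{C_R}{n}\sum_\alpha f_\alpha\theta_\alpha$ (using $\mathfrak{L}_{\xi_\alpha}(\vphi)=\theta_\alpha$) is by definition the fourth summand. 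For the scalar curvature I expand $-R_\vphi\frac{\omega^n}{n!}=-Ric_0\wedge\frac{\omega^{n-1}}{(n-1)!}+\Delta F\frac{\omega^n}{n!}$, and for the last term I use $\mathfrak{L}_{\xi_\alpha}(\log(\ddc\vphi)^n)=\xi_\alpha(F)+\mathfrak{L}_{\xi_\alpha}(\log\omega_0^n)$. The pieces $-nRic_0$ and $+\sum_\alpha f_\alpha\mathfrak{L}_{\xi_\alpha}(\log\omega_0^n)$, which are built only from the fixed reference $\omega_0$, assemble exactly into $-\bfE_g^{Ric^\eq_0}(\vphi)$.

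It remains to show the leftover terms produce the entropy. Collecting $\Delta F$ (from $-R_\vphi$), the explicit $\Delta f_\vphi$, the explicit $V_{f,\vphi}(f_\vphi)$, and $\sum_\alpha f_\alpha\xi_\alpha(F)=V_{f,\vphi}(F)$, the remaining integrand is exactly $L\Psi:=\Delta\Psi+V_{f,\vphi}(\Psi)$, the drift Laplacian applied to $\Psi$. The crux is self-adjointness of $L$ against $\MA_g(\vphi)=g_\vphi\frac{\omega^n}{n!}$; granting it,
\begin{align*}
\int_X\dot\vphi\,(L\Psi)\,g_\vphi\frac{\omega^n}{n!}=\int_X\Psi\,(L\dot\vphi)\,g_\vphi\frac{\omega^n}{n!}=\int_X\Psi\,\frac{d}{dt}\Big(g_\vphi\frac{\omega^n}{n!}\Big),
\end{align*}
where the last step is the identity $\frac{d}{dt}\MA_g(\vphi)=(\Delta\dot\vphi+V_{f,\vphi}(\dot\vphi))\MA_g(\vphi)$ already noted in the text. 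Because $\int_X\frac{d}{dt}\MA_g(\vphi)=\frac{d}{dt}\bV_g=0$, this equals $\frac{d}{dt}\int_X\Psi\,\MA_g(\vphi)$, so integrating in $t\in[0,1]$ gives the entropy $\int_X\log\frac{g_\vphi(\ddc\vphi)^n}{\omega_0^n}g_\vphi\frac{\omega^n}{n!}$, up to the additive constant $\int_X f_0\,e^{f_0}\frac{\omega_0^n}{n!}$ from the endpoint $\vphi_0$, which is immaterial since $\bfM$ is defined only modulo constants.

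The main obstacle is the self-adjointness of $L=\Delta+V_{f,\vphi}$ with respect to $\MA_g(\vphi)$. I expect to prove it by integration by parts: moving $\ddc$ across in $\int_X\Psi\,\ddc\dot\vphi\wedge g_\vphi\frac{\omega^{n-1}}{(n-1)!}$ produces, besides the symmetric second-order term, first-order terms involving $\partial g_\vphi=g_\vphi\,\partial f_\vphi$, and the Hamiltonian identity \eqref{eq-HamVfvphi}, $\frac{\sqrt{-1}}{2\pi}\bar\partial f_\vphi=\iota_{V_{f,\vphi}}\ddc\vphi$, is precisely what turns these into the $V_{f,\vphi}$-drift, making $L$ symmetric. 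This is the same mechanism that makes the generalized Futaki invariant well-defined in Proposition \ref{prop-Fut}, so I would isolate it as a one-line lemma and reuse it.
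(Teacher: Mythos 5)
Your proof is correct and follows essentially the same route as the paper's: both start from \eqref{Mabuchi-double-integral}, split $R_\vphi$ via $Ric_\vphi=Ric_0-\ddc\log\frac{(\ddc\vphi)^n}{\omega_0^n}$, identify the reference-metric pieces with $-\bfE_g^{Ric^\eq_0}$, $\bfE_g^{\ddc\psi}$, $\bfE_g^D$ and the $C_R$-terms, and convert the remaining second-order terms into the entropy using $(\Delta\dot\vphi+V_{f,\vphi}(\dot\vphi))\,g_\vphi\frac{(\ddc\vphi)^n}{n!}=\frac{d}{dt}\big(g_\vphi\frac{(\ddc\vphi)^n}{n!}\big)$ together with integration by parts against the weighted measure. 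Your packaging of the two successive integrations by parts as self-adjointness of the drift operator $\Delta+V_{f,\vphi}$ with respect to $\MA_g(\vphi)$ on $T$-invariant functions (which does follow from the Hamiltonian identity \eqref{eq-HamVfvphi}, exactly as in Proposition \ref{prop-Fut}), applied once to the combined density $\Psi=\log\frac{g_\vphi(\ddc\vphi)^n}{\omega_0^n}$, is a tidier bookkeeping of what the paper does for $\log\frac{(\ddc\vphi)^n}{\omega_0^n}$ and $f_\vphi$ separately, and it lands directly on the entropy term as written in \eqref{Chen-Tian} rather than differing from it by the $\vphi$-independent constant $\int_X f_\vphi\,g_\vphi\frac{(\ddc\vphi)^n}{n!}$.
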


\begin{proof}
The proof is by applying integration by parts calculation to \eqref{Mabuchi-double-integral}. 
\begin{align*}
& -\int_0^1\int_X \dot{\vphi} R_\vphi g_\vphi \frac{(\ddc\vphi)^n}{n!} dt  \\
&= \int_0^1\int_X \dot{\vphi}\Delta \log(\frac{(\ddc\vphi)^n}{\omega_0^n}) g_\vphi \frac{(\ddc\vphi)^n}{n!}dt -\int_0^1\int_X \dot{\vphi}g_\vphi Ric_0\wedge \frac{(\ddc\vphi)^{n-1}}{(n-1)!} dt \\
&= -\int_0^1\int_X g_\vphi \vphi^{i\bar{j}}\partial_i\log(\frac{(\ddc\vphi)^n}{\omega_0^n})\partial_{\bar{j}}\dot{\vphi}\frac{(\ddc\vphi)^{n}}{n!}dt 
-\int_0^1\int_X \dot{\vphi}  g_\vphi \vphi^{i\bar{j}}\partial_i \log(\frac{(\ddc\vphi)^n}{\omega_0^n})\partial_{\bar{j}}  f_\vphi \frac{(\ddc\vphi)^{n}}{n!}dt \\
&-\int_0^1\int_X \dot{\vphi}g_\vphi Ric_0\wedge \frac{(\ddc\vphi)^{n-1}}{(n-1)!}dt \\
&= \int_0^1\int_X\log(\frac{(\ddc\vphi)^n}{\omega_0^n})(\vphi^{i\bar{j}}\partial_i  f_\vphi \partial_{\bar{j}}\dot{\vphi}+\Delta\dot{\vphi})g_\vphi\frac{(\ddc\vphi)^n}{n!}dt
- \int_0^1\int_X \dot{\vphi}g_\vphi V_{f,\vphi}(\log(\frac{(\ddc\vphi)^n}{\omega_0^n})) \frac{(\ddc\vphi)^n}{n!}dt \\
&-\int_0^1\int_X \dot{\vphi}g_\vphi Ric_0\wedge \frac{(\ddc\vphi)^{n-1}}{(n-1)!}dt \\
&= \int_0^1\int_X \log(\frac{(\ddc\vphi)^n}{\omega_0^n}) (g_\vphi\frac{(\ddc\vphi)^n}{n!})'dt
- \int_0^1\int_X \dot{\vphi}g_\vphi V_{f,\vphi}(\log(\frac{(\ddc\vphi)^n}{\omega_0^n})) \frac{(\ddc\vphi)^n}{n!}dt \\
&-\int_0^1\int_X \dot{\vphi}g_\vphi Ric_0\wedge \frac{(\ddc\vphi)^{n-1}}{(n-1)!}dt \\
&= \int_X \log(\frac{(\ddc\vphi)^n}{\omega^n})g_\vphi\frac{(\ddc\vphi)^n}{n!} -\int_0^1\int_X\Delta \dot{\vphi} g_\vphi\frac{(\ddc\vphi)^n}{n!}dt \\
& -\int_0^1\int_X \dot{\vphi}  V_{f,\vphi}(\log(\frac{(\ddc\vphi)^n}{\omega^n}))g_\vphi \frac{(\ddc\vphi)^n}{n!}dt - \int_0^1\int_X\dot{\vphi}g_\vphi Ric_0 \wedge \frac{(\ddc\vphi)^{n-1}}{(n-1)!}dt.
\end{align*}
Then
\begin{align*}
&\bfM(\vphi) = \int_X \log(\frac{(\ddc\vphi)^n}{\omega^n_0})g_\vphi \frac{(\ddc\vphi)^n}{n!}  -\int_0^1\int_X\dot{\vphi} g_\vphi Ric_0\wedge\frac{(\ddc\vphi)^{n-1}}{(n-1)!}dt \\
& - \int_0^1\int_X \dot{\vphi} V_{f,\vphi}(\log(\frac{(\ddc\vphi)^n}{\omega^n_0}))g_\vphi \frac{(\ddc\vphi)^n}{n!} 
 + \int_0^1\int_X \dot{\vphi} \sum_\alpha  f_\alpha  \mathfrak{L}_{\xi_\alpha}(\log((\ddc\vphi)^n)) g_\vphi \frac{(\ddc\vphi)^n}{n!}dt \\
&+  \frac{C_R}{n}(n\bfE_{g}(\vphi)+\int_0^1\int_X \dot{\vphi}  f_\vphi  g_\vphi\frac{(\ddc\vphi)^n}{n!}dt)  
+ \bfE_g^{\ddc\psi}(\vphi)+\bfE_g^D(\vphi) \\
&+ \int_0^1\int_X \big( -\Delta \dot{\vphi} g_\vphi\frac{(\ddc\vphi)^n}{n!} +\dot{\vphi}\Delta f_\vphi  g_\vphi \frac{(\ddc\vphi)^n}{n!}+\dot{\vphi} V_{f,\vphi}( f_\vphi ) g_\vphi \frac{(\ddc\vphi)^n}{n!} \big) dt  \\
&= \int_X \log(\frac{(\ddc\vphi)^n}{\omega_0^n}) g_\vphi \frac{(\ddc\vphi)^n}{n!} -\int_0^1\int_X \dot{\vphi} g_\vphi (nRic_0-\sum_\alpha  f_\alpha  \mathfrak{L}_{\xi_\alpha}(\log(\omega_0^n)) \wedge \frac{(\ddc\vphi)^{n-1}}{n!}dt\\
&+ \frac{C_R}{n}( n \bfE_{g}(\vphi) +\int_0^1\int_X \dot{\vphi}\sum_\alpha  f_\alpha \theta_\alpha(\vphi) g_\vphi \frac{(\ddc\vphi)^n}{n!}dt)  +\bfE_g^{\ddc\psi}(\vphi)+\bfE_g^D(\vphi),
\end{align*}
where the integral in the fourth line vanishes by using integration by parts. 
\end{proof}
\begin{remark}\label{rem-ERic0}
One can verify that the above functionals in Lemma \ref{Chen-Tian} do not depend on the choice of path connecting $\vphi_0$ to $\vphi$. 
We briefly explain this fact for $\bfE^{Ric_0^\eq}_g(\vphi)$ which is an equivariant analogue of the $\bfE^{Ric_0}$-functional in the original Chen-Tian's formula. Consider the following one form defined on the space of K\"{a}hler metrics:
\begin{eqnarray*}
\Gamma(u)&=&\int_X u g_\vphi(n Ric_0-\sum_\alpha  f_\alpha  \mathfrak{L}_{\xi_\alpha}(\log(\omega_0^n)))\wedge \frac{(\ddc\vphi)^{n-1}}{n!}\\
&=&\int_X u \left(g_\vphi \vphi^{i\bar{j}}(Ric_0)_{i\bar{j}}-\sum_\alpha g_{\alpha,\vphi} \Delta_0 \theta^0_\alpha\right) \frac{(\ddc\vphi)^n}{n!}.
\end{eqnarray*}
By using the identity $\bar{\partial}\Delta_0 \theta^0_\alpha=\iota_{\xi_\alpha}Ric_0$, a straightforward calculation via several integration by parts shows that:
\begin{eqnarray*}
u_1(\Gamma(u_2))&=& {\rm Re}\left(\int_X ((u_1)_{\bar{i}}(u_2)_j (Ric_0)_{i\bar{j}} g-(u_1)_{\bar{i}}(u_2)_i (Ric_0)_{k\bar{k}})g \frac{(\ddc\vphi)^n}{n!}\right)=u_2(\Gamma(u_1)),
\end{eqnarray*}
which means that $\Gamma$ is a closed 1-form and so the integral does not depend on the choice of paths. 
\end{remark}

\subsection{Convexity of $\bfM$ when $X$ is smooth}
In this subsection, we will use Berman-Berndsson's work \cite{BB17} to show that $\bfM$ is convex along a weak geodesic in case $X$ is smooth.
This is a prelude to the general case, which will be discussed in Section \ref{convex_singular}.
Define the metric on K\"ahler potential space by
\begin{align}
\label{L2_norm}
\la v_1, v_2\ra = \int_X v_1 v_2 g_\vphi  \frac{(\ddc\vphi)^n}{n!} .
\end{align}
A simple calculation shows that the connection is given by
\begin{align}
\nabla v=\dot{v} + \frac{1}{2}\la\nabla \dot{u}, \nabla v\ra
\end{align}

and the geodesic induced by the metric above is 
\begin{align}
\label{geodesic_equation}
 \ddot{u} - \frac{1}{2}|\nabla  \dot{u}|^2 = 0 .
\end{align}

By the Kempf-Ness picture (see Appendix \ref{moment}),$\bfM$ should have a certain convex property. 
A rigorous proof of the convexity along a $C^{1,1}$-geodesic is stated in the following.

Consider the total space $X\times A$, where $A = [0,1]\times S^1$ is an annulus, with complex local coordinate $z_{n+1}$, and $t = Re(z_{n+1})$.
Let $d d^c \Phi = \pi_1^* \ddc\vphi$ be the metric on $X\times A$. 

\begin{lemma}
\label{d-closed}
For any differentiable form $v$ on $X\times A$,
\begin{equation}
d(\int_X v(x) g_\vphi (\ddc\Phi)^n) = \int_X g_\vphi   dv \wedge (dd^c \Phi)^n,
\end{equation}
where $\int_X$ is the operator of integration along the fibre for the trivial fibration $X\times A\rightarrow X$.
\end{lemma}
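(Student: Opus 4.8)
The plan is to reduce the asserted identity to two facts about the $2n$-form $g_\Phi(\ddc\Phi)^n$ on $X\times A$ — here I write $g_\Phi:=g(\bfm_\Phi)$ for the function on the total space which restricts to $g_\vphi$ on each fibre, matching the notation in the statement — after which the conclusion is a formal consequence of commuting $d$ with integration over the fibre. Two elementary observations set this up: first, $\ddc\Phi$ is a closed real $(1,1)$-form, so $(\ddc\Phi)^n$ is $d$-closed; second, since $\ddc\Phi=\pi_1^*\ddc\vphi$ is pulled back from $X$, its top exterior power degenerates,
\begin{equation}
(\ddc\Phi)^{n+1}=0 .
\end{equation}
I emphasize that this degeneracy is exactly what persists for the holomorphic families $\Phi|_{(x,t)}=\sigma_{t\xi}^*\vphi$ appearing in Lemma \ref{J_convex}, so the argument below applies to them verbatim.

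The crux is the vanishing of the gradient term,
\begin{equation}
dg_\Phi\wedge(\ddc\Phi)^n=0 .
\end{equation}
To see this I would apply Lemma \ref{lem-etaPhi} with the smooth function $g$ in the role of $f$, obtaining $\tfrac{\sqrt{-1}}{2\pi}\bar\partial g_\Phi=\iota_{V_{g,\vphi}}\ddc\Phi$ with $V_{g,\vphi}=\sum_\alpha g_\alpha\,\xi_\alpha$ tangent to the fibres; conjugating (both $g_\Phi$ and $\ddc\Phi$ are real) gives $\tfrac{\sqrt{-1}}{2\pi}\partial g_\Phi=-\iota_{\bar V_{g,\vphi}}\ddc\Phi$, hence
\begin{equation}
dg_\Phi=\frac{2\pi}{\sqrt{-1}}\,\iota_{V_{g,\vphi}-\bar V_{g,\vphi}}\,\ddc\Phi .
\end{equation}
Using that the interior product is an anti-derivation and that $(\ddc\Phi)^n$ is a power of $\ddc\Phi$, this yields $dg_\Phi\wedge(\ddc\Phi)^n=\tfrac{2\pi}{\sqrt{-1}\,(n+1)}\,\iota_{V_{g,\vphi}-\bar V_{g,\vphi}}\big((\ddc\Phi)^{n+1}\big)$, which is zero by the degeneracy above. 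I expect this to be the main obstacle: the naive Leibniz expansion of $d\big(v\,g_\Phi(\ddc\Phi)^n\big)$ leaves precisely this term, and its vanishing is not visible termwise for a general form $v$ until one recognizes that $dg_\Phi$ is a contraction of $\ddc\Phi$ along the torus directions and then invokes $(\ddc\Phi)^{n+1}=0$.

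With these two vanishings the remaining computation is routine. Since $(\ddc\Phi)^n$ is closed and $dg_\Phi\wedge(\ddc\Phi)^n=0$, the Leibniz rule gives
\begin{equation}
d\big(v\,g_\Phi(\ddc\Phi)^n\big)=g_\Phi\,dv\wedge(\ddc\Phi)^n ,
\end{equation}
the potential cross term $dg_\Phi\wedge v\wedge(\ddc\Phi)^n=\big(dg_\Phi\wedge(\ddc\Phi)^n\big)\wedge v$ vanishing because $(\ddc\Phi)^n$ has even degree. Finally, since the fibre $X$ is compact, boundaryless, and of even real dimension, push-forward along the fibration $X\times A\to A$ with compact fibre $X$ commutes with the exterior derivative without sign or boundary contribution, $d\int_X\beta=\int_X d\beta$. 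Applying this to $\beta=v\,g_\Phi(\ddc\Phi)^n$ gives $d\big(\int_X v\,g_\Phi(\ddc\Phi)^n\big)=\int_X g_\Phi\,dv\wedge(\ddc\Phi)^n$, which is the claim.
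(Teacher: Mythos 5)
Your proposal follows the paper's route up to the decisive point: both use the commutation of $d$ with fibre integration, the Leibniz rule, and the contraction identity from Lemma \ref{lem-etaPhi} to rewrite the error term as a multiple of $\iota_{V_{g,\vphi}-\bar V_{g,\vphi}}\big((\ddc\Phi)^{n+1}\big)$. The divergence — and the gap — is in how that term is killed. You dispose of it by asserting the pointwise degeneracy $(\ddc\Phi)^{n+1}=0$, on the grounds that $\ddc\Phi$ is pulled back from $X$. That hypothesis is not available in the generality in which the lemma is actually invoked: in the convexity argument for $\bfM$ the lemma is applied to weak (sub)geodesics, where $\ddc\Phi$ has nontrivial $dt\wedge d\bar t$ and mixed components, $(\ddc\Phi)^{n+1}$ is only a nonnegative top form, and indeed one of the terms the lemma \emph{produces} there is precisely $\int_X g_\vphi(\ddc\Phi)^{n+1}$. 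Under your hypothesis that term is identically zero, so your version of the lemma cannot be the one feeding that computation; the appeal to the special case $\Phi|_{(x,t)}=\sigma_{t\xi}^*\vphi$ covers only the application in Lemma \ref{J_convex}, not the others.

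The paper's proof avoids the degeneracy entirely and uses only that $V_{g,\vphi}$ is tangent to the fibres of $X\times A\to A$. Concretely, $(\ddc\Phi)^{n+1}$ is a top form on $X\times A$, hence has vertical--horizontal bidegree $(2n,2)$; contracting with the vertical field $V_{g,\vphi}$ yields a $(2n+1)$-form of bidegree $(2n-1,2)$, which has no component with $2n$ vertical legs and therefore integrates to zero over the $2n$-dimensional fibre $X$ regardless of whether $(\ddc\Phi)^{n+1}$ vanishes. This is the step you are missing: the error term need not vanish pointwise, only its fibre integral does, and that follows from verticality alone. To repair your argument, replace the appeal to $(\ddc\Phi)^{n+1}=0$ by this bidegree count (and note that for $v$ of positive degree one must check the bidegree of $v\wedge\iota_{V_{g,\vphi}}\big((\ddc\Phi)^{n+1}\big)$ as well, which is where the restriction on how the lemma is iterated becomes relevant).
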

\begin{proof}
Since the exterior derivative commutes with the integral along the fibers, we get
\begin{eqnarray*}
\bar{\partial} \int_X v g_\vphi (\ddc\Phi)^n&=&\int_X (\bar{\partial} v) \wedge g_\vphi (\ddc\Phi)^n+ \int_X v \bar{\partial} g_\Phi   \wedge (\ddc\Phi)^n.
\end{eqnarray*}
By Lemma \ref{lem-etaPhi}, 
\begin{eqnarray*}
\bar{\partial}(g_\Phi)\wedge (\ddc\Phi)^n&=&\frac{2\pi}{\sqrt{-1}}\iota_{V_{g,\vphi}}(\ddc\Phi)\wedge (\ddc\Phi)^n\\
&=&\frac{2\pi}{\sqrt{-1}}\frac{1}{n+1}\iota_{V_{g,\vphi}}(\ddc\Phi)^{n+1} .
\end{eqnarray*}
Since $V_{g,\vphi}$ is along the vertical direction of the projection $X\times A\rightarrow A$, it is easy to see that the second integral vanishes. Similar consideration applies to the differential operator $\partial$.
\end{proof}

In the following, we will use the expression \eqref{Mabuchi_1} to show that $\bfM$ is convex along a weak geodesic.
We will divide $\bfM$ into two parts,
$ (I) = \int_X \log(\frac{(\ddc\vphi)^n}{e^{-u+h_0} \omega_0^n})g_\vphi \frac{(\ddc\vphi)^n}{n!}$, 
$ (II) =  \int_X   f_\vphi  g_\vphi \frac{(\ddc\vphi)^n}{n!} - \bfE_g( \vphi)$. 



We will follow Berman-Berndtsson's calculation for (I).
By Lemma \eqref{d-closed},
\begin{align}
\begin{split}
&\ddc \int_X \log(\frac{(\ddc\vphi)^n}{e^{-u+h_0}\omega_0^n}) g_\vphi  (\ddc \Phi)^n  = \int_X g_\vphi  \ddc \log(\frac{(\ddc\vphi)^n}{e^{-u+h_0}\omega_0^n}) 
\wedge (\ddc \Phi)^n \\
&= \int_X g_\vphi  \ddc \log((\ddc\vphi)^n) \wedge (\ddc \Phi)^n  
- \int_X g_\vphi  \ddc \log(e^{-u+h_0}\omega_0^n)\wedge (\ddc \Phi)^{n} \\
&= \int_X g_\vphi  \ddc \log((\ddc\vphi)^n) \wedge (\ddc \Phi)^n  
+ \int_X g_\vphi   (\ddc \Phi)^{n+1} +\int_X g_\vphi \ddc\psi\wedge (\ddc\Phi)^n + \int_D g_\vphi (\ddc\Phi)^n\\
&\ge \int_X g_\vphi  \ddc \log((\ddc\vphi)^n) \wedge (\ddc \Phi)^n  .
\\
\end{split}
\end{align}
In the third equality, 
the last three terms are induced by $-\ddc\log(e^{-u}d\mu_0) = \ddc\vphi + [D] + \Theta$. Each of the three terms is nonnegative along a geodesic.
The term in the last line is positive by using Bergman kernel approximation approach by Berman-Berndtsson.
Then (I) is convex.
For (II), by a direct calculation, we can see that $\int_X  f_\vphi  g_\vphi \frac{(\ddc\vphi)^n}{n!}$ is constant.
This can also be seen by considering it as an integral on polytope $P$ associated with a Duistermaat-Heckman measure.
Then $\ddc {\rm (II)} = \ddc \bfE_g(u) = \int_X g_\vphi \frac{(\ddc\Phi)^{n+1}}{(n+1)!} = 0$.
Then we conclude with
\begin{equation}
\ddc \bfM(\vphi) \ge \int_X g_\vphi  \ddc \log((\ddc\vphi)^n) \wedge \frac{(\ddc \Phi)^n}{n!}
\end{equation}
and $\bfM$ is convex along a weak geodesic.

\begin{remark}
In a recent paper \cite{Lahdili20}, over a smooth manifold $X$, the Mabuchi functional is also shown to be convex along weak geodesics under a very general setting. This generalized Mabuchi functional is called $(v,w)$-Mabuchi energy, and we refer readers to \cite{Lahdili20} for more details. 
\end{remark}

\subsection{Convexity of generalized Mabuchi functional in singular case}
\label{convex_singular}
In this subsection, we will prove that the generalized Mabuchi functional $\bfM$ is convex along a geodesic when $X$ is a variety.
The idea of the proof is similar to \cite[4.1.2]{LTW19}.
Let $P = \pi^*L-E_b$ be an ample $\QQ$-line bundle on the resolution $\tX$. (We will abbreivate $\pi^*L$ as $L$ when there is no ambiguity.)
Our strategy is to approximate $\vphi$ by $\vphi_\epsilon\in c_1(L+\epsilon P)$,
then prove that the generalized Mabuchi functional $\bfM_{\vphi_{0,\epsilon}}(\vphi_\epsilon)$ is convex along a geodesic.
(where the subscript is to emphasis $\vphi_{0,\epsilon}$ is the chosen reference metric.)
At last we will show that $\bfM_{\vphi_{0,\epsilon}}(\vphi_\epsilon)$ converges to $\bfM(\vphi)$ at the end points which would imply the convexity of the latter.

By generalized Stone-Weierstrass theorem, $g$ can be approximated by polynomials in $C^2$-norm. 
Let 
\begin{align}
\begin{split}
\hat{\mcH}_T(X,\mcL) &= \{\vphi = \vphi_0+u\in {\rm Psh}_T(X,\mcL): u\in L^\infty(\tX)\cap C^\infty(\tX\setminus E),
(\ddc\pi^*\vphi)^n \text{ is smooth over } \tX,\\
& \text{ and there exist $\alpha, C>0$ such that }
|\ddc u| \leq C |s_E|^{-\alpha} \}
\end{split}
\end{align}
We consider a pair of metrics $\vphi(0), \vphi(1)\in \hat{\mcH}_T(X,\mcL)$,  which is connected by a weak geodesic $\vphi(t)$.
We assume $\sup_X \vphi(i) = 0$.
Let  $\omega_P = \ddc\vphi_P$ be a $K\times T$-invariant K\"ahler metric on $\tX$,
$\ddc\vphi_{0,\epsilon} = \omega_\epsilon = \omega + \epsilon \omega_P$, where $0\leq \epsilon \leq 1$;
$\vphi_\epsilon(i)\in \mcH(\tX,L_\epsilon)$, which will be determined later;
$\vphi_\epsilon = \vphi_{0,\epsilon}+u_\epsilon$;
$\vphi_\epsilon(t)$ be the geodesic that connects $\vphi_\epsilon(0),\vphi_\epsilon(1)$.
Since the $T_\CC$-action can be lifted up to the resolution $\tX$, we can lift the action of $T_\CC$ to $E$, as well as to $L+\epsilon P$.
Then the moment map $\bfm_{\vphi_{\epsilon}}$ is well-defined.
By the generalized Chen-Tian's formula \eqref{Chen-Tian} for the polarized pair $(\tilde{X}, D'=\pi^{-1}_*D, L+\epsilon P)$, we have
\begin{align}
\begin{split}
\label{Hartog's}
& \bfM_{\vphi_{0,\epsilon}}(\vphi_\epsilon) \\
&= \int_\tX \log(\frac{(\ddc\vphi_\epsilon)^n}{\omega_\epsilon^n})e^{f_{\vphi_\epsilon}}\frac{(\ddc\vphi_\epsilon)^n}{n!} \\
&+\Big( \frac{C_{R,\epsilon}}{n}\bfE_g^{\ddc{\vphi_\epsilon}^\eq}(\vphi_\epsilon)  - \bfE_{g}^{Ric_\epsilon^\eq}(\vphi_\epsilon) + \bfE_g^{D'}(\vphi_\epsilon) + \bfE_g^{\ddc\psi}(\vphi_\epsilon)  \Big)
+ C_{1,\epsilon}\\
&= \bfH_{g, \omega_\epsilon}(\vphi_\epsilon) + \bfF_{g,\epsilon}(\vphi_\epsilon) + C_{1,\epsilon} ,
\end{split}
\end{align}
where the constant
$C_{R,\epsilon}$ satisfies $\int_\tX(R_{\vphi,\epsilon}-{\rm tr}_{\vphi_{\epsilon}}(\ddc\psi+[D']))\frac{(\ddc\vphi_\epsilon)^n}{n!} = \int_\tX C_{R,\epsilon}\frac{(\ddc\vphi_\epsilon)^n}{n!}$,
$C_{1,\epsilon} = \int_\tX  f_{\vphi_{0,\epsilon}}g_{\vphi_{0,\epsilon}}\frac{\omega_\epsilon^n}{n!}$ (without the loss of generality, we can drop the constant $C_{1,\epsilon}$ in the following analysis), 
and
\begin{eqnarray}
\label{E_omega_eq}
\begin{split}
\bfE_g^{\ddc{\vphi_\epsilon}^\eq}(\vphi_\epsilon) 
= \int_0^1\int_\tX \dot{\vphi}_{s,\epsilon} {g_{\vphi_{s,\epsilon}}}\big( \frac{(\ddc\vphi_{s,\epsilon})^n}{(n-1)!} +\sum_\alpha  f_\alpha  \mathfrak{L}_{\xi_\alpha}(\vphi_{s,\epsilon})\frac{(\ddc\vphi_{s,\epsilon})^n}{n!}\big) ds \\
= \int_0^1\int_\tX u_\epsilon {g_{\vphi_{s,\epsilon}}}\big( \frac{(\ddc\vphi_{s,\epsilon})^n}{(n-1)!} + \sum_\alpha  f_\alpha  \mathfrak{L}_{\xi_\alpha}(\vphi_{s,\epsilon})\frac{(\ddc\vphi_{s,\epsilon})^n}{n!}\big) ds ,
\end{split}
\end{eqnarray}
where $\vphi_{s,\epsilon} = \vphi_{0,\epsilon} + s u_\epsilon$, $V_{s,\epsilon} = \sum_{1\leq\alpha\leq r}f_\alpha(\bfm_{\vphi_{s,\epsilon}}) V_{\alpha}$, and $V_{\alpha}$ still denotes the lifting of $\xi_\alpha$ over $\tX$.
\begin{eqnarray}
\label{E_Ric_eq}
\begin{split}
\bfE_{g}^{Ric^\eq_\epsilon}(\vphi_\epsilon) 
= \int_0^1\int_\tX \dot{\vphi}_{s,\epsilon} {g_{\vphi_{s,\epsilon}}}\big( Ric_\epsilon\wedge \frac{(\ddc\vphi_{s,\epsilon})^{n-1}}{(n-1)!} - \sum_\alpha  f_\alpha  \mathfrak{L}_{\xi_\alpha}(\log(\omega_\epsilon^n))\frac{(\ddc\vphi_{s,\epsilon})^n}{n!}\big) ds\\
= \int_0^1\int_\tX u_\epsilon \big( g Ric_\epsilon\wedge \frac{(\ddc\vphi_{s,\epsilon})^{n-1}}{(n-1)!} - \sum_\alpha g_\alpha\mathfrak{L}_{\xi_\alpha}(\log(\omega_\epsilon^n))\frac{(\ddc\vphi_{s,\epsilon})^n}{n!}\big) ds ,
\end{split}
\end{eqnarray}
where $Ric_\epsilon = -\ddc\log(\omega_\epsilon^n)$.
\begin{eqnarray}
\label{E_D}
\begin{split}
\bfE_g^{D'}(\vphi_\epsilon) =\int_0^1\int_{D'} \dot{\vphi}_{s,\epsilon} {g_{\vphi_{s,\epsilon}}}\frac{(\ddc\vphi_{s,\epsilon})^{n-1}}{(n-1)!}ds\\
=\int_0^1\int_{D'} u_\epsilon {g_{\vphi_{s,\epsilon}}}\frac{(\ddc\vphi_{s,\epsilon})^{n-1}}{(n-1)!}ds ,
\end{split}
\end{eqnarray}
\begin{eqnarray}
\label{E_psi}
\begin{split}
\bfE_g^{\ddc\psi}(\vphi_\epsilon) = \int_0^1\int_\tX \dot{\vphi}_{s,\epsilon} {g_{\vphi_{s,\epsilon}}}\ddc\psi \wedge \frac{(\ddc\vphi_{s,\epsilon})^{n-1}}{(n-1)!}ds \\
= \int_0^1\int_\tX u_\epsilon {g_{\vphi_{s,\epsilon}}}\ddc\psi \wedge \frac{(\ddc\vphi_{s,\epsilon})^{n-1}}{(n-1)!}ds  .
\end{split}
\end{eqnarray}

\begin{lemma}
$\bfM_{\vphi_{0,\epsilon}}$ is convex along $\vphi_\epsilon(t)$.
\end{lemma}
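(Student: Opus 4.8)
The plan is to rerun the Berman--Berndtsson convexity argument of the previous subsection on the smooth resolution $\tX$, with the single change that the ample class is now $L_\epsilon = L + \epsilon P$ rather than the anti-canonical class, so that the special expression \eqref{Mabuchi_1} is unavailable and must be replaced by the general Chen--Tian decomposition recorded in \eqref{Hartog's}, namely $\bfM_{\vphi_{0,\epsilon}} = \bfH_{g,\omega_\epsilon} + \bfF_{g,\epsilon} + C_{1,\epsilon}$. Throughout I work on $\tX \times A$, regard the $C^{1,1}$ geodesic $\vphi_\epsilon(t)$ as a metric $\Phi$ there, and compute the distributional $\ddc$ of each summand by Lemma \ref{d-closed}. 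The two facts I use repeatedly are that along a geodesic $\ddc\Phi \ge 0$ while $(\ddc\Phi)^{n+1} = 0$, so that $(\ddc\Phi)^n \ge 0$ both on $\tX\times A$ and, by restriction, on $D'\times A$.

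I would first record two elementary simplifications. Since the Duistermaat--Heckman measure is $\vphi$-independent, the quantity $\int_{\tX} f_\vphi g_\vphi\frac{(\ddc\vphi)^n}{n!} = \int_P (g\log g)\,\DHM_T$ is constant in $t$, so modulo a constant the entropy reduces to $\int_{\tX}\log\big(\frac{(\ddc\vphi)^n}{\omega_\epsilon^n}\big) g_\vphi\frac{(\ddc\vphi)^n}{n!}$. Next, by Lemma \ref{d-closed} the two twisting functionals \eqref{E_D}, \eqref{E_psi} satisfy $\ddc\bfE_g^{D'} = \int_{D'} g_\vphi\frac{(\ddc\Phi)^n}{(n-1)!} \ge 0$ and $\ddc\bfE_g^{\ddc\psi} = \int_{\tX} g_\vphi\,\ddc\psi\wedge\frac{(\ddc\Phi)^n}{(n-1)!} \ge 0$, because $[D']\ge 0$, $\ddc\psi\ge 0$ and $(\ddc\Phi)^n\ge 0$; these are exactly the nonnegative contributions that appeared in the anti-canonical computation.

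The core of the argument is the pairing of the reduced entropy with $-\bfE_g^{Ric^{\eq}_\epsilon}$. Applying Lemma \ref{d-closed} to the entropy yields $\ddc\bfH_{g,\omega_\epsilon} = \int_{\tX} g_\vphi\,\ddc\log\big((\ddc\vphi)^n\big)\wedge\frac{(\ddc\Phi)^n}{n!} + \int_{\tX} g_\vphi\, Ric_\epsilon\wedge\frac{(\ddc\Phi)^n}{n!}$, using $Ric_\epsilon = -\ddc\log\omega_\epsilon^n$. The second, sign-indefinite integral is precisely cancelled by the $Ric_\epsilon$-part of $-\ddc\bfE_g^{Ric^{\eq}_\epsilon}$, while the remaining moment-map terms of $\bfE_g^{Ric^{\eq}_\epsilon}$ (those carrying $\sum_\alpha f_\alpha\mathfrak{L}_{\xi_\alpha}(\log\omega_\epsilon^n)$) combine with the Monge--Amp\`ere and moment pieces of $\frac{C_{R,\epsilon}}{n}\bfE_g^{\ddc{\vphi_\epsilon}^{\eq}}$ into affine contributions; concretely this is the reverse of the integration-by-parts that produced \eqref{Chen-Tian}, now read on $\tX\times A$ with $(\ddc\Phi)^{n+1}=0$, and it is guaranteed to close up because each such functional is path-independent by Remark \ref{rem-ERic0}. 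Combining with the nonnegative twisting terms of the previous paragraph, one is left with $\ddc\bfM_{\vphi_{0,\epsilon}} \ge \int_{\tX} g_\vphi\,\ddc\log\big((\ddc\vphi)^n\big)\wedge\frac{(\ddc\Phi)^n}{n!}$.

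The main obstacle is the positivity of this last term, $\int_{\tX} g_\vphi\,\ddc\log\big((\ddc\vphi)^n\big)\wedge\frac{(\ddc\Phi)^n}{n!} \ge 0$, along a merely $C^{1,1}$ geodesic. As in the smooth subsection this is the Berman--Berndtsson Bergman-kernel approximation, which I would adapt to the $g$-weighted Monge--Amp\`ere measure using the two-sided bound $0 < c \le g_\vphi \le C$ to compare with the unweighted inequality; this step also carries the burden of justifying all the preceding $\ddc$ computations distributionally for a potential of limited regularity. The secondary difficulty is purely bookkeeping: verifying the exact cancellation of the $Ric_\epsilon$ and moment-map contributions via the identities $\Delta f_\vphi = f_{\alpha\beta}(\theta_\alpha)_i(\theta_\beta)^i + f_\alpha\Delta\theta_\alpha$ and $\mathfrak{L}_{\xi_\alpha}(\log(\ddc\vphi)^n) = \Delta\theta_\alpha$, which must be handled with care because $g_\vphi$, and hence $f_\vphi$, themselves vary with $t$ along the geodesic.
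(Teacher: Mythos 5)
Your overall architecture matches the paper's: both arguments run the Berman--Berndtsson scheme on the Chen--Tian decomposition \eqref{Hartog's}, isolate the nonnegative contributions of $\bfE_g^{D'}$ and $\bfE_g^{\ddc\psi}$, cancel the $Ric_\epsilon$ terms against the entropy, and reduce everything to the positivity of $\int_\tX g_{\vphi_\epsilon}\,\ddc\log((\ddc\vphi_\epsilon)^n)\wedge\frac{(\ddc\Phi_\epsilon)^n}{n!}$. However, there is a genuine gap at precisely that last step. You propose to deduce the positivity of the \emph{weighted} term from the unweighted Berman--Berndtsson inequality ``using the two-sided bound $0<c\le g_\vphi\le C$.'' This does not work: the Berman--Berndtsson positivity of $\int_\tX \ddc\log((\ddc\vphi)^n)\wedge(\ddc\Phi)^n$ is the positivity of a \emph{total integral} of a signed density (the fibrewise current $\ddc\log((\ddc\vphi)^n)\wedge(\ddc\Phi)^n$ is not pointwise nonnegative --- its fibre restriction is essentially $-Ric$), so multiplying the integrand by a nonconstant positive weight $g_\vphi$ can destroy the sign, and no comparison via $c\le g_\vphi\le C$ is available. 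The paper's way around this is the device you omit entirely: first reduce to polynomial $g=\sum_\vk a_\vk\prod_\alpha y_\alpha^{k_\alpha}$ by Stone--Weierstrass, then pass to the fibration $\tX^\bvk\to\PP^\bvk$, where the weighted integral becomes the \emph{unweighted} expression $\sum_\vk a_\vk\int_{\tX^\bvk}\ddc\log((\ddc\vphi_\epsilon)^n)^\bvk\wedge\frac{(\ddc\Phi_\epsilon^\bvk)^{n+k}}{(n+k)!}$ to which the Bergman-kernel argument applies, and finally recover continuous $g$ by a $3\epsilon$-argument at the level of the convexity inequality $\bfM(t)\le(1-t)\bfM(0)+t\bfM(1)+3\epsilon$.

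A secondary weakness is your treatment of the remaining moment-map terms: you assert that the pieces carrying $\sum_\alpha f_\alpha\mathfrak{L}_{\xi_\alpha}(\log\omega_\epsilon^n)$ and the functional $\frac{C_{R,\epsilon}}{n}\bfE_g^{\ddc\vphi_\epsilon^\eq}$ ``combine into affine contributions,'' justified by the path-independence of Remark \ref{rem-ERic0}. Path-independence of the defining one-forms gives well-definedness of the functionals, not the vanishing of their $\ddc$ along the geodesic. The paper proves $\ddc\bfE_g^{\ddc\vphi_\epsilon^\eq}=0$ by the explicit identity \eqref{ddcbfE_keq}, i.e.\ $\ddc\bfE_g^{\ddc\vphi_\epsilon^\eq}=\sum_\vk a_\vk(n+k)\int_{\tX^\bvk}\frac{(\ddc\Phi_\epsilon^\bvk)^{n+k+1}}{(n+k+1)!}$, together with the observation that $\Phi_\epsilon^\bvk$ is again a geodesic on $\tX^\bvk$ so that $(\ddc\Phi_\epsilon^\bvk)^{n+k+1}=0$. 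Without the fibration (or an equivalent explicit computation with the identities $\Delta f_\vphi=f_{\alpha\beta}(\theta_\alpha)_i(\theta_\beta)^i+f_\alpha\Delta\theta_\alpha$ and $\mathfrak{L}_{\xi_\alpha}\log(\ddc\vphi)^n=\Delta\theta_\alpha$ carried out in full), this step is not established in your proposal.
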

\begin{proof}
We will first prove the case when $g = \sum_\vk a_\vk \prod_{\alpha} y_\alpha^{k_\alpha}$ is a polynomial, where $y\in \RR^r$, $\vk$ belongs to a finite set.
Then $\sum_\alpha g_\alpha \xi_\alpha=\sum_{\alpha}D_\alpha g(\bfm_{\vphi_{s,\epsilon}})V_{\alpha}= \sum_\vk a_\vk \sum_\alpha k_\alpha \prod_\beta  \theta_{\beta,\epsilon}^{k_\beta-\delta_{\alpha,\beta}}V_{\alpha}$.
Denote $\vtheta_{\alpha,\epsilon} = \mathfrak{L}_{V_{\alpha}}(\log(\omega^n_\epsilon))$.
\begin{align*}
&\int_\tX \dot{\vphi}_\epsilon \sum_\alpha g_\alpha (\mathfrak{L}_{\xi_\alpha} (\log\omega^n_\epsilon))\frac{(\ddc\vphi_\epsilon)^n}{n!} = \sum_\vk a_\vk \int_X \dot{\vphi}_\epsilon \sum_\alpha k_\alpha \prod_\beta \theta_{\beta,\epsilon}^{k_\beta-\delta_{\alpha,\beta}}
\vtheta_{\alpha,\epsilon} \frac{(\ddc\vphi_\epsilon)^n}{n!} =\\
& \sum_\vk a_\vk \int_{\tX^\bvk} \dot{\vphi}_\epsilon \sum_\alpha \vtheta_{\alpha,\epsilon} (\ddc\vphi^\FS_{\alpha,\epsilon})\wedge \frac{(\ddc\vphi_\epsilon)^n}{n!}\wedge \frac{(\theta_{1,\epsilon}\ddc\vphi_{1,\epsilon}^\FS)^{k_1}}{k_1!} \\
&\hskip 5cm \cdots\wedge \frac{(\theta_{\alpha,\epsilon}\ddc\vphi^\FS_{\alpha,\epsilon})^{k_\alpha-1}}{(k_\alpha-1)!}\wedge \cdots \wedge \frac{(\theta_{r,\epsilon}\ddc\vphi^\FS_{r,\epsilon})^{k_r}}{k_r!}.
\end{align*}
Then
\begin{align}
\label{bfE_Riceq}
\begin{split}
-\bfE_{g}^{Ric^\eq_\epsilon}(\vphi_\epsilon) &= \sum_\vk a_\vk \int_0^1\int_{\tX^\bvk}u_\epsilon \ddc \log(\omega_\epsilon^n)^\bvk \wedge \frac{(\ddc\vphi^\bvk_{s,\epsilon})^{n+k-1}}{(n+k-1)!} ds \\
&= \sum_\vk a_\vk \frac{1}{(n+k)!}\sum_{j=0}^{n+k-1}\int_{\tX^\bvk} u_\epsilon \ddc \log(\omega_\epsilon^n)^\bvk \wedge (\ddc\vphi_\epsilon^\bvk)^{n+k-1-j}
\wedge (\ddc\vphi_{0,\epsilon}^\bvk)^j .
\end{split}
\end{align}
Then
\begin{equation}
-\ddc \bfE_{g}^{Ric^\eq_\epsilon} = \sum_\vk a_\vk \int_{\tX^\bvk} \ddc \log(\omega_\epsilon^n)^\bvk \wedge \frac{(\ddc\Phi^\bvk_\epsilon)^{n+k}}{(n+k)!} .
\end{equation}
By a similar calculation,
\begin{align}
\label{bfE_keq}
\begin{split}
\bfE_g^{\ddc{\vphi_\epsilon}^\eq}(\vphi_\epsilon) &= \sum_\vk a_\vk \frac{(n+k)}{(n+k+1)!} \sum_{j=0}^{n+k} \int_{\tX^\bvk} u_\epsilon (\ddc\vphi^\bvk_\epsilon)^{n+k-j}\wedge (\ddc\vphi^\bvk_{0,\epsilon})^j \\
&= \sum_\vk a_\vk (n+k) \bfE^\bvk_{\vphi^\bvk_{0,\epsilon}}(\vphi^\bvk_\epsilon).
\end{split}
\end{align}
\begin{equation}
\label{ddcbfE_keq}
\ddc \bfE_g^{\ddc{\vphi_\epsilon}^\eq} = \sum_\vk a_\vk (n+k)\int_{\tX^\bvk} \frac{(\ddc\Phi^\bvk_\epsilon)^{n+k+1}}{(n+k+1)!}
\end{equation}
and (compare with the calculation in Remark \ref{rem-ERic0})
\begin{align}
\label{nn}
\begin{split}
&\ddc (\int_\tX \log(\frac{(\ddc\vphi_\epsilon)^n}{\omega_\epsilon^n}){g_{\vphi_\epsilon}} \frac{(\ddc\vphi_\epsilon)^n}{n!} -\bfE_{g}^{Ric^\eq_\epsilon}) \\
&= \ddc\big( \sum_\vk a_\vk \int_{X^\bvk} \log(\frac{(\ddc\vphi_\epsilon)^n}{\omega_\epsilon^n}) \frac{(\ddc\vphi_\epsilon^\bvk)^{n+k}}{(n+k)!} \big) -\ddc\bfE_{g}^{Ric^\eq_\epsilon} \\
&= \sum_\vk a_\vk \int_{X^\bvk} \ddc\log((\ddc\vphi_\epsilon)^{n})^\bvk \wedge \frac{(\ddc\Phi_\epsilon^\bvk)^{n+k}}{(n+k)!} 
- \int_{\tX^\bvk} \ddc\log(\omega_\epsilon^n)^\bvk \wedge \frac{(\ddc\Phi_\epsilon^\bvk)^{n+k}}{(n+k)!} - \ddc\bfE_{g}^{Ric^\eq_\epsilon}\\
&=  \sum_\vk a_\vk\int_{\tX^\bvk} \ddc\log((\ddc\vphi_\epsilon)^{n})^\bvk \wedge \frac{(\ddc\Phi_\epsilon^\bvk)^{n+k}}{(n+k)!} \\
&=  \int_\tX {g_{\vphi_\epsilon}} \ddc\log((\ddc\vphi_\epsilon)^{n}) \wedge \frac{(\ddc\Phi_\epsilon)^{n}}{(n)!}  .
\end{split}
\end{align}
The equality of the last two lines can be checked locally. Let $U$ be an open affine chart in $\tX$,
\begin{align}
\begin{split}
\ddc \int_U \log((\ddc\vphi_\epsilon)^n) {g_{\vphi_\epsilon}}\frac{(\ddc\vphi_\epsilon)^n}{n!}
&= \ddc\big( \sum_\vk a_\vk \int_{U^\bvk} \log((\ddc\vphi_\epsilon)^n)^\bvk \frac{(\ddc\vphi_\epsilon^\bvk)^{n+k}}{(n+k)!} \big)\\
&= \sum_\vk a_\vk \int_{U^\bvk} \ddc\log((\ddc\vphi_\epsilon)^n)^\bvk \wedge \frac{(\ddc\Phi_\epsilon^\bvk)^{n+k}}{(n+k)!}  . 
\end{split}
\end{align}
By Berman-Berndtsson's Bergman kernel approximation method, \eqref{nn} is non-negative. 
When $\Phi_\epsilon$ is a geodesic, $\Phi_\epsilon^\bvk$ is also a geodesic. (compare with \eqref{L2_norm}-\eqref{geodesic_equation}.)
From \eqref{ddcbfE_keq}, along a geodesic, we have
$\ddc \bfE_{g,\omega^\eq_{\vphi_\epsilon}}(\vphi_\epsilon) =0$.
Similar as in the proof of the smooth case, it can be shown that $\bfE_g^D(\vphi_\epsilon),\bfE_g^{\ddc\psi}(\vphi_\epsilon)$ are convex along $\vphi_\epsilon(t)$.
Then when ${g_{\vphi_\epsilon}}$ is a polynomial, $\bfM_{\vphi_{0,\epsilon}}(\vphi_\epsilon)$ is convex along the geodesic $\vphi_\epsilon(t)$.

{ In the general case, we can approximate $g $ by polynomials $g_j$. 
For simplicity, denote the corresponding generalized Mabuchi functional by $\bfM_j(t)$, and denote the limit by $\bfM(t)$.
For any $\epsilon>0$, when $j$ is sufficiently large, $|\bfM_j(t)-\bfM(t)|<\epsilon$.
Then $\bfM(t) \leq \bfM_j(t)+\epsilon \leq (1-t)t\bfM_j(0) + t\bfM_j(1)+\epsilon \leq (1-t)\bfM(0)+t\bfM(1) + 3\epsilon$.
Take the limit, we can see $\bfM_{\vphi_{0,\epsilon}}$ is convex.}
\end{proof}

\begin{remark}
Our expansion of generalized Mabuchi functional is partially motivated by the expansion in \cite{Tia05}. See also \cite{Nak11}.
\end{remark}

Let $\vphi_\epsilon(i)$ $(i=0,1)$ be the solution to the Monge-Amp\`{e}re equation
\begin{equation}
\label{MA_approximation}
{g_{\vphi_\epsilon(i)}} (\ddc\vphi_\epsilon(i))^n = d_\epsilon {g_{\vphi(i)}}(\ddc\vphi(i))^n, \; \sup_\tX u_\epsilon = 0 ,
\end{equation}
where the right-hand side of the equation is a degenerate smooth volume form and $d_\epsilon = \int_\tX {g_{\vphi_{0,\epsilon}}}\omega_\epsilon^n / \int_\tX {g_{\vphi_0}}\omega^n$. 
\begin{lemma}
The solution $u_\epsilon(i)\in C^2(\tX\setminus E)$, and $|\nabla^2 u_\epsilon(i)| \leq C |s_E|_{\vphi_{0,1}}^{-\alpha}$, for some $C, \alpha>0$.
\end{lemma}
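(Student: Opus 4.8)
The plan is to regard \eqref{MA_approximation} as a degenerate complex Monge--Amp\`ere equation on the resolution $\tX$ whose right-hand side is smooth but vanishes along $E$, and to establish the weighted second order estimate by the same Chern--Lu-plus-maximum-principle scheme used in Proposition \ref{higher_regularity}; the degeneration of the right-hand side along $E$ is exactly what produces the weight $|s_E|^{-\alpha}$. Writing $\ddc\vphi_\epsilon = \omega_\epsilon+\ddc u_\epsilon$ with $\omega_\epsilon=\ddc\vphi_{0,\epsilon}$, the equation becomes
\[
(\ddc\vphi_\epsilon)^n = e^{\tilde F}\,\omega_\epsilon^n, \qquad
\tilde F = \log\left(\frac{d_\epsilon\, g_{\vphi(i)}}{g_{\vphi_\epsilon}}\cdot\frac{(\ddc\vphi(i))^n}{\omega_\epsilon^n}\right).
\]
Since $g$ is smooth and positive and the moment polytope $P$ is fixed, $g_{\vphi(i)}$ and $g_{\vphi_\epsilon}$ are bounded above and below, and since $\vphi(i)\in\hat{\mcH}_T(X,\mcL)$ the form $(\ddc\vphi(i))^n$ is smooth over $\tX$ and vanishes along $E$; hence $\tilde F$ is smooth on $\tX\setminus E$, bounded from above, and tends to $-\infty$ along $E$. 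Off $E$ the equation is non-degenerate elliptic, so after the uniform $C^0$-bound (obtained as in Corollary \ref{C^0}, using $e^{-u_\epsilon}\in L^p$) the standard Evans--Krylov--Schauder theory gives $u_\epsilon(i)\in C^\infty(\tX\setminus E)$, which settles the first claim $u_\epsilon(i)\in C^2(\tX\setminus E)$.

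For the weighted Laplacian bound I would apply the Chern--Lu inequality with respect to the fixed K\"ahler metric $\omega_1=\ddc\vphi_{0,1}$ to $n+\Delta_{\omega_\epsilon}u_\epsilon=\tr_{\omega_\epsilon}\ddc\vphi_\epsilon$. As in Proposition \ref{higher_regularity}, the only delicate contribution comes from $\Delta\tilde F$. Decomposing $\tilde F$ and using the moment map identities $\theta_{\alpha,\vphi_\epsilon}=\theta_{\alpha,\vphi_{0,\epsilon}}+\xi_\alpha(u_\epsilon)$ together with $\Delta f_{\vphi_\epsilon}=f_{\alpha\beta}(\theta_\alpha)_i(\theta_\beta)^i+f_\alpha\Delta\theta_\alpha$, the weight produces a term of the form $f_\alpha\,\xi_\alpha(n+\Delta_{\omega_\epsilon}u_\epsilon)$, while the contribution of $\log\big((\ddc\vphi(i))^n/\omega_\epsilon^n\big)$ is bounded by $C|s_E|^{-\beta}$ for some $\beta>0$, precisely because $\vphi(i)\in\hat{\mcH}_T$ satisfies $|\ddc u_{\vphi(i)}|\le C|s_E|^{-\beta}$.

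I would then run the maximum principle on the test function $H=\log(n+\Delta_{\omega_\epsilon}u_\epsilon)-A u_\epsilon+\alpha\log|s_E|^2_{\vphi_{0,1}}$, with $A$ larger than the bisectional curvature bound of $\omega_1$ and $\alpha$ chosen large. Since $\log|s_E|^2_{\vphi_{0,1}}\to-\infty$ along $E$ and $u_\epsilon$ is bounded, $H$ attains its maximum at an interior point $x_0\in\tX\setminus E$. At $x_0$ the relation $\xi_\alpha H=0$, combined with the boundedness of $\xi_\alpha u_\epsilon=\theta_{\alpha,\vphi_\epsilon}-\theta_{\alpha,\vphi_{0,\epsilon}}$ (the moment polytope being fixed) and of $\xi_\alpha\log|s_E|^2$ (the torus action preserving $E$), converts the problematic term $f_\alpha\,\xi_\alpha(n+\Delta u_\epsilon)$ into a bounded quantity, exactly as in Proposition \ref{higher_regularity}. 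The resulting differential inequality gives $\tr_{\ddc\vphi_\epsilon}\omega_1(x_0)\le C$, and feeding this back through the equation together with the bound $\tilde F\le C+\beta\log|s_E|^2$ yields $H(x_0)\le C$. Hence $H\le C$ everywhere, so $n+\Delta_{\omega_\epsilon}u_\epsilon\le C|s_E|^{-\alpha}$ with $C,\alpha$ independent of $\epsilon$, and since $\nabla^2 u_\epsilon$ is controlled by $\Delta_{\omega_\epsilon}u_\epsilon$ on $\tX\setminus E$ this is the claimed estimate.

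The main obstacle is the weight term $f_\alpha\,\xi_\alpha(n+\Delta u_\epsilon)$: a priori it has no definite sign and lives at the same order as the quantity being estimated. The key point, inherited from Proposition \ref{higher_regularity}, is that the fixedness of the moment polytope forces $\xi_\alpha u_\epsilon$ to be uniformly bounded, so that, after invoking the max-point identity $\xi_\alpha H=0$, this term can be absorbed. A secondary technical point is to ensure the maximum of $H$ is interior, i.e. that $n+\Delta_{\omega_\epsilon}u_\epsilon$ grows at most polynomially in $|s_E|^{-1}$; this rough preliminary bound follows by comparison from $\vphi(i)\in\hat{\mcH}_T$ and the explicit degenerate model near $E$, and is then refined to the sharp exponent by the barrier $\alpha\log|s_E|^2_{\vphi_{0,1}}$.
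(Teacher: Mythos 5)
Your proposal is correct and takes essentially the same approach as the paper's (very brief) proof, which sets up the continuity path $(\ddc\vphi_{\epsilon,t})^n = d_{\epsilon,t}e^{-tf_{\vphi_{\epsilon,t}}}\Omega$ with $\Omega=g_{\vphi(i)}(\ddc\vphi(i))^n$, solves $t=0$ by Yau's theorem, takes the $C^0$-estimate from \cite{EGZ09}, and then invokes exactly the Chern--Lu/maximum-principle argument of Proposition \ref{higher_regularity} that you spell out, including the key point that the fixed moment polytope forces $\xi_\alpha u_\epsilon$ to be bounded so the term $f_\alpha\xi_\alpha(n+\Delta u_\epsilon)$ can be absorbed at the maximum point. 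The only difference is that the paper's continuity method in $t$ simultaneously supplies existence of the solution, whereas you treat the equation directly as an a priori estimate; for the regularity assertion of the lemma this is immaterial.
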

\begin{proof}
We will brief the proof. Denote $\Omega = g_\vphi (\ddc\vphi)^n$, which is a smooth volume form.
Consider the continuty method $(\ddc\vphi_{\epsilon,t})^n = d_{\epsilon,t}e^{-t f_{\vphi_{\epsilon,t}}}\Omega$.
At $t=0$, this is solved by Yau's resolution \cite{Yau78}. The $C^0$-estimate follows \cite{EGZ09}.
The $C^2$-estimate follows from a similar argument as in the proof of Proposition \ref{higher_regularity}.
\end{proof}

\begin{lemma}
\label{geodesic_ends}
For $i=0,1$, up to a subsequence, $u_\epsilon(i)$ converges to $u(i)$ in $L^1(\omega_{1}^n)$. Furthermore, $\lim_{\epsilon\to 0} \bfE_{g,\vphi_{0,\epsilon}}(\vphi_\epsilon(i)) = \bfE_g(\vphi(i))$, $\lim_{\epsilon\to 0} \bfE_g^{\ddc\vphi(i)_\epsilon^\eq}(\vphi_\epsilon(i)) = \bfE_g^{\ddc\vphi(i)^\eq}(\vphi(i))$.
\end{lemma}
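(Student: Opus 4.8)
The plan is to run a compactness--uniqueness argument on the resolution $\tX$ and then to pass to the limit in the defining integrals of the two energies by the dominated convergence theorem, after expressing every measure as a density against the fixed smooth volume form $\omega_1^n$. \emph{Step 1 (compactness).} By the preceding lemma the potentials $u_\epsilon(i)$ are uniformly bounded in $L^\infty(\tX)$ — combining $\sup_\tX u_\epsilon(i)=0$ with the uniform $C^0$-estimate (cf. Corollary \ref{C^0}) — and obey $|\nabla^2 u_\epsilon(i)|\le C|s_E|_{\vphi_{0,1}}^{-\alpha}$ on $\tX\setminus E$ for fixed $C,\alpha>0$. Hence on every compact subset of $\tX\setminus E$ the family is bounded in $C^{1,\beta}$, and after extracting a subsequence we get $u_\epsilon(i)\to u_*$ in $C^{1,\beta}_{\rm loc}(\tX\setminus E)$ and in $L^1(\omega_1^n)$, with $u_*$ a bounded $\omega$-psh function; set $\vphi_*=\vphi_0+u_*$. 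In particular $u_\epsilon(i)\to u_*$ pointwise on the full-measure set $\tX\setminus E$.

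\emph{Step 2 (identification).} On $\tX\setminus E$ the local smooth convergence gives $(\ddc\vphi_\epsilon(i))^n\to(\ddc\vphi_*)^n$ and $g_{\vphi_\epsilon(i)}\to g_{\vphi_*}$ (the moment coordinates converge and $g$ is smooth on a fixed neighbourhood of all the polytopes $P_\epsilon$). Passing to the limit in \eqref{MA_approximation} and using $d_\epsilon\to1$, we obtain $g_{\vphi_*}(\ddc\vphi_*)^n=g_{\vphi(i)}(\ddc\vphi(i))^n$ on $\tX\setminus E$; since $\vphi(i)\in\hat{\mcH}_T$ the right-hand side is a smooth measure and the left-hand side does not charge the pluripolar set $E$, so the identity holds on all of $\tX$, i.e. $\MA_g(\vphi_*)=\MA_g(\vphi(i))$. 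By the uniqueness statement of Proposition \ref{Berman_Nystrom} together with the normalization $\sup u_*=\sup u(i)=0$ this forces $u_*=u(i)$, which proves the first assertion.

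\emph{Step 3 (energy convergence).} Writing $\psi_{t,\epsilon}=\vphi_{0,\epsilon}+t\,u_\epsilon(i)$, the definition gives
\begin{equation*}
\bfE_{g,\vphi_{0,\epsilon}}(\vphi_\epsilon(i))=\frac{1}{\bV_g}\int_0^1\int_\tX u_\epsilon(i)\,g_{\psi_{t,\epsilon}}\,\frac{(\ddc\psi_{t,\epsilon})^n}{n!}\,dt.
\end{equation*}
Since $\vphi_{0,\epsilon}=\vphi_0+\epsilon\vphi_P\to\vphi_0$ smoothly and $u_\epsilon(i)\to u(i)$ as in Steps 1--2, the integrand converges to $u(i)\,g_{\vphi_t}(\ddc\vphi_t)^n/n!$ pointwise on $\tX\setminus E$, where $\vphi_t=\vphi_0+t\,u(i)$. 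Expanding $(\ddc\psi_{t,\epsilon})^n$ into the mixed terms $\omega_\epsilon^{n-k}\wedge(\ddc u_\epsilon(i))^k$ and expressing them as densities against $\omega_1^n$, the bound of Step 1 dominates each density by the fixed weight $C|s_E|^{-n\alpha}$, which is $\omega_1^n$-integrable for the (small) $\alpha$ produced by the estimate; as $u_\epsilon(i)g_{\psi_{t,\epsilon}}$ is uniformly bounded, dominated convergence yields $\bfE_{g,\vphi_{0,\epsilon}}(\vphi_\epsilon(i))\to \frac{1}{\bV_g}\int_0^1\int_\tX u(i)\,g_{\vphi_t}\frac{(\ddc\vphi_t)^n}{n!}dt=\bfE_g(\vphi(i))$. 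The convergence $\bfE_g^{\ddc\vphi(i)^\eq_\epsilon}(\vphi_\epsilon(i))\to\bfE_g^{\ddc\vphi(i)^\eq}(\vphi(i))$ follows in the same way from the explicit formula \eqref{E_omega_eq}: the additional factors $\mathfrak{L}_{\xi_\alpha}(\psi_{t,\epsilon})=\theta_\alpha(\psi_{t,\epsilon})$ and $g_\alpha$ are uniformly bounded and converge pointwise on $\tX\setminus E$, so the same dominated-convergence estimate applies term by term.

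\emph{Main obstacle.} The crux is the behaviour near the exceptional divisor $E$, where the second-order estimates degenerate and one must prevent mass of the (mixed) Monge--Amp\`ere measures from concentrating on $E$ in the limit. For the identification this is easy, since the top-degree measure is pinned down by the equation and neither side charges $E$; for the energy integrals it rests on the quantitative singular bound $|\nabla^2 u_\epsilon(i)|\le C|s_E|^{-\alpha}$ furnishing a \emph{fixed} $\omega_1^n$-integrable dominating density $C|s_E|^{-n\alpha}$, and it is this integrability that must be checked with care.
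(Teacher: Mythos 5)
Your Steps 1--2 are sound and in fact more explicit than the paper's own treatment of the first assertion (the paper simply invokes Hartogs' compactness and the stability of the equation \eqref{MA_approximation} to identify the limit). The problem is in Step 3, and it is a genuine gap rather than a presentational one. Your dominated-convergence argument rests on the claim that the densities of the mixed terms $\omega_\epsilon^{n-k}\wedge(\ddc u_\epsilon(i))^k$ against $\omega_1^n$ are dominated by the \emph{fixed integrable} weight $C|s_E|^{-n\alpha}$. But the preceding lemma only produces \emph{some} $\alpha>0$ in the bound $|\nabla^2 u_\epsilon(i)|\le C|s_E|^{-\alpha}$; integrability of $|s_E|^{-n\alpha}$ near the divisor $E$ requires $n\alpha<2$, and nothing in the estimate guarantees this. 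So your dominating function need not be in $L^1(\omega_1^n)$, and the dominated convergence theorem cannot be applied as stated. A secondary issue in the same step: $C^{1,\beta}_{\rm loc}$ convergence on $\tX\setminus E$ does not give pointwise convergence of $(\ddc u_\epsilon(i))^k$, so even granting an integrable dominating density you would still need an additional argument (e.g.\ Bedford--Taylor continuity plus local uniform convergence of the bounded factors) to pass to the limit.

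The paper circumvents this entirely and you should note the mechanism. It never tries to control the intermediate mixed measures pointwise. Instead it uses the cocycle property to write
$\bfE_{g,\vphi_{0,\epsilon}}(\vphi_\epsilon)-\bfE_{g,\vphi_{0,\epsilon}}(\vphi+\epsilon\vphi_P)=\bfE_{g,\vphi+\epsilon\vphi_P}(\vphi_\epsilon)
=\bfI_{g,\vphi+\epsilon\vphi_P}(\vphi_\epsilon)-\bfJ_{g,\vphi+\epsilon\vphi_P}(\vphi_\epsilon)+\int_\tX(u_\epsilon-u)\,g_{\vphi_\epsilon}(\ddc\vphi_\epsilon)^n$,
so that only the \emph{endpoint} Monge--Amp\`ere measures appear. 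The measure $g_{\vphi+\epsilon\vphi_P}(\ddc(\vphi+\epsilon\vphi_P))^n$ is dominated by $C\omega_1^n$ because $\vphi\in\hat{\mcH}_T$, while the integral against $g_{\vphi_\epsilon}(\ddc\vphi_\epsilon)^n$ is controlled in two ways: via the equation \eqref{MA_approximation} it equals $d_\epsilon$ times a fixed smooth measure, and via the uniform entropy bound $\bfH_{g,\omega_\epsilon}(\vphi_\epsilon)\le C$ combined with the H\"older--Young inequality for the Orlicz pair $\chi(s)=(s+1)\log(s+1)-s$, $\chi^*(s)=e^s-s-1$, which yields $|\int(u_\epsilon-u)\,g_{\vphi_\epsilon}(\ddc\vphi_\epsilon)^n|\le C\,\bfH_{g,\omega_\epsilon}(\vphi_\epsilon)\,\|u_\epsilon-u\|_{L^{\chi^*}}\le C'\|u_\epsilon-u\|_{L^1(\omega_1^n)}$, the last step using the uniform $L^\infty$ bound. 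The remaining piece $\bfE_{g,\vphi_{0,\epsilon}}(\vphi+\epsilon\vphi_P)-\bfE_{g,\vphi_0}(\vphi)$ only involves the fixed $u$, so dominated convergence applies there legitimately. To repair your proof you would need to replace your pointwise domination by this entropy/Orlicz device (or some other mechanism ruling out mass concentration on $E$), since the equation only pins down the top-degree measure at $t=1$ and not along the linear path defining $\bfE_g$.
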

\begin{proof}
Abbreviate $u(i)$ as $u$.
By Hartogs' compactness theorem, up to a subsequence, $\ddc\vphi_\epsilon$ converges to $\ddc\vphi$ as current, which furthermore implies $u_\epsilon$ converges
to $u$ in $L^1(\omega_{1}^n)$.

$\bfE_{g,\vphi_{0,\epsilon}}(\vphi_\epsilon) - \bfE_{g, \vphi_0}(\vphi) = \big(\bfE_{g,\vphi_{0,\epsilon}}(\vphi_\epsilon) - \bfE_{g,\vphi_{0,\epsilon}}(\vphi-\vphi_0 + \vphi_{0,\epsilon})\big)
+\big( \bfE_{g,\vphi_{0,\epsilon}}(\vphi-\vphi_0+\vphi_{0,\epsilon})-\bfE_{g,\vphi_0}(\vphi)\big)$.
Note that $\vphi-\vphi_0+\vphi_{0,\epsilon} = \vphi_{0,\epsilon}+u = \vphi + \epsilon \vphi_P$.
\begin{align*}
\bfE_{g,\vphi_{0,\epsilon}}(\vphi_\epsilon) - \bfE_{g,\vphi_{0,\epsilon}}(\vphi+\epsilon\vphi_P) &= \bfE_{g,\vphi+\epsilon\vphi_P} (\vphi_\epsilon) \\
&= \bfI_{g,\vphi+\epsilon\vphi_P}(\vphi_\epsilon) -\bfJ_{g,\vphi+\epsilon\vphi_P}(\vphi_\epsilon) + \int_\tX (u_\epsilon-u){g_{\vphi_\epsilon}}(\ddc\vphi_\epsilon)^n
\end{align*}
$\bfI_{g,\vphi+\epsilon\vphi_P}(\vphi_\epsilon) = \int_\tX (u_\epsilon-u)\big( {g_{\vphi+\epsilon\vphi_P}}(\ddc(\vphi+\epsilon\vphi_P))^n - {g_{\vphi_\epsilon}}(\ddc\vphi_\epsilon)^n\big)$.
There exists some $C>0$, such that ${g_{\vphi+\epsilon\vphi_P}}(\ddc(\vphi+\epsilon\vphi_P))^n \leq C \omega_1^n$.
Then
\begin{equation}
|\int_\tX (u_\epsilon-u) {g_{\vphi+\epsilon\vphi_P}}(\ddc(\vphi+\epsilon\vphi_P))^n| \leq C \int_\tX |u_\epsilon-u|\omega_1^n \xrightarrow{\epsilon\to 0} 0 .
\end{equation}
By \eqref{MA_approximation}, $\bfH_{g,\omega_\epsilon}(\vphi_\epsilon)$ is uniformly bounded. Then by H\"{o}lder-Young's inequality,
\begin{align}
\begin{split}
|\int_\tX (u_\epsilon-u) {g_{\vphi_\epsilon}}(\ddc\vphi_\epsilon)^n| = |\int_\tX (u_\epsilon-u)  \frac{(\ddc\vphi_\epsilon)^n}{\omega_\epsilon^n} {g_{\vphi_\epsilon}}  \omega_\epsilon^n |\\
\leq C |u_\epsilon-u|_{L^{\chi^*}}|\frac{(\ddc\vphi_\epsilon)^n}{\omega_\epsilon^n}|_{L^\chi} \leq C |\bfH_{g,\omega_\epsilon}(\vphi_\epsilon)| |u_\epsilon-u|_{L^{\chi^*}},
\end{split}
\end{align}
where $\chi(s) = (s+1)\log(s+1)-s$, $\chi^*(s) = e^s-s-1$. Since $\chi^*(s) \leq s e^s$, the inequality above implies
$|u_\epsilon-u|_{L^{\chi^*}}\leq \int_\tX |u_\epsilon-u|e^{|u_\epsilon-u|}\omega_1^n \leq C |u_\epsilon-u|_{L^1(\omega_1^n)}\xrightarrow{\epsilon\to 0} 0$, since $|u_\epsilon-u|$ is uniformly bounded.
In addition, $\bfJ_{g} \leq C \bfI_{g}$. Then we have as $\epsilon\to 0$, $0\leq \bfJ_{g,\vphi+\epsilon\vphi_P}(\vphi_\epsilon) \leq C \bfI_{g,\vphi+\epsilon\vphi_P}(\vphi_\epsilon) \rightarrow 0$.
Similarly, 
\begin{equation}
|\int_\tX (u_\epsilon-u){g_{\vphi_\epsilon}}(\ddc\vphi_\epsilon)^n| = |d_\epsilon||\int_\tX (u_\epsilon-u){g_{\vphi_\epsilon}}(\ddc\vphi)^n| 
\leq C \int_\tX |u_\epsilon-u|\omega_1^n \xrightarrow{\epsilon} 0 .
\end{equation}
Then we have
$|\bfE_{g,\vphi_{0,\epsilon}}(\vphi_\epsilon) - \bfE_{g,\vphi_{0,\epsilon}}(\vphi-\vphi_0+\vphi_{0,\epsilon})|\xrightarrow{\epsilon\to 0} 0$ .
Denote $\vphi_s = \vphi_0+su$. 
\begin{align}
\begin{split}
&|\bfE_{g,\vphi_{0,\epsilon}}(\vphi-\vphi_0+\vphi_{0,\epsilon})-\bfE_{g,\vphi_0}(\vphi)| = |\int_0^1\int_\tX u \big( {g_{\vphi_s+\epsilon\vphi_P}}(\ddc\vphi_s+\epsilon\vphi_P)^n - {g_{\vphi_s}}(\ddc\vphi_s)^n \big) ds|\\
&= |\int_0^1\int_\tX u\big(  ({g_{\vphi_s+\epsilon\vphi_P}}-{g_{\vphi_s}}) (\ddc(\vphi_s+\epsilon\vphi_P))^n + {g_{\vphi_s}}((\ddc(\vphi_s+\epsilon\vphi_P))^n - (\ddc\vphi_s)^n) \big) ds| \\
&\leq C \int_0^1\int_\tX |{g_{\vphi_s+\epsilon\vphi_P}}-{g_{\vphi_s}}|\omega_1^n ds +  |\int_0^1\int_\tX u {g_{\vphi_s}}((\ddc(\vphi_s+\epsilon\vphi_P))^n - (\ddc\vphi_s)^n) ds| \\ 
&\xrightarrow{\epsilon\to 0} 0 ,
\end{split}
\end{align}
where in the third line, we use the fact $u\in L^\infty(\tX,\omega_1)$, in the last line, we use the dominated convergence theorem.
Then $\lim_{\epsilon} \bfE_{g,\vphi_{0,\epsilon}}(\vphi_\epsilon) = \bfE_{g}(\vphi)$.

Let $g_j$ be the polynomial approximation of $g$. By \eqref{bfE_keq} and a similar argument as above, for any $\epsilon'>0$, there exists a $\epsilon_0>0$, such that
when $\epsilon<\epsilon_0$, $|\bfE_{g_j}^{\ddc\vphi_\epsilon^\eq}(\vphi_\epsilon)-\bfE_{g_j}^{\ddc\vphi^\eq}(\vphi)|<\epsilon'$.

Meanwhile, since the moment polytope is bounded, $\mathfrak{L}_{V_{\alpha}}(\vphi_{s,\epsilon})=\theta_{\alpha,\vphi_{s,\epsilon}}$ is uniformly bounded. By the $C^2$-convergence of $g_j$, we have $j_0>0$, for $j>j_0$,
$|\sum_\alpha ( f_{j,\alpha}\mathfrak{L}_{\xi_\alpha}(\vphi_{s,\epsilon})- f_\alpha \mathfrak{L}_{\xi_\alpha}(\vphi_{s,\epsilon}))|
\leq \sum_{\alpha} |D_\alpha f_j-D_\alpha f||\mathfrak{L}_{V_{\alpha}}(\vphi_{s,\epsilon})|<\epsilon'$. 
 In addition, $u_\epsilon$ is uniformly bounded. Then there exists a $C>0$,
such that $|\bfE_{g_j}^{\ddc\vphi_\epsilon^\eq}(\vphi_\epsilon)-\bfE_{g}^{\ddc\vphi_\epsilon^\eq}(\vphi_\epsilon)|< C\epsilon'$.
Then 
\begin{align*}
&|\bfE_{g}^{\ddc\vphi_\epsilon^\eq}(\vphi_\epsilon)-\bfE_g^{\ddc\vphi^\eq}(\vphi)| \leq \\
& 
|\bfE_{g}^{\ddc\vphi_\epsilon^\eq}(\vphi_\epsilon) - \bfE_{g_j}^{\ddc\vphi_\epsilon^\eq}(\vphi_\epsilon|
+|\bfE_{g_j}^{\ddc\vphi_\epsilon^\eq}(\vphi_\epsilon)-\bfE_{g_j}^{\ddc\vphi^\eq}(\vphi)| + |\bfE_{g_j}^{\ddc\vphi^\eq}(\vphi)-\bfE_g^{\ddc\vphi^\eq}(\vphi)|\\
& \xrightarrow{j\to\infty,\epsilon\to 0} 0
\end{align*}
\end{proof}

\begin{lemma}
\label{geodesic_t}
For any $t$, $u_\epsilon(t)$ is uniformly bounded (with respect to $t$ and $\epsilon$); $u_\epsilon(t)$ converges to $u(t)$ in $L^1(\tX,\omega_1^n)$ uniformly (with respect to $t$).
Furthermore, $\lim_{\epsilon\to 0} \bfE_{g,\vphi_{0,\epsilon}}(\vphi_\epsilon(t)) = \bfE_g(\vphi(t))$, 
$\lim_{\epsilon\to 0} \bfE_g^{\ddc\vphi(t)_\epsilon^\eq}(\vphi_\epsilon(t)) = \bfE_g^{\ddc\vphi(t)^\eq}(\vphi(t))$.
\end{lemma}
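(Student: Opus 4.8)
The plan is to prove the three assertions in order, reducing each interior statement to the corresponding endpoint statement of Lemma \ref{geodesic_ends} by exploiting convexity in $t$, compactness, and uniqueness of weak geodesics.

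\textbf{Step 1 (uniform bound).} First I would record that for fixed $x$ the map $t\mapsto u_\epsilon(t)(x)$ is convex, since the total–space potential $\Phi_\epsilon$ is $\pi_1^*\omega_\epsilon$-psh on $\tX\times A$ and independent of the imaginary part of the annulus coordinate. Combined with the standard Lipschitz estimate for weak geodesics, $\|\dot\vphi_\epsilon(t)\|_{L^\infty(\tX)}\le \|\vphi_\epsilon(0)-\vphi_\epsilon(1)\|_{L^\infty(\tX)}$, and the uniform bound $|u_\epsilon(i)|\le C$ on the endpoints (from the normalization $\sup_\tX u_\epsilon(i)=0$ and the $C^0$-estimate established above), this gives $|u_\epsilon(t)|\le \|u_\epsilon(0)\|_{L^\infty}+\|\dot\vphi_\epsilon\|_{L^\infty}\le C'$ uniformly in $t$ and $\epsilon$. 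The same speed bound yields the equi-Lipschitz property $\|u_\epsilon(t)-u_\epsilon(t')\|_{L^1(\omega_1^n)}\le C|t-t'|$ (and likewise for $u(t)$), which I use in Step 2.

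\textbf{Step 2 (uniform $L^1$-convergence).} Because $\omega_\epsilon\ge\omega$, every $\omega$-psh function is $\omega_\epsilon$-psh, so the $\omega$-geodesic $\Phi$ is a subgeodesic for the $\omega_\epsilon$-problem, which already pins $\Phi$ below the limit. For the convergence I would argue by compactness and uniqueness: by Step 1 the family $\{\vphi_\epsilon(t)\}$ is uniformly bounded, so by Hartogs' compactness any sequence $\epsilon_k\to0$ has a subsequence along which $\Phi_{\epsilon_k}\to\Phi_\infty$ in $L^1_{\mathrm{loc}}(\tX\times A)$, with $\Phi_\infty$ being $\pi_1^*\omega$-psh and convex in $t$. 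Using the endpoint convergence of Lemma \ref{geodesic_ends} together with the uniform bounds, the boundary values of $\Phi_\infty$ are $\vphi(0),\vphi(1)$, and the homogeneous Monge-Amp\`ere equation passes to the limit; by uniqueness of the bounded weak geodesic with fixed endpoints, $\Phi_\infty=\Phi$. Since every subsequence has a sub-subsequence with the same limit, $u_\epsilon(t)\to u(t)$ in $L^1(\omega_1^n)$ for each fixed $t$, and the equi-Lipschitz bound of Step 1 upgrades this to convergence uniform in $t\in[0,1]$ (an Arzel\`a-Ascoli argument for the $L^1(\omega_1^n)$-valued curves $t\mapsto u_\epsilon(t)-u(t)$).

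\textbf{Step 3 (convergence of the energies).} Here I would repeat the computation of Lemma \ref{geodesic_ends} at a fixed interior $t$, splitting
\begin{align*}
\bfE_{g,\vphi_{0,\epsilon}}(\vphi_\epsilon(t))-\bfE_g(\vphi(t))
=\bfE_{g,\vphi(t)+\epsilon\vphi_P}(\vphi_\epsilon(t))
+\big(\bfE_{g,\vphi_{0,\epsilon}}(\vphi(t)+\epsilon\vphi_P)-\bfE_g(\vphi(t))\big).
\end{align*}
The second term tends to $0$ by dominated convergence exactly as in Lemma \ref{geodesic_ends}, using only $u(t)\in L^\infty(\tX)$ and $\omega_\epsilon\to\omega$. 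The only point in the first term not formally identical to the endpoint case is the pairing $\int_\tX (u_\epsilon(t)-u(t))\,g_{\vphi_\epsilon(t)}(\ddc\vphi_\epsilon(t))^n$ (and the analogous term inside $\bfI_{g,\vphi(t)+\epsilon\vphi_P}$): at the endpoints this was controlled by the entropy bound from \eqref{MA_approximation}, which is unavailable along the geodesic. Instead I would invoke Chen's $C^{1,1}$-regularity for the $\omega_\epsilon$-geodesic $\vphi_\epsilon(t)$, whose boundary data satisfy $|\nabla^2 u_\epsilon(i)|\le C|s_E|^{-\alpha}$ by the $C^2$-estimate established above; this produces, uniformly in $t$ and $\epsilon$, a bound $(\ddc\vphi_\epsilon(t))^n\le C|s_E|^{-n\alpha}\,\omega_1^n$ with $|s_E|^{-n\alpha}\in L^p(\omega_1^n)$ for some $p>1$ by the klt hypothesis. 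Combining this uniform $L^p$-bound with the interpolation estimate $\|u_\epsilon(t)-u(t)\|_{L^{p'}}\le \|u_\epsilon(t)-u(t)\|_{L^\infty}^{1-1/p'}\|u_\epsilon(t)-u(t)\|_{L^1}^{1/p'}\to0$ (from Steps 1 and 2) and H\"older's inequality makes the pairing vanish; since $\bfI_g,\bfJ_g\ge0$ and $\bfJ_g\le C\bfI_g$ by \eqref{I}-\eqref{J}, the first term tends to $0$. The convergence $\bfE_g^{\ddc\vphi(t)_\epsilon^\eq}(\vphi_\epsilon(t))\to\bfE_g^{\ddc\vphi(t)^\eq}(\vphi(t))$ then follows by the same reasoning after reducing, via the fibration identity \eqref{bfE_keq}, to a finite sum of ordinary energies $\bfE^\bvk$ on the total spaces $\tX^\bvk$, to which the $L^1$-continuity of the energy on uniformly bounded families applies; the $C^2$ polynomial approximation of $g$ transfers the result to general $g$ as in Lemma \ref{geodesic_ends}.

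\textbf{Main obstacle.} I expect the crux to be Step 2 together with the integrability input of Step 3, since both demand control of the approximating geodesics in the degenerate regime where the classes $[\omega_\epsilon]$ vary and degenerate along $E$ and where the endpoints converge only in $L^1$. Concretely, passing the homogeneous Monge-Amp\`ere equation to the limit to identify $\Phi_\infty=\Phi$ (using only the uniform bounds, $\omega_\epsilon\ge\omega$, and uniqueness of bounded geodesics), and extracting the uniform-in-$(t,\epsilon)$ bound $(\ddc\vphi_\epsilon(t))^n\le C|s_E|^{-n\alpha}\omega_1^n$ from Chen-type regularity with $|s_E|^{-\alpha}$ boundary control, are the two technically delicate points; everything else is a faithful repetition of Lemma \ref{geodesic_ends}.
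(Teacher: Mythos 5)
Your Step 1 matches the paper (the Lipschitz estimate $|u_\epsilon(t)-u_\epsilon(s)|\le |u_\epsilon(1)-u_\epsilon(0)|(t-s)$ plus the endpoint $C^0$-bound), but the two steps where you diverge from the paper are exactly where the gaps are. In Step 2 you identify the limit $\Phi_\infty$ by ``passing the homogeneous Monge--Amp\`ere equation to the limit'' and invoking uniqueness of the bounded geodesic. This does not work as stated: $L^1$-convergence of a uniformly bounded family of psh potentials does not imply convergence of their Monge--Amp\`ere measures (that requires monotonicity or convergence in capacity), so you cannot conclude $(\ddc\Phi_\infty)^{n+1}=0$. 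Your fallback comparison (``$\omega_\epsilon\ge\omega$ pins $\Phi$ below the limit'') also fails, because the boundary data $\vphi_\epsilon(i)$ are only known to converge to $\vphi(i)$ in $L^1$, not uniformly, so the comparison principle for geodesics with perturbed boundary values cannot be applied. The paper identifies $\hat u(t)=u(t)$ by an energy argument instead: $\bfE_{g,\vphi_{0,\epsilon}}(\vphi_\epsilon(t))$ is affine in $t$ for each $\epsilon$, its pointwise limit $\bfE_g(\hat\vphi(t))$ is therefore affine, and a psh path with affine $\bfE_g$ and the correct endpoints must coincide with the geodesic.

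In Step 3, your replacement for the missing entropy bound --- a uniform-in-$(t,\epsilon)$ weighted Hessian bound $(\ddc\vphi_\epsilon(t))^n\le C|s_E|^{-n\alpha}\omega_1^n$ derived from ``Chen's $C^{1,1}$-regularity'' --- is not available. Chen's (and B\l ocki's) geodesic regularity gives a global $L^\infty$ Laplacian bound and requires uniformly bounded $C^{1,1}$ boundary data; here the boundary data only satisfy $|\nabla^2 u_\epsilon(i)|\le C|s_E|^{-\alpha}$, and no standard result propagates such a pointwise weighted bound to the interior of the geodesic, uniformly as the reference classes degenerate. The paper's mechanism is different and is the heart of the lemma: convexity of $\bfM_{\vphi_{0,\epsilon}}$ along $\vphi_\epsilon(t)$ (just proved) plus the uniform endpoint bounds give a uniform upper bound on $\bfM_{\vphi_{0,\epsilon}}(\vphi_\epsilon(t))$; since $\bfF_{g,\epsilon}(\vphi_\epsilon(t))$ is uniformly bounded by the $C^0$-bound on $u_\epsilon(t)$, the entropy $\bfH_{g,\omega_\epsilon}(\vphi_\epsilon(t))$ is uniformly bounded for interior $t$, and then the H\"older--Young duality argument of Lemma \ref{geodesic_ends} applies verbatim to control the pairing $\int_\tX(u_\epsilon(t)-u(t))\,g_{\vphi_\epsilon(t)}(\ddc\vphi_\epsilon(t))^n$. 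You should replace your two delicate claims with this entropy-via-Mabuchi-convexity argument; the rest of your outline then goes through.
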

\begin{proof}
By \cite[Proposition 1.4]{DNG18},
\begin{equation}
\label{equicontinuous}
|u_\epsilon(t)-u_\epsilon(s)|\leq |u_\epsilon(1)-u_\epsilon(0)|(t-s)
\end{equation}
for $0\leq s\leq t\leq 1$, we have $u_\epsilon(t)$ is uniformly bounded with respect to $\epsilon,s$, and $u_\epsilon(t)$ is equicontinuous with respect to $t$. 
Then up to choose a subsequence,  as $\epsilon\to 0$, $u_\epsilon(t)$ converges to a limit $\hat{u}(t)$ in $L^1(\tX,\omega_1^n)$, uniformly with respect to $t$.
Since $\bfM_{\vphi_{0,\epsilon}}(\vphi_\epsilon(t))$ is convex along $u_\epsilon(t)$, $\bfM_{\vphi_{0,\epsilon}}(\vphi_\epsilon(i))$ are uniformly bounded for $i=0,1$, we have
$\bfM_{\vphi_{0,\epsilon}}(\vphi_\epsilon(t))$ is uniformly bounded with respect to $\epsilon$ and $t$.
From \eqref{E_omega_eq},\eqref{E_Ric_eq},\eqref{E_D},\eqref{E_psi}, using the fact that $u_\epsilon(t)$ is uniformly bounded and $f \in C^2(P)$,
it's not hard to check $\bfF_{g,\epsilon}(u_\epsilon(t))$ is uniformly bounded with respect to $\epsilon, t$. Then the entropy term $\bfH_{g,\omega_\epsilon}(\vphi_\epsilon(t))$ is also uniformly bounded.
As $\bfH_{g,\omega_\epsilon}$ and $u_\epsilon(t)$ are both uniformly bounded, we can adapt the same argument as in the proof of Lemma \ref{geodesic_ends} to show that
$\bfE_{g,\vphi_{0,\epsilon}}(\vphi_\epsilon(t))$ converges to $\bfE_{g}(\hat{\vphi}(t))$ both as a convex function and concave function over $t$.
Then $\bfE_{g}(\hat{\vphi}(t))$ is affine over $t$. This implies that $\hat{u}(t)=u(t)$.

By a similar argument as in the proof of Lemma \ref{geodesic_ends}, we also have
$\lim_{\epsilon\to 0} \bfE_g^{\ddc\vphi(t)_\epsilon^\eq}(\vphi_\epsilon(t)) = \bfE_g^{\ddc\vphi(t)^\eq}(\vphi(t))$.
\end{proof}

\begin{lemma}
In the formula \eqref{Hartog's}, for $0\leq t\leq 1$, $\lim_{\epsilon\to 0}\bfF_{g,\epsilon}(\vphi_\epsilon(t)) = \bfF_{g}(\vphi(t))$.
\end{lemma}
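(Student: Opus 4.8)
The plan is to show that each of the four terms making up $\bfF_{g,\epsilon}$ in \eqref{Hartog's}---namely $\tfrac{C_{R,\epsilon}}{n}\bfE_g^{\ddc{\vphi_\epsilon}^\eq}(\vphi_\epsilon(t))$, $-\bfE_g^{Ric^\eq_\epsilon}(\vphi_\epsilon(t))$, $\bfE_g^{D'}(\vphi_\epsilon(t))$ and $\bfE_g^{\ddc\psi}(\vphi_\epsilon(t))$---converges to its $\epsilon=0$ counterpart. Fix $t$ and abbreviate $u_\epsilon=u_\epsilon(t)$ and $\vphi_{s,\epsilon}=\vphi_{0,\epsilon}+su_\epsilon$. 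By Lemma \ref{geodesic_t} the functions $u_\epsilon$ are uniformly bounded and $u_\epsilon\to u(t)$ in $L^1(\tX,\omega_1^n)$; since they are uniformly bounded, this convergence takes place in capacity, so for each $s$ the mixed Monge-Amp\`ere currents $(\ddc\vphi_{s,\epsilon})^{j}$ converge weakly to $(\ddc\vphi_s)^{j}$, and the Hamiltonians $\theta_{\alpha,\vphi_{s,\epsilon}}$---hence $g_{\vphi_{s,\epsilon}}$ and $f_\alpha(\bfm_{\vphi_{s,\epsilon}})$---are uniformly bounded and convergent. Just as in the convexity lemma above I first treat a monomial $g=\prod_\alpha y_\alpha^{k_\alpha}$, using the fibration $\tX^\bvk$ to turn the equivariant energies into ordinary mixed energies on $\tX^\bvk$, and pass to general $g\in C^2(P)$ at the end by polynomial approximation.

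The term $\tfrac{C_{R,\epsilon}}{n}\bfE_g^{\ddc{\vphi_\epsilon}^\eq}(\vphi_\epsilon(t))$ is immediate: $\bfE_g^{\ddc\vphi(t)_\epsilon^\eq}(\vphi_\epsilon(t))\to\bfE_g^{\ddc\vphi(t)^\eq}(\vphi(t))$ is exactly Lemma \ref{geodesic_t}, while $C_{R,\epsilon}\to C_R$ since $C_{R,\epsilon}$ is a ratio of intersection numbers computed in $[L+\epsilon P]$ and depends continuously on $\epsilon$. For $\bfE_g^{D'}$ and $\bfE_g^{\ddc\psi}$ I would argue as in Lemma \ref{geodesic_ends}: the integrands pair the uniformly bounded, $L^1$-convergent product $u_\epsilon\,g_{\vphi_{s,\epsilon}}$ against the positive currents $(\ddc\vphi_{s,\epsilon})^{n-1}|_{D'}$, respectively $\ddc\psi\wedge(\ddc\vphi_{s,\epsilon})^{n-1}$. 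Weak convergence of these currents together with dominated convergence in the $(s,t)$-integral gives the claim; for the $\ddc\psi$-term the klt hypothesis on $(X,D+\Theta)$ ensures that the limiting wedge product against the positive current $\Theta$ has finite mass and that no mass concentrates on the pluripolar set.

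The genuine obstacle is $\bfE_g^{Ric^\eq_\epsilon}(\vphi_\epsilon(t))$, because the reference Ricci form $Ric_\epsilon=-\ddc\log\omega_\epsilon^n$ degenerates along the exceptional locus $E$ as $\epsilon\to 0$, so it cannot simply be wedged against weakly convergent factors. To handle it I would start from the fibration identity \eqref{bfE_Riceq}, which writes $-\bfE_g^{Ric^\eq_\epsilon}(\vphi_\epsilon)$ as a sum of integrals $\int_{\tX^\bvk}u_\epsilon^\bvk\,\ddc\log(\omega_\epsilon^n)^\bvk\wedge(\ddc\vphi_\epsilon^\bvk)^{a}\wedge(\ddc\vphi_{0,\epsilon}^\bvk)^{b}$, and then decompose $\log\omega_\epsilon^n=\log\omega_1^n+\log(\omega_\epsilon^n/\omega_1^n)$ relative to the fixed smooth K\"ahler metric $\omega_1=\omega+\omega_P$ on $\tX$. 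The contribution of the fixed smooth form $\ddc\log\omega_1^n$ converges by the weak-convergence argument of the previous paragraph. For the remaining contribution I would integrate by parts on $\tX^\bvk$ to move $\ddc$ onto $u_\epsilon^\bvk$, reducing to $\int_{\tX^\bvk}h_\epsilon^\bvk\,\ddc u_\epsilon^\bvk\wedge(\cdots)$ with $h_\epsilon=\log(\omega_\epsilon^n/\omega_1^n)$. Since $\omega_\epsilon^n$ increases with $\epsilon$, the functions $h_\epsilon\le 0$ decrease to $h_0=\log(\omega^n/\omega_1^n)$, which the klt/integrability condition controls along $E$; splitting $\ddc u_\epsilon^\bvk$ into the two positive pieces coming from $\ddc\vphi_\epsilon^\bvk$ and $\ddc\vphi_{0,\epsilon}^\bvk$ and using the uniform bound on $u_\epsilon$, one passes to the limit by monotone/dominated convergence against the weakly convergent positive measures. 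This is precisely the mechanism of \cite[4.1.2]{LTW19}, here adapted to the weighted equivariant setting.

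Finally, passing from polynomial $g$ to general $g\in C^2(P)$ is routine: if $g_j\to g$ in $C^2$-norm, then the uniform boundedness of $u_\epsilon(t)$ and of the Hamiltonians $\theta_{\alpha,\vphi_{s,\epsilon}}$ shows $\bfF_{g_j,\epsilon}(\vphi_\epsilon(t))\to\bfF_{g,\epsilon}(\vphi_\epsilon(t))$ uniformly in $\epsilon$, so the convergence in $\epsilon$ established for each $g_j$ upgrades to the convergence for $g$ by a standard diagonal argument, exactly as at the end of the convexity lemma. The only delicate step remains the Ricci term, where the degenerating reference volume $\omega_\epsilon^n$ must be played off against the klt singularities along $E$.
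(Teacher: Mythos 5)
Your proposal is correct and shares the paper's overall skeleton: you handle \eqref{Hartog's} term by term, get the $\bfE_g^{\ddc\vphi_\epsilon^\eq}$ term directly from Lemma \ref{geodesic_t}, treat $\bfE_g^{D'}$ and $\bfE_g^{\ddc\psi}$ by the mechanism of Lemma \ref{geodesic_ends} (boundedness plus $L^1$-convergence of $u_\epsilon$ against weakly convergent positive currents), and pass from polynomial to general $g$ by a $C^2$-approximation that is uniform in $\epsilon$. Where you genuinely diverge is on the Ricci term. The paper stays inside the fibration formula \eqref{bfE_Riceq} and simply asserts convergence for polynomial $g_j$ ``by a similar argument as in Lemma \ref{geodesic_ends} and Lemma \ref{geodesic_t}''; the uniform-in-$\epsilon$ comparison $|\bfE_{g_j}^{Ric_\epsilon^\eq}-\bfE_{g}^{Ric_\epsilon^\eq}|\le C\epsilon'$ is then obtained by bounding the total variation of $Ric_\epsilon\wedge(\ddc\vphi_{s,\epsilon})^{n-1}$. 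You instead isolate the degeneration by writing $\log\omega_\epsilon^n=\log\omega_1^n+h_\epsilon$ with $h_\epsilon=\log(\omega_\epsilon^n/\omega_1^n)\le 0$ decreasing to $h_0$, integrate by parts to move $\ddc$ onto $u_\epsilon$, and use monotone/dominated convergence against the two positive pieces of $\ddc u_\epsilon^\bvk$. This is closer in spirit to the argument of \cite[4.1.2]{LTW19} that the section announces it is following, and it has the advantage of making explicit why the limit functional $\bfE_g^{Ric^\eq}(\vphi)$ is finite at all (the $L^1$-control of $\log(\omega^n/\omega_1^n)$ along $E$), a point the paper's proof leaves implicit; the cost is that you must still justify passing to the limit when both the decreasing integrand $h_\epsilon$ and the positive reference measures vary with $\epsilon$, which deserves a sentence. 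Neither route has a gap the other avoids entirely, but yours makes the only delicate term more transparent while the paper's is shorter because it leans harder on the two preceding lemmas.
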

\begin{proof}
We will abbreviate $\vphi(t)$ by $\vphi$.
The convergence of $\bfE_{g}^{\ddc\vphi_\epsilon^\eq}(\vphi_\epsilon)$ is already shown in Lemma \ref{geodesic_t}.
Let $g_j$ be the polynomial approximation of $g$. 
Using the expression \eqref{bfE_Riceq}, by a similar argument as in the proof of Lemma \ref{geodesic_ends} and Lemma \ref{geodesic_t},
$\lim_{\epsilon\to 0} \bfE_{g_j}^{Ric_\epsilon^\eq}(\vphi_\epsilon) = \bfE_{g_j}^{Ric_\epsilon^\eq}(\vphi)$.
For any $\epsilon'>0$, by the $C^2$-convergence of $g_j$, there exists a $j_0>0$, such that for any $j>j_0$,
$|g_{j,\vphi_{s,\epsilon}}-g_{\vphi_{s,\epsilon}}|<\epsilon'$, $|V_{s,\epsilon,j}(\log(\omega_\epsilon^n))-V_{s,\epsilon}(\log(\omega_\epsilon^n))|<\epsilon'$,
where the second inequality is by the same argument as in the proof of Lemma \ref{geodesic_ends}.
In addition, $u_\epsilon$ is uniformly bounded. Then
\begin{align*}
|\bfE_{g_j}^{Ric_\epsilon^\eq}(\vphi_\epsilon)-\bfE_{g}^{Ric_\epsilon^\eq}(\vphi_\epsilon)|
&\leq C \epsilon' \int_0^1\int_\tX |Ric_\epsilon\wedge\frac{(\ddc\vphi_{s,\epsilon})^{n-1}}{(n-1)!}| + \frac{(\ddc\vphi_{s,\epsilon})^n}{n!} ds\\
&\leq C \epsilon'
\end{align*}
Then by the $3\epsilon$-argument as in the proof of Lemma \ref{geodesic_ends},
we have
$\lim_{\epsilon\to 0} \bfE_{g}^{Ric_\epsilon^\eq}(\vphi_\epsilon) = \bfE_{g}^{Ric^\eq}(\vphi)$.
Similarly, we can show the convergence of $\bfE_g^D(\vphi_\epsilon), \bfE_g^{\ddc\psi}(\vphi_\epsilon)$.
\end{proof}

By the convergence of $\bfF_{g,\epsilon}(\vphi_\epsilon(t))$ and the lower semi-continuity of $\bfH_{g,\omega_\epsilon}(\vphi_\epsilon)$, 
$\bfM (\vphi(t))\leq \lim_{\epsilon\to 0} \bfM_{\vphi_{0,\epsilon}}(\vphi_\epsilon(t))$. 
Then
\begin{align*}
\bfM(\vphi(t)) &\leq \lim_{\epsilon\to 0} \bfM_{\vphi_{0,\epsilon}}(u_\epsilon(t)) \\
&\leq \lim_{\epsilon\to 0} \big( (1-t) \bfM_{\vphi_{0,\epsilon}}(u_\epsilon(0)) + t \bfM_{\vphi_{0,\epsilon}}(u_\epsilon(1)) \big) \\
&\leq (1-t)\bfM(\vphi(0)) + t\bfM(\vphi(1)).
\end{align*}

Following the proof of \cite[Lemma 3.1]{BDL17}, we have
\begin{lemma}
\label{app_H}
For any $\vphi\in \mcE^1_T$, there exists a sequence of $\vphi_j\in \hat{\mcH}_T(X,L)$, such that
$\vphi_j$ converges to $\vphi$ under the strong topology of $\mcE^1_T$, and
$\bfH_{g,\Theta}(\vphi_j)$ converges to $\bfH_{g,\Theta}(\vphi)$.
\end{lemma}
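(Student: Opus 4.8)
The plan is to follow Berman--Darvas--Lu: rather than regularizing the potential $\vphi$ directly (which does not control the entropy), I regularize the measure $\MA_g(\vphi)$ and recover the approximants by solving Monge--Amp\`ere equations, so that the entropy is pinned down by construction. First I would set up the truncation. Write $d\nu=\MA_g(\vphi)=f\,d\mu_0$ with $f=d\nu/d\mu_0\in L^1(d\mu_0)$ and $\int_X f\,d\mu_0=1$; since $\vphi$ is $T$-invariant so is $f$, and by definition $\bfH_{g,\Theta}(\vphi)=\int_X f\log f\,d\mu_0$. I then choose $T$-invariant densities $f_j$ that are bounded, smooth off the exceptional and singular locus, satisfy $\int_X f_j\,d\mu_0=1$, converge to $f$ in $L^1(d\mu_0)$, and satisfy $\int_X f_j\log f_j\,d\mu_0\to\int_X f\log f\,d\mu_0$ --- for instance a normalized mollification of $\min(f,j)$, for which monotone and dominated convergence give the entropy convergence (this also covers the case $\bfH_{g,\Theta}(\vphi)=+\infty$, where the left side tends to $+\infty$). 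Set $\nu_j=f_j\,d\mu_0\in\mcM^1_T(X)$, which has finite energy because $f_j$ is bounded and $d\mu_0$ is klt.

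Next I would solve and regularize. By Proposition \ref{Berman_Nystrom} there is a unique (suitably normalized) $\vphi_j\in\mcE^1_T(X,L)$ with $\MA_g(\vphi_j)=\nu_j$. The $L^\infty$-estimate (via \cite{EGZ09}) gives $u_j=\vphi_j-\vphi_0\in L^\infty(\tX)$, and the a priori $C^2$-estimate together with the Krylov--Evans--Schauder bootstrap exactly as in Proposition \ref{higher_regularity} --- now with the fixed right-hand side $f_j\,d\mu_0$, which only simplifies the argument --- yield smoothness of $u_j$ away from $E$, smoothness of $(\ddc\pi^*\vphi_j)^n$ over $\tX$, and the bound $|\ddc u_j|\le C|s_E|^{-\alpha}$. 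Hence $\vphi_j\in\hat{\mcH}_T(X,L)$. By construction the entropy converges: $\bfH_{g,\Theta}(\vphi_j)=\int_X\log(d\nu_j/d\mu_0)\,d\nu_j=\int_X f_j\log f_j\,d\mu_0\to\int_X f\log f\,d\mu_0=\bfH_{g,\Theta}(\vphi)$.

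Finally I would upgrade to strong convergence. In the finite-entropy case the values $\bfH_{g,\Theta}(\vphi_j)$ are bounded, so by Lemma \ref{H_compact} the normalized $\vphi_j$ are precompact in the strong topology; if a subsequence converges strongly to some $\psi\in\mcE^1_T$, then continuity of $\MA_g$ along strong convergence together with $\nu_j\to\nu$ weakly forces $\MA_g(\psi)=\nu=\MA_g(\vphi)$, and uniqueness in Proposition \ref{Berman_Nystrom} gives $\psi=\vphi$. Since every subsequence admits a further subsequence converging to $\vphi$, the full sequence converges strongly. When $\bfH_{g,\Theta}(\vphi)=+\infty$ one instead controls $\bfE_g^*(\nu_j)=\bfI_g(\vphi_j)-\bfJ_g(\vphi_j)$ and invokes the continuity of the inverse Monge--Amp\`ere operator in energy from \cite{BBEGZ,BW14} to reach the same conclusion.

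I expect the main obstacle to be the regularity step: verifying that each $\vphi_j$ genuinely lies in $\hat{\mcH}_T(X,L)$ requires the uniform $C^2$-estimate with the precise blow-up rate $|s_E|^{-\alpha}$ near the exceptional divisor, which is exactly the delicate part of Proposition \ref{higher_regularity}, now run with the bounded density $f_j$ in place of $e^{-(\vphi-\vphi_0)}$. By contrast, the identification of the strong limit is comparatively soft, resting only on the precompactness of entropy sublevels (Lemma \ref{H_compact}) and the uniqueness of finite-energy Monge--Amp\`ere solutions (Proposition \ref{Berman_Nystrom}), and the entropy convergence is automatic from the truncation.
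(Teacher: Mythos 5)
Your proposal is correct and is essentially the same argument the paper intends: the paper's proof is a one-line citation of \cite[Lemma 3.1]{BDL17}, and your truncation of the density $f=d\nu/d\mu_0$, solution of $\MA_g(\vphi_j)=f_j\,d\mu_0$ via Proposition \ref{Berman_Nystrom}, regularity via Proposition \ref{higher_regularity} to land in $\hat{\mcH}_T(X,L)$, and identification of the strong limit by compactness plus uniqueness is precisely that argument adapted to the weighted/twisted setting. (In the infinite-entropy case you could simplify further: lower semicontinuity of $\bfH_{g,\Theta}$ makes the entropy convergence automatic for any strongly convergent approximating sequence.)
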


Combine the approximation Lemma \ref{app_H} and the analysis above,
we can conclude that $\bfM(u(t))$ is convex along the geodesic $u(t)$.

\section{Non-Archimedean functionals and $\bG$-uniform stability}
\label{non-Archimedean functionals}

\subsection{Psh rays and Non-Archimedean metrics}
In the previous sections, we have studied the equation \eqref{MA} from the Archimedean point of view.
In the following sections, we will reconsider this question from the Non-Archimedean side. We will be brief in our discussion and only emphasize the key modifications in our generalized case. 


For a projective variety $X$, let $X^\NA$ be the Berkovich analytification of $X$ with respect to the trivial norm on $\bC$. Then $X^\NA$ is a compact, Hausdorff space.  The set of divisorial valuations $X^{\div}_\bQ$ is a dense subset of $X^{\NA}$.

Let $\bB = \{\tau\in \CC: |\tau|\leq 1\}$ be the unit disk in $\CC$.
We call a continuous map $\Phi=\{\vphi(t)\}: (\cdot, t)\in \RR_{>0} \rightarrow \mcE^1_T(X, L)$ a psh ray, if 
$\Phi(x, -\log|\tau|)$ is a psh metric on $X\times \bB^*$.
A psh ray $\Phi$ is said to have linear growth if 
\begin{equation}
\lim_{t\to +\infty} \frac{\sup_X (\vphi(t)-\vphi_0)}{t} =: \lambda_{\max}(\Phi) < +\infty.
\end{equation}
$\Phi$ is called sup-normalized if $\lambda_\max(\Phi)=0$.
Any sup-normalized a psh ray $\Phi$ extends to a psh metric on $X\times \bB$.
The upper bound of the growth rate guarantees the existence of a constant $a>0$, such that
$\Phi + a \log|\tau| < +\infty$.
$\Phi$ induces metric $\Phi^{\NA}$ on $L^\NA$. Let $\phi_\triv$ be the trivial metric on $L^\NA$. Then the relative potential $\Phi^\NA-\phi_\triv$ is represented by a function on $X^\div_\bQ$:
for any $v\in X^\div_\bQ$, 
$$(\Phi^\NA-\phi_\triv)(v) = -\sigma(v)(\Phi + a \log|\tau|)+ a$$
where $\sigma: X^\div_\bQ\rightarrow (X\times \CC)^\div_\bQ$ is the Gauss extension, 
and when $v = v_E$, where $E$ is a Weil divisor, $\sigma(v)(\Phi + a \log|\tau|)$ is the {
generic Lelong number
of $\Phi+a \log|\tau|$ at $E$.}

For a projective variety $X$ coupled with a line bundle $L$, a test configuration $(\mcX,\mcL)$
is a projective variety $\mcX$ coupled with a line bundle $\mcL$, with
\begin{enumerate}
\item[(1).] $\pi: \mcX\rightarrow \bB$ is a flat projective morphism, which is $\CC^*$-equivariant.
\item[(2).] $\mcL$ is a $\bC^*$-equivariant $\bQ$-line bundle.
\item[(2).] $(\mcX_t, \mcL_t)$ is isomorphic to $(X, L)$ when $t\neq 0$.
\end{enumerate}

For a test configuration $(\mcX,\mcL)$, by resolution of singularity, we can assume $(\mcX, \mcL)$ is dominating, i.e. there exists a $\bC^*$-equivariant birational morphism $\rho: \mcX \rightarrow X\times\bB$. Then $\mcL = \rho^* L + D$ for some $\bQ$-Cartier divisor $D$.
The test configuration $(\mcX,\mcL)$ induces a canonical Non-Archimedean metric $\phi_{(\mcX,\mcL)}$ on $L^\NA$, which
satisfies
\begin{equation}
(\phi_{(\mcX,\mcL)} - \phi_{\rm triv})(v) = \sigma(v) (D)
\end{equation}
for any $v\in X^\div_\bQ$.
We set:
\begin{equation}
\mcH^\NA(L):=\{\phi_{(\mcX,\mcL)}: (\mcX,\mcL) \text{ is a test configuration of } (X,L)\}.
\end{equation}

We call $\Phi$ a smooth psh ray associated to $(\mcX, \mcL)$ if $e^{-\Phi}$ extends to a smooth metric on $\mcL$.
Specifically, the pull-back of the Fubini-Study metric $\Phi_{\FS}$ on $\mcX$ is a smooth psh ray.

{
\begin{lemma}
For any $T_\CC\times \bG$-equivariant test configuration $\phi = \phi_{\mcX,\mcL}\in \mcH^\NA$,
there exists a smooth psh ray $\Phi$ associate to it, such that $\Phi$ is $T\times K$-invariant.
\end{lemma}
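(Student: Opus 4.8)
The plan is to start from the Fubini--Study psh ray attached to the test configuration and then to symmetrize it by averaging over the maximal compact subgroup $T\times K\subset T_\CC\times\bG$. First I would fix $m\gg 0$ so that $m\mcL$ is relatively very ample, choose a $\bC^*$-equivariant embedding $\mcX\hookrightarrow \PP^N\times\bB$ with $m\mcL=\mathcal{O}(1)|_{\mcX}$, and pull back the Fubini--Study metric to produce the smooth psh ray $\Phi_\FS$; by the definition recalled just above the statement, $\Phi_\FS$ is associated to $(\mcX,\mcL)$, meaning $e^{-\Phi_\FS}$ extends to a smooth, semipositively curved metric on $\mcL$ over the total space $\mcX$. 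The next point is to use that, since $(\mcX,\mcL)$ is $T_\CC\times\bG$-equivariant, the $T_\CC\times\bG$-action on $(X,L)$ extends to an action on $(\mcX,\mcL)$ commuting with the defining $\bC^*$-action and lifting to a linearization on $\mcL$. Restricting to $T\times K$ then gives a smooth action of a compact Lie group on the total space of $\mcL\to\mcX$, so that for each $\gamma\in T\times K$ the pull-back $\gamma^*\Phi_\FS$ is again a smooth metric on the \emph{same} line bundle $\mcL$.

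I would then set
\begin{equation}
\Phi:=\int_{T\times K}\gamma^*\Phi_\FS\,d\gamma,
\end{equation}
with $d\gamma$ the normalized Haar measure. Because the space of smooth Hermitian metrics on $\mcL$ is an affine space modeled on $C^\infty(\mcX,\RR)$ (two metrics differ by a global smooth function), this average is a well-defined smooth metric on $\mcL$ over all of $\mcX$; hence $e^{-\Phi}$ still extends smoothly across the central fibre, so $\Phi$ is itself a smooth psh ray associated to $(\mcX,\mcL)$. Its curvature is semipositive since
\begin{equation}
\ddc\Phi=\int_{T\times K}\gamma^*(\ddc\Phi_\FS)\,d\gamma\ge 0,
\end{equation}
each $\gamma$ acting holomorphically, and it has linear growth because $\Phi-\Phi_\FS$ is bounded. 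Translation invariance of Haar measure gives $\gamma_0^*\Phi=\int_{T\times K}(\gamma\gamma_0)^*\Phi_\FS\,d\gamma=\Phi$ for all $\gamma_0\in T\times K$, so $\Phi$ is $T\times K$-invariant; restricting to the fibres $\mcX_\tau\cong X$, whose identifications with $X$ are $T\times K$-equivariant because $T_\CC\times\bG$ commutes with the $\bC^*$-action, shows that each $\vphi(t)$ lies in $\mcE^1_T(X,L)$ and is in addition $K$-invariant.

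Finally I would confirm that $\Phi$ induces the \emph{same} non-Archimedean metric $\phi_{(\mcX,\mcL)}$. This is automatic: $\Phi-\Phi_\FS$ is a global smooth function on $\mcX$, hence bounded near the central fibre, so for every $v\in X^\div_\bQ$ the generic Lelong numbers agree, $\sigma(v)(\Phi+a\log|\tau|)=\sigma(v)(\Phi_\FS+a\log|\tau|)$, whence $\Phi^\NA=\Phi_\FS^\NA=\phi_{(\mcX,\mcL)}$. The steps requiring the most care, and the ones I expect to be the main (albeit routine) obstacle, are the equivariant bookkeeping behind the linearization---checking that the $T\times K$-action really lifts to $\mcL$ and commutes with the $\bC^*$-action so that all $\gamma^*\Phi_\FS$ are metrics on one fixed $\mcL$---together with the verification that the averaged metric remains smooth up to and along the central fibre. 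Both follow from the equivariance hypothesis on $(\mcX,\mcL)$ and the smoothness of the compact-group action.
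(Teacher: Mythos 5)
Your proposal is correct and follows essentially the same route as the paper: the paper's proof simply takes a smooth psh ray coming from the semi-ample $\mcL$ (e.g.\ the Fubini--Study one) and averages it over the $T\times K$-action. The additional verifications you supply (smoothness of the average, semipositivity, and invariance of the induced non-Archimedean metric) are all correct and just flesh out what the paper leaves implicit.
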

\begin{proof}
Since $\mcL$ is semi-ample over $\mcX$, there exists a smooth psh ray $\Phi$ on $\mcX$. Averaging 
$\Phi$ by $T\times K$-action. Then $\Phi$ becomes $T\times K$-invariant.
\end{proof}
}

\subsection{NA-functionals}
In this subsection, we will define the Non-Archimedean version of $\bfE_g, \bfJ_g, \bfD_g$ functionals.

Recall the definition of $\bfE^\NA,\bfJ^\NA, \bfL_\Theta^\NA$ for any $\phi=\phi_{(\mcX,\mcL)}\in \mcH^\NA$ (see \cite{BBJ18}):
\begin{align}
\bfE^\NA(\phi) &=\bfE^\NA(\mcL)= \frac{1}{\bV_1}\frac{\bar{\mcL}^{\cdot n+1}}{(n+1)!}=\frac{1}{L^{\cdot n}}\frac{\bar{\mcL}^{\cdot n+1}}{n+1}\label{eq-E1NA}\\
\Lam^\NA(\phi)&=\Lam^\NA(\mcL)=\frac{1}{\bV_1}\bar{\mcL}\cdot \frac{L^{\cdot n}_{\bP^1}}{n!}=\frac{1}{L^{\cdot n}}\bar{\mcL}\cdot L_{\bP^1}^{\cdot n}\\
\bfJ^\NA(\phi) &= \bfJ^\NA(\mcL)=\Lam^\NA-\bfE^\NA \\
\bfL_\Theta^\NA(\phi) &=\bfL_\Theta^\NA(\mcL)=\inf_{v\in X^\div_\bQ}(A_{(X,D+\Theta)}(v) + (\phi-\phi_\triv)(v)) ,
\end{align}
where $A_{(X, D+\Theta)}$ is the log discrepancy function: for any $v\in X^{\rm div}_\bQ$:
\begin{equation}
A_{(X, D+\Theta)}(v)=A_{(X,D)}(v)-v(\Theta).
\end{equation}

We will follow the conventions used in the proof of Proposition \ref{MA_eta}.
Our strategy to define non-Archimedean functionals is motivated by the method in the Archimedean case. 
We will first deal with the case when $g$ is a polynomial by using a fibration construction (see the proof of Proposition \ref{MA_eta}).
Then the functionals associated to a general $g$ can be defined as a limit of the functionals associated to polynomial $g$.

\subsection{Polynomial $g$}

First assume $g  = \prod_{\alpha=1}^r y_\alpha^{k_\alpha}$ to be a monomial.
We will use the notations in our proof of Proposition \ref{MA_eta}  and denote $\bfE_g = \bfE^\bvk $.
When $\vphi\in \mcH_T(X, L)$ we have the identity:
\begin{eqnarray}\label{eq-Egvk}
\bfE_g(\vphi)&=&\bfE^\bvk(\vphi):=\int_0^1 dt \frac{1}{\bV_g}\int_X \dot{\vphi} \prod_{i=1}^r\theta_\alpha(\vphi)^{k_i} \frac{(\ddc\vphi)^n}{n!}= \frac{1}{\bV_g}\int_0^1dt \int_{X^\bvk} \dot{\vphi}^\bvk \frac{(\ddc\vphi^\bvk)^{n+k}}{(n+k)!}\nonumber \\
&=&\frac{1}{\bV_g}\frac{1}{(n+k+1)!}\sum_{p=0}^{n+k} \int_{X^\bvk} (\vphi^\bvk-\vphi_0^\bvk) (\ddc \vphi_0^\bvk)^p\wedge ((\ddc\vphi)^\bvk)^{n+k-p},
\end{eqnarray}
where $k = |\vec{k}| = k_1+\cdots + k_r$.
The formula in the second equality is also well-defined for $\vphi \in \mcE^1_T(X, L)$. We will use this formula as the definition 
of $\bfE^\bvk$.

Using the fibration construction in section \ref{sec-gMA}, we set:
\begin{equation}\label{eq-mcXvk}
(\mcX^{[\vec{k}]}, \mcL^{[\vec{k}]}, \mcX_0^{[\vec{k}]})=(\mcX, \mcL, \mcX_0)\times \bS^{2k+1})/(S^1)^r
\end{equation}
where $(S^1)^r$ acts $\mcX$ vertically along the fibres of $\mcX\rightarrow \bC$.
Then similar to the non-Archimedean $\bfE$ functional in \eqref{eq-E1NA}, we set:
\begin{eqnarray}
(\bfE^\bvk)^\NA =\frac{1}{\bV_g} \frac{(\bar{\mcL}^\bvk)^{\cdot n+k+1}}{(n+k+1)!}.
\end{eqnarray}

As a consequence, $\bfE_g^\NA$ is also defined if $g $ is a polynomial.



\begin{lemma}
\label{poly_E}
For any normal test configuration $(\mcX,\mcL)$. Fix a smooth Hermitian metric $e^{-\vphi}$ on $\mcL$. Assume that $\eta$ is the holomorphic vector field generating the $\bC^*$-action and let $\theta_\eta(\vphi)$ be the Hamiltonian function of $\eta$. Then for any polynomial $g$, we have the identity:
\begin{equation}
\bfE_g^\NA(\mcL)=\frac{1}{\bV_g}\int_{\mcX_0} \theta_\eta(\vphi) g_\vphi \frac{(\ddc\vphi)^n}{n!}.
\end{equation}
\end{lemma}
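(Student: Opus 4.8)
The plan is to reduce to the case of a monomial $g$ by linearity, to establish the identity there via the fibration construction of Proposition \ref{MA_eta}, and to bootstrap everything from the classical ($g\equiv 1$) Duistermaat--Heckman formula. Throughout I assume, as in the setup of Section \ref{sec-gMA}, that the fixed metric $e^{-\vphi}$ is $T$-invariant (averaging over $T$ if necessary), so that the fibration construction applies.

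First I would record the base case $\vec{k}=0$. For a normal test configuration $(\mcX,\mcL)$ compactified over $\PP^1$, with $\eta$ the generator of the $\bC^*$-action and $\theta_\eta(\vphi)$ its Hamiltonian for the smooth metric $\ddc\vphi$, the Duistermaat--Heckman theorem identifies the pushforward $(\theta_\eta)_*\bigl(\tfrac{(\ddc\vphi|_{\mcX_0})^n}{n!}\bigr)$ with the Duistermaat--Heckman measure of $(\mcX,\mcL)$, whose first moment is $\tfrac{\bar{\mcL}^{\cdot n+1}}{(n+1)!}$. This yields
\begin{equation}
\bfE^\NA(\mcL)=\frac{1}{\bV_1}\frac{\bar{\mcL}^{\cdot n+1}}{(n+1)!}=\frac{1}{\bV_1}\int_{\mcX_0}\theta_\eta(\vphi)\,\frac{(\ddc\vphi)^n}{n!},
\end{equation}
which is the content of \cite{BBJ18} and the references therein.

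Next, fix a monomial $g=\prod_{\alpha=1}^r y_\alpha^{k_\alpha}$ and apply this identity to the fibered test configuration $(\mcX^\bvk,\mcL^\bvk)$ of \eqref{eq-mcXvk}, which has relative dimension $n+k$ and relatively ample $\mcL^\bvk$, with the $\bC^*$-action acting through $\mcX$ and trivially on the $\bS^\bvk$-directions. Because the test configuration is $T_\bC$-equivariant, $\eta$ commutes with $T$ and $\iota_\eta\,\ddc\vphi^\FS_\alpha=0$, so by the decomposition $\ddc\vphi^\bvk=\ddc\vphi+\sum_\alpha\theta_\alpha\,\ddc\vphi^\FS_\alpha$ from the proof of Proposition \ref{MA_eta} we get $\iota_\eta\ddc\vphi^\bvk=\iota_\eta\ddc\vphi$; hence the Hamiltonian of $\eta$ on $\mcX_0^\bvk$ is the pullback of $\theta_\eta(\vphi)$ from the fibre. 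The base case applied to $(\mcX^\bvk,\mcL^\bvk)$ gives
\begin{equation}
(\bfE^\bvk)^\NA=\frac{1}{\bV_g}\frac{(\bar{\mcL}^\bvk)^{\cdot n+k+1}}{(n+k+1)!}=\frac{1}{\bV_g}\int_{\mcX_0^\bvk}\theta_\eta(\vphi)\,\frac{(\ddc\vphi^\bvk)^{n+k}}{(n+k)!}.
\end{equation}
Now I would integrate along the fibres of $\mcX_0^\bvk\to\PP^\bvk$ exactly as in Proposition \ref{MA_eta}: since $\theta_\eta(\vphi)$ is $T$-invariant it coincides with its own pullback $w^\bvk$ from the fibre, and the fibre integration formula of that proposition gives $\int_{\mcX_0^\bvk}\theta_\eta\,\tfrac{(\ddc\vphi^\bvk)^{n+k}}{(n+k)!}=\int_{\mcX_0}\theta_\eta\,\prod_\alpha\theta_\alpha^{k_\alpha}\,\tfrac{(\ddc\vphi)^n}{n!}=\int_{\mcX_0}\theta_\eta\,g_\vphi\,\tfrac{(\ddc\vphi)^n}{n!}$, using $g_\vphi=\prod_\alpha\theta_\alpha(\vphi)^{k_\alpha}$. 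This proves the monomial case, and summing against the coefficients $a_\vk$ of a general polynomial $g=\sum_\vk a_\vk\prod_\alpha y_\alpha^{k_\alpha}$ — both the intersection numbers $(\bar{\mcL}^\bvk)^{\cdot n+k+1}$ and the right-hand integral being linear in $g$ — gives the stated formula after dividing by $\bV_g$.

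The main obstacle is the base case: making rigorous the identification of $\tfrac{\bar{\mcL}^{\cdot n+1}}{(n+1)!}$, computed as an intersection number on the $\PP^1$-compactification, with the Hamiltonian integral $\int_{\mcX_0}\theta_\eta(\vphi)\tfrac{(\ddc\vphi)^n}{n!}$ over the possibly singular or non-reduced central fibre. This forces one to fix the additive normalization of $\theta_\eta$ compatibly with the chosen equivariant lift of $\eta$ to $\bar{\mcL}$, and to interpret the fibre integral through the Duistermaat--Heckman measure rather than a naive integration over $\mcX_0$. Once this is in place, the equivariant fibration step and the passage from monomials to polynomials are routine.
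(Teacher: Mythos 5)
Your proposal is correct and follows essentially the same route as the paper: reduce to a monomial $g$ by linearity, pass to the fibered test configuration $(\mcX^\bvk,\mcL^\bvk)$ of \eqref{eq-mcXvk} where $\mcL^\bvk$ is relatively ample and the $\bC^*$-action commutes with the vertical $T$-action, invoke the classical identity $\frac{\bar{\mcL}^{\cdot n+1}}{(n+1)!}=\int_{\mcX_0}\theta_\eta(\vphi)\frac{(\ddc\vphi)^n}{n!}$ (the paper cites \cite[Proposition 3.12]{BHJ17}) in dimension $n+k$, and integrate along the fibres of $\mcX_0^\bvk\to\PP^\bvk$. Your explicit verification that the Hamiltonian of $\eta$ on $\mcX_0^\bvk$ is the pullback of $\theta_\eta(\vphi)$ is a detail the paper leaves implicit, but the argument is the same.
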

\begin{proof}
We only need to show the equality for 
 $(\bfE^\bvk)^\NA$ when $g=\prod_\alpha \theta_\alpha^{k_\alpha}$ is a monomial. Consider the spaces defined in \eqref{eq-mcXvk}. Because $T\cong (S^1)^r$ acts vertically and commutes with the $\bC^*$-action,
we then have a $\bC^*$-equivariant morphism $\mcX^\bvk\rightarrow \bC$ such that the central fibre is given by $\mcX_0^\bvk$. We can assume that $\mcL^{\bvk}$ is relatively ample over $\bC$ (by adding a positive constant to $\theta$ as in section \ref{sec-gMA}). So we are reduced to the usual case of the $\bfE$-functional. 
By the well-known result for the usual $\bfE$-functional (see \cite[Proposition 3.12]{BHJ17}), we get the identity:

\begin{eqnarray*}
(\bfE^\bvk)^\NA (\mcL) &=& \frac{1}{\bV_g}\frac{(\bar{\mcL}^\bvk)^{\cdot n+k+1}}{n+k+1}\\
&=&\frac{1}{\bV_g}\int_{\mcX_0^{[\vec{k}]}}\theta_\eta(\vphi)^{[\vec{k}]} \frac{((\ddc\vphi)^\bvk)^{n+k}}{(n+k)!}\\
&=&\frac{1}{\bV_g}\int_{\mcX_0} \theta_\eta(\vphi)\prod_{\alpha}\theta_{\alpha}(\vphi)^{k_\alpha} \frac{(\ddc\vphi)^n}{n!}.
\end{eqnarray*}
\end{proof}

\subsection{Continuous $g$}
\begin{definition}
For any $\phi=\phi_{(\mcX, \mcL)}\in \mcH^\NA$, we choose a smooth psh metric $e^{-\vphi}$ on $\mcL|_{\mcX_0}$ and let $\eta$ denote the holomorphic vector field generating the $\bC^*$-action. We define:
\begin{equation}\label{eq-ENA}
\bfE^\NA_g(\phi):=\bfE^\NA_g(\mcL):=\frac{1}{\bV_g}\int_{\mcX_0} \theta_\eta(\vphi) g_{\vphi}\frac{(\ddc \vphi)^n}{n!}.
\end{equation}
\end{definition}
\begin{lemma}
The quantity on the right-hand-side of \eqref{eq-ENA} does not depend on the choice of $\vphi$.
\end{lemma}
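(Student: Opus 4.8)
The plan is to reduce to the polynomial case already settled in Lemma \ref{poly_E} and then pass to the limit by the Stone--Weierstrass theorem, mirroring the structure of the proof of Proposition \ref{MA_eta}. First I would treat a monomial $g=\prod_\alpha y_\alpha^{k_\alpha}$ (and then, by linearity, an arbitrary polynomial). In this case Lemma \ref{poly_E}, applied with the fibration $\mcX^\bvk$ from \eqref{eq-mcXvk}, identifies the right-hand side of \eqref{eq-ENA} with a purely intersection-theoretic quantity,
\begin{equation*}
\frac{1}{\bV_g}\int_{\mcX_0}\theta_\eta(\vphi)\,g_\vphi\,\frac{(\ddc\vphi)^n}{n!}=(\bfE^\bvk)^\NA(\mcL),
\end{equation*}
where the right-hand side is a self-intersection number of $\bar{\mcL}^\bvk$ on $\mcX^\bvk$ and is therefore manifestly independent of the chosen metric $\vphi$. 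Any smooth psh metric on $\mcL|_{\mcX_0}$ extends to a smooth metric on $\mcL$ over the total space, so Lemma \ref{poly_E} applies and the value of the integral depends only on the restriction to $\mcX_0$. Thus the assertion holds for every polynomial $g$.

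For a general continuous $g$ I would fix two smooth psh metrics $\vphi_1,\vphi_2$ on $\mcL|_{\mcX_0}$ and choose, by Stone--Weierstrass, polynomials $g_i$ with $\|g-g_i\|_{L^\infty(P)}\to 0$ on the compact moment polytope $P$. Writing $F(\vphi)=\int_{\mcX_0}\theta_\eta(\vphi)\,g_\vphi\frac{(\ddc\vphi)^n}{n!}$, and $F_i(\vphi)$ for the same integral with $g$ replaced by $g_i$, the polynomial case gives $F_i(\vphi_1)=F_i(\vphi_2)=:c_i$ for every $i$. The triangle inequality then yields
\begin{equation*}
|F(\vphi_1)-F(\vphi_2)|\le |F(\vphi_1)-c_i|+|c_i-F(\vphi_2)|\le \sum_{j=1,2}\int_{\mcX_0}|\theta_\eta(\vphi_j)|\,|g_{\vphi_j}-g_{i,\vphi_j}|\,\frac{(\ddc\vphi_j)^n}{n!},
\end{equation*}
and it remains to show that the right-hand side tends to $0$ as $i\to\infty$.

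The only point requiring care, and hence the main obstacle, is to make these error integrals uniform in the metric, so that the limit does not depend on $\vphi$. For this I would invoke two metric-independent facts. First, $\int_{\mcX_0}\frac{(\ddc\vphi_j)^n}{n!}=\frac{(\mcL|_{\mcX_0})^{\cdot n}}{n!}$ is the fixed volume of $\mcL|_{\mcX_0}$. Second, for a fixed lift of $\eta$ to $\mcL$ the Hamiltonian $\theta_\eta(\vphi_j)$ takes values in the interval $[\lambda_{\min},\lambda_{\max}]$ determined by the $\bC^*$-weights on $\mcX_0$; this range is an algebraic invariant of $(\mcX,\mcL)$ and, exactly as the moment polytope and Duistermaat--Heckman measure are independent of $\vphi$ by \cite[Proposition 4.1]{BW14}, it is independent of the choice of $\vphi_j$. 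Hence there is a constant $C$, independent of $\vphi_1,\vphi_2$ and $i$, with $|\theta_\eta(\vphi_j)|\le C$ on $\mcX_0$, and therefore
\begin{equation*}
|F(\vphi_1)-F(\vphi_2)|\le 2\,C\,\frac{(\mcL|_{\mcX_0})^{\cdot n}}{n!}\,\|g-g_i\|_{L^\infty(P)}\longrightarrow 0 .
\end{equation*}
Since the left-hand side is a fixed nonnegative number, it must vanish, so $F(\vphi_1)=F(\vphi_2)$; dividing by the constant $\bV_g$ gives the independence of $\bfE^\NA_g(\mcL)$ on the choice of $\vphi$.
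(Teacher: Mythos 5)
Your proof is correct. It reaches the same conclusion by a slightly different packaging than the paper: the paper disposes of the lemma in one line by observing that the right-hand side of \eqref{eq-ENA} equals $\frac{1}{\bV_g}\int_{P\times\bR}\lambda\, g(y)\,\DH_{T_\bC\times\bC^*}(\mcX_0,\mcL_0)$, and the Duistermaat--Heckman measure of the $T_\bC\times\bC^*$-action on $(\mcX_0,\mcL|_{\mcX_0})$ is metric-independent; this handles polynomial and continuous $g$ simultaneously. You instead first settle polynomial $g$ through Lemma \ref{poly_E} (where the quantity becomes a self-intersection number on $\mcX^{\bvk}$) and then run a Stone--Weierstrass limit, taking care to make the error uniform in the metric via the fixed volume $(\mcL|_{\mcX_0})^{\cdot n}$ and the metric-independent range of the Hamiltonian $\theta_\eta$. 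Both arguments ultimately rest on the same equivariant fact; yours is longer but has the virtue of being structurally parallel to how the paper defines and manipulates the other non-Archimedean functionals (polynomial case via the fibration, then uniform approximation), whereas the paper's is the more economical route. The only point worth tightening is your appeal to extending a smooth psh metric from $\mcL|_{\mcX_0}$ to all of $\mcL$: such an extension need not stay positively curved, but this is harmless since the identity in Lemma \ref{poly_E} only involves the restriction to $\mcX_0$, where positivity is retained.
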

\begin{proof}
Let $\DH_{T_\bC\times\bC^*}$ be the Duistermaat-Heckman measure associated to the $T_\bC\times\bC^*$-action on $(\mcX_0, \mcL|_{\mcX_0})$. Then the right-hand-side is given by:
\begin{equation}
\frac{1}{\bV_g}\int_{P\times\bR} \lambda\cdot g(y)  \DH_{T_\bC\times \bC^*} (\mcX_0, \mcL_0),
\end{equation}
where $\lambda$ and $y$ are the variable on $\bR$ and $P$ respectively. It is well-known that the DH measure and hence the above integral do not depend on the choice $\tilde{\vphi}$ (see \cite{BHJ17}).

\end{proof}

\begin{lemma}
Let $(\mcX, \mcL)$ be an 
ample normal test configuration.
For a continuous function $g$, let $g_i$ be a sequence of polynomials that converges to it over the moment polytope $P$.
Then
\begin{equation}
\lim_{i\to\infty} \bfE^\NA_{g_i}(\mcL)  = \bfE^\NA_g(\mcL) .
\end{equation}
\end{lemma}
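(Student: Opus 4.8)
The plan is to reduce the statement to the elementary fact that the integral of a uniformly convergent sequence of continuous functions against a fixed finite measure converges. By the preceding lemma, for every polynomial $g_i$ as well as for the continuous limit $g$, the defining integral \eqref{eq-ENA} can be rewritten in terms of the Duistermaat--Heckman measure of the $T_\bC\times\bC^*$-action on the central fibre:
\[
\bfE^\NA_g(\mcL)=\frac{1}{\bV_g}\int_{P\times\RR}\lambda\cdot g(y)\,\DH_{T_\bC\times\bC^*}(\mcX_0,\mcL_0),
\]
where $\lambda$ is the coordinate on the $\RR$-factor recording the weight of the $\bC^*$-action and $y$ is the coordinate on $P$. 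The key observation is that $\DH_{T_\bC\times\bC^*}$ is a fixed positive measure of total mass $\bV_1$, and since $(\mcX,\mcL)$ is a fixed ample normal test configuration, the $\bC^*$-weights are bounded; hence $\DH_{T_\bC\times\bC^*}$ has compact support and $\lambda$ is bounded on it, say $|\lambda|\le\Lambda$.

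First I would estimate the numerators. Setting $a_i:=\int_{P\times\RR}\lambda\, g_i(y)\,\DH_{T_\bC\times\bC^*}$ and $a:=\int_{P\times\RR}\lambda\, g(y)\,\DH_{T_\bC\times\bC^*}$, and using that $g_i\to g$ uniformly on the compact polytope $P$, I get
\[
|a_i-a|\le \Lambda\Big(\sup_{y\in P}|g_i(y)-g(y)|\Big)\bV_1\xrightarrow{\ i\to\infty\ }0.
\]
Next I would control the normalizing constants. By definition \eqref{eq-Vg}, $\bV_g=\int_P g(y)\,\DHM_T(X,L)$ is the integral of $g$ against the fixed finite measure $\DHM_T(X,L)$ on $P$, whose total mass is $\bV_1$; the same uniform bound then gives $|\bV_{g_i}-\bV_g|\le(\sup_P|g_i-g|)\,\bV_1\to 0$. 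Since $g$ is continuous and strictly positive on the compact set $P$, we have $\bV_g\ge(\min_P g)\,\bV_1>0$, so $\bV_{g_i}$ is bounded away from $0$ for $i$ large. Finally I would combine these: writing
\[
\bfE^\NA_{g_i}(\mcL)-\bfE^\NA_g(\mcL)=\frac{a_i-a}{\bV_{g_i}}+a\Big(\frac{1}{\bV_{g_i}}-\frac{1}{\bV_g}\Big),
\]
both terms tend to $0$, which yields the claim.

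The only point requiring care — and the main, though mild, obstacle — is the boundedness of $\lambda$, i.e.\ the compact support of the Duistermaat--Heckman measure, which follows from the finiteness of the $\bC^*$-weights on $H^0(\mcX_0,m\mcL_0)$ for a fixed test configuration. Alternatively, one may bypass the $\DH$-formulation entirely and argue directly from \eqref{eq-ENA}: after fixing a smooth psh metric $e^{-\vphi}$ on $\mcL|_{\mcX_0}$, the Hamiltonian $\theta_\eta(\vphi)$ is continuous on the compact central fibre $\mcX_0$, hence bounded, and $\int_{\mcX_0}(\ddc\vphi)^n/n!=\bV_1$, so the identical three-step estimate applies verbatim with $\|\theta_\eta(\vphi)\|_{L^\infty(\mcX_0)}$ in place of $\Lambda$ and with the moment image of $\vphi$ contained in $P$.
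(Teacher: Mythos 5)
Your proof is correct and follows essentially the same route as the paper's: both reduce the claim to integrating the uniformly convergent sequence $g_i$ against a fixed finite measure of compact support (the paper works with $\theta_\eta(\vphi)\,\MA_{g_i}(\vphi)$ summed over the components of $\mcX_0$ and cites dominated convergence, while you pass through the Duistermaat--Heckman reformulation of the preceding lemma, which amounts to the same estimate). If anything, you are slightly more careful than the paper, which disposes of the normalizing constants by a one-line rescaling to $\bV_{g_i}=\bV_g=1$, whereas you verify explicitly that $\bV_{g_i}\to\bV_g$ and that $\bV_g$ is bounded away from zero.
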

\begin{proof}
By the definition of $\MA_g(\vphi)$, $\MA_{g_i}(\vphi)$ converges to $\MA_g(\vphi)$ as a measure.
By rescaling, without loss of generality we can assume that $\bV_{g_i}=\bV_g=1$.
Then
\begin{align}
\begin{split}
\lim_{i\to\infty}\bfE_{g_i}^\NA(\mcL) &= \lim_{i\to\infty} \int_{\mcX_0} \theta_\eta(\vphi) e^{g_{i,\vphi}}\frac{(\ddc\vphi)^n}{n!}\\
&= \lim_{i\to\infty} \sum_j e_j \int_{E_j} \theta_\eta(\vphi) \MA_{g_i}(\vphi) \\
&= \sum_j e_j \int_{E_j}\theta_\eta(\vphi) \MA_g(\vphi) =  \int_{\mcX_0} \theta_\eta(\vphi) g_\vphi \frac{(\ddc\vphi)^n}{n!} .
\end{split}
\end{align}
The first equality is by Lemma \ref{poly_E}, the third equality is by dominated convergence theorem.
\end{proof}

\begin{definition}
For any $\phi\in \mcH^\NA$, we define
\begin{equation}
\Lam^\NA_g(\phi)=\Lam^\NA(\phi)=\frac{1}{L^{\cdot n}}\bar{\mcL}\cdot L_{\bP^1}^{\cdot n}.
\end{equation}
We define the Non-Archimedean $\bfJ_g$ functional as
\begin{equation}
\bfJ_g^{\NA} = \Lam^\NA_g- \bfE_g^{\NA}
\end{equation}
and define the Non-Archimedean generalized Ding-functional as
\begin{equation}\label{eq-DNA}
\bfD^{\NA} = -\bfE^{\NA}_g + \bfL_\Theta^{\NA} .
\end{equation}
\end{definition}

\subsection{Slope formulas}
\begin{definition}
Let $\bfF$ be a functional on $\mcE^1_T(X,\omega)$. Let $\Phi$ be a psh ray of linear growth.
The slope at infinity of $\bfF$ along $\Phi$ is defined as
\begin{equation}
\bfF'^\infty(\Phi) =  \lim_{t\to +\infty}\frac{\bfF(\Phi(t))}{t}
\end{equation}
if the limit exists.
\end{definition}
\begin{proposition}
\label{F_slope}
Let $\phi = \phi_{(\mcX,\mcL)}\in \mcH^{\NA}(L)$ be a normal ample test configuration.
Let $\Phi(t)$ be a smooth psh ray associated to $(\mcX,\mcL)$.
Then
\begin{align}
\bfF'^\infty(\Phi)=\bfF^\NA(\phi).
\end{align}
where $t = -\log|\tau|$, $\tau\in \bB$, and $\bfF$ can be $\bfE_g,\bfJ_g,\bfD_g$.
\end{proposition}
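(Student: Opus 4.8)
The plan is to prove the slope formula first for the energy $\bfE_g$, and then to obtain the cases $\bfF=\bfJ_g$ and $\bfF=\bfD$ by linearity of the slope operator, combined with the classical slope formulas for $\Lam_g$ and $\bfL_\Theta$. Since both $\bfE_g(\Phi(t))$ and $\bfE_g^\NA(\phi)$ were defined by first treating polynomial $g$ through the fibration construction in the proof of Proposition~\ref{MA_eta} and then passing to a limit over polynomial approximations, I would carry out the slope computation with the same two-step structure.

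For the first step, assume $g=\prod_{\alpha=1}^r y_\alpha^{k_\alpha}$ is a monomial. By \eqref{eq-Egvk}, the Archimedean functional satisfies $\bfE_g(\vphi)=\bfE^\bvk(\vphi)$, which up to the normalization $\bV_g^{-1}$ is the ordinary Monge--Amp\`ere energy on the fibered space $(\tX^\bvk,L^\bvk)$ evaluated at the induced metric $\vphi^\bvk$. The smooth $T\times K$-invariant psh ray $\Phi$ associated to $(\mcX,\mcL)$ induces a psh ray $\Phi^\bvk$ on $\tX^\bvk$, and the essential point is to verify that $\Phi^\bvk$ is exactly the smooth psh ray associated to the fibered test configuration $(\mcX^\bvk,\mcL^\bvk)$ of \eqref{eq-mcXvk}, and that the latter is a normal ample test configuration. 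Granting this, the classical slope formula for the ordinary energy (\cite[Proposition~3.12]{BHJ17}, \cite{BBJ18}) applied to $(\mcX^\bvk,\mcL^\bvk)$ yields
\begin{equation*}
\lim_{t\to+\infty}\frac{\bfE_g(\Phi(t))}{t}=\frac{1}{\bV_g}\frac{(\bar{\mcL}^\bvk)^{\cdot n+k+1}}{(n+k+1)!}=(\bfE^\bvk)^\NA=\bfE_g^\NA(\phi),
\end{equation*}
where the last two equalities are Lemma~\ref{poly_E} and the definition \eqref{eq-ENA}. Linearity then extends the identity $\bfE_g'^\infty(\Phi)=\bfE_g^\NA(\phi)$ to every polynomial $g$.

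The second step passes to continuous (hence smooth) $g$ by a $3\epsilon$-argument. Choosing polynomials $g_i\to g$ uniformly on $P$ and normalizing $\bV_g=\bV_{g_i}=1$, the definition of $\bfE_g$ gives
\begin{equation*}
|\bfE_g(\Phi(t))-\bfE_{g_i}(\Phi(t))|\le\epsilon\int_0^1\!\!\int_X|\Phi(t)-\vphi_0|\,\frac{(\ddc\vphi_s)^n}{n!}\,ds,\qquad \vphi_s=\vphi_0+s(\Phi(t)-\vphi_0),
\end{equation*}
whose right-hand side is $O(\epsilon\, t)$ because $\Phi$ has linear growth. Dividing by $t$ shows the slopes of $\bfE_{g_i}$ converge to that of $\bfE_g$, while $\bfE_{g_i}^\NA\to\bfE_g^\NA$ by the preceding lemma, so $\bfE_g'^\infty(\Phi)=\bfE_g^\NA(\phi)$. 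For $\Lam_g$, Lemma~\ref{lem-Hartogs} gives $\Lam_g(\vphi)=\sup(\vphi-\vphi_0)+O(1)$, so its slope equals $\lambda_{\max}(\Phi)$, which is $\Lam^\NA_g=\Lam^\NA$ by the $g$-independent computation of \cite{BHJ17,BBJ18}. Since $\bfL_\Theta$ does not involve $g$, the slope formula $\bfL_\Theta'^\infty(\Phi)=\bfL_\Theta^\NA(\phi)$ is the one already established in \cite{BBJ18}. Combining these through $\bfJ_g=\Lam_g-\bfE_g$ and $\bfD=-\bfE_g+\bfL_\Theta$ produces $\bfJ_g'^\infty(\Phi)=\bfJ_g^\NA(\phi)$ and $\bfD'^\infty(\Phi)=\bfD^\NA(\phi)$.

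The step I expect to be the main obstacle is the identification underlying the first paragraph of the fibration argument: showing that the induced ray $\Phi^\bvk$ coincides with the canonical smooth psh ray of $(\mcX^\bvk,\mcL^\bvk)$ and that this fibered family is a normal ample test configuration, so that \cite[Proposition~3.12]{BHJ17} genuinely applies. The delicacy is that the $(S^1)^r$-action used to form $\tX^\bvk$ must be compatible with the $\bC^*$-action of the test configuration and with the degeneration to $\mcX_0$; once this equivariant compatibility and the relative ampleness of $\mcL^\bvk$ (achieved by the positivity of the $\theta_\alpha$ after translation, as in Section~\ref{sec-gMA}) are in place, the remaining estimates are routine.
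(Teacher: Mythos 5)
Your proposal is correct and follows essentially the same route as the paper: reduce the monomial case to the ordinary energy on the fibered space $(\mcX^\bvk,\mcL^\bvk)$ and invoke the classical slope formula there, then pass to continuous $g$ by a uniform-in-$t$ $3\epsilon$ estimate justifying the exchange of limits, and finally handle $\bfJ_g$ and $\bfD$ via the $g$-independent slope of $\Lam_g$ (through $\lambda_{\max}$ and Hartogs) and the known slope formula for $\bfL_\Theta$. The only cosmetic difference is that you bound the approximation error by $|\Phi(t)-\vphi_0|=O(t)$ directly, while the paper uses the upper bound $\dot{\vphi}\le C$ along the ray; both yield the same $O(\epsilon t)$ control.
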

\begin{proof}
We will first show the proof for $\bfE_g$. 
We first assume that $g =\prod_{\alpha=1}^r y_\alpha^{k_\alpha}$ is a monomial. Then 
\begin{equation}
\bfE_g(\vphi)=\bfE^{[\vec{k}]}(\vphi^{[\vec{k}]}).
\end{equation}
We can apply the same argument in \cite[Proof of Theorem 4.2]{BHJ19} to $(\mcX^{[\vec{k}]}, \mcL^{[\vec{k}]})$ to get
\begin{eqnarray*}
\label{E_slope}
\lim_{t\rightarrow+\infty}\frac{\bfE_g(\vphi(t))}{t}&=&\lim_{t\to +\infty}\frac{\bfE^{[\vec{k}]}(\vphi(t))}{t}=\frac{1}{\bV_g}\frac{(\bar{\mcL}^{[\vec{k}]})^{n+k+1}}{(n+k+1)!}=(\bfE^{[\vec{k}]})^\NA(\phi).
\end{eqnarray*}


For general continuous $g$, by Stone-Weierstrass theorem, we can find a sequence of polynomials $g_i$ which converges to $g $ uniformly over the moment polytope $P$. For simplicity of notations, we assume $\bV_{g_i}=\bV_g=1$. 
It's easy to see for each fixed $\vphi$, 
\begin{equation}
\lim_{i\rightarrow+\infty} \bfE_{g_i}(\vphi)=\bfE_g(\vphi).
\end{equation}
For any $\epsilon>0$, there exists an $i_0>0$, such that for any indices $i,i'>i_0$, $|g_i - g_{i'}|<\epsilon$.
Then
\begin{align}
\label{bound_1}
\begin{split}
&|\frac{(\bfE_{g_i}-\bfE_{g_{i'}})(\vphi(t))}{t}| \\
&\leq \frac{1}{t}|\int_0^t\int_X(\dot{\vphi}-C+C) (g_i-g_{i'}) \frac{(\ddc\vphi)^n}{n!}ds|+ \frac{1}{t}|(\bfE_{g_i}-\bfE_{g_{i'}})(\vphi(0))|\\
&\leq \frac{\epsilon}{t}\big( |\int_0^t\int_X(\dot{\vphi}-C)  \frac{(\ddc\vphi)^n}{n!}ds|+Ct\big) + \frac{C'\epsilon}{t}\\
&\leq  \epsilon \big( (|\bfE^\NA-C|+C)+\frac{C'}{t}\big)  ,
\end{split}
\end{align}
where we have used $\dot{\vphi}-C\leq 0$ for some constant $C$ in the third line.


Then we have:
\begin{eqnarray*}
\lim_{t\rightarrow+\infty} \frac{\bfE_{g}(\vphi(t))}{t}&=&\lim_{t\rightarrow+\infty}\lim_{i\rightarrow+\infty} \frac{\bfE_{g_i}(\vphi(t))}{t}\\
&=&\lim_{i\rightarrow+\infty}\lim_{t\rightarrow+\infty} \frac{\bfE_{g_i}(\vphi(t))}{t}=\lim_{i\rightarrow+\infty} \bfE^\NA_{g_i}(\phi)\\
&=&\frac{1}{\bV_g}\lim_{i\rightarrow+\infty}\int_{\mcX_0}\theta e^{f_i(\tilde{\vphi})} (\ddc\tilde{\vphi})^n\\
&=&\frac{1}{\bV_g}\int_{\mcX_0}\theta_\xi(\tilde{\vphi}) e^{f_{\tilde{\vphi}}} (\ddc\tilde{\vphi})^n = \bfE^\NA_g.
\end{eqnarray*}
We can exchange the limits in the second line because of the estimate \eqref{bound_1}.



To show the slope formula for $\bfJ_g$, we only need to show that
\begin{equation}
\lim_{t\to\infty} \frac{1}{t}\int_X (\vphi(t)-\vphi_0)e^{f_{\vphi_0}}\frac{(\ddc\vphi_0)^n}{n!} = \Lam^\NA_g(\phi)=\Lam^\NA(\phi).
\end{equation}
Since $\Phi(t)$ has linear growth, we have $\lim_{t\to\infty}(\sup_X\frac{\vphi(t)-\vphi_0}{t}) = \lambda_{\rm max}(\Phi)<+\infty$.
By Hartogs' compactness lemma, up to a subsequence, $\vphi(t)-\vphi_0-\sup_X(\vphi(t)-\vphi_0)$ converges weakly to a function in $\mcE^1_T(X,\omega)$.
Then
\begin{eqnarray*}
\label{lambda_max}
&&\lim_{t\to\infty} \frac{1}{t}\int_X (\vphi(t)-\vphi_0)e^{f_{\vphi_0}}\frac{(\ddc\vphi_0)^n}{n!} \\
&=& \lambda_{\rm max} + 
\lim_{t\rightarrow+\infty}\frac{1}{t}\int_X (\vphi(t)-\vphi_0-\sup_X(\vphi(t)-\vphi_0))e^{f_{\vphi_0}}\frac{(\ddc\vphi_0)^n}{n!}\\
&=& \lambda_{\rm max}.
\end{eqnarray*}
On the other hand, it is known that $\lambda_{\max}(\Phi)=\Lam^\NA(\phi)$.

Since $\bfD_g = -\bfE_g + \bfL_\Theta$, the slope formula for $\bfD_g$ follows from the proof for $\bfE_g$ above and 
the proof for $\bfL_\Theta$ as in \cite{BBJ15}.
\end{proof}
We have the following non-Archimedean version of Ding's inequality (\cite[Lemma 6.17]{BoJ18a}).
\begin{corollary}
For any $t\in [0,1]$ and $\phi\in \mcH^{\NA}$,
we have the inequality:
\begin{equation}
\bfJ^\NA_g((1-t)\phi_\triv+t\phi)\le t^{1+\frac{1}{C}}\bfJ^\NA_g(\phi).
\end{equation}
\end{corollary}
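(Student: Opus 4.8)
The plan is to transfer the Archimedean inequality \eqref{J_t} to the non-Archimedean side by passing to the slope at infinity, exactly in the spirit of how the Archimedean functionals were shown to compute their non-Archimedean counterparts in Proposition \ref{F_slope}. First I would fix a dominating normal ample model for $\phi=\phi_{(\mcX,\mcL)}$ and choose a $T\times K$-invariant smooth psh ray $\Phi=\{\vphi(s)\}$ associated to $(\mcX,\mcL)$, with $s=-\log|\tau|$. From it I form the interpolated ray $\Psi(s):=(1-t)\vphi_0+t\,\vphi(s)$. Since $\vphi_0$ is a fixed smooth metric independent of $\tau$ and $\vphi(s)$ is psh in $(x,\tau)$, the convex combination $\Psi$ is again a psh ray of linear growth.

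The key bookkeeping step is to identify its non-Archimedean limit. Writing $\Psi+t a\log|\tau|=(1-t)\vphi_0+t\,(\Phi+a\log|\tau|)$ and using that generic Lelong numbers are unchanged under adding the bounded smooth term $(1-t)\vphi_0$ and scale linearly under multiplication by $t>0$, one gets $\sigma(v)(\Psi+t a\log|\tau|)=t\,\sigma(v)(\Phi+a\log|\tau|)$ for every $v\in X^\div_\bQ$, hence $\Psi^\NA-\phi_\triv=t\,(\phi-\phi_\triv)$, i.e. $\Psi^\NA=(1-t)\phi_\triv+t\,\phi$.

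Next I would apply the Archimedean inequality \eqref{J_t} with $\vphi=\vphi(s)$, which gives, for every $s>0$, the pointwise bound $\bfJ_g(\Psi(s))=\bfJ_g((1-t)\vphi_0+t\,\vphi(s))\le t^{1+\frac{1}{C}}\,\bfJ_g(\vphi(s))$. Dividing by $s$ and letting $s\to+\infty$, the right-hand side converges to $t^{1+\frac1C}\bfJ_g^\NA(\phi)$ by Proposition \ref{F_slope}. For the left-hand side I invoke the same slope formula for the ray $\Psi$: when $t\in(0,1]\cap\bQ$ the class $(1-t)\rho^*L+t\mcL$ is ample on $\mcX$, being a convex combination of the nef class $\rho^*L$ and the ample class $\mcL$, so after clearing denominators $\Psi$ is a smooth psh ray associated to an ample normal test configuration whose non-Archimedean metric is $(1-t)\phi_\triv+t\phi$; thus Proposition \ref{F_slope} yields $\lim_{s\to\infty}\bfJ_g(\Psi(s))/s=\bfJ_g^\NA((1-t)\phi_\triv+t\phi)$, proving the inequality for all rational $t\in[0,1]$.

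Finally, for irrational $t$ I argue by continuity. Along $(1-t)\phi_\triv+t\phi$ the functional $\Lam^\NA_g$ is linear in $t$, while $\bfE_g^\NA$ is, by its definition \eqref{eq-ENA} together with the fibration description for monomial $g$ and the Stone--Weierstrass limit, a polynomial in $t$; hence both $\bfJ_g^\NA((1-t)\phi_\triv+t\phi)$ and $t^{1+\frac1C}\bfJ_g^\NA(\phi)$ are continuous in $t$, and the inequality on the dense rational set extends to all $t\in[0,1]$. I expect the only genuine subtlety to lie in the second step combined with the application of the slope formula to $\Psi$: one must verify that the interpolated ray really computes the non-Archimedean metric $(1-t)\phi_\triv+t\phi$ and that this metric is represented by an ample normal model so that Proposition \ref{F_slope} applies. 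Both points are settled by the Lelong-number homogeneity and the convexity and continuity remarks above.
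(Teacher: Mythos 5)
Your proposal is correct and follows essentially the same route as the paper, whose proof is the one-line remark that the corollary "follows from the Archimedean inequality \eqref{J_t} and the slope formula" of Proposition \ref{F_slope}; you have simply spelled out the details (identification of the non-Archimedean limit of the interpolated ray, ampleness of the interpolated class, continuity in $t$) that the paper leaves implicit.
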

\begin{proof}
This follows from the Archimedean inequality \eqref{J_t} and the slope formula in the above proposition.
\end{proof}
Based on the previous discussion, it is convenient to introduce the following:
\begin{definition}\label{def-intg}
Let $(\mcX, \mcL)$ be a test configuration of $(X, L)$, we define:
If $g =\prod_{\alpha=1}^r y_\alpha^{k_\alpha}$, then
\begin{equation}
\frac{1}{p!}\left(\bar{\mcL}^{\cdot p}\cdot L_{\bP^1}^{\cdot n+1-p}\right)_g:=\frac{1}{(k+p)!}(\bar{\mcL}^{[\vec{k}]})^{\cdot p} \cdot (L_{\bP^1}^{[\vec{k}]})^{\cdot n+1-p} .
\end{equation}
Moreover, if $\Delta$ is a $T_\bC\times\bC^*$-invariant divisor on $\mcX$ and if $\Delta^{[k]}=(\Delta\times \bS^{[\vec{k}]})/(S^1)^r$ is the associated invariant divisor on $\mcX^{[\vec{k}]}$, then we set:
\begin{equation}
\frac{1}{p!} (\Delta \cdot \bar{\mcL}^p\cdot L_{\bP^1}^{\cdot n-p})_g:=\frac{1}{(k+p)!} \Delta^{[\vec{k}]}\cdot (\bar{\mcL}^{[\vec{k}]})^{\cdot p} \cdot (L_{\bP^1}^{[\vec{k}]})^{\cdot n+1-p} .
\end{equation}

For a general continuous $g$ defined on $P$, we choose a sequence of polynomials $g_i$ that converges uniformly to $g $ and define:
\begin{equation}
\left(\bar{\mcL}^{\cdot p}\cdot L_{\bP^1}^{\cdot n+1-p}\right)_g=\lim_{i\rightarrow+\infty}\left(\bar{\mcL}^{\cdot p}\cdot L_{\bP^1}^{\cdot n+1-p}\right)_{g_i}.
\end{equation}
\begin{equation}
(\Delta \cdot \bar{\mcL}^p\cdot L_{\bP^1}^{\cdot n-p})_g=\lim_{i\rightarrow+\infty} (\Delta \cdot \bar{\mcL}^p\cdot L_{\bP^1}^{\cdot n-p})_{g_i}.
\end{equation}
\end{definition}
\begin{remark}
During our completion of this paper, we noticed a preprint of Inoue \cite{Ino20} in which a more general framework of equvariant intersections is used to define an equivariant version of K-stability adapted to the usual K\"{a}hler-Ricci solitons.
\end{remark}

Recall $D = \mcL-\rho^* L_\bC$.
We can use the same argument for $\bfE'^\infty=\bfE^\NA$ to get:
\begin{equation}
\frac{1}{n!}(D\cdot \mcL^{\cdot n})_g:=D^\bvk\cdot \frac{(\mcL^\bvk)^{\cdot n+k}}{(n+k)!}= \lim_{t\to\infty}\frac{1}{t} \int_{X^\bvk} (\vphi^\bvk-\vphi_0^\bvk) \frac{((\ddc\vphi)^\bvk)^{n+k}}{(n+k)!}
\end{equation}
\begin{proposition}
\label{bfII}
For any $\phi=\phi_{(\mcX, \mcL)}$ we have
\begin{equation}
\label{bfI_f^NA}
\bfI_g^\NA(\phi) = \Lam^\NA_g - (D\cdot \mcL^{\cdot n})_g.
\end{equation}
Furthermore, we have the slope formula for $\bfI_g^\NA$
\begin{equation}
\bfI_g^\NA(\mcL) = 
\bfI_g'^\infty(\Phi) .
\end{equation}
\end{proposition}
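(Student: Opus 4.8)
The plan is to obtain both assertions by taking the slope at infinity of the elementary Archimedean decomposition of $\bfI_g$ and matching each term with a non-Archimedean quantity that has already been computed earlier in this section. First I would record the identity that follows at once from the definitions of $\bfI_g$ and $\Lam_g$ in Section \ref{functionals}: with the normalization $\bV_g=1$,
\begin{equation*}
\bfI_g(\vphi) = \Lam_g(\vphi) - \int_X (\vphi-\vphi_0)\, g_\vphi\, \frac{(\ddc\vphi)^n}{n!}.
\end{equation*}
The formula (\ref{bfI_f^NA}) is then exactly the term-by-term non-Archimedean shadow of this: $\Lam^\NA_g$ is the counterpart of $\Lam_g$, and $(D\cdot\mcL^{\cdot n})_g$ (in the sense of Definition \ref{def-intg}) is the weighted intersection number introduced in the display immediately preceding the statement, which by construction records the slope of the last integral. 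Thus, reading (\ref{bfI_f^NA}) as the definition of $\bfI_g^\NA$, proving the slope formula simultaneously justifies (\ref{bfI_f^NA}).

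Next I would compute the two slopes separately along a smooth psh ray $\Phi=\{\vphi(t)\}$ associated to $(\mcX,\mcL)$ (which exists and may be taken $T\times K$-invariant). For the first term, since $\Lam_g(\vphi)=\int_X(\vphi-\vphi_0)\,g_{\vphi_0}\frac{\omega_0^n}{n!}$ is linear in $\vphi$ against the fixed probability measure $g_{\vphi_0}\frac{\omega_0^n}{n!}$, the computation already carried out for $\Lam_g$ in the proof of Proposition \ref{F_slope} (Hartogs' compactness together with the linear growth of $\Phi$) gives
\begin{equation*}
\lim_{t\to\infty}\frac{\Lam_g(\vphi(t))}{t}=\lambda_{\max}(\Phi)=\Lam^\NA(\phi)=\Lam^\NA_g(\phi).
\end{equation*}
For the second term, the display immediately before the proposition already establishes, for a monomial $g=\prod_\alpha y_\alpha^{k_\alpha}$, that
\begin{equation*}
\lim_{t\to\infty}\frac{1}{t}\int_X(\vphi(t)-\vphi_0)\,g_{\vphi(t)}\,\frac{(\ddc\vphi(t))^n}{n!}=\frac{1}{n!}(D\cdot\mcL^{\cdot n})_g,
\end{equation*}
obtained by lifting to the fibration $(\mcX^\bvk,\mcL^\bvk)$ and invoking the standard ``slope equals intersection number'' identity, precisely as for $\bfE^\NA$ in Lemma \ref{poly_E} and Proposition \ref{F_slope}. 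Subtracting the two slopes yields $\bfI_g'^\infty(\Phi)=\Lam^\NA_g-(D\cdot\mcL^{\cdot n})_g=\bfI_g^\NA(\phi)$; in particular the slope exists and depends only on the test configuration, not on the chosen ray.

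The only point demanding real care, and the one I expect to be the main obstacle, is the passage from monomial (hence polynomial) $g$ to a general continuous $g$ in the second slope, i.e. interchanging the limits $t\to\infty$ and $i\to\infty$ where $g_i\to g$ uniformly on the compact polytope $P$ by Stone--Weierstrass. I would handle this exactly as in the estimate (\ref{bound_1}) of Proposition \ref{F_slope}: using $\dot\vphi-C\le 0$ along the ray for a suitable constant $C$ and $|g_i-g_{i'}|<\epsilon$ on $P$, one bounds $\tfrac{1}{t}$ times the difference of the second integrals for $g_i$ and $g_{i'}$ uniformly in $t$, so the double limit may be exchanged and $\frac{1}{n!}(D\cdot\mcL^{\cdot n})_{g_i}\to\frac{1}{n!}(D\cdot\mcL^{\cdot n})_g$ by Definition \ref{def-intg}. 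Since the $\Lam_g$-slope is independent of $g$, combining the two limits completes the proof of the slope formula and hence of (\ref{bfI_f^NA}).
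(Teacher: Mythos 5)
Your proposal is correct and follows essentially the same route as the paper's own proof: decompose $\bfI_g$ as $\Lam_g$ minus $\int_X(\vphi-\vphi_0)\,g_\vphi\frac{(\ddc\vphi)^n}{n!}$, identify the slope of the first term with $\Lam^\NA$ via \eqref{lambda_max} and of the second with $\frac{1}{n!}(D\cdot\mcL^{\cdot n})_g$ for monomial $g$ via the fibration construction, and then pass to continuous $g$ by the uniform-in-$t$ estimate coming from the linear growth of $\Phi$ to justify interchanging the limits $t\to\infty$ and $i\to\infty$. This is precisely the argument of the paper (its estimate \eqref{bound_2} is the integrated form of your $\dot\vphi-C\le 0$ bound), so no further comment is needed.
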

\begin{proof}
As shown in formula \eqref{lambda_max},
$\Lam = \lim_{t\to +\infty} \frac{1}{t}\int_X (\vphi-\vphi_0)e^{f_{\vphi_0}}\frac{(\ddc\vphi_0)^n}{n!}$.

Again, by the fact $\Phi$ is a smooth psh ray, there exists a $C>0$, such that $\vphi(t)-\vphi_0< C(t+1)$.
Then we have
For $\epsilon>0$, there exists $i_0\in \NN$, such that for any $i,i'>i_0$, $|g_i-g_{i'}|<\epsilon$.
\begin{align}
\label{bound_2}
\begin{split}
&\frac{1}{t}|\int_X(\vphi(t)-\vphi_0)(g_{i,\vphi_0}-g_{i',\vphi_0})| \frac{(\ddc\vphi_0)^n}{n!} \leq \\
& \frac{\epsilon}{t}\Big( |\int_X(\vphi(t)-\vphi_0-C(t+1))\frac{(\ddc\vphi)^n}{n!}|
+ |\int_X C (t+1)\frac{(\ddc\vphi)^n}{n!} | \Big)\\
&\leq \epsilon \big( |(D\cdot \mcL^{\cdot n})_g- C'| + C' \big) .
\end{split}
\end{align}
Take $t\to\infty$, we can see $|(D\cdot \mcL^{\cdot n})_{g_i} - (D\cdot \mcL^{\cdot n})_{g_{i'}}| < C'' \epsilon$,
which implies $\bfI^\NA_g(\phi) = \lim_{i\to\infty}\bfI^\NA_{g_i}(\phi)$ and is well-defined.
Furthermore,
\begin{align}
\begin{split}
\bfI_g'^\infty(\Phi) &= \lim_{t\to\infty}\frac{1}{t} \bfI_g(\vphi(t)) \\
&= \lim_{t\to\infty}\lim_{i\to\infty} \frac{1}{t} \bfI_{g_i} (\vphi(t))  \\
&= \lim_{i\to\infty} \lim_{t\to\infty}\frac{1}{t} \bfI_{g_i} (\vphi(t)) = \lim_{i\to\infty} \bfI^\NA_{g_i} (\phi) = \bfI^\NA_g(\phi) ,
\end{split}
\end{align}
where the third equality is because of \eqref{bound_2} and dominated convergence theorem.

\end{proof}

Set
\begin{equation}
\mcH^\NA_{T\times K}=\{\phi=\phi_{(\mcX, \mcL)}; (\mcX, \mcL) \text{ is a $T_\bC\times \bG$ equivariant test configuration of } (X, L)\}.
\end{equation}

In the following, we will abbreviate $\phi_{(\mcX,\mcL)}$ as $\phi$ when there is no ambiguity.
From now on, let $\bG$ be a reductive complex Lie group of $\Aut_T(X, D, \Theta)$ (see Definition \ref{def-AutT}). Assume $\bG=K_\bC$ and let $\bT$ be the center of $\bG$. Then $\bT\cong (\bC^*)^\fr$. We will denote $N_\bZ={\rm Hom}(\bC^*, \bT), N_\bR=N_\bZ\otimes_\bZ\bR\cong \bR^\fr$.

\begin{definition}[see \cite{His16b}]
\label{xi_twist}
For any $\xi\in N_\bR$ and a $(T_\bC\times \bG)$-equivariant test configuration $(\mcX,\mcL)$, the $\xi$-twisted test configuration
$(\mcX,\mcL)_\xi$ is defined as the following.
Let $\zeta$ be the holomorphic vector field that generates the $\CC^*$-action of $(\mcX,\mcL)$.
Then $(\mcX,\mcL)_\xi$ is the $\bT$-equivariant test configuration with the holomorphic vector field
$\zeta+\xi$. If $\phi=\phi_{(\mcX, \mcL)}$, we also denote $\phi_\xi=\phi_{(\mcX_\xi, \mcL_\xi)}$.

For any $\phi\in \mcH^\NA_{T\times K}$, set 
\begin{equation}
\bfJ^\NA_{g,\bT}(\phi)=\inf_{\xi\in N_\bR}\bfJ^\NA_{g,\bT}(\phi_\xi).
\end{equation}

\end{definition}

\begin{proposition}
\label{trivial}
For any $\xi\in N_\bR$, it induces a test configuration $\phi_\xi$ with $\CC^*$-action genearted by $\xi$.
Then $\bfJ_g^{\NA}(\phi_\xi) = 0$ if and only if $\xi = 0$.
\end{proposition}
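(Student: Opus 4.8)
The plan is to realize $\phi_\xi$ concretely as the $\xi$-twist of the \emph{trivial} test configuration and then reduce the claim to the elementary fact that a strictly positively weighted average of a function equals its maximum only when the function is constant. Write $\phi_\triv=\phi_{(X\times\bC,\,L)}$, with $\zeta$ the generator of the base $\bC^*$-action. By Definition \ref{xi_twist}, $\phi_\xi$ is the test configuration on $X\times\bC$ whose $\bC^*$-action is generated by $\zeta+\xi$; its central fibre $\mcX_0\cong X$ inherits the $\bC^*$-action generated by $\xi$ alone. Fix the smooth reference $e^{-\vphi_0}$ and let $\theta_\xi:=\theta_\xi(\vphi_0)$ be the Hamiltonian of $\xi$ with respect to $\ddc\vphi_0$, so that $\iota_\xi\ddc\vphi_0=\tfrac{\sqrt{-1}}{2\pi}\bar\partial\theta_\xi$. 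Since $\eta|_{\mcX_0}=\xi$, the closed formula \eqref{eq-ENA} (which is valid for continuous $g$) gives
\begin{equation}
\bfE^\NA_g(\phi_\xi)=\frac{1}{\bV_g}\int_X \theta_\xi\, g_{\vphi_0}\frac{(\ddc\vphi_0)^n}{n!}=\int_X \theta_\xi\, d\mu_g,\qquad d\mu_g:=\frac{1}{\bV_g}g_{\vphi_0}\frac{(\ddc\vphi_0)^n}{n!},
\end{equation}
i.e. $\bfE^\NA_g(\phi_\xi)$ is the average of $\theta_\xi$ against the probability measure $d\mu_g$ (a genuine probability measure because $g>0$ and $\bV_g=\int_X g_{\vphi_0}(\ddc\vphi_0)^n/n!$).

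Next I would compute $\Lam^\NA_g(\phi_\xi)=\Lam^\NA(\phi_\xi)$, which by definition does not involve $g$. The idea is to use the identity $\Lam^\NA(\phi)=\lambda_{\max}(\Phi)$ established inside the proof of Proposition \ref{F_slope}, applied to the product psh ray $\Phi_\xi(t)=\sigma_{t\xi}^*\vphi_0$ associated to $\phi_\xi$. Because the moment polytope is invariant under adding a potential, the range of the Hamiltonian $\theta_\xi(\Phi_\xi(t))$ is a $t$-independent interval whose maximum is $\max_X\theta_\xi$; hence $\sup_X(\Phi_\xi(t)-\vphi_0)$ grows linearly in $t$ with slope exactly $\max_X\theta_\xi$, giving $\Lam^\NA(\phi_\xi)=\max_X\theta_\xi$. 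Combining the two computations,
\begin{equation}
\bfJ^\NA_g(\phi_\xi)=\Lam^\NA(\phi_\xi)-\bfE^\NA_g(\phi_\xi)=\max_X\theta_\xi-\int_X\theta_\xi\, d\mu_g\ \ge\ 0,
\end{equation}
where the inequality is immediate since $\theta_\xi\le\max_X\theta_\xi$ pointwise and $d\mu_g$ is a probability measure. Note also that this quantity is insensitive to the additive normalization of the moment map, since such a constant cancels between the two terms.

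Finally I would read off the equality case. We have $\bfJ^\NA_g(\phi_\xi)=0$ iff $\theta_\xi=\max_X\theta_\xi$ holds $\mu_g$-almost everywhere. Since $g>0$, the measure $\mu_g$ has the same null sets as $(\ddc\vphi_0)^n$ and hence full support on $X$, so $\theta_\xi$ is constant on $X$. Then $\bar\partial\theta_\xi=0$, whence $\iota_\xi\ddc\vphi_0=\tfrac{\sqrt{-1}}{2\pi}\bar\partial\theta_\xi=0$; as $\ddc\vphi_0$ is a Kähler (nondegenerate) form, this forces $\xi=0$ as a holomorphic vector field, and therefore $\xi=0$ in $N_\bR$ because the $\bT$-action (being a subgroup of $\Aut(X,D)$) is effective. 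Conversely, if $\xi=0$ then $\theta_0\equiv 0$ and both $\max_X\theta_\xi$ and the average vanish, so $\bfJ^\NA_g(\phi_\xi)=0$. I expect the only delicate points to be the careful identification $\Lam^\NA(\phi_\xi)=\max_X\theta_\xi$ via the linear-growth slope of the product ray, and the full-support argument that upgrades ``constant $\mu_g$-a.e.'' to ``constant everywhere''; both are standard once the $g>0$ hypothesis is invoked, so the weighting by $g$ changes nothing essential compared with the classical $g\equiv 1$ case.
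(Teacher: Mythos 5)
Your argument is correct, but it takes a genuinely different route from the paper's. The paper's proof is a short reduction to the classical unweighted case: using the comparison $\frac{1}{C}\,\bfJ\le \bfJ_g\le C\,\bfJ$ from \eqref{J} together with the slope formulas, it deduces that $\bfJ_g^{\NA}(\phi_\xi)=0$ forces $\bfJ^{\NA}(\phi_\xi)=0$, and then invokes \cite{BHJ17} for the known fact that the unweighted $\bfJ^{\NA}$ of a product configuration vanishes only for the trivial one-parameter subgroup. You instead compute both terms of $\bfJ^{\NA}_g(\phi_\xi)=\Lam^\NA(\phi_\xi)-\bfE^\NA_g(\phi_\xi)$ explicitly, identifying the difference as $\max_X\theta_\xi$ minus the average of $\theta_\xi$ against the probability measure $\frac{1}{\bV_g}g_{\vphi_0}\frac{(\ddc\vphi_0)^n}{n!}$, and conclude from the equality case of this mean inequality together with nondegeneracy of the K\"ahler form and effectiveness of the torus action. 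Your route is self-contained, avoids citing the $g\equiv 1$ theory, and makes the positivity quantitative (it exhibits $\bfJ^\NA_g(\phi_\xi)$ as a weighted oscillation of the Hamiltonian); the price is that you must justify $\Lam^\NA(\phi_\xi)=\max_X\theta_\xi$ via the linear growth of the product ray and track the additive normalization of the lifting of $\xi$, which you correctly observe cancels in $\bfJ$. On a singular $X$ your full-support step should be read on a resolution, where $(\ddc\vphi_0)^n$ is nondegenerate off the exceptional locus; this is enough to upgrade ``constant almost everywhere'' to ``constant,'' so the argument survives. Both proofs are valid; the paper's is shorter because it delegates the hard content to \cite{BHJ17}, while yours would also serve as a direct proof of that cited fact in the weighted setting.
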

\begin{proof}
The ``if" direction is straightforward. We need to show the ``only if" direction.
By \eqref{J}, 
we have
\begin{equation}
\frac{1}{C} \leq \lim_{t\to\infty}\frac{\bfJ(\sigma_\xi(t)^*\vphi)}{t} \leq \lim_{t\to\infty}\frac{\bfJ_g(\sigma_\xi(t)^*\vphi)}{t} 
\leq C \lim_{t\to\infty}\frac{\bfJ(\sigma_\xi(t)^*\vphi)}{t}  .
\end{equation}
Then $\bfJ_g'^\infty(\Phi_\xi) = 0$ implies $\bfJ'^\infty(\Phi_\xi)=\bfJ^\NA(\phi_\xi)= 0$.
By \cite{BHJ17}, this implies $\phi_\xi=0$ and $\xi=0$.
\end{proof}

We also have the following equivariant version of slope formula.
\begin{proposition}
\label{J_trivial}
Let $\phi=\phi_{(\mcX, \mcL)}$ be an $T_\CC\times \bG$-equivariant ample normal test configuration.
Let $\Phi$ be a $T\times K$-invariant smooth psh ray associated to $(\mcX,\mcL)$.
Then
\begin{align}
\bfJ_{g,\bT}^{\NA}(\phi) = \bfJ'^\infty_{g,\bT}(\Phi) .
\end{align}
\end{proposition}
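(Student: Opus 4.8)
The plan is to reduce the equivariant statement to the non-equivariant slope formula of Proposition \ref{F_slope}, applied to each twist separately, and then to interchange the infimum over $\xi$ with the slope limit. By definition both $\bfJ'^\infty_{g,\bT}(\Phi)$ and $\bfJ^\NA_{g,\bT}(\phi)$ are infima over $\xi\in N_\bR$ of the untwisted quantities $\bfJ'^\infty_g(\Phi_\xi)$ and $\bfJ^\NA_g(\phi_\xi)$, where $\Phi_\xi(t)=\sigma_{t\xi}^*\Phi(t)$ is the psh ray generated by $\zeta+\xi$ (the vector field attached to $\phi_\xi$ in Definition \ref{xi_twist}). Hence the whole proposition follows once I establish the pointwise identity $\bfJ'^\infty_g(\Phi_\xi)=\bfJ^\NA_g(\phi_\xi)$ for every $\xi\in N_\bR$, after which taking the infimum over $\xi$ on both sides concludes.

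First I would treat rational $\xi\in N_\bQ$. After clearing denominators $\zeta+\xi$ generates an honest algebraic $\bC^*$-action, so $(\mcX_\xi,\mcL_\xi)$ is again a normal ample $T_\bC\times\bG$-equivariant test configuration and $\Phi_\xi$ is a $T\times K$-invariant smooth psh ray associated to it. Proposition \ref{F_slope} then applies verbatim to $\phi_\xi$ and gives the desired equality for all $\xi\in N_\bQ$.

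Next I would pass from rational to arbitrary $\xi$ by a continuity-plus-density argument, which I expect to be the main obstacle: for irrational $\xi$ the twisted object is no longer an algebraic test configuration, so the slope formula cannot be invoked directly. The point is that both sides depend nicely on $\xi$. On the non-Archimedean side, twisting shifts the $\bC^*$-weight by the $T$-moment coordinate while leaving the $T$-moment map — and hence $g_\vphi$ — untouched; using Lemma \ref{poly_E} together with the Duistermaat-Heckman description of $\bfE^\NA_g$ shows $\bfE^\NA_g(\phi_\xi)=\bfE^\NA_g(\phi)+\tfrac{1}{\bV_g}\int_{\mcX_0}\langle\bfm_\vphi,\xi\rangle\,g_\vphi\tfrac{(\ddc\vphi)^n}{n!}$ is affine in $\xi$, while $\Lam^\NA_g(\phi_\xi)=\lambda_{\max}(\Phi_\xi)$ is the maximum of the linear function $(\lambda,y)\mapsto\lambda+\langle y,\xi\rangle$ over the moment image of $(\mcX_0,\mcL|_{\mcX_0})$, hence convex in $\xi$. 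Consequently $\bfJ^\NA_g(\phi_\xi)=\Lam^\NA_g(\phi_\xi)-\bfE^\NA_g(\phi_\xi)$ is convex, in particular continuous, in $\xi$. On the Archimedean side, Lemma \ref{J_convex} already gives that $\xi\mapsto\bfJ_g(\sigma_{t\xi}^*\Phi(t))$ is convex along $N_\bR$ for each $t$; since a convex function of $t$ has a well-defined recession slope, $\bfJ'^\infty_g(\Phi_\xi)$ exists and inherits convexity — hence continuity — on the finite-dimensional space $N_\bR$.

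Finally, since the two continuous functions $\xi\mapsto\bfJ'^\infty_g(\Phi_\xi)$ and $\xi\mapsto\bfJ^\NA_g(\phi_\xi)$ agree on the dense subset $N_\bQ\subset N_\bR$, they agree everywhere, and taking the infimum over $\xi\in N_\bR$ yields $\bfJ'^\infty_{g,\bT}(\Phi)=\bfJ^\NA_{g,\bT}(\phi)$. A minor technical point I would also record is that the infima are attained and finite: this follows from the convexity and properness of $\bfJ_g$ over $N_\bR$ in Lemma \ref{J_convex}, which also guarantees that no difficulty arises from exchanging the infimum with the slope limits.
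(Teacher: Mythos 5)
There is a genuine gap, and it comes from a misreading of what $\bfJ'^\infty_{g,\bT}(\Phi)$ means. In the paper this quantity is the slope at infinity of the function $t\mapsto \bfJ_{g,\bT}(\Phi(t))=\inf_{\xi\in N_\bR}\bfJ_g(\sigma_{t\xi}^*\Phi(t))$ — the infimum sits \emph{inside} the limit, because this is the quantity appearing in the $\bG$-coercivity condition (Definition \ref{def-Gcoer}) that the proposition is designed to convert into $\bfJ^\NA_{g,\bT}(\phi)=\inf_\xi\bfJ^\NA_g(\phi_\xi)$. Your proposal instead takes $\bfJ'^\infty_{g,\bT}(\Phi)$ to be $\inf_\xi\bfJ'^\infty_g(\Phi_\xi)$ by definition, reduces everything to the pointwise identity $\bfJ'^\infty_g(\Phi_\xi)=\bfJ^\NA_g(\phi_\xi)$, and then "takes the infimum on both sides." That only yields the easy inequality $\bfJ^\NA_{g,\bT}(\phi)=\inf_\xi\lim_t\frac1t\bfJ_g(\sigma_{t\xi}^*\Phi(t))\ge\limsup_t\frac1t\inf_\xi\bfJ_g(\sigma_{t\xi}^*\Phi(t))$. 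The entire content of the proposition is the reverse inequality, i.e.\ the interchange of $\inf_{\xi\in N_\bR}$ with $\lim_{t\to\infty}$, and your one-sentence remark that convexity and properness of $\bfJ_g$ over $N_\bR$ "guarantee that no difficulty arises from exchanging the infimum with the slope limits" does not establish it: properness for each fixed $t$ gives a minimizer $\xi_t$, but nothing yet prevents $\xi_t$ from escaping to infinity as $t\to\infty$, nor does it give the uniformity in $t$ needed to pass to the limit.

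The paper's proof supplies exactly these two missing ingredients. First, it shows $|\xi_t|\le C$ uniformly in $t$ via the quasi-triangle inequality for $\bfI_g/\bfJ_g$ applied to $\bfJ_g(\sigma_{t\xi_t}^*\vphi_0)\le C(\bfJ_g(\sigma_{t\xi_t}^*\Phi(t))+\sigma_{t\xi_t}^*\bfJ_{g,\Phi(t)}(\vphi_0))\le C(\bfJ^\NA_{g}(\phi)t+o(t))$, combined with the linear growth of $\bfJ_g$ along one-parameter orbits (Proposition \ref{trivial}). Second, after extracting $\xi_{t_j}\to\xi_\infty$, it proves $\frac{1}{t_j}\bigl|\sigma_{t_j\xi_{t_j}}^*\Phi(t_j)-\sigma_{t_j\xi_\infty}^*\Phi(t_j)\bigr|\to0$ by an explicit Fubini--Study computation with the $\bC^*$- and $\xi$-weights on $H^0(X,qL)$, which via $|\bfJ_g(\cdot)-\bfJ_g(\cdot)|\le C\bfI(\cdot,\cdot)$ lets one replace $\xi_{t_j}$ by the fixed $\xi_\infty$ and conclude $\bfJ'^\infty_{g,\bT}(\Phi)=\bfJ^\NA_g(\phi_{\xi_\infty})\ge\bfJ^\NA_{g,\bT}(\phi)$. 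Your continuity-in-$\xi$ and rational-density observations for the pointwise identity are reasonable but address a side issue; without the uniform boundedness of the minimizers $\xi_t$ and the uniform comparison estimate, the argument does not close.
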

\begin{proof}
LHS $\geq$ RHS can be seen from the definition.
We need to show LHS $\leq$ RHS.
We have $\inf_{\xi\in N_\bR}\bfJ_g(\sigma_{t\xi}^*\Phi(t)) = \bfJ_g(\sigma_{t\xi_t}^*\Phi(t)))$.
First, we will show that $|\xi_t| \leq C $ for some $C>0$. This is deduced from the quasi-triangle inequality
\begin{align*}
\bfJ_g(\sigma_{t\xi_t}^*\vphi_0) &\leq C (\bfJ_g(\sigma_{t\xi_t}^*\Phi(t)) + \sigma_{t\xi_t}^*\bfJ_{g,\Phi(t)}(\vphi_0))\\
&\leq C(\bfJ_{g}^{\NA}(\phi)\cdot t + o(t)) .
\end{align*}
Since $|\xi_t|<C$, we can choose a sequence such that $\xi_{t_j}$ converges to $\xi_\infty$.
We need to show that $\frac{1}{t_j}|\bfJ_g(\sigma_{\xi_{t_j}}(t_j)^*\Phi(t_j)) - \bfJ_g(\sigma_{\xi_\infty}(t_j)^*\Phi(t_j))|\to 0$.
It suffices to show
\begin{equation}
\frac{1}{t_j}|\sigma_{t_j\xi_{t_j}}^*\Phi(t_j) - \sigma_{t_j\xi_\infty}^*\Phi(t_j)|\to 0 ,
\end{equation}
since 
\begin{align*}
|\bfJ_g(\sigma_{t_j\xi_{t_j}}^*\Phi(t_j)) - \bfJ_g(\sigma_{t_j\xi_\infty}^*\Phi(t_j))| &\leq C \bfI(\sigma_{t_j\xi_{t_j}}^*\Phi(t_j),
\sigma_{t_j\xi_\infty}^*\Phi(t_j)) \\
&\leq C' |\sigma_{t_j\xi_{t_j}}^*\Phi(t_j)- \sigma_{t_j\xi_\infty}^*\Phi(t_j)| .
\end{align*}
Choose $q\in \NN$ such that sections of $q\mcL$ embed $\mcX$ into a projection space.
It turns out that it is equivalent and more convenient to work with the Fubini-Study metric $\vphi_\FS$ instead of $\vphi$.
Let $s_0,\cdots,s_{N_q}$ be a basis of $H^0(X,qL)$. Denote $\zeta$ as the vector field that induces the $\CC^*$-action of
the test configuration. Denote the weight of $\zeta$ on $s_i$ as $\lambda_i$; denote the weight of $\xi$ on $s_i$ as $\la\xi, a_i\ra$.
Then 
\begin{align*}
\frac{1}{t_j}|\sigma_{\xi_{t_j}}(t_j)^*\vphi_\FS(t_j) - \sigma_{\xi_{\infty}}(t_j)^*\vphi_\FS(t_j)|&=
\frac{1}{t_j}|\log(\frac{\sum_i \tau_j^{-2(\lambda_i + \la\xi_{t_j},a_i\ra)}|s_i|^2_\vphi}{\sum_i \tau_j^{-2(\lambda_i + \la\xi_\infty,a_i\ra)}|s_i|^2_\vphi})| \\
& \leq |\la\xi_{t_{i_0}}-\xi_{t_\infty}, a_{i_0}\ra| + o(1) \\
&\leq C |\xi_{t_j} - \xi_\infty| + o(1) ,
\end{align*}
where $t_j = -\log|\tau_j|^2$, $i_0$ is the index such that $\lambda_{i_0}+\la\xi_\infty, a_{i_0}\ra = \sup_i \lambda_i + \la\xi_\infty,a_i\ra$.
Then the proof is concluded.
\end{proof}

\begin{proposition}
\label{J_product}
Let $(\mcX,\mcL)$ be a $T_\CC\times \bG$-equivariant ample normal test configuration.
Then $\bfJ^{\NA}_{g,\bT}(\mcL) = 0$ if and only if $(\mcX,\mcL)$ is induced by $\sigma_{\xi}$, where $\xi\in N_\bR$.
\end{proposition}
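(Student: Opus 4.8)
The plan is to prove the two implications separately: the sufficiency will follow from monotonicity of the infimum together with $\bfJ^\NA_g\ge 0$, while the necessity rests on attaining the infimum by a convexity--properness argument and then invoking the rigidity of the (untwisted) non-Archimedean $\bfJ$-functional. Throughout I read the defining identity of Definition \ref{xi_twist} as $\bfJ^\NA_{g,\bT}(\phi)=\inf_{\xi\in N_\bR}\bfJ^\NA_g(\phi_\xi)$, consistently with Proposition \ref{trivial}.

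For the sufficiency, suppose $(\mcX,\mcL)$ is induced by $\sigma_\xi$ for some $\xi\in N_\bR$. Then the $(-\xi)$-twist $(\mcX,\mcL)_{-\xi}$ has holomorphic vector field $\zeta-\xi$ which generates the trivial action on $X$, so $\phi_{-\xi}$ is the trivial test configuration and $\bfJ^\NA_g(\phi_{-\xi})=0$. Since $\bfJ^\NA_{g,\bT}(\mcL)=\inf_{\xi'\in N_\bR}\bfJ^\NA_g(\phi_{\xi'})\le \bfJ^\NA_g(\phi_{-\xi})=0$, and since $\bfJ^\NA_g\ge 0$ always (because $\bfJ_g\ge 0$ by \eqref{J} and $\bfJ^\NA_g$ is its slope at infinity via Proposition \ref{F_slope}), we conclude $\bfJ^\NA_{g,\bT}(\mcL)=0$.

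For the necessity, the first step is to show that the infimum is attained. The map $\xi\mapsto\bfE^\NA_g(\phi_\xi)$ is affine, since twisting by $\xi$ replaces the Hamiltonian $\theta_\eta$ by $\theta_\eta+\langle\bfm_\vphi,\xi\rangle$ in the formula \eqref{eq-ENA}, whereas $\Lam^\NA(\phi_\xi)=\lambda_{\max}(\Phi_\xi)=\max_i(\lambda_i+\langle\xi,a_i\rangle)$ is a convex support-type function of $\xi$ (exactly the weight data appearing in the proof of Proposition \ref{J_trivial}); hence $\xi\mapsto\bfJ^\NA_g(\phi_\xi)=\Lam^\NA(\phi_\xi)-\bfE^\NA_g(\phi_\xi)$ is convex. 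Properness comes from the comparison \eqref{J}, which after passing to slopes gives $\tfrac1C\bfJ^\NA\le\bfJ^\NA_g\le C\bfJ^\NA$, so $\bfJ^\NA_g(\phi_\xi)\to\infty$ as $|\xi|\to\infty$ by Proposition \ref{trivial}; alternatively one uses the bound $|\xi_t|\le C$ established in the proof of Proposition \ref{J_trivial} and passes to the limit. Convexity together with properness yields a minimizer $\xi_0\in N_\bR$, and the hypothesis $\bfJ^\NA_{g,\bT}(\mcL)=0$ forces $\bfJ^\NA_g(\phi_{\xi_0})=0$. The second step is rigidity: the comparison $\tfrac1C\bfJ^\NA\le\bfJ^\NA_g$ gives $\bfJ^\NA(\phi_{\xi_0})=0$, and by \cite{BHJ17} the vanishing of the untwisted $\bfJ^\NA$ on a normal ample test configuration forces it to be trivial. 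Thus $\phi_{\xi_0}$ is trivial, meaning the $\CC^*$-action of $(\mcX,\mcL)$ is generated by $-\xi_0\in N_\bR$; equivalently $(\mcX,\mcL)$ is induced by $\sigma_{-\xi_0}$, as required.

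The main obstacle is the attainment in the first step of the necessity argument, because the minimizer $\xi_0$ may be irrational, so that $\phi_{\xi_0}$ is only an $\RR$-test configuration rather than a genuine test configuration. The point is that the convexity-plus-properness packaging of $\bfJ^\NA_g$ over $N_\bR$ (mirroring Lemma \ref{J_convex}) still produces a minimizer in $N_\bR$, and the rigidity characterization of the vanishing locus of $\bfJ^\NA$ persists in this generalized setting; this is precisely why the conclusion is phrased as ``induced by $\sigma_\xi$ with $\xi\in N_\bR$'' rather than demanding a rational one-parameter subgroup.
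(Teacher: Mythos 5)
Your argument follows essentially the same route as the paper's: reduce to showing the infimum over $\xi\in N_\bR$ is attained at some $\xi_0$ (the paper gets this from the compactness argument in the proof of Proposition \ref{J_trivial}, you additionally offer convexity-plus-properness of $\xi\mapsto\bfJ^\NA_g(\phi_\xi)$, which is a clean alternative justification), then use the comparison $\tfrac1C\bfJ^\NA\le\bfJ^\NA_g$ to pass to the untwisted functional and invoke rigidity of $\bfJ^\NA$. The one place where your write-up is softer than the paper's is the final rigidity step: since $\xi_0$ need not be rational, $\phi_{\xi_0}$ is only an $\RR$-test configuration, so the statement of \cite{BHJ17} that $\bfJ^\NA=0$ forces triviality does not literally apply; you flag this but only assert that the rigidity ``persists.'' The paper closes exactly this point concretely, by embedding via $q\mcL$ and computing with the weights $\lambda_i$ of the $\CC^*$-action and $\mu_i=\la\xi_0,a_i\ra$ of the twist: the vanishing $\max_i(\lambda_i+\mu_i)-\frac{1}{N_q+1}\sum_i(\lambda_i+\mu_i)=0$ forces all $\lambda_i+\mu_i$ to be equal even for irrational $\xi_0$, whence $(\mcX_{\xi_0},\mcL_{\xi_0})$ is trivial and $(\mcX,\mcL)$ is induced by $\sigma_{-\xi_0}$. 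Supplying that finite-dimensional computation (or an equivalent argument valid for $\RR$-twists) would make your proof complete.
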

\begin{proof}
The ``if" direction is straightforward. We need to show the ``only if" direction.
Let $\Phi$ be a smooth psh ray associated to $(\mcX,\mcL)$.
As shown in the proof of Proposition \ref{J_trivial}, there exists a $\xi\in N_\bR$, such that
$\bfJ^\NA_{g,\bT}(\mcL) = \bfJ^\NA_g(\mcL_\xi) = \lim_{t\to +\infty} \frac{\bfJ_g(\sigma_{t\xi}^*\Phi(t))}{t}$.
We may choose $\Phi$ be the pull-back of the Fubini-Study metric by the embedding $q\mcL$ for some $q\in \NN$.
Let $\{\lambda_0,\cdots,\lambda_{N_q}\}$, $\{\mu_0,\cdots,\mu_{N_q}\}$ be eigenvalues of the $\CC^*$-action and $\xi$-action on $H^0(X,qL)$
accordingly.
Then 
$\bfJ^\NA(\mcL_\xi) = \max_{0\leq i\leq N_q}(\lambda_i+\mu_i) - \frac{1}{N_q+1} \sum_{i=0}^{N_q} (\lambda_i + \mu_i) = 0$.
Consequently, $\lambda_i + \mu_i = \lambda_j +\mu_j$, and $(\mcX_\xi,\mcL_\xi)$ is a trivial test configuration.
Thus $(\mcX,\mcL)$ is test configuration induced by $\sigma_{-\xi}$.
\end{proof}







\subsection{$\bG$-uniform stability and valuative criterion}
We introduce:
\begin{definition}
Let $\bG$ be a connected reductive subgroup of $\Aut_T(X, D, \Theta)$ and $\bT$ be its center.
$\bX=(X, D+\Theta, T)$ is $\bG$-uniformly $g$-Ding-stable if there exists $\gamma>0$ such that for any $T_\CC\times\bG$-equivariant test configuration $\phi\in \mcH^\NA(L)$, we have:
\begin{eqnarray*}
\bfD^\NA(\phi)\ge \gamma\cdot  \bfJ^\NA_{g,\bT}(\phi).
\end{eqnarray*}
\end{definition}
In this section, we will explain a valuative criterion for the $\bG$-uniform $g$-Ding-stability for $\bX$ which generalizes the results in \cite{Fuj19a,Li19}. Since the proof is similar to the usual case as considered in \cite{Li19} which generalizes the argument in \cite{BBJ18}. We will only emphasize the key points and leave the details to the reader.

For simplicity of notations, we assume that $L$ is Cartier. See \cite[2.4.2]{Li19} for modifications of notations when $L$ is only $\bQ$-Cartier.
Let $W_\bullet=\{W_m\}$ be a $T$-invariant graded linear series (\cite[2.4.A]{Laz04}). We first define a $g$-weighted volume denoted by $\vol_g(W_\bullet)$. For this, we decompose
\begin{equation}
W_m=\bigoplus_{\alpha\in M_\bZ}W_{m,\alpha}\subset H^0(X, mL)=:R_m,
\end{equation}
where $W_{m,\alpha}=\{s\in W_m; t\circ s=t^\alpha\cdot s\}$.
Then we define:
\begin{equation}
\vol_g(W_\bullet)=\lim_{m\rightarrow+\infty} \frac{n!}{m^n} \sum_\alpha g(\frac{\alpha}{m})\dim W_{m,\alpha}.
\end{equation}
The existence of this limit can be proved using the theory of Newton-Okounkov bodies.

Let $\mcF=\{\mcF^{\lambda} R_m\}$ be a $T$-invariant filtration as considered in \cite{BC11} (see \cite[2.4.2]{Li19}). 
For each $\alpha\in M_\bZ$, let $\lambda^{(m, \alpha)}_1\ge \lambda^{(m, \alpha)}_2\ge \cdots \ge \lambda^{(m, \alpha)}_{N_{m,\alpha}}$ be the successive minima of the filtration $\mcF^\lambda R_{m,\alpha}$. 

Set
\begin{eqnarray}
f_m(\lambda)&=&\frac{n!}{m^n}\sum_{\lambda^{(m,\alpha)}_k\ge \lambda } g(\frac{\alpha}{m})\dim (\cF^{m\lambda}R_{m,\alpha}), \\
\nu_m&:=&-d f_m(\lambda)=\frac{n!}{m^n}\sum_{\alpha, k}g(\frac{\alpha}{m})\delta_{\frac{\lambda^{(m,\alpha)}}{m}}.
\end{eqnarray}
On the other hand, we set $\cF^{(\lambda)}=\{\cF^{m\lambda}R_m\}$. Then as $m\rightarrow+\infty$, $f_m(\lambda)\rightarrow \vol_g(\cF^{(\lambda)})$. So arguing as in \cite{BC11}, we know that $\nu_m$ converges to the measure:
\begin{equation}
\DHM_g(\cF):=\frac{1}{\bV_g}\left(-d\vol_g(\cF^{(t)})\right).
\end{equation}

Let $\phi=\phi_{\mcF}$ be the (continuous) non-Archimedean metric associated to $\mcF$. Then we define:
\begin{eqnarray}
\bfE^\NA_g(\phi_\mcF)&=&\frac{1}{\bV_g}\int_\bR \lambda (-d \vol_g(\cF R^{(\lambda)}_\bullet)).
\end{eqnarray}
For any $v\in (X^{\rm div}_\bQ)^T$, it defines a filtration $\mcF_v$ by setting:
\begin{equation}
\mcF_v^{\lambda} R_m=\{s\in R_m; v(s)\ge \lambda\}.
\end{equation}
In this case, by integration by parts, we known that:
\begin{equation}
\bfE^\NA(\phi_{\mcF_v})=\frac{1}{\bV_g}\int_0^{+\infty} \vol_g(\mcF R^{(\lambda)}_\bullet)d\lambda= S_g(v)
\end{equation}
where we introduced:
\begin{equation}\label{eq-Sgv}
S_g(v):=S_{L,g}(v):=\frac{1}{\bV_g}\int_0^{+\infty} \vol_g(\mcF R^{(\lambda)}_\bullet)d\lambda.
\end{equation}
We have the following valuative criterion:
\begin{theorem}\label{thm-valDing}
$(X, D+\Theta)$ is $\bG$-uniformly $g$-Ding-stable if and only if there exists $\gamma>1$ such that for any $v\in (X^{\rm div}_\bQ)^T$, there exists $\xi\in N_\bR$ satisfying:
\begin{equation}
A_{D+\Theta}(v_\xi)-\gamma \cdot S_g(v_\xi)\ge 0.
\end{equation}
\end{theorem}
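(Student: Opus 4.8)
The plan is to transport the valuative-criterion arguments of \cite{BBJ18, Fuj19a, Li19} to the $g$-weighted, $\bT$-equivariant setting, reducing the weight to the unweighted theory throughout by the fibration construction $(\mcX^{[\vk]},\mcL^{[\vk]})$ and the uniform polynomial approximation of $g$ from Section \ref{non-Archimedean functionals}. The first step is to enlarge the class of test objects from test configurations to bounded $T$-invariant filtrations $\cF$ of $R_\bullet=\bigoplus_m H^0(X,mL)$: by Fujita approximation together with the Newton--Okounkov-body machinery already used to define $\vol_g$ and $\DHM_g(\cF)$, every such $\cF$ is a decreasing limit of finitely generated filtrations, which are precisely those induced by $T_\bC\times\bC^*$-equivariant test configurations, and the functionals $\bfE^\NA_g$, $\Lam^\NA_g$, $\bfJ^\NA_g$ and $\bfL^\NA_\Theta$ extend continuously along such approximations. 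Hence $\bG$-uniform $g$-Ding-stability is equivalent to the inequality $\bfD^\NA(\phi_\cF)\ge\gamma\,\bfJ^\NA_{g,\bT}(\phi_\cF)$ tested over all bounded $T$-invariant filtrations, and it remains to compare the two sides filtration by filtration. I also record the twist-invariance $\bfJ^\NA_{g,\bT}(\phi_\eta)=\bfJ^\NA_{g,\bT}(\phi)$, which is immediate from $(\phi_\eta)_\xi=\phi_{\eta+\xi}$ and the definition of $\bfJ^\NA_{g,\bT}$ as an infimum over all twists.

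For the implication from stability to the valuative inequality, I would fix $v\in(X^\div_\bQ)^T$ and, for each $\xi\in N_\bR$, test the stability inequality on the twisted valuation filtration $\cF_{v_\xi}$, that is on $(\phi_{\cF_v})_\xi$. The energy is computed exactly by \eqref{eq-Sgv}, giving $\bfE^\NA_g(\phi_{\cF_{v_\xi}})=S_g(v_\xi)$, while the infimum defining $\bfL^\NA_\Theta$ is controlled by the valuation $v_\xi$ itself, yielding the weighted Fujita bound $\bfD^\NA(\phi_{\cF_{v_\xi}})\le A_{D+\Theta}(v_\xi)-S_g(v_\xi)$. By Lemma \ref{J_convex} and Propositions \ref{J_trivial} and \ref{J_product}, the infimum defining $\bfJ^\NA_{g,\bT}(\phi_{\cF_v})$ is attained at some $\xi=\xi^\ast$, and there $\bfJ^\NA_g(\phi_{\cF_{v_{\xi^\ast}}})=\Lam^\NA_g-\bfE^\NA_g=T_g(v_{\xi^\ast})-S_g(v_{\xi^\ast})\ge\tfrac1n S_g(v_{\xi^\ast})$, the weighted analogue of $S\le\tfrac{n}{n+1}T$ obtained on $\mcX^{[\vk]}$. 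Combining these with the stability hypothesis and the twist-invariance above gives
\[
A_{D+\Theta}(v_{\xi^\ast})-S_g(v_{\xi^\ast})\ \ge\ \bfD^\NA(\phi_{\cF_{v_{\xi^\ast}}})\ \ge\ \gamma\,\bfJ^\NA_{g,\bT}(\phi_{\cF_v})\ \ge\ \tfrac{\gamma}{n}\,S_g(v_{\xi^\ast}),
\]
so that the valuative criterion holds at $\xi=\xi^\ast$ with $\gamma'=1+\gamma/n>1$.

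For the converse I would bound $\bfD^\NA(\phi)$ from below for an arbitrary $T_\bC\times\bG$-equivariant test configuration $\phi$, again after passing to filtrations. Writing $\bfD^\NA=\bfL^\NA_\Theta-\bfE^\NA_g$ and inserting the weighted Fujita--Boucksom--Jonsson estimate---which, through the Okounkov body attached to $\phi$, compares $\bfE^\NA_g(\phi_\xi)$ with the weighted invariants $S_g(v_\xi)$ of the divisorial valuations detecting $\bfL^\NA_\Theta$---reduces the desired inequality $\bfD^\NA\ge\gamma_0\,\bfJ^\NA_{g,\bT}$ to the hypothesis $A_{D+\Theta}(v_\xi)\ge\gamma\,S_g(v_\xi)$, after optimizing over $\xi$ and using once more the twist-invariance of $\bfJ^\NA_{g,\bT}$ together with the comparison $\bfJ^\NA_g\asymp S_g$. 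The main obstacle, and the point where genuinely new work beyond \cite{Li19} is needed, is the interaction of the weight $g$ with these valuative estimates: one must establish the weighted Newton--Okounkov convergence, the identity $\bfE^\NA_g(\phi_{\cF_v})=S_g(v)$ and the comparison $\bfJ^\NA_g\asymp S_g$ for merely continuous $g$---accessible only through polynomial approximation and the fibration over $\mcX^{[\vk]}$---and uniform control of the twist $\xi$ so that no constant degenerates when passing between the infimum over $N_\bR$ in $\bfJ^\NA_{g,\bT}$ and its valuative counterpart $v_\xi$. Once these weighted analogues of the Fujita and Boucksom--Jonsson inequalities are secured, the remaining bookkeeping follows the unweighted argument of \cite{Li19}.
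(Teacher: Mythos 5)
Your plan follows essentially the same route as the paper: the paper proves Theorem \ref{thm-valDing} by running the argument of \cite[Cor.~3.2 and \S 4.2]{Li19} with $A_{(X,D)}$ replaced by $A_{(X,D+\Theta)}$ and $S$ by $S_g$, and the only genuinely new input it records is exactly the weighted inequality you single out as the crux, namely $\inf_v\bigl(S_g(v)+(\phi-\phi_\triv)(v)\bigr)\ge \bfE^\NA_g(\phi)$ (Lemma \ref{lem-NAlegend}). Your identification of where the weight actually enters --- the $g$-weighted Fujita/Boucksom--Jonsson estimate, the identity $\bfE^\NA_g(\phi_{\cF_v})=S_g(v)$ from \eqref{eq-Sgv}, and the reduction to test configurations via filtrations --- matches the paper's intent, and the overall two-directional structure (test on $\cF_{v_\xi}$ one way, insert the Legendre-type bound the other way, with the Ding inequality $\bfJ^\NA_g((1-t)\phi_\triv+t\phi)\le t^{1+1/C}\bfJ^\NA_g(\phi)$ absorbing the rescaling) is the one carried out in Step 3 of Section \ref{stable_and_proper}.

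One displayed step is wrong as written, though repairable from estimates already in the paper. You claim $\bfJ^\NA_g(\phi_{\cF_{v_{\xi^\ast}}})=T_g-S_g\ge\tfrac1n S_g$ as ``the weighted analogue of $S\le\tfrac{n}{n+1}T$ obtained on $\mcX^{[\vec k]}$.'' The fibration argument gives, for a monomial weight of total degree $k$, only $S_g\le\tfrac{n+k}{n+k+1}T$ (the ambient dimension of $\mcX^{[\vec k]}$ is $n+k$), and this constant degenerates as $k\to\infty$ under the polynomial approximation needed for a general continuous $g$; moreover the threshold satisfies $T_g=T$ while $S_g$ can exceed $\tfrac{n}{n+1}T$ whenever $\sup_P g/\inf_P g$ is large, so $T-S_g\ge\tfrac1n S_g$ is simply false in general. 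The correct route is the one the paper has already prepared: $\bfJ^\NA_g$ is comparable to $\bfJ^\NA$ by \eqref{J} together with the slope formulas, $S_g$ is comparable to $S$ since $0<\inf_P g\le g\le\sup_P g$, and the unweighted inequality $T-S\ge\tfrac1n S$ then yields $\bfJ^\NA_g(\phi_{\cF_{v_{\xi^\ast}}})\ge c(n,g)\,S_g(v_{\xi^\ast})$ for some $c(n,g)>0$, which is all that is needed to produce a constant $\gamma'>1$ in the valuative criterion. A smaller point you gloss over: in the converse direction the twist by $\xi$ changes $\bfE^\NA_g$ by $\Fut_g(\xi)$ (Lemma \ref{E_twist}), so one must also note that the hypothesis forces $\Fut_g\equiv 0$ on $N_\bR$ before the chain of inequalities closes.
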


It is clear that we can adopt the proof in \cite[Corollary 3.2]{Li19} and \cite[4.2]{Li19} to prove the valuative criterion for $\bG$-uniform $g$-Ding-stability without using the MMP program. In each step of the argument, we just need to replace $A_{(X,D)}$ by $A_{(X, D+\Theta)}$ and replace $S(v)$ by $S_g(v)$. So we omit the details of the argument, except for recording the following useful lemma in its proof, which generalizes a corresponding inequality in \cite{BBJ18,Li19}. Note that the proof in \cite[4.2]{Li19} does not need the solution of non-Archimedean Monge-Amp\`{e}and Legendre duality as used in \cite{BBJ18}.

\begin{lemma}\label{lem-NAlegend}
Given any  $\phi=\phi_{(\mcX,\mcL)}$ for a $T_\CC\times \bG$-equivariant semi-ample test configuration
\begin{equation}\label{eq-SL2bfE}
\inf_{v\in (X^{\rm div}_\bQ)^{\bG}} (S_g(v)+(\phi-\phi_\triv)(v))\ge \inf_{v\in X^{\rm div}_\bQ}(S_g(v)+(\phi-\phi_\triv)(v))\ge  \bfE^\NA_g(\phi).
\end{equation}
\end{lemma}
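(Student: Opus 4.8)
The plan is to prove the two inequalities in \eqref{eq-SL2bfE} separately. The left-hand inequality is immediate: since $(X^{\rm div}_\bQ)^{\bG}\subseteq X^{\rm div}_\bQ$, the infimum of the function $v\mapsto S_g(v)+(\phi-\phi_\triv)(v)$ over the smaller set of $\bG$-invariant divisorial valuations can only be larger than its infimum over all divisorial valuations. Hence the substantive content is the right-hand inequality $\inf_{v\in X^{\rm div}_\bQ}(S_g(v)+(\phi-\phi_\triv)(v))\ge \bfE^\NA_g(\phi)$, which generalizes the corresponding unweighted inequality of \cite{BBJ18,Li19}.

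For the right-hand inequality I would first treat the case where $g=\prod_{\alpha=1}^r y_\alpha^{k_\alpha}$ is a monomial and reduce it to the unweighted inequality on the fibered space $(X^\bvk,L^\bvk)$ built in the proof of Proposition \ref{MA_eta}. Given $v\in X^{\rm div}_\bQ$, lift it to the divisorial valuation $\tilde v$ on $X^\bvk$ obtained by extending $v$ trivially along the $\PP^{k_\alpha}$-directions, so that $\tilde v$ is computed by the fibered version of the divisor computing $v$. The reduction rests on two matching identities: $S^\bvk(\tilde v)=S_g(v)$, where $S^\bvk$ is the ordinary ($g=1$) $S$-invariant on $(X^\bvk,L^\bvk)$, and $(\phi^\bvk-\phi_\triv)(\tilde v)=(\phi-\phi_\triv)(v)$. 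The first follows from the same dimension bookkeeping used to construct $\MA_g$ in Proposition \ref{MA_eta} and to compute $\bfE^\NA_g$ in Lemma \ref{poly_E}: the weighted pieces $W_{m,\alpha}$ on $X$, weighted by $g(\alpha/m)$, reproduce to leading order the ordinary sections of $m\mcL^\bvk$, so $\vol_g(\mcF_v R^{(\lambda)}_\bullet)$ becomes the unweighted volume of the lifted filtration $\mcF_{\tilde v}$ on $X^\bvk$, and integrating in $\lambda$ (see \eqref{eq-Sgv}) gives $S_g(v)=S^\bvk(\tilde v)$. The second holds because the relative potential is computed by $\sigma(\cdot)(D)$ with $D=\mcL-\rho^* L_\bC$, while the fibered divisor $D^\bvk$ restricts to $D$ on each fiber and $\tilde v$ is trivial transverse to the fibers, so the two Gauss extensions agree.

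With these identities in hand I would invoke the unweighted inequality of \cite{BBJ18,Li19} on $X^\bvk$, namely $\inf_w\big(S^\bvk(w)+(\phi^\bvk-\phi_\triv)(w)\big)\ge (\bfE^\bvk)^\NA(\phi^\bvk)=\bfE^\NA_g(\phi)$, the last equality being Lemma \ref{poly_E}. Since $S^\bvk(\tilde v)+(\phi^\bvk-\phi_\triv)(\tilde v)=S_g(v)+(\phi-\phi_\triv)(v)$ for every $v$, taking the infimum over $v\in X^{\rm div}_\bQ$ yields the bound for monomial, and hence polynomial, $g$. For general continuous $g$ I would approximate it uniformly on $P$ by polynomials $g_i$: for each fixed $v$ the polynomial inequality reads $S_{g_i}(v)+(\phi-\phi_\triv)(v)\ge \bfE^\NA_{g_i}(\phi)$, and letting $i\to\infty$ using the pointwise convergences $S_{g_i}(v)\to S_g(v)$ and $\bfE^\NA_{g_i}(\phi)\to\bfE^\NA_g(\phi)$ gives $S_g(v)+(\phi-\phi_\triv)(v)\ge \bfE^\NA_g(\phi)$; taking the infimum over $v$ then finishes the argument. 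Note that only pointwise convergence in $v$ is needed here, so no uniform control over the family of valuations is required.

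The main obstacle is the first matching identity $S^\bvk(\tilde v)=S_g(v)$: it requires checking that the filtration $\mcF_v$ lifts compatibly to $\mcF_{\tilde v}$ on $X^\bvk$ and that the weighted Duistermaat--Heckman measure $\DHM_g(\mcF_v)$ coincides with the ordinary one on $X^\bvk$, which is cleanest through the Newton--Okounkov body description of $\vol_g$ referenced after its definition. A secondary and more routine point is verifying the pointwise convergence $S_{g_i}(v)\to S_g(v)$, which follows from the linearity of $\vol_g$ in $g$ together with $\bV_{g_i}\to\bV_g$.
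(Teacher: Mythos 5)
Your overall strategy is genuinely different from the paper's. The paper gives no written proof of this lemma: it records it as the key inequality extracted from adapting \cite[Corollary 3.2]{Li19} and \cite[4.2]{Li19}, where the argument is a filtration/Duistermaat--Heckman comparison carried out weight-by-weight in the $T$-decomposition $R_m=\oplus_\alpha R_{m,\alpha}$ and then summed against the weights $g(\alpha/m)$, so that only the pointwise positivity of $g$ enters. Your route instead reduces the monomial case $g=\prod_\alpha y_\alpha^{k_\alpha}$ to the unweighted inequality on the fibered space $(X^\bvk,L^\bvk)$. The trivial left-hand inequality is fine, and the monomial reduction is sound in spirit: the matching identities $S^\bvk(\tilde v)=S_g(v)$ and $(\phi^\bvk-\phi_\triv)(\tilde v)=(\phi-\phi_\triv)(v)$ can be verified by the same pushforward-to-$\PP^\bvk$ bookkeeping the paper uses for $\bfE^\NA_g$, with the caveat that the lift $\tilde v$ only exists when $v$ is $T$-invariant, since the divisor computing $v$ must live on a $T$-equivariant birational model in order to descend to $X^\bvk$ (this matches the fact that $S_g(v)$ is itself only defined in the paper for $v\in(X^{\rm div}_\bQ)^T$).

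The genuine gap is the step ``for monomial, and hence polynomial, $g$''. For fixed $v$ the quantity $\bV_g\bigl(S_g(v)+(\phi-\phi_\triv)(v)-\bfE^\NA_g(\phi)\bigr)$ is linear in $g$, so knowing it is nonnegative for every monomial only yields nonnegativity for polynomials with \emph{nonnegative} monomial coefficients; a signed combination of valid inequalities is not an inequality. A Stone--Weierstrass approximation of a positive continuous $g$ gives no control on the signs of these coefficients (already $g(y)=(y-1)^2+\epsilon$ on $P=[0,2]$ has a negative linear coefficient, even after translating $P$ into the positive orthant), so your limiting argument for continuous $g$ is not justified as written. To repair this you would need either a positivity certificate (e.g.\ Handelman's theorem, writing $g$ as a nonnegative combination of products of affine forms positive on $P$, each realizable as a Hamiltonian and fed into a modified fibration construction), or, more simply, to run the Li--Fujita argument directly on the weighted quantities: the comparison between the filtration of $(\mcX,\mcL)$ and $\mcF_v$ holds on each weight piece $R_{m,\alpha}$, and summing against $g(\alpha/m)\ge 0$ gives \eqref{eq-SL2bfE} for all positive continuous $g$ at once. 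This is what the paper means by ``replace $S(v)$ by $S_g(v)$''.
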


\section{Yau-Tian-Donaldson conjecture for twisted K\"{a}hler-Ricci $g$-solitons}
\label{stable_and_proper}


\begin{lemma}
\label{E_twist}
Let $(\mcX,\mcL)$ be a $T_\CC\times\bG$-equivariant normal ample test configuration. Let $\xi$ be a holomorphic vector field in $N_\bR$,
$(X_\CC,L_{\bC,\xi})$ be a product test configuration induced by $\xi$ using the canonical lifting of $\xi$ (see \eqref{eq-canlift}). Then
\begin{equation} 
\bfE^\NA_g(\mcL_\xi) = \bfE^\NA_g(\mcL) + \bfE^\NA_g(L_{\bC,\xi})
\end{equation} 
\begin{equation} 
\bfL_\Theta^\NA(\mcL_\xi) = \bfL_\Theta^\NA(\mcL)
\end{equation} 
and 
\begin{equation}\label{eq-Fut2}
\Fut_g(\xi)=-\bfE^\NA_g(L_{\bC,\xi}). 
\end{equation}
\end{lemma}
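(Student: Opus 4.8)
The plan is to establish the three identities in Lemma \ref{E_twist} by reducing everything to the already-proven non-Archimedean slope formula (Proposition \ref{F_slope}) together with the fibration/monomial machinery developed in the proof of Proposition \ref{MA_eta} and Lemma \ref{poly_E}. The guiding principle, emphasized in the introduction, is that the $g$-soliton difficulty and the twisting by $D+\Theta$ are \emph{de-coupled} in the Ding picture: so the first identity involves only $\bfE_g$ and the torus/weighting data, while the second involves only $\bfL_\Theta$ and is insensitive to the $\xi$-twist.

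First I would prove the additivity of $\bfE^\NA_g$. The twisted test configuration $(\mcX_\xi,\mcL_\xi)$ has the same underlying $\mcX$ but its $\bC^*$-action is generated by $\zeta+\xi$, where $\zeta$ generates the action of $(\mcX,\mcL)$. Using the definition \eqref{eq-ENA}, $\bfE^\NA_g(\mcL_\xi)=\frac{1}{\bV_g}\int_{\mcX_0}\theta_{\zeta+\xi}(\vphi)g_\vphi\frac{(\ddc\vphi)^n}{n!}$. Since the Hamiltonian depends linearly on the generating vector field, $\theta_{\zeta+\xi}(\vphi)=\theta_\zeta(\vphi)+\theta_\xi(\vphi)$, and splitting the integral gives $\bfE^\NA_g(\mcL)+\frac{1}{\bV_g}\int_{\mcX_0}\theta_\xi(\vphi)g_\vphi\frac{(\ddc\vphi)^n}{n!}$. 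The second term is exactly $\bfE^\NA_g(L_{\bC,\xi})$ applied to the product configuration, because for a product test configuration the central fibre is $(X,L)$ itself and the defining integral \eqref{eq-ENA} reduces to the Hamiltonian of $\xi$ against the weighted measure. The linearity of the moment map in the generator is the one fact to state carefully; everything else is rearrangement.

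Next, the invariance $\bfL^\NA_\Theta(\mcL_\xi)=\bfL^\NA_\Theta(\mcL)$. Here I would argue valuatively. The twist by $\xi\in N_\bR$ comes from an element of the center $\bT$ of $\bG\subseteq\Aut_T(X,D,\Theta)$, hence acts by automorphisms preserving $D$ and $\Theta$; consequently it preserves the log discrepancy $A_{(X,D+\Theta)}$. Concretely, under the Gauss extension the twist shifts the $\bC^*$-weight but acts on the space of $T$-invariant divisorial valuations over the central fibre compatibly with $A_{D+\Theta}$, so the infimum defining $\bfL^\NA_\Theta$ in $\inf_{v}(A_{(X,D+\Theta)}(v)+(\phi-\phi_\triv)(v))$ is unchanged. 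I would make this precise by noting that twisting by $\xi$ corresponds to composing with the automorphism $\sigma_\xi$, and since $\iota_\xi([D]+\Theta)=0$ by the definition \eqref{eq-autT} of $\mathfrak{aut}_T(X,D,\Theta)$, both $A_{(X,D)}$ and $v(\Theta)$ are invariant.

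Finally, the Futaki identity \eqref{eq-Fut2}: $\Fut_g(\xi)=-\bfE^\NA_g(L_{\bC,\xi})$. By Proposition \ref{prop-Fut} with the canonical lifting \eqref{eq-canlift}, $\Fut_g(\ddc\vphi,\xi)=-\int_X\theta_{\xi,\vphi}e^{f_\vphi}\frac{(\ddc\vphi)^n}{n!}=-\int_X\theta_{\xi,\vphi}g_\vphi\frac{(\ddc\vphi)^n}{n!}$, while from the computation just above $\bfE^\NA_g(L_{\bC,\xi})=\frac{1}{\bV_g}\int_{\mcX_0}\theta_\xi(\vphi)g_\vphi\frac{(\ddc\vphi)^n}{n!}$; after the normalization $\bV_g=1$ and identifying $\mcX_0$ with $X$ for the product configuration these agree up to sign, provided the moment function $\theta_\xi$ is normalized by the canonical lifting so that the constant ambiguity $\chi_\psi(\tilde\xi_*)=0$ vanishes. \textbf{The main obstacle} I anticipate is precisely this normalization bookkeeping: the Hamiltonian $\theta_\xi$ is only defined up to an additive constant, and I must verify that the canonical lifting used in defining $\Fut_g$ matches the normalization implicit in $\bfE^\NA_g(L_{\bC,\xi})$ (coming from the relative $\bQ$-line bundle structure on the product configuration). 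Checking that these two normalizations of $\theta_\xi$ coincide—rather than differing by a nonzero constant times $\bV_g$—is the delicate point; I would resolve it by tracing through \eqref{eq-canlift} and the Duistermaat-Heckman description in Lemma \ref{poly_E}, where the weight data on the central fibre fixes the additive constant unambiguously.
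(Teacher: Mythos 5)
Your treatment of the second and third identities is sound and essentially matches the paper: the invariance of $\bfL^\NA_\Theta$ under the twist is exactly the valuative argument of \cite{Li19} (the identity $A(v)+(\phi_\xi-\phi_\triv)(v)=A(v_\xi)+(\phi-\phi_\triv)(v_\xi)$ together with the bijection $v\mapsto v_\xi$ on invariant valuations, using $\iota_\xi([D]+\Theta)=0$), and the Futaki identity is precisely the comparison of \eqref{eq-Futform} with $\bfE^\NA_g(L_{\bC,\xi})=\frac{1}{\bV_g}\int_X\theta_{\xi,\vphi}\,g_\vphi\frac{(\ddc\vphi)^n}{n!}$, where the additive ambiguity you rightly worry about is killed by the canonical lifting \eqref{eq-canlift}.

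For the first identity your route differs from the paper's, and it has a genuine gap. You compute $\bfE^\NA_g(\mcL_\xi)$ directly from \eqref{eq-ENA} on the central fibre and use $\theta_{\zeta+\xi}=\theta_\zeta+\theta_\xi$; that part is fine. But the leftover term is $\frac{1}{\bV_g}\int_{\mcX_0}\theta_\xi\,g_\vphi\frac{(\ddc\vphi)^n}{n!}$, an integral over the central fibre $\mcX_0$ of the \emph{given} test configuration, whereas $\bfE^\NA_g(L_{\bC,\xi})$ is by definition the corresponding integral over $X$, the central fibre of the product configuration. These are integrals over different, in general non-isomorphic, varieties, so saying that "the defining integral reduces to the Hamiltonian of $\xi$ against the weighted measure" does not close the argument. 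You must additionally invoke the constancy of the Duistermaat--Heckman measure of the fibrewise $T_\bC\times\bT$-action along the flat family $\mcX\to\bC$ (equivariant flatness of $\pi_*\mcL^{\otimes m}$ as a $T_\bC\times\bT$-module), which gives $\int_{\mcX_0}\theta_\xi\,g_\vphi\,\MA=\int_X\theta_\xi\,g_\vphi\,\MA$. This fact is true and standard, but it is the crux of your version of the proof and cannot be left implicit. The paper sidesteps it by working entirely on the general fibre: it takes a Fubini--Study ray with $\vphi_\xi(t)=\sigma_{t\xi}^*\sigma_{t\lambda}^*\vphi$, splits $\bfE_g(\vphi_\xi(t))=\bfE_{g,\sigma_{t\lambda}^*\vphi}(\sigma_{t\lambda}^*\sigma_{t\xi}^*\vphi)+\bfE_g(\vphi(t))$ by the cocycle property, and applies the slope formula of Proposition \ref{F_slope} to each piece, so the product-configuration term appears directly as the slope of $t\mapsto\bfE_g(\sigma_{t\xi}^*\vphi)$ on $X$. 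Either route works once your missing step is supplied and justified.
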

\begin{proof}
We will use the pull-back of Fubini-Study metric as a $T\times K$-invariant smooth psh ray $\Phi$ associated to $(\mcX,\mcL)$.(details can be found in the proof of Proposition \ref{J_trivial}.)
The advantage of using this metric is, we can express  $\vphi(t) = \sigma_{t\lambda}^*\vphi$, where $\sigma_{t\lambda}$ is the $C^*$-action of the test configuration.
Let $\Phi_\xi$ be the $\xi$-twisted psh ray. Then $\phi_\xi(t) = \sigma_{t\xi}^*\sigma_{t\lambda}^*\vphi = \sigma_{t\xi+t\lambda}^*\vphi$.
Then
\begin{align}
\begin{split}
\bfE^\NA_g(\mcL_\xi) &= \bfE'^\infty_g(\Phi_\xi)\\
&= \lim_{t\to\infty}\frac{\bfE_g(\vphi_\xi(t))}{t}\\
&= \lim_{t\to\infty}\frac{\bfE_g(\vphi_\xi(t)-\bfE_g(\vphi(t))}{t} + \lim_{t\to\infty}\frac{\bfE_g(\vphi(t))}{t} \\
&= \lim_{t\to\infty}\frac{\bfE_{g,\sigma_{t\lambda}^*\vphi}(\sigma_{t\lambda}^*\sigma_{t\xi}^*\vphi)}{t} + \bfE^\NA_g(\mcL)\\
&= \lim_{t\to\infty}\frac{\bfE_g(\sigma_{t\xi}^*\vphi)}{t} + \bfE^\NA_g(\mcL)\\
&= \bfE^\NA_g(L_{\bC,\xi}) + \bfE^\NA_g(\mcL)
\end{split}
\end{align}
The identity for $\bfL_\Theta^\NA(\mcL)$ can be proved using the same argument as \cite[Proposition 3.3]{Li19}.

Moreover, the identity \eqref{eq-Fut2} follows from the identity $\bfE^\NA_g(L_{\bC,\xi})=\int_X \theta_{\xi,\vphi}\MA_g(\vphi)$ and the formula \eqref{eq-Futform} for the generalized Futaki-invariant.

\end{proof}

\begin{lemma}
\label{destablizer}
Let $\bG$ be the reductive Lie group defined before. Assume $\bfM$ is not $\bG$-coercive. 
Then for any $\vphi\in \mcE^1_{T\times K}(X,L)$,
there exists a $T\times K$-invariant geodesic ray $\Phi$ emanating from $\vphi$ satisfying:
\begin{enumerate}
\item We have the normalization:
\begin{equation}\label{eq-Phinorm}
\sup(\vphi(t)-\vphi_0)=0, \quad \bfE(\vphi(t))=-t.
\end{equation}
\item
\begin{equation}\label{eq-Phidec}
\bfM'^\infty(\Phi)\le 0.
\end{equation}
\item
\begin{equation}\label{eq-Jxilb}
\inf_{\xi\in N_\bR} \bfJ'^\infty(\Phi_\xi)=1.
\end{equation}
\end{enumerate}

\end{lemma}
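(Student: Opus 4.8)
The plan is to realize $\Phi$ as a subsequential limit of finite-energy geodesic segments joining $\vphi$ to a sequence $\{\vphi_j\}$ witnessing the failure of $\bG$-coercivity, following the scheme of the implication iii) $\Rightarrow$ i) in Theorem \ref{equivalent}, while bookkeeping the normalizations (1)--(3). First I would negate Definition \ref{def-Gcoer}: since $\bfM$ is not $\bG$-coercive there are $\vphi_j\in\mcE^1_{T\times K}$ and $\delta_j\searrow 0$, $C_j\nearrow+\infty$ with $\bfM(\vphi_j)\le \delta_j\inf_{\sigma\in\bT}\bfJ_g(\sigma^*\vphi_j)-C_j$. By Lemma \ref{J_convex}(ii) the infimum is attained, so after replacing $\vphi_j$ by the optimal $\bT$-translate I may assume $\bfJ_g(\vphi_j)=\inf_\sigma\bfJ_g(\sigma^*\vphi_j)$ and $\bfM(\vphi_j)\le \delta_j\bfJ_g(\vphi_j)-C_j$. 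Writing $\bfM=\bfH_{g,\Theta}+\bfJ_g-\bfI_g$ and using $\bfH_{g,\Theta}\ge 0$ together with $\bfI_g-\bfJ_g\le C\bfJ_g$ from \eqref{I_J}, a bounded $\bfJ_g(\vphi_j)$ would force $\bfH_{g,\Theta}(\vphi_j)\to-\infty$; hence $\bfJ_g(\vphi_j)\to+\infty$, and by \eqref{J} also $\bfJ(\vphi_j)\to+\infty$. As $\bfM$ and $\bfJ_g$ are invariant under adding constants, I sup-normalize each $\vphi_j$; then $\Lam(\vphi_j)$ stays bounded by Lemma \ref{lem-Hartogs}, so $\bfE(\vphi_j)=\Lam(\vphi_j)-\bfJ(\vphi_j)\to-\infty$.

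Next I would join $\vphi$ to $\vphi_j$ by the weak finite-energy geodesic $\Phi_j$, which is $T\times K$-invariant by uniqueness and which I parametrize by energy so that $\bfE(\Phi_j(t))=\bfE(\vphi)-t$; its length is then $T_j=\bfE(\vphi)-\bfE(\vphi_j)\to+\infty$. By the standard compactness of geodesic rays of linear growth, a subsequence of $\{\Phi_j\}$ converges to a $T\times K$-invariant geodesic ray $\Phi$ emanating from $\vphi$. Absorbing the constant $\bfE(\vphi)$ and the remaining affine gauge, I arrange the sup-normalization $\sup(\vphi(t)-\vphi_0)=0$ and $\bfE(\vphi(t))=-t$, which is (1).

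For (2) I would invoke the convexity of $\bfM$ along weak geodesics established in Section \ref{generalized-Mabuchi}. Convexity on each segment gives $\bfM(\Phi_j(t))\le (1-t/T_j)\bfM(\vphi)+(t/T_j)(\delta_j\bfJ_g(\vphi_j)-C_j)$. The sup-normalization together with \eqref{J}, \eqref{I} and Lemma \ref{lem-Hartogs} yields $\bfJ_g(\vphi_j)\le C\,T_j+O(1)$, so for fixed $t$ the term $(t/T_j)\delta_j\bfJ_g(\vphi_j)\to 0$ while $-(t/T_j)C_j\le 0$; hence $\limsup_j\bfM(\Phi_j(t))\le\bfM(\vphi)$. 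The lower semicontinuity of $\bfM$ (Lemma \ref{H_compact} for $\bfH_{g,\Theta}$, combined with the continuity of $\bfI_g,\bfJ_g$) then gives $\bfM(\vphi(t))\le\bfM(\vphi)$ for every $t$, and dividing by $t$ produces $\bfM'^\infty(\Phi)\le 0$.

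Finally, condition (1) forces $\bfJ(\vphi(t))=\Lam(\vphi(t))+t$ with $\Lam$ bounded, so $\bfJ'^\infty(\Phi)=1$ and therefore $\inf_\xi\bfJ'^\infty(\Phi_\xi)\le 1$. The reverse inequality is the crux and the step I expect to be the main obstacle: I must show $\xi=0$ minimizes the convex, proper function $\xi\mapsto\bfJ'^\infty(\Phi_\xi)$ (convexity and properness coming from Lemma \ref{J_convex}(i) and Propositions \ref{trivial}, \ref{J_trivial}). This requires transporting the finite-level $\bT$-minimality of each $\vphi_j$ to minimality of the slope at infinity, while reconciling the weighted $\bfJ_g$ of the coercivity definition with the unweighted $\bfJ$ in (3) through the comparabilities \eqref{J}, \eqref{I}. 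I would carry it out via the equivariant slope formula of Proposition \ref{J_trivial}: the uniform bound $|\xi_t|\le C$ on the optimal twists supplied there lets me extract $\xi_{t_j}\to\xi_\infty$, and the first-order optimality of $\vphi_j$ within its $\bT$-orbit, preserved under the limit, pins the minimizer at $\xi=0$, giving $\inf_\xi\bfJ'^\infty(\Phi_\xi)=1$.
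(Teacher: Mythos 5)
Your proposal follows essentially the same route as the paper's proof: negate coercivity, pass to $\bT$-optimal representatives, join to $\vphi$ by geodesic segments whose limit gives the ray, use convexity and lower semicontinuity of $\bfM$ for \eqref{eq-Phidec}, and handle \eqref{eq-Jxilb} by transporting the finite-level $\bT$-minimality to the slope at infinity (the paper delegates exactly this last step to the argument of \cite[Proof of Proposition 6.2]{Li20}). The structure, normalizations, and the identification of the crux all match.
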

\begin{proof}
Assume $\bfM$ is not $\bG$-coercive. Let $\vphi$ be an arbitary metric in $\mcE^1_{T\times K}(X,L)$.
Then by the assumption and Lemma \ref{J_convex}(ii), 
There exists a sequence of $\vphi_j\in \mcE^1_{T\times K}$, where $\bfJ_g(\vphi_j) = \inf_{\sigma\in \bT}\bfJ_g(\sigma^*\vphi_j)$, such that
$\bfM(\vphi_j)\leq \delta_j \bfJ_g(\vphi_j) - C_j$, where $\delta_j \to 0$, and $C_j\to \infty$.
Since the entropy $\bfH_{g,\Theta}(\vphi_j)$ has a lower bound, we have $\bfJ_g(\vphi_j)\to \infty$.
Let $\Phi_j(t)$ be the geodesic ray, that emanates from $\vphi$ and passes through $\vphi_j$. Denote the distance between $\vphi$ and $\vphi_j$
by $T_j$. Then $\Phi_j(T_j) = \vphi_j$.
By the convexity of $\bfM$ (Section \ref{convex_singular}), for $t\in [0,T_j]$, we have
$\bfM(\Phi_j(t)) \leq (1-\frac{t}{T_j}) \bfM(\vphi)+ (\delta_j\frac{\bfJ_g(\vphi_j)}{T_j}-\frac{C_j}{T_j})t$.

We can assume $\sup(\vphi_j-\vphi_0)=0$. As $j\to\infty$, up to choosing a subsequence, $\Phi_j$ converges weakly to a geodesic ray $\Phi$. 
From our construction, we can see that $\Phi$ is not an orbit of $\TT$.
Let $T>0$ be any positive constant. When $t\in [0,T]$, $\bfH_{g,\Theta}(\Phi_j(t))$ is uniformly bounded above.
By Lemma \ref{H_compact}, $\bfI_g(\Phi_j(t)), \bfJ_g(\Phi_j(t))$ converges uniformly. 
In addition, $\bfH_{g,\Theta}$ is lower semi-continuous in strong topology.
Then $\bfM(\Phi(t))\leq \lim_{j\to\infty}(\bfM(\Phi_j(t))) \leq \bfM(\vphi)$.
Since $\bfM$ is also convex , $\bfM$ is decreasing along $\Phi$.

With the Hartogs' lemma \ref{lem-Hartogs} and arguing in the same way as \cite[Proof of Proposition 6.2]{Li20}, we get the conclusion.
\end{proof}

Now we can prove the Yau-Tian-Donaldson conjecture by using the method developed in \cite{BBJ15, BBJ18, LTW19,His19,Li19}.
\begin{theorem}
The generalized Ding functional $\bfD$ is $\bG$-coercive if and only if
$\bG$ is reductive and $\bX$ is $\bG$-uniform Ding-stabile.
\end{theorem}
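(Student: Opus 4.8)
The plan is to prove the two implications separately, concentrating the real content in the direction ``$\bG$-uniform $g$-Ding-stability $\Rightarrow$ $\bG$-coercivity of $\bfD$''. Throughout I would rely on the slope formulas of Proposition \ref{F_slope} and Proposition \ref{J_trivial}, the pointwise bound $\bfD\le\bfM$ from Lemma \ref{M_D}, the equivalence i)$\Leftrightarrow$ii) of Theorem \ref{equivalent}, and the destabilizing geodesic ray furnished by Lemma \ref{destablizer}.

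For the easy direction (coercivity $\Rightarrow$ stability), fix a $T_\CC\times\bG$-equivariant ample test configuration $\phi=\phi_{(\mcX,\mcL)}$ and choose a $T\times K$-invariant smooth psh ray $\Phi$ associated to it (existence is the lemma preceding the non-Archimedean functionals). By definition of $\bG$-coercivity, $\bfD(\Phi(t))\ge\delta\inf_{\sigma\in\bT}\bfJ_g(\sigma^*\Phi(t))-C$ for all $t$; dividing by $t$ and letting $t\to+\infty$, the left side tends to $\bfD^\NA(\phi)$ by Proposition \ref{F_slope}, while the right side tends to $\delta\,\bfJ^{\NA}_{g,\bT}(\phi)$ by Proposition \ref{J_trivial}. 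Hence $\bfD^\NA(\phi)\ge\delta\,\bfJ^{\NA}_{g,\bT}(\phi)$, i.e. $\bG$-uniform $g$-Ding-stability with $\gamma=\delta$. Reductivity of $\Aut_T(X,D,\Theta)$ is automatic, since coercivity produces a solution of \eqref{MA} via Theorem \ref{equivalent}, and Lemma \ref{lem-unique} then gives reductivity.

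For the hard direction I argue by contradiction. If $\bfD$ is not $\bG$-coercive, then neither is $\bfM$ by Theorem \ref{equivalent}, so Lemma \ref{destablizer} supplies a $T\times K$-invariant geodesic ray $\Phi$ with the normalizations $\sup(\vphi(t)-\vphi_0)=0$, $\bfE(\vphi(t))=-t$, together with $\bfM'^\infty(\Phi)\le 0$ and $\inf_{\xi\in N_\bR}\bfJ'^\infty(\Phi_\xi)=1$. Since $\bfD\le\bfM$ by Lemma \ref{M_D}, passing to slopes gives $\bfD'^\infty(\Phi)\le\bfM'^\infty(\Phi)\le 0$. Let $\phi=\Phi^\NA$ be the non-Archimedean metric determined by the (sup-normalized) ray, viewed as a function on $X^\div_\bQ$ via the Lelong-number description in the psh-ray section. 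The key analytic-to-algebraic comparison is
\[
\bfD^\NA(\phi)\ \le\ \bfD'^\infty(\Phi),
\]
which I would obtain from the exact energy slope identity $\bfE_g'^\infty(\Phi)=\bfE^\NA_g(\phi)$ (the slope formula of Proposition \ref{F_slope}, extended from test configurations to geodesic rays) together with the lower bound $\bfL_\Theta'^\infty(\Phi)\ge\bfL_\Theta^\NA(\phi)$, the latter coming from the presentation of $\bfL_\Theta^\NA$ as the infimum of $A_{(X,D+\Theta)}+(\phi-\phi_\triv)$ over divisorial valuations. Thus $\bfD^\NA(\phi)\le 0$. On the other hand, because $\Phi$ is a genuine geodesic ray the energy slopes are exact for each twist, so $\bfJ^\NA_g(\phi_\xi)=\bfJ_g'^\infty(\Phi_\xi)$; combining with the comparability \eqref{J} between $\bfJ_g$ and $\bfJ$ and the normalization $\inf_\xi\bfJ'^\infty(\Phi_\xi)=1$ yields $\bfJ^{\NA}_{g,\bT}(\phi)=\inf_\xi\bfJ^\NA_g(\phi_\xi)>0$ (using Definition \ref{xi_twist}).

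To close the loop I would upgrade $\bG$-uniform stability, which is hypothesized only over honest test configurations, to the possibly non-test-configuration metric $\phi=\Phi^\NA$; this is precisely the role of the valuative criterion Theorem \ref{thm-valDing} and the inequality of Lemma \ref{lem-NAlegend}. Reading $\phi$ off as a $T$-invariant filtration and twisting by the optimal $\xi\in N_\bR$, stability forces $\bfD^\NA(\phi)\ge\gamma\,\bfJ^{\NA}_{g,\bT}(\phi)>0$, contradicting $\bfD^\NA(\phi)\le 0$; this contradiction establishes coercivity. I expect the main obstacle to be exactly this last passage, namely showing that the destabilizing ray's non-Archimedean metric lies within the scope of the valuative criterion (equivalently, that stability over test configurations propagates to the filtration $\Phi^\NA$), together with the careful verification of the comparison $\bfL_\Theta'^\infty(\Phi)\ge\bfL_\Theta^\NA(\Phi^\NA)$ in the presence of arbitrary klt singularities and a possibly singular twisting current $\Theta$.
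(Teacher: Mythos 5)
Your easy direction is exactly the paper's argument, and your reduction of the hard direction to a destabilizing geodesic ray via Theorem \ref{equivalent} and Lemma \ref{destablizer} also matches the paper. The gap is in the final passage, and it is not a routine verification: you propose to apply the stability hypothesis to the non-Archimedean metric $\phi=\Phi^\NA$ of the destabilizing ray, but $\bG$-uniform $g$-Ding-stability is hypothesized only over test configurations, and $\Phi^\NA$ is in general not one. Propagating the inequality $\bfD^\NA\ge\gamma\,\bfJ^\NA_{g,\bT}$ from test configurations to such limit objects is essentially the hardest point of the whole variational approach (in \cite{BBJ18} it requires solving a non-Archimedean Monge-Amp\`{e}re equation or a regularization theorem for non-Archimedean metrics), and you supply no argument for it beyond naming Theorem \ref{thm-valDing} and Lemma \ref{lem-NAlegend}. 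A second, related unjustified claim is the ``exact energy slope identity'' $\bfE_g'^\infty(\Phi)=\bfE_g^\NA(\Phi^\NA)$ for the geodesic ray: Proposition \ref{F_slope} gives this only for smooth psh rays associated to test configurations; for a general geodesic ray one only has $\bfE_g'^\infty(\Phi)\le\bfE_g^\NA(\Phi^\NA)$, with equality characterizing maximal rays, and the destabilizing ray is not known to be maximal. Consequently your identities $\bfJ^\NA_g(\phi_\xi)=\bfJ_g'^\infty(\Phi_\xi)$ and the strict positivity of $\bfJ^\NA_{g,\bT}(\phi)$ are also not established.

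The paper avoids both problems by never evaluating the stability inequality on $\Phi^\NA$ itself. It (i) perturbs to a resolution $\tX$ with ample $L_\epsilon$ (needed to handle the klt singularities), (ii) approximates the ray by the test configurations $\Bl_{\fa_m}(\tX_\bC)$ with $\fa_m=\mcJ(m\Phi_\epsilon)$, proving convergence of all the relevant functionals, and (iii) applies the valuative criterion of Theorem \ref{thm-valDing} to divisorial valuations $v_k$ that nearly compute $\bfL_\Theta'^\infty(\Phi)\le -1$, feeding the resulting bound $A_{(X,D+\Theta)}(v_{k,\xi_k})\ge\delta' S_g(v_{k,\xi_k})$ into a chain of inequalities through Lemma \ref{lem-NAlegend} evaluated on the approximating test configurations $\phi_{\epsilon,m}$; the contradiction obtained is with $\bfL_\Theta'^\infty(\Phi)\le -1$, not with a stability inequality for $\Phi^\NA$. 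To complete your proof you would have to either carry out this approximation-plus-valuative argument or prove the extension of the stability inequality to filtrations; neither is done in the proposal.
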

\begin{proof}[proof of the ``only if" direction]
Let $(\mcX,\mcL)$ be an ample normal test configuration. Let $\Phi$ be a smooth psh ray associated to $(\mcX,\mcL)$.
Since $\bfD$ is $\bG$-coercive, there exist $\delta>0, C>0$ such that
$\bfD(\Phi(t)) \geq \delta \bfJ_{g,\bT}(\Phi(t)) - C$.
Then by slope formulas,
\begin{equation}
\bfD^\NA(\mcL) = \lim_{t\to\infty}\frac{\bfD(\Phi(t))}{t} \geq \lim_{t\to\infty} \delta \frac{\bfJ_{g,\bT}(\Phi(t))}{t} = \delta \bfJ^\NA_{g,\bT}(\mcL)
\end{equation}
Thus $X$ is $\bG$-uniform Ding-stable.
\end{proof}

\begin{proof}[proof of the ``if" direction]
We will decompose the proof of the ``only if" part into four steps.

\vskip 10pt

{\bf Step 1:}
Construct a destabling geodesic ray.

Assume $\bfD$ is not $\bG$-coercive. By Theorem \ref{equivalent}, this implies $\bfM$ is not $\bG$-coercive.
By Lemma \ref{destablizer},
there exists a destablizing geodesic ray $\Phi$ emanating from $0$, which satisfies the conditions \eqref{eq-Phinorm}-\eqref{eq-Jxilb}. 
\vskip 10pt

{\bf Step 2:}
Approximate the destablizing geodesic ray by test configurations. We use the construction in \cite{BBJ18, LTW19}.

We need two approximations: approximating $L$ by ample line bundles over the resolution of $X$ and approximating psh rays by test configurations.

Let $\rho: \tilde{X}\rightarrow X$ be a log resolution of singularities obtained by a composition of blowing-up along smooth centers. Assume $\{E_i\}$ are the exceptional divisors. Then there exist $b_i\in \bQ_{>0}$ such that $\rho^*L-\sum_i b_i E_i$ is ample on $\tilde{X}$, where $L=-K_X-D-B$ as before. Moreover, we have the identity
\begin{eqnarray*}
-K_{\tilde{X}}-D'-\rho^*B&=&\frac{1}{1+\epsilon}\left(\rho^*(-K_X-D-B)+\epsilon (\rho^*(-K_X-D-B)-\sum_i b_i E_i)\right)\\
&&\quad +\sum_i (-a_i+\frac{\epsilon}{1+\epsilon}b_i) E_i.
\end{eqnarray*}
Set:
\begin{equation}
\tilde{\Theta}=\rho^*\Theta, \quad D_\epsilon=D'+\sum_i (-a_i+\frac{\epsilon}{1+\epsilon}b_i)E_i, \quad P=\rho^*L-\sum_i b_i E_i, \quad L_\epsilon=\frac{1}{1+\epsilon}(\rho^*L+\epsilon P).
\end{equation}
Then we have:
\begin{equation}
-K_\tX-D_\epsilon-\rho^*B= L_\epsilon.
\end{equation}
Fix a smooth positively $e^{-\vphi_P}$ curved metric on $P$ and set $\Phi_\epsilon=\frac{1}{1+\epsilon}(\rho^*\Phi+\epsilon \vphi_P)$. Then $\Phi_\epsilon$ is a subgeodesic ray for the line bundle $L_\epsilon$ over $\tX$. 

Let $\fa_m = \mcJ(m\Phi_\epsilon)$ be the multiplier ideal sheaf with respect to $m\Phi_\epsilon$.
We would like to use the normal blow-up along $\fa_m$, $\Bl_{\fa_m}(\tX_\CC)$ as test configurations that approximate $\Phi_\epsilon$.
Let $\hat{L}_\epsilon = (1+\epsilon)L_\epsilon$, $\hat{\Phi}_\epsilon = (1+\epsilon)\Phi_\epsilon$.
We need to first show that, there exists an integer $m_0>0$, such that for any $m>0$,
$\oo_{\tX\times \CC}((m_0+m)\hat{L}_\epsilon \otimes \fa_m)$ is globally generated. If so, $\Bl_{\fa_m}(\tX_\CC)$
can be embedded into a projective space, which provides a $T_\bC\times \bG$-equivariant test configuration.
By Castelnuovo-Mumford regularity theorem, in order to show finite generation, it suffices to show
$H^i(\tX_\CC, ((m_0+m)\hat{L}_\epsilon- i P)\otimes \fa_m) = 0$, for any $i>0$, 
which can be reduced to show that $R^i(\oo_{\tX_\CC}((m_0+m)\hat{L}_\epsilon-(n+1) P)\otimes \fa_m) = 0$.
There exists an $m_0>0$, such that $-K_\tX + (m_0+m)\hat{L}_\epsilon-(n+1)P$ is $p_2$-ample for any $m\in \NN$,
where $p_2: \tX_\CC \rightarrow \CC$.
By applying Nadel vanishing to $(-K_\tX + (m_0+m)\hat{L}-(n+1)P)\otimes \fa_m$,
the vanishing of higher order direct image sheaves is proved.

By Demailly's Bergman kernel approximation, we have
$\hat{\Phi}_{\epsilon,m} = \frac{1}{2(m_0+m)}\log(\sum_j |s^{(m)}_j|^2)$, where $\{s^{(m)}_j\}$
is an orthonormal basis of $H^0(\tX_\CC, \oo_{\tX_\CC}(m_0+m)\hat{L}_\epsilon \otimes \fa_m)$, with respect to the $L^2$-norm
$\int_{\tX_\CC}|s|^2 e^{m\Phi_\epsilon}$.
Corresponding, we have $\hat{\phi}_{\epsilon,m}-\hat{\phi}_\triv$ as a function on $\tX^\div_\bQ$. By \cite[Lemma 5.7]{BBJ15},
$\hat{\phi}_\epsilon \leq \hat{\phi}_{\epsilon,m} \leq \frac{m}{m_0+m}\hat{\phi}_\epsilon + \frac{1}{m}(A_\tX+1)$,
and $\lim_{m\to\infty}\bfL_\Theta^\NA(\hat{\phi}_{\epsilon,m}) = \bfL_\Theta^\NA(\hat{\phi}_{\epsilon})$.
By monotonicity of $\bfE_g, \Lam_g$ functionals, we have
\begin{align}
\lim_{m\to\infty} \bfE^{\NA}_g(\hat{\phi}_{\epsilon,m}) &\geq \bfE'^\infty_g(\hat{\Phi}_{\epsilon})\\
\lim_{m\to\infty} \Lam^\NA_g(\hat{\Phi}_{\epsilon,m}) &\geq \Lam'^\infty_g(\hat{\Phi}_{\epsilon}).
\end{align}
Furthermore, 
\begin{lemma}
For any $\xi\in N_\bR$,
\begin{align}
\label{E_epsilon}
\lim_{\epsilon\to 0} \bfE'^\infty_{g,\vphi_{0,\epsilon}}(\hat{\Phi}_{\epsilon,\xi}) &= \bfE'^\infty_g(\Phi_\xi),\\
\label{Lambda_epsilon}
\lim_{\epsilon\to 0} \Lam'^\infty_{\vphi_{0,\epsilon}}(\hat{\Phi}_{\epsilon,\xi}) &= \Lam'^\infty(\Phi_\xi), \\
\label{L_epsilon}
\lim_{\epsilon\to 0} \bfL_\Theta^\NA(\hat{\phi}_{\epsilon,\xi}) &= \bfL_\Theta^\NA(\phi).
\end{align}
\end{lemma}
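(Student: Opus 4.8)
The plan is to treat the two energy identities \eqref{E_epsilon}--\eqref{Lambda_epsilon} separately from the log-term identity \eqref{L_epsilon}, since the former are governed by intersection-theoretic slopes and the latter by the log-discrepancy formula. In all three cases the $\xi$-twist can be stripped off first: by Lemma \ref{E_twist} one has $\bfE^\NA_g(\cdot_\xi)=\bfE^\NA_g(\cdot)+\bfE^\NA_g(\cdot_{\bC,\xi})$, where the product piece $\bfE^\NA_g(\cdot_{\bC,\xi})$ is a weighted-volume (Futaki-type) quantity depending only on the polarization class; as $\xi\in N_\bR$ is fixed and $\hat{L}_\epsilon=\rho^*L+\epsilon P\to\rho^*L$, this piece converges by continuity of the associated Duistermaat–Heckman integrals in the class, and $\bfL_\Theta^\NA$ is outright $\xi$-invariant by the same lemma. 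Hence it suffices to prove all three identities with $\xi$ suppressed, i.e. comparing $\hat{\Phi}_\epsilon=\rho^*\Phi+\epsilon\vphi_P$ with $\rho^*\Phi$ (which descends to $\Phi$ on $X$).

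For the energy statements I would first reduce to a monomial $g=\prod_\alpha y_\alpha^{k_\alpha}$. Approximating $g$ by polynomials $g_j$ via Stone–Weierstrass, the uniform estimates of the type \eqref{bound_1} and \eqref{bound_2}---which bound the difference of slopes by $\sup_P|g_j-g_{j'}|$ times class-dependent constants that are uniform in $\epsilon$---let me interchange the limits $j\to\infty$ and $\epsilon\to 0$, reducing to polynomial and then monomial $g$. For a monomial, the fibration construction in the proof of Proposition \ref{MA_eta} together with Lemma \ref{poly_E} identifies $\bfE'^\infty_{g,\vphi_{0,\epsilon}}(\hat{\Phi}_\epsilon)$ with the ordinary ($g=1$) slope $(\bfE^\bvk)'^\infty$ computed on $(\tX^\bvk,\hat{L}_\epsilon^\bvk)$, and similarly for $\Lam$. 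This reduces \eqref{E_epsilon}--\eqref{Lambda_epsilon} to the continuity, as $\epsilon\to 0$, of the standard non-Archimedean energy and $\Lam$ slopes under the class perturbation $\hat{L}_\epsilon^\bvk\to(\rho^*L)^\bvk$.

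For this last continuity I would argue as in \cite{BBJ15, LTW19}. Since $\bfE_g$ is affine along geodesics---Lemma \ref{d-closed} gives $\ddc\bfE_g=\int_X g_\vphi(\ddc\Phi)^{n+1}/n!=0$ along a geodesic---the slopes are genuine derivatives, and the monotonicity of $\bfE_g$ and $\Lam_g$ in the reference class (together with $\hat{\Phi}_\epsilon\geq\rho^*\Phi$ after normalizing $\vphi_P\geq 0$) yields the inequalities $\liminf_{\epsilon\to 0}\bfE'^\infty_{g,\vphi_{0,\epsilon}}(\hat{\Phi}_\epsilon)\geq\bfE'^\infty_g(\Phi)$ and likewise for $\Lam$. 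The reverse inequalities follow from an explicit estimate: writing $\hat{\Phi}_\epsilon=\rho^*\Phi+\epsilon\vphi_P$, the extra contribution to each slope is bounded by $\epsilon$ times a fixed intersection number on $\tX^\bvk$ (coming from $P^\bvk$ paired with powers of $(\rho^*L)^\bvk$), which tends to $0$; combined with the $L^1$-type convergence of the approximating potentials as in Lemmas \ref{geodesic_ends} and \ref{geodesic_t}, this gives the matching upper bound. This squeeze is the technical heart, and I expect the main difficulty here: one must control slopes \emph{at infinity}, not finite-level energies, uniformly in $\epsilon$, so the perturbation estimate has to be made at the level of the non-Archimedean/intersection data rather than at a fixed $t$.

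Finally, for \eqref{L_epsilon}, the $\xi$-invariance of $\bfL_\Theta^\NA$ from Lemma \ref{E_twist} reduces matters to $\lim_{\epsilon\to 0}\bfL_\Theta^\NA(\hat{\phi}_\epsilon)=\bfL_\Theta^\NA(\phi)$. Using $\bfL_\Theta^\NA(\cdot)=\inf_{v}\big(A_{(X,D+\Theta)}(v)+(\cdot-\cdot_\triv)(v)\big)$ together with the ramification formula $-K_\tX-D_\epsilon-\rho^*B=L_\epsilon$ (the harmless $(1+\epsilon)$-rescaling to $\hat{L}_\epsilon$ only shifts the potential by a controlled amount), the perturbation changes the relative potential by $\epsilon(\vphi_P-\vphi_{P,\triv})=O(\epsilon)$ uniformly, while the boundary coefficients of $D_\epsilon$ vary continuously in $\epsilon$; hence the two infima over divisorial valuations converge. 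This is precisely the mechanism already invoked for the $m\to\infty$ convergence $\lim_m\bfL_\Theta^\NA(\hat{\phi}_{\epsilon,m})=\bfL_\Theta^\NA(\hat{\phi}_\epsilon)$, so I would adapt that argument, following \cite[Lemma 5.7]{BBJ15}, to the present $\epsilon\to 0$ limit.
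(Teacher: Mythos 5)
Your proposal is correct and follows essentially the same route as the paper: reduce to monomial $g$ by Stone--Weierstrass together with estimates of the type \eqref{bound_1} that are uniform in $\epsilon$ (so the limits in $j$ and $\epsilon$ can be interchanged), and handle the monomial case by passing to the fibration $\tX^{\bvk}$, where the weighted slope becomes the ordinary $g=1$ slope and the $\epsilon\to 0$ continuity --- together with \eqref{Lambda_epsilon} and \eqref{L_epsilon} --- is exactly the content the paper imports from \cite{Li19}. The only difference is that the paper cites \cite{Li19} for those standard pieces while you sketch them (monotonicity plus an $O(\epsilon)$ perturbation of intersection numbers, and the valuative formula for $\bfL_\Theta^\NA$), and your sketches are consistent with the cited arguments.
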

\begin{proof}
Equalities \eqref{Lambda_epsilon}, \eqref{L_epsilon} have been shown in \cite{Li19}.
By \cite{Li19}, we also have 
\begin{align*}
\lim_{\epsilon\to 0}(\bfE^\bvk_{\vphi_{0,\epsilon}})'^\infty(\hat{\Phi}^\bvk_{\epsilon,\xi}) = (\bfE^\bvk)'^\infty(\Phi^\bvk_\xi).
\end{align*}
Since for any $\delta>0$, there exists a polynomial $p_i = \sum_\vk a_\vk \prod_{\alpha=1}^r y_i^{k_\alpha}$, such that $|p_i-g|<\delta$.
As shown in the proof of Proposition \ref{F_slope}, there exists a $C>0$, uniform over $\epsilon$, such that
\begin{align*}
|\bfE_{g,\vphi_{0,\epsilon}}'^\infty(\hat{\Phi}_{\epsilon,\xi})-\sum_{\vk}a_\vk (\bfE_{\vphi_{0,\epsilon}}^\bvk)'^\infty(\hat{\Phi}^\bvk_{\epsilon,\xi})| < C \delta.
\end{align*}
Then the uniform convergence implies \eqref{E_epsilon}.
\end{proof}

\vskip 10pt

{\bf Step 3:} Completion of the proof
By Lemma \ref{destablizer}, $\bfM$ is decreasing along $\Phi$. Then $\bfD'^\infty(\Phi)\leq \bfM'^\infty\leq 0$.
By the construction of $\Phi$, $\Lam'^\infty_g(\Phi) = 0$, $-\bfE_g'^\infty(\Phi) = \bfJ_{g,\bT}'^\infty(\Phi) = 1$.
Then
$\bfL_\Theta'^\infty(\Phi) \leq -1$.

Choose a subsequence of $\bG$-invariant divisorial valuations $v_k\in X^{\rm div}$ such that
\begin{equation}
\bfL_\Theta'^\infty(\Phi) \leq A_{(X,D+\Theta)}(v_k) - G(v_k)(\Phi) < \bfL_\Theta'^\infty(\Phi) + \frac{1}{k}.
\end{equation}
By valuation criterion, there exists $\delta = \delta_{\bG}(X,D+\Theta) >1 $, and $\xi_k\in N_\bR$, such that
\begin{equation}
A_{(X,D+\Theta)}(v_{k,\xi_k}) \geq \delta S_g(v_{k,\xi_k}).
\end{equation}

From the argument of the valuation criterion in Theorem \ref{thm-valDing} and the same calculation as in \cite[4.4]{LTW19}, we have
\begin{equation}
A_{(\tilde{X}, D_\epsilon+\tilde{\Theta})}(v_{k,\xi_k}) \geq \delta' S_{L_\epsilon, g}(v_{k,\xi_k})
\end{equation}
where $\delta'>1$. Indeed, this follows from the estimate: there exists $C>0$ such that for any $v\in X^{\rm div}_\bQ$,
\begin{eqnarray*}
\frac{A_{(\tilde{X}, D_\epsilon+\tilde{\Theta})}(v)}{S_{L_\epsilon,g}(v)}\ge (1-C\epsilon)\frac{A_{(X,D+\Theta)}(v)}{S_g(v)}
\end{eqnarray*}
for some $C>0$, which follows from two inequalities:
\begin{enumerate}
\item Note that the identity $D_\epsilon=D_0+\frac{\epsilon}{1+\epsilon}\sum_i b_i E_i$.  
\begin{eqnarray*}
\frac{A_{(\tX, D_\epsilon+\tilde{\Theta})}(v)}{A_{(X, D+\Theta)}(v)}&=&\frac{A_\tX(v)-v(D_\epsilon)-v(\Theta)}{A_\tX(v)-v(D_0)-v(\Theta)}=1-\frac{\epsilon}{1+\epsilon}\frac{\sum_i b_i v(E_i)}{A_\tX(v)-v(D_0)-v(\Theta)}\\
&\ge& 1-\frac{\epsilon}{1+\epsilon}{\rm lct}(\tX, D_0+\Theta+\sum_i b_i E_i)^{-1}.
\end{eqnarray*}

\item Recall that $L_\epsilon=\rho^*L-\frac{\epsilon}{1+\epsilon}\sum_i b_i E_i$. It is easy to see that the integrands in $S_g$ and $S_{L_\epsilon,g}$ has a comparison:
\begin{equation}
\vol_g(L-xv)\ge \vol_g(L_\epsilon-xv).
\end{equation}
\end{enumerate}

Moreover by the same argument as in \cite[3.1]{Li19} (see also \cite[2.1.3]{Li20}), we have the identity for any $\xi\in N_\bR$:
\begin{eqnarray*}
(\phi_{\epsilon,m,\xi}-\phi_\triv)(v)&=&(\phi_{\epsilon,m}-\phi_\triv)(v_\xi)+A_{(\tilde{D}, D_\epsilon+\tilde{\Theta})}(v_{\xi})-A_{(\tilde{D}, D_\epsilon+\tilde{\Theta})}(v). 
\end{eqnarray*}
Again note that we are using the canonical lifting of $\xi$ (see \eqref{eq-canlift}) to twist the non-Archimedean metric.

Now we can estimate in the same way as \cite{Li19}: 
\begin{align}
\begin{split}
& \quad \bfL^\NA_{(\tX,D_\epsilon+\tilde{\Theta})}(\phi_{\epsilon,m}) + O(\epsilon,m^{-1},k^{-1}) \\
&= A_{(\tX,D_\epsilon+\tilde{\Theta})}(v_k) + (\phi_{\epsilon,m}-\phi_\triv)(v_k)\\
&= A_{(\tX,D_\epsilon+\tilde{\Theta})}(v_{k,\xi_k}) + (\phi_{\epsilon,m,-\xi_k}-\phi_\triv)(v_{k,\xi_k}) \\
&\geq \delta' S_{L_\epsilon}(v_{k,\xi_k}) + (\phi_{\epsilon,m,-\xi_k}-\phi_\triv)(v_{k,\xi_k}) \\
&\geq \delta' \bfE^\NA_g((\delta')^{-1} (\phi_{\epsilon,m,-\xi_k}-\phi_\triv)) \\
&= ( -\delta'\bfJ^\NA_g({\delta'}^{-1}(\phi_{\epsilon,m,-\xi_k}-\phi_\triv)) +\bfJ^\NA_g((\phi_{\epsilon,m,-\xi_k}-\phi_\triv))) + \bfE^\NA_g((\phi_{\epsilon,m,-\xi_k}-\phi_\triv)) \\
&\geq (1-(\delta')^{\frac{-1}{C}}) \bfJ^\NA_g((\phi_{\epsilon,m,-\xi_k}-\phi_\triv)) + \bfE^\NA_g((\phi_{\epsilon,m,-\xi_k}-\phi_\triv))\\
&= (1-(\delta')^{\frac{-1}{C}}) \Lam'^\infty_g(\Phi_{\epsilon,m,-\xi_k}) + (\delta')^{\frac{-1}{C}}\bfE'^\infty_g(\Phi_{\epsilon,m,-\xi_k})\\
&\geq (1-(\delta')^{\frac{-1}{C}})\Lam'^\infty_g(\Phi_{-\xi_k}) + (\delta')^{\frac{-1}{C}}\bfE'^\infty_g(\Phi_{-\xi_k})\\
&= (1-(\delta')^{\frac{-1}{C}})\bfJ'^\infty_g(\Phi_{-\xi_k}) + \bfE'^\infty_g(\Phi_{-\xi_k})\\
&= (1-(\delta')^{\frac{-1}{C}})\bfJ'^\infty_g(\Phi_{-\xi_k}) + \bfE'^\infty_g(\Phi)+\Fut_g(\xi_k)\\
&\ge (1-(\delta')^{\frac{-1}{C}})-1=-(\delta')^{\frac{-1}{C}}.
\end{split}
\end{align}
The second inequality used Lemma \ref{lem-NAlegend}. The last inequality uses \eqref{eq-Jxilb}. Moreover we have used the identity \eqref{eq-Fut2} and the fact the $\Fut_g(\xi_k)\equiv 0$.
Letting $\epsilon\rightarrow 0, m\rightarrow+\infty, k\rightarrow+\infty$, we get a contradiction to $\bfL^\NA(\phi)\le -1$. 

\begin{remark}
By the same argument as in \cite[5.4]{Li19}, we actually know that $|\xi_k|$ in the above proof is uniformly bounded. Moreover, if $X$ is smooth, then the above argument can be simplified (see \cite{BBJ18, His19, Li20})
\end{remark}

\end{proof}

\section{Stability via special test configurations}\label{sec-MMP}

\begin{definition}
A test configuration $(\mcX, \mcL)$ of $(X, L)$ is a special test configuration if $\mcX_0$ is a normal projective variety and $\mcL$ is relatively ample.
\end{definition}
\begin{remark}
If $(X, D)$ is log Fano and $\Theta=0$, then the usual definition of special test configuration also requires that $\mcL\sim_{\pi,\bQ} -(K_\mcX+\mcD)$.
Since we are considering the general twisting, the special test configuration is in a more general sense compared with the log Fano case. See also Remark \ref{rem-special}.
\end{remark}

\begin{theorem}
Let $\bG\subseteq \Aut_T(X, D, \Theta)$ be a reductive subgroup. 
Then $(X, D+\Theta)$ is $\bG$-uniformly $g$-Ding-stable if and only if if it is $\bG$-uniformly $g$-Ding-stable for all $T_\CC\times \bG$-equivariant special test configurations.
\end{theorem}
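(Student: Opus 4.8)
The \emph{only if} direction is immediate: a $T_\CC\times\bG$-equivariant special test configuration is in particular a $T_\CC\times\bG$-equivariant test configuration, so the defining inequality $\bfD^\NA(\phi)\ge\gamma\,\bfJ^\NA_{g,\bT}(\phi)$ passes to the special ones. The content is the \emph{if} direction, and my plan is to adapt the MMP process of \cite{LX14, BBJ15, Fuj19a, BLZ19} to the weighted and twisted setting. Concretely, given an arbitrary $T_\CC\times\bG$-equivariant normal ample test configuration $\phi=\phi_{(\mcX,\mcL)}$, I will produce from it a $T_\CC\times\bG$-equivariant \emph{special} test configuration $\phi^s=\phi_{(\mcX^s,\mcL^s)}$ satisfying $\bfD^\NA(\phi^s)\le \bfD^\NA(\phi)$ and $\bfJ^\NA_{g,\bT}(\phi^s)\ge \bfJ^\NA_{g,\bT}(\phi)$. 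Granting this, the assumed inequality $\bfD^\NA(\phi^s)\ge\gamma\,\bfJ^\NA_{g,\bT}(\phi^s)$ for special test configurations yields $\bfD^\NA(\phi)\ge \bfD^\NA(\phi^s)\ge \gamma\,\bfJ^\NA_{g,\bT}(\phi^s)\ge\gamma\,\bfJ^\NA_{g,\bT}(\phi)$, which is exactly $\bG$-uniform $g$-Ding-stability over all test configurations.

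The construction of $(\mcX^s,\mcL^s)$ proceeds in the two usual stages, run $T_\CC\times\bG$-equivariantly (possible since $T_\CC\times\bG$ is reductive, so the MMP can be performed equivariantly). First I would replace $(\mcX,\mcL)$ by the relative log canonical modification of $(\mcX,\mcD_\epsilon+\mcX_0)$ over $\bP^1$, where $\mcD_\epsilon$ incorporates the strict transforms of $D$, of the boundary $B$ realized by an honest $T$-invariant $\bQ$-divisor (available since $\Theta$ is associated to a $T$-invariant linear system), together with the exceptional parameters exactly as in Step 2 of the proof of Theorem \ref{thm-YTD}. Then I would run a $(K_{\mcX}+\mcD_\epsilon+\mcX_0)$-MMP with scaling over $\bP^1$, which terminates with a model whose central fiber is irreducible and normal and on which the polarization is relatively ample, i.e. a special test configuration. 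The point that lets this argument run verbatim from \cite{Li19,Fuj19a} is the decoupling recorded in the introduction: by \eqref{eq-DNA} we have $\bfD^\NA=-\bfE^\NA_g+\bfL^\NA_\Theta$, and the twisting enters only through $\bfL^\NA_\Theta(\phi)=\inf_v\bigl(A_{(X,D+\Theta)}(v)+(\phi-\phi_\triv)(v)\bigr)$, which carries no $g$-weight. Hence the monotonicity of $\bfL^\NA_\Theta$ along each MMP step is precisely the statement of \cite{Li19,Fuj19a} with $A_{(X,D)}$ replaced by $A_{(X,D+\Theta)}$.

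It then remains to control the weighted energy $\bfE^\NA_g$ and the twist. For a monomial $g=\prod_\alpha y_\alpha^{k_\alpha}$ I would pass to the fibration $(\mcX^{[\vec k]},\mcL^{[\vec k]})$ of \eqref{eq-mcXvk}, under which $\bfE^\NA_g(\mcL)$ becomes the ordinary $\bfE^\NA$ of $\mcL^{[\vec k]}$; since the equivariant MMP on $\mcX$ lifts to a relative MMP on the $\mcX$-bundle $\mcX^{[\vec k]}\to\prod_\alpha\PP^{k_\alpha}$, the ordinary monotonicity of $\bfE^\NA$ transfers to the weighted $\bfE^\NA_g$. The case of general $g$ follows by Stone--Weierstrass approximation and the continuity of the non-Archimedean functionals in $g$ established in Section \ref{non-Archimedean functionals} (see Definition \ref{def-intg} and the surrounding lemmas). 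The twist is handled via Lemma \ref{E_twist}: the optimal $\xi\in N_\bR$ realizing $\bfJ^\NA_{g,\bT}(\phi)=\inf_\xi\bfJ^\NA_g(\phi_\xi)$ is transported through the MMP using the translation invariance of the Duistermaat--Heckman measures and the identity $\Fut_g(\xi)=-\bfE^\NA_g(L_{\bC,\xi})$ of \eqref{eq-Fut2}, together with the boundedness of $|\xi|$ as in Proposition \ref{J_trivial} and the remark following Theorem \ref{thm-YTD}.

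The main obstacle I anticipate is exactly this last control of $\bfE^\NA_g$: unlike the unweighted energy, the weighted energy is not manifestly monotone under birational contractions, so one must verify that the MMP does not increase it. The fibration reduction handles monomial (hence polynomial) $g$ cleanly, but one must check that the relative MMP on $\mcX^{[\vec k]}$ induced by the equivariant MMP on $\mcX$ is again a genuine MMP with the same contracted/flipped loci, so that the intersection-theoretic monotonicity on $\mcX^{[\vec k]}$ applies; and one must ensure the rationality/approximation of the optimal $\xi$ is compatible with the (finitely many) MMP steps. I expect these to be routine once the bundle structure of $\mcX^{[\vec k]}$ over the fixed base $\prod_\alpha\PP^{k_\alpha}$ is used to reduce every weighted statement to an ordinary one, but they constitute the technical heart of the argument.
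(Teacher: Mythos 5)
Your overall strategy (equivariant MMP as in \cite{LX14,Fuj19a,BLZ19}, the fibration $\mcX^{[\vec k]}$ to reduce weighted intersection numbers to ordinary ones, a general member $\Delta\in\frac{1}{m}\mathfrak{S}$ to handle the twist, Stone--Weierstrass for general $g$) matches the paper's Theorem \ref{thm-MMP}. But your central reduction has a genuine gap: you propose to produce $\phi^s$ with the \emph{two separate} inequalities $\bfD^\NA(\phi^s)\le\bfD^\NA(\phi)$ and $\bfJ^\NA_{g,\bT}(\phi^s)\ge\bfJ^\NA_{g,\bT}(\phi)$. The second inequality is false in general: $\bfJ^\NA_{g,\bT}$ is not monotone increasing along the MMP. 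For instance, take the test configuration obtained by blowing up a point in the central fibre of the trivial test configuration with polarization $\rho^*L-cE$; it has $\bfJ^\NA>0$, yet its log canonical modification / anticanonical model is the trivial (or a product) test configuration with $\bfJ^\NA_{g,\bT}=0$. One sees the same thing from the derivative computation in the paper: along each MMP step one gets
\begin{equation*}
n!\,\frac{d}{d\lambda}\bigl(\bfD^\NA_g-\epsilon\bfJ^\NA_g\bigr)(\phi_\lambda)=-\sum_i (e_i-e_1)\Bigl[(1-\epsilon)\,(E_i\cdot \bar{\mcL}_\lambda^{\cdot n})_g+\epsilon\,(E_i\cdot (L_{\bP^1})^{\cdot n})_g\Bigr]\le 0,
\end{equation*}
and subtracting the $\epsilon=0$ case shows $\frac{d}{d\lambda}\bfJ^\NA_g=\sum_i(e_i-e_1)\bigl[(E_i\cdot(L_{\bP^1})^{\cdot n})_g-(E_i\cdot\bar{\mcL}_\lambda^{\cdot n})_g\bigr]$, which has no definite sign. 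The correct (and sufficient) statement is the \emph{simultaneous} monotonicity of the combination $\bfD^\NA-\epsilon\bfJ^\NA_{g,\bT}$ for every $\epsilon\in[0,1]$, up to the base-change factor $d$; one then concludes by taking $\epsilon=\gamma$, since $d\bigl(\bfD^\NA(\phi)-\gamma\bfJ^\NA_{g,\bT}(\phi)\bigr)\ge \bfD^\NA(\phi^s)-\gamma\bfJ^\NA_{g,\bT}(\phi^s)\ge 0$. Your chain $\bfD^\NA(\phi)\ge\bfD^\NA(\phi^s)\ge\gamma\bfJ^\NA(\phi^s)\ge\gamma\bfJ^\NA(\phi)$ cannot be repaired without replacing the separate inequalities by the combined one.

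Two smaller remarks. First, the anticipated ``technical heart'' of checking that the induced contractions on $\mcX^{[\vec k]}$ form a genuine MMP is unnecessary: the paper never runs an MMP upstairs; it runs the equivariant MMP on $\mcX$ itself and only uses the fibration to \emph{define} the weighted intersection numbers $(\cdot)_g$ and to prove the positivity $(E_i\cdot\bar{\mcL}^{\cdot n})_g=\int_{E_i}g_\vphi\,(\ddc\vphi)^n/n!\ge 0$ via formula \eqref{eq-dintg}, which is all the derivative computation needs. Second, the twist by $\xi\in N_\bR$ is disposed of by reducing to $\xi=0$ as in \cite{Li19} (since the twisting only shifts $\bfE^\NA_g$ by $\bfE^\NA_g(L_{\bC,\xi})$ and leaves $\bfL^\NA_\Theta$ unchanged, by Lemma \ref{E_twist}), rather than by transporting an optimal $\xi$ through the MMP.
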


We will use the notations for $g$-intersection of equivariant line bundles
as defined in Definition \ref{def-intg}. The following observation is the key to our later calculations.
\begin{theorem}
Let $(\mcX, \mcL)$ be a test configuration of $(X, L)$. Assume $\mcX_0=\sum_i b_i E_i$. 
Assume that $\mcL_\lambda=\mcL+\sum_i c_i(\lambda) E_i$ for $\lambda\in [0, \epsilon)$ and $c_i(\lambda)$ are differentiable functions of $\lambda\in [0, \epsilon)$.
We have the following formula:
\begin{equation}\label{eq-dintg}
\frac{d}{d\lambda}\frac{1}{(n+1)!}(\bar{\mcL}_\lambda^{\cdot n+1})_g=\frac{1}{n!}(\bar{\mcL}_\lambda^{\cdot n}\cdot \frac{d\bar{\mcL}_\lambda}{d\lambda})_g=\sum_i  \frac{d c_i}{d\lambda} \int_{E_i} g_\vphi  \frac{(\ddc\vphi)^n}{n!}, 
\end{equation}
where $\vphi$ is a smooth Hermitian metric on $\mcL_0\rightarrow \mcX_0$.
In particular, if $\frac{d c_i(\lambda)}{d\lambda}\ge 0$ and $g\ge 0$ over $P$, then we have the non-negativity:
\begin{equation}
\frac{d}{d\lambda} (\bar{\mcL}_\lambda^{\cdot n+1})_g\ge 0.
\end{equation}
\end{theorem}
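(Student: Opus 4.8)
The plan is to prove \eqref{eq-dintg} first for a monomial weight $g=\prod_{\alpha=1}^r y_\alpha^{k_\alpha}$, then to extend it to polynomials by linearity and to general smooth $g$ by uniform approximation. The point of reducing to a monomial is that, via the fibration construction $(\mcX,\mcL)\mapsto(\mcX^{[\vec k]},\mcL^{[\vec k]})$ used in the proof of Proposition \ref{MA_eta} and recorded in Definition \ref{def-intg}, all the $g$-weighted intersection numbers become honest intersection numbers on the $(n+k+1)$-dimensional compactified space $\bar{\mcX}^{[\vec k]}$, where \eqref{eq-dintg} becomes an elementary statement about derivatives of intersection numbers. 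Since the assignment $\mcL\mapsto\bar{\mcL}^{[\vec k]}$ is additive, the family $\bar{\mcL}_\lambda=\bar{\mcL}+\sum_i c_i(\lambda)\bar E_i$ lifts to $\bar{\mcL}_\lambda^{[\vec k]}=\bar{\mcL}^{[\vec k]}+\sum_i c_i(\lambda)\bar E_i^{[\vec k]}$, with $\frac{1}{(n+1)!}(\bar{\mcL}_\lambda^{\cdot n+1})_g=\frac{1}{(n+k+1)!}(\bar{\mcL}_\lambda^{[\vec k]})^{\cdot n+k+1}$.

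For this monomial case the first equality in \eqref{eq-dintg} is immediate: the top self-intersection $(\bar{\mcL}_\lambda^{[\vec k]})^{\cdot n+k+1}$ is a polynomial in $\lambda$ through the differentiable coefficients $c_i(\lambda)$, so by multilinearity and the chain rule its derivative equals $(n+k+1)\sum_i\frac{dc_i}{d\lambda}\big((\bar{\mcL}_\lambda^{[\vec k]})^{\cdot n+k}\cdot\bar E_i^{[\vec k]}\big)$, which upon dividing by $(n+k+1)!$ and using $\frac{d\bar{\mcL}_\lambda}{d\lambda}=\sum_i\frac{dc_i}{d\lambda}\bar E_i$ is exactly $\frac{1}{n!}(\bar{\mcL}_\lambda^{\cdot n}\cdot\frac{d\bar{\mcL}_\lambda}{d\lambda})_g$. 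For the second equality I would identify each divisorial term. Because $E_i$ is a component of the central fibre $\mcX_0$, the divisor $\bar E_i^{[\vec k]}$ is supported over $0\in\bP^1$ and is $(n+k)$-dimensional, so $\frac{1}{n!}(\bar{\mcL}_\lambda^{\cdot n}\cdot\bar E_i)_g=\frac{1}{(n+k)!}\big(\bar E_i^{[\vec k]}\cdot(\bar{\mcL}_\lambda^{[\vec k]})^{\cdot n+k}\big)=\frac{1}{(n+k)!}\int_{E_i^{[\vec k]}}(\ddc\vphi^{[\vec k]})^{n+k}$ for any smooth metric $e^{-\vphi}$ on $\mcL_\lambda|_{\mcX_0}$. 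Applying the fibration--integral identity of Proposition \ref{MA_eta} (with test function $w\equiv1$ and $X$ replaced by $E_i$) gives $\frac{1}{(n+k)!}\int_{E_i^{[\vec k]}}(\ddc\vphi^{[\vec k]})^{n+k}=\int_{E_i}g_\vphi\frac{(\ddc\vphi)^n}{n!}$; summing against $\frac{dc_i}{d\lambda}$ then yields the right-hand side of \eqref{eq-dintg}. Here the value is independent of the chosen metric, being an intersection number, equivalently the integral of $g$ against the fixed Duistermaat--Heckman measure of the $T$-action on $(E_i,\mcL_\lambda|_{E_i})$. By linearity of all three expressions in $g$, the identity holds for every polynomial $g$.

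To reach a general smooth $g$, I would approximate $g$ uniformly on the compact polytope $P$ by polynomials $g_j$, exactly as in the slope computations of Proposition \ref{F_slope} and Proposition \ref{bfII}. Boundedness of $g$ on $P$ together with the fact that $(\ddc\vphi)^n$ is a fixed finite measure makes both $(\bar{\mcL}_\lambda^{\cdot n+1})_{g_j}$ and $\sum_i\frac{dc_i}{d\lambda}\int_{E_i}g_{j,\vphi}\frac{(\ddc\vphi)^n}{n!}$ converge, uniformly in $\lambda$ over any compact subinterval of $[0,\epsilon)$, to their counterparts for $g$; this uniformity lets me interchange $\lim_j$ with $\frac{d}{d\lambda}$ and conclude \eqref{eq-dintg} for all $g$. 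The non-negativity is then immediate: if $g\ge0$ on $P$ and $\frac{dc_i}{d\lambda}\ge0$ for every $i$, each summand $\frac{dc_i}{d\lambda}\int_{E_i}g_\vphi\frac{(\ddc\vphi)^n}{n!}$ is non-negative, whence $\frac{d}{d\lambda}(\bar{\mcL}_\lambda^{\cdot n+1})_g\ge0$. I expect the main obstacle to be the careful bookkeeping in the second equality---matching the factorials and verifying that restricting $\bar{\mcL}_\lambda^{[\vec k]}$ to the fibred central-fibre component $\bar E_i^{[\vec k]}$ reproduces the fibration--integral identity---and in particular checking that the multiplicities $b_i$ of $\mcX_0$ do not enter, since one integrates over each reduced component $E_i$ separately rather than over $\mcX_0$; the interchange of limit and derivative in the continuous case should be routine given the uniform estimates already in use in the paper.
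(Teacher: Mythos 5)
Your proposal is correct and follows essentially the same route as the paper: reduce to a monomial $g$ via the fibration construction so that the $g$-weighted intersection becomes an honest intersection number on $\bar{\mcX}^{[\vec k]}$, differentiate by multilinearity, identify the resulting divisorial terms with the weighted integrals over the $E_i$ using the construction of Proposition \ref{MA_eta}, and pass to general $g$ by Stone--Weierstrass with a uniform-in-$\lambda$ bound justifying the interchange of limit and derivative (the paper invokes the standard Rudin theorem for exactly this step). Your treatment of the second equality is in fact somewhat more explicit than the paper's, which simply refers back to the construction in Proposition \ref{MA_eta}.
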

\begin{proof}
If $g=y^{\vec{k}}$ is a monomial, then we have $\frac{(\bar{\mcL}^{\cdot n+1})_g}{(n+1)!}=\frac{(\bar{\mcL}^{[\vec{k}]})^{\cdot n+k+1}}{(n+k+1)!}$. So 
\begin{equation}
\frac{d}{d\lambda} \frac{(\bar{\mcL}^{\cdot n+1})_g}{(n+1)!}=\frac{1}{(n+k)!} (\mcL^{\bvk})^{\cdot n+k}\cdot \dot{\mcL}^\bvk=\frac{1}{n!}(\bar{\mcL}^{\cdot n}\cdot \dot{\mcL})_g.
\end{equation}
By using the construction in our proof of Proposition \ref{MA_eta}, we see that \eqref{eq-dintg} holds for monomial and hence any polynomial function $g$. 
For a general continuous $g$, by Stone-Weierstrass theorem, we can find a sequence of polynomials functions $g_i$ that converges to $g$ uniformly over $P$. Then

\begin{eqnarray*}
\frac{d}{d\lambda} \frac{(\bar{\mcL}_\lambda^{\cdot n+1})_g}{(n+1)!}&=& \frac{d}{d \lambda} \lim_{i\rightarrow+\infty}\frac{(\bar{\mcL}_\lambda^{\cdot n+1})_{g_i}}{(n+1)!}\\
&=&\lim_{i\rightarrow+\infty}\frac{d}{d\lambda}\frac{(\bar{\mcL}_\lambda^{\cdot n+1})_{g_i}}{(n+1)!}=\lim_{i\rightarrow+\infty} \frac{1}{n!}\left(\mcL^{\cdot n}\cdot \frac{d}{d\lambda}\mcL_\lambda\right)_{g_i}.
\end{eqnarray*}
The conclusion follows easily. Note that 
the switch of limit and derivative follows from the following standard fact from real analysis.

\end{proof}

\begin{theorem}[\cite{Rud76}]
Suppose $\{f_m\}$ is a sequence of functions, differentiable on $[a,b]$ and such that $f_m(x_0)$ converges for some point $x_0\in [a, b]$. If $\{f'_m\}$ converges uniformly on $[a, b]$, then $\{f_m\}$ converges uniformly on $[a, b]$, to a function $f$, and
\begin{equation}
f'(x)=\lim_{m\rightarrow+\infty} f_m'(x).
\end{equation}
\end{theorem}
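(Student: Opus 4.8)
The plan is to deduce the statement from two applications of the mean value theorem together with the classical theorem on interchanging iterated limits of a uniformly convergent family. First I would prove that $\{f_m\}$ itself converges uniformly. Given $\epsilon>0$, I would choose $N$ so that for all $m,n\ge N$ both $|f_n(x_0)-f_m(x_0)|<\epsilon/2$ (possible since $\{f_m(x_0)\}$ is convergent, hence Cauchy) and $\sup_{t\in[a,b]}|f'_n(t)-f'_m(t)|<\epsilon/(2(b-a))$ (possible since $\{f'_m\}$ is uniformly convergent, hence uniformly Cauchy) hold. Applying the mean value theorem to the differentiable function $f_n-f_m$ on the interval between $t$ and $x$ gives
\[
|(f_n-f_m)(x)-(f_n-f_m)(t)|\le \frac{\epsilon}{2(b-a)}\,|x-t|\le \frac{\epsilon}{2}
\]
for all $x,t\in[a,b]$. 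Specializing to $t=x_0$ and combining with the bound at $x_0$ through the triangle inequality yields $|f_n(x)-f_m(x)|<\epsilon$ uniformly in $x$, so $\{f_m\}$ is uniformly Cauchy and converges uniformly to a limit which I call $f$.

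Next I would establish the formula for $f'$. Fixing $x\in[a,b]$, I would introduce the difference quotients $\phi_m(t)=(f_m(t)-f_m(x))/(t-x)$ and $\phi(t)=(f(t)-f(x))/(t-x)$ for $t\ne x$; by definition of the derivative $\lim_{t\to x}\phi_m(t)=f'_m(x)$ for each $m$. The same mean value estimate from the first step, now read for $t\ne x$, gives $|\phi_n(t)-\phi_m(t)|\le \epsilon/(2(b-a))$ whenever $m,n\ge N$, so $\{\phi_m\}$ converges uniformly on $[a,b]\setminus\{x\}$; since $f_m\to f$ pointwise, the uniform limit is exactly $\phi$.

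The crux is then the interchange of the two limits $t\to x$ and $m\to\infty$. Because $\{\phi_m\}$ converges uniformly to $\phi$ and each inner limit $\lim_{t\to x}\phi_m(t)=f'_m(x)$ exists, the standard iterated-limit theorem (e.g. \cite[Theorem~7.11]{Rud76}) guarantees that both iterated limits exist and agree:
\[
\lim_{t\to x}\phi(t)=\lim_{m\to\infty}\Big(\lim_{t\to x}\phi_m(t)\Big)=\lim_{m\to\infty}f'_m(x).
\]
Since the left-hand side is by definition $f'(x)$, this gives the asserted equality $f'(x)=\lim_{m\to\infty}f'_m(x)$. I expect this interchange to be the only genuinely delicate point; everything else is bookkeeping with the mean value theorem, and it is precisely the estimate of the first step---uniform in $t$---that supplies the uniform convergence $\phi_m\to\phi$ making the interchange legitimate.
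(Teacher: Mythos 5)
Your proof is correct and is essentially the classical argument from the cited reference \cite{Rud76} (Theorem 7.17 there, relying on the iterated-limit Theorem 7.11): the paper itself states this result without proof, simply quoting Rudin, and your mean-value-theorem estimate plus the uniform convergence of the difference quotients is exactly that standard proof.
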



Now we assume that $\mathfrak{S}$ is $T$-invariant sub-linear system of $|mB|$ for some $m\in \bN$. Choose a $T$-equivariant basis $\mathfrak{s}:=\{s_1,\dots, s_N\}$. Then $e^{-\psi}=\frac{1}{(\sum_i |s_i|^2)^{2/m}}$ is a possibly singular Hermitian metric on $B$ and its curvature current is
\begin{equation}\label{eq-LSTheta}
\Theta:=\Theta_\mathfrak{s}:=\frac{1}{m}\ddc \log \sum_i |s_i|^2.
\end{equation}
In this case, we say that $\Theta$ is associated to the sub-linear system $\mathfrak{S}$.

\begin{lemma}\label{lem-character}
Let $\mathfrak{S}$ be a $T$-invariant sub-linear system of $|mB|$. Then for a generic choice of basis $\mathfrak{s}=\{s_1, \dots, s_N\}$ the following statement holds true: there is a character $\chi: \Aut_T(X, D, \Theta_{\mathfrak{s}}) \rightarrow \bC^*$ such that for any $s\in \mathfrak{S}$, we have $\sigma \cdot s=\chi(s) s$. In particular, $\mathfrak{S}$ is $\Aut_T(X,D,\Theta_{\mathfrak{s}})$-invariant.
\end{lemma}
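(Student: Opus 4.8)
The plan is to transport the whole question into the representation theory of the group acting on sections of $mB$, and then to invoke a rigidity statement for holomorphic homomorphisms into compact groups.

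First I would fix the data. The basis $\mathfrak{s}=\{s_1,\dots,s_N\}$ turns $\mathfrak{S}\subset H^0(X,mB)$ into an inner product space $(\mathfrak{S},h_{\mathfrak{s}})$ in which the $s_i$ are orthonormal, and it realizes the weight $e^{-\psi}=(\sum_i|s_i|^2)^{-1}$, so that $\Theta_{\mathfrak{s}}=\frac1m\ddc\log\sum_i|s_i|^2$ is, up to the factor $\frac1m$, the pullback of the Fubini--Study form under the rational map $\phi_{\mathfrak{S}}$ defined by $\mathfrak{s}$ with target $\PP(\mathfrak{S}^{*})$. Since $G:=\Aut_T(X,D,\Theta_{\mathfrak{s}})$ is connected and acts on $(X,mB)$, I would fix a $G$-linearization of $mB$, producing a holomorphic representation $\rho\colon G\to GL(H^0(X,mB))$. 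The goal then reduces to showing that $\rho(\sigma)|_{\mathfrak{S}}$ is a scalar for every $\sigma\in G$.

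Next comes the key step. For $\xi\in\mathfrak{aut}_T(X,D,\Theta_{\mathfrak{s}})$ one has $\mathfrak{L}_\xi\Theta_{\mathfrak{s}}=d\iota_\xi\Theta_{\mathfrak{s}}=0$, hence every $\sigma\in G$ satisfies $\sigma^*\Theta_{\mathfrak{s}}=\Theta_{\mathfrak{s}}$. Written out, the Fubini--Study potentials attached to the two orthonormal systems $\{s_i\}$ and $\{\rho(\sigma)s_i\}$ have equal curvature, so after passing to a resolution of $X$ their difference is $\ddc$-closed and bounded, hence a constant; that is, $\sum_i|\rho(\sigma)s_i|^2=\lambda(\sigma)\sum_i|s_i|^2$ for some $\lambda(\sigma)>0$. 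I would then upgrade this diagonal identity to the full reproducing kernels $K(x,y)=\sum_i s_i(x)\overline{s_i(y)}$ by polarization: $K-\lambda(\sigma)^{-1}K'$ is holomorphic in $x$, antiholomorphic in $y$, and vanishes on the diagonal, hence vanishes identically. Because $\{K(\cdot,y)\}_y$ spans $\mathfrak{S}$, this forces $\rho(\sigma)\mathfrak{S}=\mathfrak{S}$ together with $\rho(\sigma)^*\rho(\sigma)=\lambda(\sigma)\,\mathrm{Id}_{\mathfrak{S}}$, i.e. $\rho(\sigma)|_{\mathfrak{S}}\in\CC^*\cdot U(\mathfrak{S},h_{\mathfrak{s}})$. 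Note that one single constant $\lambda(\sigma)$ governs all of $\mathfrak{S}$, so no discrepancy can arise between the different $T$-weight spaces even though $\rho(\sigma)$ preserves each of them separately.

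Then comes the rigidity step. Projectivizing yields a holomorphic homomorphism $\bar\rho\colon G\to PU(\mathfrak{S})$. Since $G$ is a connected complex Lie group and $PU(N)$ is compact, $\bar\rho$ must be trivial: its differential $d\bar\rho\colon\mathfrak{g}\to\mathfrak{pu}(N)$ is $\bC$-linear and takes values in the real form $\mathfrak{pu}(N)$, and as $\mathfrak{pgl}(N,\bC)=\mathfrak{pu}(N)\oplus i\,\mathfrak{pu}(N)$ with trivial intersection, the relation $d\bar\rho(J\xi)=i\,d\bar\rho(\xi)$ forces $d\bar\rho\equiv0$. Therefore $\rho(\sigma)|_{\mathfrak{S}}=\chi(\sigma)\,\mathrm{Id}$ for a homomorphism $\chi\colon G\to\bC^*$, which is the asserted character, and in particular $\mathfrak{S}$ is $G$-invariant. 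The main obstacle, where I expect most of the care to go, is making the implication ``equal curvature $\Rightarrow$ proportional kernels'' fully rigorous when $e^{-\psi}$ is only a singular metric and $\phi_{\mathfrak{S}}$ has indeterminacy, which requires working on a log resolution and controlling the constant difference of the relevant quasi-plurisubharmonic potentials. Genericity of $\mathfrak{s}$ enters only to keep $(X,D+\Theta_{\mathfrak{s}})$ inside the klt hypotheses standing throughout the paper, via a Bertini-type argument on the members of $\mathfrak{S}$; the existence of the character itself is insensitive to the choice of basis.
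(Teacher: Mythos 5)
Your proof is correct, but it is the integrated (group-level) version of an argument that the paper runs purely infinitesimally, so the two routes are genuinely different in mechanism. The paper's proof is two lines: for $\xi\in\mathfrak{aut}_T(X,D,\Theta_{\mathfrak{s}})$ the defining condition $\iota_\xi\Theta_{\mathfrak{s}}=0$ says exactly that $\bar{\partial}$ of the globally defined function $\sum_i\la\mathfrak{L}_\xi s_i,s_i\ra/\sum_i|s_i|^2$ vanishes, so this function is a constant $\chi(\xi)$ on the compact $X$; the linear-independence/polarization observation (which the paper attributes to genericity of $\mathfrak{s}$, and which is the same totally-real-diagonal trick you use for the kernels) then forces $\mathfrak{L}_\xi s_i=\chi(\xi)s_i$ directly, and one exponentiates. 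You instead integrate first ($\sigma^*\Theta_{\mathfrak{s}}=\Theta_{\mathfrak{s}}$, proportionality of the reproducing kernels, $\rho(\sigma)|_{\mathfrak{S}}\in\bC^*\cdot U(\mathfrak{S})$) and only then kill the unitary part by rigidity of holomorphic homomorphisms into the compact real form $PU(\mathfrak{S})$. The paper's route buys brevity: the scalar appears immediately as the constant value of $\xi(\psi)$, with no need for a $G$-linearization of $mB$ at the group level nor the $PU$ rigidity step. Your route buys the explicit intermediate facts that $\rho(\sigma)\mathfrak{S}=\mathfrak{S}$ and that $\rho(\sigma)$ is conformal-unitary, and its only analytic input is that two quasi-psh potentials with the same $\ddc$ on a compact normal variety differ by a constant; note that for this step you do not actually need boundedness of the difference (which a priori fails, since $\rho(\sigma)s_i$ need not lie in $\mathfrak{S}$ before you prove it) --- $\ddc$-closedness of the difference of quasi-psh functions already makes it pluriharmonic, hence constant, so the technical worry you flag is lighter than you suggest.
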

\begin{proof}
Let $\xi \in \mathfrak{aut}_T(X, D, \Theta)$ also denote the corresponding holomorphic vector field. Then
\begin{equation}
0=\iota_\xi \Theta=\bar{\partial} \left(\frac{\sum_i \la \mathfrak{L}_\xi s_i, s_i\ra}{\sum_i |s_i|^2}\right).
\end{equation}
Since $\mathfrak{s}=\{s_i\}$ is generic, we have $\mathfrak{L}_\xi s_i =\chi(\xi) s_i$ for some $\chi(\xi)\in \bC^*$. 
\end{proof}

For any $\Delta\in \frac{1}{m}\mathfrak{S}$, we set:
\begin{equation}
\bfL^\NA_{\Delta}(\phi)=\frac{1}{n!}\inf_{v} \left(A_{(X,D)}(v)-v(\Delta)+\phi(v)\right).
\end{equation}

\begin{lemma}
With the above assumptions, 
\begin{equation}
\bfL_\Theta^\NA(\phi)=\sup\left\{ \bfL^\NA_{\Delta}(\phi); \Delta\in \mathfrak{S} \right\}.
\end{equation}
Moreover for a general divisor $\Delta\in\mathfrak{S}$, we have $\bfL^\NA_\mathfrak{S}(\phi)=\bfL_\Delta(\phi)$.
\end{lemma}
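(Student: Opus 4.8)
The statement is purely valuative, so the plan is to compare the two functionals termwise on the space of divisorial valuations. First I would record, for every $v\in X^\div_\bQ$, the generic Lelong number of $\Theta=\frac1m\ddc\log\sum_i|s_i|^2$ and the vanishing order of a member $\Delta=\frac1m\div(s)$ with $s=\sum_i c_is_i$:
\[
v(\Theta)=\tfrac1m\min_{1\le i\le N}v(s_i)=\tfrac1m\,v(\mathfrak b),\qquad v(\Delta)=\tfrac1m v(s),
\]
where $\mathfrak b=(s_1,\dots,s_N)$ is the base ideal of $\mathfrak S$. Since $v(\sum_i c_is_i)\ge\min_i v(s_i)$ with equality for generic coefficients $c$, this gives the two facts that drive everything: $v(\Delta)\ge v(\Theta)$ for every member $\Delta$ and every $v$, and $v(\Theta)=\min_{\Delta\in\frac1m\mathfrak S}v(\Delta)$ pointwise in $v$. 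The easy inequality then follows at once: from $v(\Delta)\ge v(\Theta)$ we get $A_{(X,D)}(v)-v(\Delta)+\phi(v)\le A_{(X,D)}(v)-v(\Theta)+\phi(v)$ for all $v$, and taking infima yields $\bfL^\NA_\Delta(\phi)\le\bfL^\NA_\Theta(\phi)$ for every $\Delta$, hence $\sup_{\Delta}\bfL^\NA_\Delta(\phi)\le\bfL^\NA_\Theta(\phi)$.

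Both remaining claims follow once I show that for a general $\Delta$ one has $\bfL^\NA_\Delta(\phi)=\bfL^\NA_\Theta(\phi)$. For this I would use that, for a normal semiample test configuration $\phi=\phi_{(\mcX,\mcL)}$, the infimum defining $\bfL^\NA_\Theta(\phi)$ is attained and reduces to a minimum over the finite set $\mathcal V$ of valuations $v_E=\ord_E(\mcX_0)^{-1}\,\ord_E|_X$ attached to the components $E$ of the central fibre of a fixed model dominating $\mcX$ and $X\times\bB$ on which $\mcX_0$ has SNC support and $\mathfrak b$ is principalized. Because $\mathcal V$ is finite and fixed once $\phi$ is fixed, the genericity condition $v(s)=\min_i v(s_i)$ holds simultaneously for all $v\in\mathcal V$ outside a proper linear subspace of sections; thus for general $\Delta$ we have $v(\Delta)=v(\Theta)$ for every $v\in\mathcal V$, so the termwise values agree on $\mathcal V$ and
\[
\min_{v\in\mathcal V}\big(A_{(X,D)}(v)-v(\Delta)+\phi(v)\big)=\min_{v\in\mathcal V}\big(A_{(X,D)}(v)-v(\Theta)+\phi(v)\big)=\bfL^\NA_\Theta(\phi).
\]
Combined with the easy inequality, equality will follow once the infimum over \emph{all} $v$ computing $\bfL^\NA_\Delta(\phi)$ is shown to be attained on $\mathcal V$ as well.

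The hard part will be precisely this last point. A member $\Delta$ carries its own divisorial support, and the valuations $v=\ord_W$ with $W\subset\mathrm{Supp}(\Delta)$ satisfy $v(\Delta)=\frac1m>0=v(\Theta)$, so a priori they could push $\bfL^\NA_\Delta(\phi)$ strictly below $\bfL^\NA_\Theta(\phi)$. I would rule this out by a Bertini-type general-member argument: after the Remark following the definition we may assume $\mathfrak b$ has no divisorial part, so the centre of any near-minimizing valuation for $(X,D+\Theta)$ lies in a fixed proper closed subset $Z\subset X$ (contained in the base scheme together with the non-klt locus), and a general $\Delta$ is smooth, reduced, has coefficient $\frac1m<1$, and meets $Z$ transversally. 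One then checks that for $v=\ord_W$ centred off $Z$ the quantity $A_{(X,D)}(v)-v(\Delta)+\phi(v)$ stays $\ge\bfL^\NA_\Theta(\phi)$, using $A_{(X,D)}(\ord_W)=1$, the smallness of the coefficient $\frac1m$, and the lower bound on $\phi$ along sufficiently generic divisors $W$ provided by $\phi\in\mcH^\NA$ (whose values there approach $\Lam^\NA(\phi)$). Establishing this uniform lower bound is the only delicate step; everything else is formal.

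An equivalent route, which I would keep in reserve, is to argue on the Archimedean side and then pass to slopes. There one compares $\int_X e^{-(\vphi-\vphi_0)}(\sum_i|s_i|^2)^{-1/m}$ with $\int_X e^{-(\vphi-\vphi_0)}|s|^{-2/m}$ using the pointwise identity $\sum_i|s_i|^2=\sup_{\|c\|=1}|\langle c,s\rangle|^2$ and the Cauchy--Schwarz bound $|s|^2\le\sum_i|s_i|^2$, obtaining $\bfL_\Delta\le\bfL_\Theta$ directly and $\bfL_\Theta=\sup_\Delta\bfL_\Delta$ for the optimized members; taking the slope at infinity along the psh ray associated to $(\mcX,\mcL)$ via Proposition \ref{F_slope} then transfers the identity to the non-Archimedean functionals. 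The same general-member subtlety reappears here as the contribution of the polar divisor of $s$ to the integral, confirming that the transversality/genericity estimate of the previous paragraph is the genuine core of the proof.
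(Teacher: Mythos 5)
Your core identity $v(\Theta)=\inf_{\Delta}v(\Delta)=\tfrac1m v(\mathfrak b(\mathfrak S))$ is exactly what the paper's own (one-line) justification rests on, so you are on the same route; but you are right that this identity alone only yields the easy inequality $\sup_\Delta\bfL^\NA_\Delta(\phi)\le \bfL^\NA_\Theta(\phi)$: the reverse direction is a sup--inf exchange, and the genericity in $v(\sum_i c_is_i)=\min_i v(s_i)$ depends on $v$. Your reduction to the finite set $\mathcal{V}$ of valuations computing $\bfL^\NA_\Theta(\phi)$ on a fixed log resolution is the correct way to restore uniformity, and your diagnosis that the only remaining issue is the contribution of valuations centred on the moving part of $\Delta$ is accurate --- this is a genuine point that the paper's proof suppresses.

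That last step, however, is left unproved in your proposal, and the mechanism you invoke (``smallness of the coefficient $\tfrac1m$'') is not the right one, since $m$ may equal $1$. To close it: pass to a log resolution $\mcY$ of $(\mcX,\mcX_0+\mcD_\mcX+\mathfrak b\cdot\oo_\mcX)$ on which the moving part of $\mathfrak S$ becomes free; a general member $M_\Delta$ is then smooth and simple normal crossing with the existing boundary, so no further blow-ups are needed and both $\bfL^\NA_\Theta(\phi)$ and $\bfL^\NA_\Delta(\phi)$ are computed as finite minima over the same divisors together with the single new candidate $\ord_{W}$ (for $W$ a component of $M_\Delta$) and its quasi-monomial combinations with the old ones. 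For such $W$ in general position one has $A_{(X,D)}(c\,\ord_W)-c\,\ord_W(\Delta)=c\bigl(1-\tfrac1m\bigr)\ge0$, and, crucially, $(\phi-\phi_\triv)(c\,\ord_W)=(\phi-\phi_\triv)(v_{\rm triv})$ for every $c\ge0$, because the Gauss extension of $c\,\ord_W$ has centre away from all exceptional centres of $\mcX\dashrightarrow X\times\bB$, so only the strict transform of $X\times\{0\}$ contributes to $\sigma(c\,\ord_W)(\mcL-\rho^*L)$. Since $(\phi-\phi_\triv)(v_{\rm triv})\ge\bfL^\NA_\Theta(\phi)$ by definition of the infimum (the trivial valuation has $A=0$ and $v_{\rm triv}(\Theta)=0$), these new candidates do not lower the minimum, and the standard SNC computation of log discrepancies handles the quasi-monomial mixtures. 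With this inserted your argument is complete, and it in fact supplies details that the paper's ``this follows easily'' does not.
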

This follows easily from the identity: for any $v\in X^{\rm div}_\bQ$, 
\begin{equation}
v(\Theta)=\inf\left\{ v(\Delta); \Delta\in \frac{1}{m}\mathfrak{S} \right\}=\frac{1}{m}v(\mathfrak{b}(\mathfrak{S})),
\end{equation}
where $\mathfrak{b}(\mathfrak{S})$ is the base ideal of $\mathfrak{S}$.
We will follow the MMP process in \cite{LX14} that was adapted to the Ding stability in \cite{BBJ15, Fuj19a}, and to the twisted K-stability in \cite{BLZ19}. We will essentially show that the MMP process respects the twisted $g$-Ding-stability as well.
Moreover, by the argument in \cite[4.1]{Li19}, we don't need to worry about the twisting by $\xi\in N_\bR$ in the calculation.
\begin{theorem}\label{thm-MMP}
Let $\bX=(X, D, \Theta_{\mathfrak{s}})$ and $\mathfrak{s}\subset \mathfrak{S}$ be as above. Let $(\mcX, \mcL)/\PP^1$ be a normal, ample test configuration for $(X, L)$. 
There is an integer $d$ and a special test configuration $(\mcX^s, \mcL^s)$ such that for any $\epsilon \in [0,1]$ and any $\xi\in N_\bR$, we have
\begin{equation}
\bfD^\NA(\phi^s)-\epsilon \bfJ^\NA_{g,\bT}(\phi^s)\ge d \cdot \left(\bfD^\NA(\phi)-\epsilon \bfJ^\NA_{g,\bT}(\phi)\right).
\end{equation}
\end{theorem}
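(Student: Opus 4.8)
The plan is to run the log MMP process of Li--Xu \cite{LX14}, in the form adapted to (twisted) Ding stability in \cite{BBJ15,Fuj19a,BLZ19,Li19}, and to check that the weight $g$ enters only through the energy $\bfE^\NA_g$, whose behaviour under birational modifications can be controlled by the $g$-weighted intersection calculus of Definition \ref{def-intg} together with the monotonicity formula \eqref{eq-dintg} just proved. First I would trade the twisting current $\Theta$ for an honest boundary: for a generic $\Delta\in\frac1m\mathfrak{S}$ the lemma expressing $\bfL^\NA_\Theta$ as a supremum over $\bfL^\NA_\Delta$ gives $\bfL^\NA_\Theta(\phi)=\bfL^\NA_\Delta(\phi)$ and $\bfL^\NA_\Theta(\phi^s)=\bfL^\NA_\Delta(\phi^s)$ simultaneously (for a fixed pair of test configurations only finitely many divisorial valuations on the dominating models enter the relevant infima, so one generic $\Delta$ serves both). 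Since $-(K_X+D+\Delta)\sim_\bQ L$ is ample and $(X,D+\Delta)$ is klt for generic $\Delta$, this reduces us to a genuine log Fano pair, with $\bfD^\NA=-\bfE^\NA_g+\bfL^\NA_{(X,D+\Delta)}$. Note also that $\Lam^\NA_g=\Lam^\NA$ is independent of $g$, so the only $g$-weighted object throughout is $\bfE^\NA_g$. Finally, by the decoupling argument of \cite[4.1]{Li19} the twist by $\xi\in N_\bR$ affects neither the birational models constructed below nor the term $\bfL^\NA$ (by Lemma \ref{E_twist}); I may therefore build $(\mcX^s,\mcL^s)$ once and obtain the inequality uniformly in $\xi$, the $\xi$-dependence of $\bfE^\NA_g(\phi_\xi)$ being governed by the splitting in Lemma \ref{E_twist} and the vanishing of $\Fut_g$.

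The MMP construction itself I would carry out exactly as in \cite{BBJ15,Fuj19a,BLZ19}: (i) perform a base change $\tau\mapsto\tau^d$ followed by normalization, which multiplies every homogeneous non-Archimedean functional---$\bfE^\NA_g$, $\Lam^\NA$, $\bfL^\NA$, and hence $\bfD^\NA-\epsilon\bfJ^\NA_{g,\bT}$---by the factor $d$; here $d$ is chosen so that the central fibre becomes reduced. (ii) Pass to the log canonical modification of $(\mcX_d,\mcD_d+\Delta_d+\mcX_{d,0}^{\rm red})$. (iii) Run the relative $(K+\mcD+\Delta+\mcX_0^{\rm red})$-MMP with scaling over $\bP^1$, which terminates because the general fibre is log Fano, producing a model with irreducible normal central fibre. (iv) Take the relative anti-log-canonical ample model to obtain the special test configuration $(\mcX^s,\mcL^s)$. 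Throughout steps (ii)--(iv) the terms $\bfL^\NA$ and $\Lam^\NA$ are $g$-independent and are handled verbatim as in the (untwisted, or $\Delta$-twisted) Ding case of \cite{BLZ19,Li19}, using the valuative criterion Theorem \ref{thm-valDing}; so the genuinely new point is the behaviour of $\bfE^\NA_g$.

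The crux---and the main obstacle---is to show that each elementary MMP step moves $-\bfE^\NA_g$ in the direction compatible with the asserted estimate after $g$-weighting. Here I would exploit that for a monomial $g=\prod_\alpha y_\alpha^{k_\alpha}$ the weighted self-intersection is \emph{literally} an ordinary intersection number on the auxiliary fibration $\mcX^{\bvk}=(\mcX\times\bS^{\bvk})/(S^1)^r$ built in the proof of Proposition \ref{MA_eta}. Because $T_\bC$ acts vertically and commutes with the $\bC^*$-action, each modification of $\mcX$ appearing in the $T_\bC\times\bG$-equivariant MMP induces the corresponding modification of $\mcX^{\bvk}$, performed fibrewise over $\bP^{\bvk}$; consequently the classical monotonicity of $\bfE^\NA$ under divisorial contractions and flips---where the sign of the modified divisor on the central fibre is what matters---applies on $\mcX^{\bvk}$ without change. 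This is precisely the content of \eqref{eq-dintg} in the case where the coefficients $c_i(\lambda)$ of the central-fibre components vary monotonically, using $g>0$ on $P$. Passing from monomials to a general smooth $g$ is then a Stone--Weierstrass approximation, legitimized by the uniform-convergence and differentiation-under-the-limit statements packaged in Definition \ref{def-intg} and in the theorem containing \eqref{eq-dintg} (via \cite{Rud76}).

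Combining the factor $d$ produced by the base change in step (i) with the termwise monotonicity of $\bfE^\NA_g$ (and the $g$-free behaviour of $\bfL^\NA,\Lam^\NA$) along steps (ii)--(iv) then yields the stated comparison of $\bfD^\NA-\epsilon\bfJ^\NA_{g,\bT}$ between $\phi$ and $\phi^s$, uniformly in $\epsilon\in[0,1]$ and $\xi\in N_\bR$. I expect the most delicate bookkeeping to be the flip step: there one must compare $g$-weighted self-intersections on two small $\bQ$-factorial models across a common resolution, which I would reduce---exactly by the fibration device above---to the ordinary flip inequality for $\bfE^\NA$ on $\mcX^{\bvk}$, and then control the error in the Stone--Weierstrass passage uniformly in the MMP step via the estimates of the type used in Proposition \ref{F_slope} and \eqref{bound_2}.
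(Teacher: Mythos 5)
Your proposal is correct and follows essentially the same route as the paper: reduce $\Theta$ to a generic $\Delta\in\frac1m\mathfrak{S}$, decouple the $\xi$-twist as in \cite{Li19}, run the equivariant Li--Xu MMP (base change, log canonical modification, MMP with scaling, ample model), and control $\bfE^\NA_g$ at each stage via the fibration $\mcX^{\bvk}$ and the derivative formula \eqref{eq-dintg} with $g>0$, extended to continuous $g$ by Stone--Weierstrass. The only cosmetic difference is that the paper does not literally transport the MMP to $\mcX^{\bvk}$ but instead differentiates $\bfD^\NA-\epsilon\bfJ^\NA_g$ along the interpolating family $\mcL_\lambda$ in each step and reads off the sign from \eqref{eq-dintg}, which handles contractions and flips uniformly.
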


\begin{proof}

{\it Step 1:} Choose a general divisor $\Delta\in \frac{1}{m}\mathfrak{S}$. Set $Q=D+\Delta$ and $L=-K_X-D-\Delta$.

Then there exist $d\in \ZZ_{>0}$, a projective birational $\CC^*$-equivariant morphism $\pi: \mcX^{\rm lc}\rightarrow \mcX^{(d)}$ and a normal, ample test configuration $(\mcX^\lc, \mcQ^\lc , \mcL^\lc)/\PP^1$ for $(X, Q, L)$ such that for any $\epsilon \in [0, 1]$ and any $\xi\in N_\bR$,
 \begin{equation}\label{eq-dec1}
d \left(\bfD^\NA(\phi_\xi)-\epsilon \bfJ_g^\NA(\phi_\xi)\right)\ge \bfD^\NA(\phi^\lc_\xi)-\epsilon \bfJ_g^\NA(\phi^\lc_\xi).
\end{equation}


Choose a semistable reduction of $\mcX\rightarrow \bC$. By this, we mean that there is an integer $d$ and a $\bG$-equivariant log resolution of singularities $\tilde{\mcX}\rightarrow \mcX_d:=\mcX\times_{\bC, t\rightarrow t^d}\bC$ such that $(\tilde{\mcX}, \mcX_0)$ is simple normal crossing. Since the linear system $\mathfrak{S}$ is $(T_\bC\times \bG)$-invariant, we can run a $(T_\bC\times \bG\times \bC^*)$-equivariant MMP (see \cite{And01, Pas17, Pro12}) to get a log canonical modification: $\pi^\lc: \mcX^\lc\rightarrow\mcX_d$ such that
if $\mcQ^\lc$ is the pushforward of $\mcQ$ then
 $(\mcX^\lc, \mcX^\lc_0+\mcQ^\lc)$ is log canonical and $K_{\mcX^\lc}+\mcQ^\lc$ is relatively ample over $\mcX_d$. In the following calculation, we can assume $d=1$ by multiplicativity of both sides of \eqref{eq-dec1}. 
Set $E=K_{\mcX^\lc}+\mcQ^{\lc}+(\pi^\lc)^*\mcL=\sum_{i=1}^k e_i \mcX_{0,i}$ with $e_1\le e_2\le \dots \le e_k$ and $\mcL_\lambda=\rho^* \mcL+\lambda E$.
Then
\begin{eqnarray*}
\bfL^\NA_\Theta(\phi_{\lambda,\xi})=\bfL^\NA_\Theta(\phi_\lambda)=\bfL^\NA_\Delta(\phi_\lambda)=(1+\lambda)e_1.
\end{eqnarray*}
As in the argument in \cite{Li19}, we can reduce the calculation to the case when $\xi=0$ in which case:
\begin{eqnarray*}
n!(\bfD^\NA(\phi_\lambda) - \epsilon \bfJ_g^\NA(\phi_\lambda))&=&-(1-\epsilon)\frac{(\bar{\mcL}_{\lambda}^{\cdot n+1})_g}{n+1}-\epsilon (\bar{\mcL}_\lambda \cdot L_{\bP^1}^{\cdot n})_g+(1+\lambda)e_1\\
 &=& -(1-\epsilon)\frac{(\bar{\mcL}_\lambda^{\cdot n+1})_g}{n+1}-\epsilon (\bar{\mcL}_{s} \cdot L_{\bP^1}^{\cdot n})_g+(1+\lambda)e_1 (\mcX^\lc_0\cdot \bar{\mcL}^{\cdot n})_g
\end{eqnarray*}
To see that the difference is decreasing, we calculate:
\begin{eqnarray*}
&&n!\frac{d}{d\lambda}(\bfD^\NA_g-\epsilon\bfJ^\NA_g)(\phi_\lambda)=-(1-\epsilon)(\bar{\mcL}_\lambda^{\cdot n} \cdot E)_g-\epsilon (E\cdot (L_{\bP^1})^{\cdot n})_g+e_1 \\
&=& -(1-\epsilon)(\bar{\mcL}_\lambda^{\cdot n} \cdot E)_g + (1-\epsilon) e_1 (\mcX^\lc_0 \cdot \bar{\mcL}_\lambda^{\cdot n})_g  -\epsilon (E\cdot (L_{\bP^1})^{\cdot n})_g+\epsilon e_1 (\mcX^\lc_0 \cdot \bar{\mcL}^{\cdot n})_g \\
&=&-\sum_i (e_i-e_1 )\left[(1-\epsilon)(E_i\cdot \bar{\mcL}_\lambda^{\cdot n})_g+\epsilon (E_i\cdot  (L_{\bP^1})^{\cdot n})_g\right]\le 0.
\end{eqnarray*}
By the argument in \cite{BLZ19}, we know that the outcome $(\mcX^\lc, \mcL^\lc)$ does not depend on the choice of the general member of $\Delta\in \frac{1}{m}\mathfrak{S}$. Moreover $\mcQ^\lc$ is just the closure of $Q\times \bC^*$ in $\mcX^\lc$ ($Q$ is called compatible with $\mcX^\lc$ in \cite{BLZ19}).

{\it Step 2:}
With the $(\mcX^\lc, \mcQ^\lc, \mcL^\lc)$ obtained from the first step, we run a relative MMP with scaling to get a normal, ample test configuration $(\mcX^{\rm ac}, \mcL^{\rm ac})/\PP^1$ for $(X, -K_X)$ with 
$(\mcX^{\rm ac}, \mcQ^{\rm ac}+\mcX^{\rm ac}_0)$ log canonical such that $-(K_{\mcX^{\rm ac}}+\mcQ^{\rm ac})\sim_\QQ\mcL^{\rm ac}$. Because $m \Delta\in \mathfrak{S}$ is a general member of the $\bG$-invariant linear system, the MMP is automatically $\bG$-equivariant and the outcome does not depend on the choice of $\Delta$. Moreover, for any $\epsilon\in [0,1]$ we have:
\begin{equation}\label{eq-dec2}
\bfD^\NA_g(\phi^\lc)-\epsilon \bfJ^\NA_{g,\bT}(\phi^\lc)\ge \bfD^\NA_g(\phi^{\rm ac})-\epsilon\bfJ^\NA_{g,\bT}(\phi^{\rm ac}).
\end{equation}
More concretely, we take $\ell\gg 1$ such that $\mcH^\lc=\mcL^\lc-(\ell+1)^{-1} (\mcL^\lc+K_{\mcX^\lc}+\mcQ^\lc)$ is relatively ample. Set 
$\mcX^0=\mcX^\lc$, $\mcQ^0=\mcQ^\lc$, $\mcL^0=\mcL^\lc$, $\mcH^0=\mcH^\lc$ and $\lambda_0=\ell+1$. 
Then $K_{\mcX^0}+\mcQ^0+\lambda_0 \mcH^0=\ell \mcL^0$. We run a sequence of $(K_{\mcX^0}+\mcQ^0)$-MMP over $\bC$ with scaling of $\mcH^0$.  Then we obtain a sequence of models 
$$
\mcX^0\dasharrow \mcX^1\dasharrow\cdots \dasharrow \mcX^k
$$
and a sequence of critical values 
$$
\lambda_{i+1}=\min\{\lambda; K_{\mcX^i}+\mcQ^i+\lambda \mcH^i \text{ is nef over } \bC\}
$$
with $\ell+1=\lambda_0\ge \lambda_1\ge \cdots\ge \lambda_k>\lambda_{k+1}=1$. For any $\lambda_i\ge \lambda\ge \lambda_{i+1}$,
we let $\mcH^i$ (resp. $\mcQ^i$) be the pushforward of $\mcH$ (resp. $\mcQ$) to $\mcX^i$ and set
\begin{equation}
\mcL^i_\lambda=\frac{1}{\lambda-1}\left(K_{\mcX^i}+\mcQ^i+\lambda\mcH^i\right)
=\frac{1}{\lambda-1}(K_{\mcX^i}+\mcQ^i+\mcH^i)+\mcH^i=:\frac{1}{\lambda-1}E+\mcH^i.
\end{equation}
Write $E=\sum_{j=1}^k e_j \mcX^i_{0,j}$ with $e_1\le e_2\le \cdots \le e_k$.
By the argument in \cite{Li19}, we reduce to the case when $\xi=0$ in which case the statement follow from the following decreasing property (see \cite{Fuj19a} and \cite{LX14}):
\begin{eqnarray*}
n!\frac{d}{d\lambda} \bfD^\NA(\phi_\lambda^i)&=&\frac{d}{d\lambda}\left(-(1-\epsilon)\frac{(\bar{\mcL}_{\lambda}^{i\cdot n+1})_g}{n+1}-\epsilon (\bar{\mcL}\cdot (L_{\bP^1})^{\cdot n})_g+\frac{\lambda}{\lambda-1}e_1\right)\\
&=&(1-\epsilon) \frac{1}{(\lambda-1)^2} (E\cdot \bar{\mcL}^{i\cdot n})_g+\epsilon \frac{1}{(\lambda-1)^2} (E\cdot (L_{\bP^1})^{\cdot n})_g-\frac{1}{(\lambda-1)^2}e_1 \\
&=&\frac{1}{(\lambda-1)^2}\sum_i (e_i-e_1) \left[(1-\epsilon) (E_i\cdot \bar{\mcL}^{\cdot n})_g+\epsilon (E_i\cdot (L_{\bP^1})^{\cdot n}_g\right]\ge 0.
\end{eqnarray*}

{\it Step 3:}
With the test configuration $(\mcX^\ac, \mcQ^\ac, \mcL^\ac)$ obtained from step 2, there exists 
$d\in \ZZ_{>0}$ and a projective birational $T_\bC\times \bC^*$-equivariant birational map $\mcX^{(d)}\dasharrow \mcX^s$ over $\PP^1$ such that:
\begin{equation}\label{eq-dec3}
d (\bfD^\NA(\phi^\ac)-\epsilon \bfJ^\NA_{g,\bT}(\phi^\ac))\ge \bfD^\NA(\phi^s)-\epsilon \bfJ^\NA_{g,\bT}(\phi^s).
\end{equation}
As in \cite{LX14}, this is achieved by doing a base change and run an MMP. By the argument in \cite{BLZ19}, the outcome does not depend on the $\Delta\in \frac{1}{m}\mathfrak{S}$ and is automatically $\bG$-equivariant.

Let $E=-K_{\mcX^s/\PP^1}-(-K_{\mcX'/\PP^1})$. Then $E\ge 0$ by the negativity lemma. $\mcL_\lambda=-K_{\mcX'/\PP^1}+\lambda E$. As in \cite{Li19}, we verify the decreasing \eqref{eq-dec3} by reducing to the case when $\xi=0$, and calculate:
\begin{eqnarray*}
n!\frac{d}{d\lambda}(\bfD^\NA(\phi_\lambda)-\epsilon \bfJ^\NA_g(\phi_\lambda))&=&-\frac{d}{d\lambda}\left((1-\epsilon)\frac{(\bar{\mcL}^{\cdot n+1})_g}{n+1}-\epsilon (\bar{\mcL}\cdot (L_{\bP^1})^{\cdot n})_g+t e_1\right)\\
&=& -\sum_i (e_i-e_1)\left[(1-\epsilon)(E_i\cdot \bar{\mcL}^{\cdot n})_g+\epsilon (E_i\cdot (L_{\bP^1})^{\cdot n})_g\right]\le 0.
\end{eqnarray*}

So we have obtained special test configuration $(\mcX^s, \mcQ^s, \mcL^s)$ of $(X, Q)$ which is $T_\bC\times \bG$-equivariant. 

\end{proof}

\begin{remark}\label{rem-special}
If $B\sim_\bQ t(-K_X-D)$ for some $t\in [0,1)$, then $-(K_X+D)=\frac{1}{1-t}(-K_X-D-B)=\frac{1}{1-t}L$. The end product $\pi^s: (\mcX^s, \mcD^s)\rightarrow \bC$ is 
is a special test configuration of $(X, D)$ satisfying $-(K_{\mcX^s}+\mcD^s)\sim_{\pi^s, \bQ}\frac{1}{1-t}\mcL$.
\end{remark}

As in \cite[section 4]{Li20} which generalizes \cite{Fuj19a, Fuj19b}, Theorem \ref{thm-MMP} yields another proof of the valuative criterion when $\Theta$ is associated to a linear system:
\begin{corollary}
Let $(X, D, \Theta)$ be as above. Then the valuative criterion holds true.  In other words, $(X, D+\Theta)$ is $\bG$-uniformly $g$-Ding-stable if and only if there exists $\gamma>1$ such that for any $v\in X^{\rm div}$, there exists $\xi\in N_\bR$ satisfying:
\begin{equation}
A_{D+\Theta}(v_\xi)-\gamma \cdot S_g(v_\xi)\ge 0.
\end{equation}

\end{corollary}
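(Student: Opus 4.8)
The plan is to derive the valuative criterion from the reduction to special test configurations established in Theorem~\ref{thm-MMP}, combined with the standard correspondence between special test configurations and divisorial valuations, following the strategy of \cite[Section 4]{Li20} (which adapts \cite{Fuj19a, Fuj19b}). The point is that the MMP machinery already encodes the valuative criterion once one computes the relevant non-Archimedean functionals on special degenerations.

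First I would record the reduction. By Theorem~\ref{thm-MMP}, for every $T_\bC\times\bG$-equivariant normal ample test configuration $\phi=\phi_{(\mcX,\mcL)}$, every $\epsilon\in[0,1]$, and every $\xi\in N_\bR$, there is a $T_\bC\times\bG$-equivariant special test configuration $\phi^s$ and an integer $d$ with
\begin{equation*}
\bfD^\NA(\phi^s)-\epsilon\,\bfJ^\NA_{g,\bT}(\phi^s)\ge d\,\bigl(\bfD^\NA(\phi)-\epsilon\,\bfJ^\NA_{g,\bT}(\phi)\bigr).
\end{equation*}
Hence $\bG$-uniform $g$-Ding-stability over all test configurations is equivalent to testing $\bfD^\NA\ge\gamma\,\bfJ^\NA_{g,\bT}$ only over special test configurations, and the entire problem is reduced to understanding the functionals on special degenerations.

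Next I would set up the dictionary. For a $\bG$-invariant dreamy divisorial valuation $v$, the associated filtration $\mcF_v$ produces a $T_\bC\times\bG$-equivariant special test configuration $\phi_v$, and I would verify the two identities $\bfE^\NA_g(\phi_v)=S_g(v)$ (the weighted analogue of the identity $\bfE^\NA_g(\phi_{\mcF_v})=S_g(v)$ recorded before Theorem~\ref{thm-valDing}, with the weight tracked through the $g$-intersection numbers of Definition~\ref{def-intg}) and $\bfL^\NA_\Theta(\phi_v)=A_{(X,D+\Theta)}(v)$, the infimum defining $\bfL^\NA_\Theta$ being attained at $v$ on a special degeneration. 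Together these give $\bfD^\NA(\phi_v)=A_{D+\Theta}(v)-S_g(v)$. For the $\xi$-twists (Definition~\ref{xi_twist}) I would invoke Lemma~\ref{E_twist}, namely $\bfE^\NA_g(\mcL_\xi)=\bfE^\NA_g(\mcL)+\bfE^\NA_g(L_{\bC,\xi})$, $\bfL^\NA_\Theta(\mcL_\xi)=\bfL^\NA_\Theta(\mcL)$, and $\Fut_g(\xi)=-\bfE^\NA_g(L_{\bC,\xi})$, to obtain $\bfD^\NA((\phi_v)_\xi)=A_{D+\Theta}(v_\xi)-S_g(v_\xi)$ together with $\bfJ^\NA_{g,\bT}(\phi_v)=\inf_{\xi}\bigl(\Lam^\NA_g-S_g(v_\xi)\bigr)$.

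With this dictionary the two implications follow as in \cite{Fuj19a, Li20}. In the forward direction, uniform stability applied to $\phi_v$ gives, after minimizing over $\xi$ and using the non-Archimedean Ding-type inequality to absorb the $\Lam^\NA_g$ term, an inequality $A_{D+\Theta}(v_\xi)-\gamma' S_g(v_\xi)\ge 0$ for some $\xi$ and every dreamy $v$; arbitrary $v\in X^{\rm div}$ are then handled by equivariant approximation by dreamy valuations together with the continuity of $A_{D+\Theta}$ and $S_g$. In the reverse direction, given the valuative inequality I would bound $\bfL^\NA_\Theta(\phi^s)=\inf_v\bigl(A_{D+\Theta}(v)+(\phi^s-\phi_\triv)(v)\bigr)$ from below using Lemma~\ref{lem-NAlegend}, obtaining $\bfD^\NA(\phi^s)\ge(1-\gamma^{-1/C})\bfJ^\NA_{g,\bT}(\phi^s)$ over special test configurations, which upgrades to all test configurations by the reduction above. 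I expect the main obstacle to be keeping the $g$-weighting and the $\bT$-twist $\xi$ mutually consistent: one must check that $\bfE^\NA_g$ and $S_g$ transform correctly under twisting (via Lemma~\ref{E_twist} and the vanishing $\Fut_g\equiv 0$ on the relevant directions) and that the $g$-intersection numbers of Definition~\ref{def-intg} genuinely reproduce $S_g(v)$ on the fibration construction; the dreamy-to-general approximation, though standard, must also be carried out $\bG$-equivariantly.
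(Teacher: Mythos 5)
Your proposal is correct and follows essentially the same route as the paper: the paper's own proof is a one-line citation of the reduction in Theorem~\ref{thm-MMP} combined with the argument of \cite[section 4]{Li20} (generalizing \cite{Fuj19a, Fuj19b}), which is exactly the special-test-configuration/dreamy-valuation dictionary you spell out. The only minor caveat is that in the forward direction the standard way to handle a general $v\in X^{\rm div}$ is to approximate the filtration $\mcF_v$ by its finitely generated truncations (which give honest test configurations) rather than to approximate $v$ by dreamy valuations, but this does not change the substance of the argument.
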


\section{Examples}
\label{sec-exmp}
It should be clear that our results generalize most of variational study of K\"{a}hler-Ricci/Mabuchi soliton metrics on log Fano varieties. Here we just point out some simple consequence. We leave the other applications to the interested reader.

Recall that $\bT=C(\Aut_T(X, D, \Theta))$ is a complex torus of rank $\fr$. Any one-parameter subgroup of $\bT$ generated by $\xi\in N_\bR$ gives a 
$T_\bC\times\Aut_T(X, D, \Theta)$-equivariant product ($\bR$-)test configuration $(X_{\bC, \xi}, L_{\bC, \xi})$ of $(X, L)$. 
In this case, $\bfD^\NA_g(X_{\bC, \xi}, L_{\bC, \xi})=\Fut_g([\omega], \xi)$. Since we can replace $\xi$ by $-\xi$, we see that 
the $\bG$-uniform $g$-Ding semistability implies
that $\Fut_g([\omega], \xi)=0$ for any $\xi\in N_\bR$.
\vskip 2mm

When $\log g$ is affine, the equation \eqref{MA_equation} reduces to the K\"{a}hler-Ricci soliton equation, and when $g$ is affine, it reduces to the case of Mabuchi solitons. Note that in these two cases, the vanishing of Futaki invariant uniquely determines the function $g$, which means the uniqueness of $V$ in \eqref{eq-KRsm} and \eqref{eq-Msoliton} for which the corresponding equation could possibly have a solution. 


Generalizing the application of results in \cite{DaS16}, it is clear that the theorem \ref{thm-YTD} allows us to get new examples of K\"{a}hler-Ricci $g$-solitons on possibly singular varieties with large symmetries. 

For example, let $(X, D)$ be a log Fano toric variety determined by a monotone labelled polytope $
P=\{l_i=\la \nu_i, x\ra \le 1\}$ (see \cite{Leg16}). 
For simplicity assume $\Theta=0$. By Theorem \ref{thm-YTD}, we know that the existence of K\"{a}hler-Ricci $g$-soliton is equivalent to the vanishing of generalized Futaki invariant which is equivalent to the vanishing of the weighted barycenter:
\begin{equation}\label{eq-toricFut}
\int_P x_i g(x) dx=0, \quad i=1,\dots, n.
\end{equation}
This generalizes the works on toric K\"{a}hler-Ricci solitons in \cite{WZ04, SZ11, BB13, Leg16} when $\log g$ is affine, and existence results about toric Mabuchi solitons in \cite{Yao17, Nak19} when $g$ is affine. Note that in the Mabuchi soliton case, the constraint \eqref{eq-toricFut} uniquely determines $g=1-\theta_P$ where $\theta_P$ is the extremal function as defined in \cite{FM95}. The condition $g>0$, which we assumed, becomes an obstruction to the existence of solutions to the corresponding equation.


More generally, similar applications can be applied to $T$-varieties of complexity one (see \cite{CaS18, IS17}) and spherical varieties as in \cite{Del16} or Fano $G$-varieties (for a reductive complex Lie group $G$) as in \cite{LTZ20}. In other words, one can effectively check their stability hence get the (non-)existence of $g$-soliton on such varieties. \footnote{T. Delcroix pointed to us that a related generation for the existence of coupled complex Monge-Amp\`{e}re equations on Fano horosymmetric manifolds has appeared in \cite{DeHu18}. }


\section{Appendix: Mabuchi functionals as Kempf-Ness functionals}
\label{moment}
In this section, we will build up the moment map for the equation \eqref{MA_equation}.
We will not use the moment map explicitly in our paper. However, the moment map provides us with a Kempf-Ness picture,
which illustrates the naturality of using the variational approach to study \eqref{MA_equation}.
For simplicity, we will only formulate the moment map for any polarized projective manifold $(X, L)$ without twistings by $D$ and $\Theta$. 
After the completion of the paper, we were informed by Inoue that the moment map picture has been built up
for K\"ahler Ricci solitons in \cite{Inoue19} and constant weighted scalar-curvature K\"ahler metrics in \cite{Lahdili19}.
Nonetheless, for the completeness of the paper, we will still include a proof under our settings in this Appendix.

Recall our notation: the holomorphic vector fields $V_1,\dots, V_r$ are generators of the complexified toric action $T_\CC$.
We will fix the Symplectic form $\omega$ in the following.
We will define the following Lie algebra of the Hamiltonian action
\begin{align}
\label{mcG}
Lie(\mcG) = \{v \in C^\infty_0(X): v \text{ is $T$-invariant. } \}
\end{align}
where $(\bar\partial v)^{\#} =V_{v}^{1,0} = \vphi^{i\bar{j}}\partial_{\bar{j}}v$.
The Lie algebra is associated with a metric $\la,\ra_{g_\vphi}$ defined by
\begin{align}
\label{norm_f}
\la v_1,v_2\ra_{g_\vphi}  = \int_X v_1 v_2 g_\vphi  (\ddc\vphi)^n
\end{align}
for any $v_1, v_2 \in Lie(\mcG)$, where $g_\vphi = e^{f_\vphi}$.

Let $\mcJ$ be the set of almost complex structures on $X$ that are compatible with the symplectic form $\ddc\vphi$.
The moduli space under our consideration is
\begin{align}
\label{mcJ}
\mcJ_T := \{J\in \mcJ: J \text{ is integral, invariant under the } T \text{-action and is compatible with } \ddc\vphi. \}
\end{align}
The space $\mcJ$ is associated with a Symplectic form $(,)_{g_\vphi}$ induced by $\ddc\vphi$:
\begin{align}
(\mu_1,\mu_2)_{g_\vphi} = \int_X \la\mu_1,\mu_2\ra_{\ddc\vphi} g_\vphi (\ddc\vphi)^n
\end{align}
for any $\mu_1,\mu_2 \in T_J \mcJ$. $(,)_{g_\vphi}$ is closed since $g_\vphi \omega^n$ is a $2n$-form independent of the choice
of the complex structure.

Denote $T^{1,0} = (TX_{\CC})^{1,0}$. Then
$T_J \mcJ \simeq \Omega^{0,1}(T^{1,0})$. Since $J$ is unitary, the Symplectic form $\omega$ induces a duality
$\Omega^{0,1}(T^{1,0}) \simeq S^2(T^{1,0})$.

Define the action of $Lie(\mcG)$ on $\mcJ_T$ by:
$\mathfrak{L}_{V_{v}^{1,0}}J$
where 
$V_v^{1,0} = J V_v + \sqrt{-1} V_v$, $V_v$ is the Hamiltonian vector field induced by $v$. Let $\odot$ denotes for the symmetric product.
By \cite[Lemma 10]{Don97} $\mathfrak{L}_{V_v^{1,0}}J = \bar\partial V_v^{1,0} - \overline{V_v^{1,0}} \lrcorner N$,
where $N=0$ when $J$ is integrable. 
We can see that the image of the action is kept to be $T$-invariant.
Let $V_{\theta_\alpha}$ be the Hamiltonian vector field induced by $\theta_\alpha$, i.e, $V_{\theta_\alpha}\lrcorner \ddc\vphi = d \theta_\alpha$.
Since $J$ is $T$-invariant, $\mathfrak{L}_{V_{\theta_\alpha}}J = 0$. Furthermore, since $J$ is integrable, $\mathfrak{L}_{JV_{\theta_\alpha}}J=0$,
which implies $\xi_\alpha = JV_{\theta_\alpha}+\sqrt{-1}V_{\theta_\alpha}$ is a holomorphic vector field with respect to $J$.
We also have $\dot{\theta}_\alpha = 0$, $\dot{f}_\vphi = \sum_\alpha  f_\alpha  \dot{\theta}_\alpha = 0$.

\begin{proposition}
For the action defined above, the corresponding moment map $\bfm$ is
\begin{align}
\label{moment_formula}
R_\vphi - \underline{R} - \Delta  f_\vphi   - \frac{\underline{R}}{n}\sum_\alpha  f_\alpha  \mathfrak{L}_{\xi_\alpha}(\vphi) - V_{f,\vphi}( f_\vphi ) - \sum_\alpha  f_\alpha  \mathfrak{L}_{\xi_\alpha}(\log((\ddc\vphi)^n)),
\end{align}
where $R_\vphi$ is the scalar curvature. 
\end{proposition}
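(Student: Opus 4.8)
The plan is to verify the defining equation of the moment map directly, adapting the Fujiki--Donaldson computation to the weighted pairings. Fix $J\in\mcJ_T$, a tangent vector $\mu=\dot J\in T_J\mcJ_T$, and an element $v\in Lie(\mcG)$. Since $\W=\ddc\vphi$ is fixed and the Hamiltonians $\theta_\alpha$ are determined by the symplectic form alone, they are independent of $J$; hence $f_\vphi=f(\theta_1,\dots,\theta_r)$ and $g_\vphi=e^{f_\vphi}$ are constant along $\mcJ_T$ (this is precisely the observation $\dot\theta_\alpha=\dot f_\vphi=0$). Consequently the weighted two-form $(\cdot,\cdot)_{g_\vphi}$ is genuinely closed, and, up to the fixed normalization conventions, the moment map condition to be checked reads
\[
\la \delta\bfm,\,v\ra_{g_\vphi}=(\,\mathfrak{L}_{V_v^{1,0}}J,\ \mu\,)_{g_\vphi},
\]
where $\delta\bfm$ is the variation of the candidate \eqref{moment_formula} along a path $J_t$ and, by integrability of $J$ together with \cite[Lemma 10]{Don97}, $\mathfrak{L}_{V_v^{1,0}}J=\bar\partial V_v^{1,0}$.

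First I would treat the unweighted model. When $g_\vphi\equiv 1$ this is exactly the classical statement that the moment map for $\mcG$ acting on $\mcJ$ is $R_\vphi-\underline R$; the content is Donaldson's adjointness identity expressing $(\bar\partial V_v^{1,0},\mu)$ as the $L^2$-pairing of $v$ against the linearization $D R_\vphi\cdot\mu$ of the scalar curvature, obtained by integrating by parts twice through the Lichnerowicz operator. I would then repeat this integration by parts carrying the weight $g_\vphi$ through every step. The new terms arise precisely when a $\bar\partial$ (or $\partial$) lands on $g_\vphi$; there I substitute $\tfrac{\sqrt{-1}}{2\pi}\bar\partial g_\vphi=\iota_{V_{g,\vphi}}\ddc\vphi=\sum_\alpha g_\alpha\,\iota_{\xi_\alpha}\ddc\vphi$ from Section \ref{sec-gMA}. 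Each such substitution converts a derivative of the weight into contractions with the holomorphic fields $\xi_\alpha$, producing the Laplacian term $\Delta f_\vphi$, the first-order term $V_{f,\vphi}(f_\vphi)$, and the mixed terms $\sum_\alpha f_\alpha\,\mathfrak{L}_{\xi_\alpha}(\log((\ddc\vphi)^n))$ via the identity $\mathfrak{L}_{\xi_\alpha}(\log(\ddc\vphi)^n)=\Delta\theta_\alpha$ recorded in Section \ref{Mabuchi_alternative}. Matching coefficients term by term against $\delta$ of \eqref{moment_formula} is then a bookkeeping exercise.

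The additive constant $\underline R$ and the coefficient $\underline R/n$ of $\sum_\alpha f_\alpha\mathfrak{L}_{\xi_\alpha}(\vphi)$ are fixed by the requirement that $\bfm$ take values in $Lie(\mcG)=C^\infty_0(X)^T$, i.e.\ that $\bfm$ be orthogonal to the constants (the kernel of the action) in the weighted pairing, $\int_X \bfm\, g_\vphi\,\omega^n=0$. Imposing this identifies $\underline R$ with the weighted average $C_R$ appearing in the generalized Mabuchi functional \eqref{Mabuchi-double-integral}, the $\underline R/n$ factor coming from the normalization of the weighted volume against the torus directions. A structurally cleaner route, which I would use to cross-check the polynomial case, is to exploit the fibration $(\tX^{\bvk},L^{\bvk})$ of Proposition \ref{MA_eta}: for $g=\prod_\alpha y_\alpha^{k_\alpha}$ the weighted symplectic form on $\mcJ_T$ is the ordinary one on the total space, so the unweighted Fujiki--Donaldson moment map (the scalar curvature of $\tX^{\bvk}$) restricts to the weighted expression on $X$, with the $\Delta\theta_\alpha$ and $f_\alpha$ terms emerging as the submersion/O'Neill correction along the fibres; one then approximates a general $g$ by polynomials as elsewhere in the paper. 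The main obstacle is the weighted integration-by-parts bookkeeping of the middle step — tracking which $\bar\partial$ hits the weight and verifying that the resulting contractions with $\xi_\alpha$ reassemble into exactly the three soliton terms of \eqref{moment_formula} with the correct signs — rather than any conceptual difficulty.
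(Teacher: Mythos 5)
Your proposal is correct and follows essentially the same route as the paper's proof: both verify the moment map identity by pairing the variation of \eqref{moment_formula} against the infinitesimal action $\mathfrak{L}_{V_v^{1,0}}J=\bar\partial V_v^{1,0}$ (Donaldson's lemma for integrable $J$), exploit that $\theta_\alpha$ and hence $f_\vphi$ are independent of $J$ so the weighted pairing is closed and the $\underline{R}$- and $\sum_\alpha f_\alpha\theta_\alpha$-terms have zero variation, and convert each derivative landing on the weight into contractions with the $\xi_\alpha$ via $\bar\partial g_\vphi=\tfrac{2\pi}{\sqrt{-1}}\iota_{V_{g,\vphi}}\ddc\vphi$, reassembling the result into the $\Delta f_\vphi$, $V_{f,\vphi}(f_\vphi)$ and $\sum_\alpha f_\alpha\Delta\theta_\alpha$ terms. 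The fibration cross-check for polynomial $g$ is a sensible extra not used in the paper's appendix, but the core argument coincides.
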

\begin{remark}
We can compare the definition above with the moment map defined in \cite{Don97} for the case of K\"ahler Einstein (CscK).
In that case, the moment map is uniquely determined up to a constant.
For the definition \eqref{moment_formula}, $\bfm$ is uniquely determined up to a constant and the choice of the lifting of $\xi_\alpha$ ($\alpha = 1,\cdots,r$)
to the line bundle $L$.

When $L=-K_X$, let $h_\vphi = -\log(\frac{(\ddc\vphi)^n}{e^{-\vphi}})$, we can reformulate \eqref{moment_formula} into
\begin{equation}
\Delta(h_\vphi- f_\vphi ) + V_{f,\vphi}(h_\vphi- f_\vphi ) 
\end{equation}
\end{remark}
\begin{proof}
A direct calculation shows that:
$$(\sum_\alpha  f_\alpha  \mathfrak{L}_{\xi_\alpha}(\vphi))' = \sum_\alpha( f_\alpha  \theta_\alpha)' = 0, $$ 
$(\sum_\alpha  f_\alpha  \mathfrak{L}_{\xi_\alpha}\log((\ddc\vphi)^n))' = \sum_\alpha( f_\alpha  \Delta\theta_\alpha)'$.
We only need to show that the dual of the infinitesimal action is the infinitesimal moment map, which is
\begin{equation}
\bfm' = R_{\vphi}' - (V_{f,\vphi}( f_\vphi ))' - (\Delta f_\vphi )' - \sum_\alpha ( f_\alpha  \Delta \theta_\alpha)'.
\end{equation}
Since $V_{f,\vphi}( f_\vphi ) = \sum_{\alpha,\beta} f_\alpha  f_\beta \xi_\alpha(\theta_\beta) = \sum_{\alpha,\beta} f_\alpha  f_\beta (\ddc\vphi)^{i\bar{j}}\partial_i\theta_\alpha\partial_{\bar{j}}\theta_\beta = \sum_{\alpha,\beta} f_\alpha  f_\beta \omega(J V_{\theta_\alpha}, V_{\theta_\beta})$,
we have
\begin{equation}
(V_{f,\vphi}( f_\vphi ))' = \sum_{\alpha,\beta} f_\alpha  f_\beta \omega(\mu V_{\theta_\alpha}, V_{\theta_\beta}) = \sum_{\alpha,\beta} f_\alpha  f_\beta (\xi_\alpha\odot \xi_\beta, \mu) = (V_{f,\vphi}\odot V_{f,\vphi}, \mu).
\end{equation}

The infinitesimal action $P: Lie(\mcG) \rightarrow S^2(T^{1,0})$ can be furthermore decomposed into:
$P = P_2 \circ P_1$, 
\begin{align}
P_1: Lie(\mcG) \rightarrow \Gamma(T^{1,0}), \; v \rightarrow V_v^{1,0},
\end{align}
\begin{align}
P_2: T^{1,0} \rightarrow S^2(T^{1,0}),\;  v \rightarrow \mathfrak{L}_v J.
\end{align}

Now consider the dual of $P$. 
Let $\mu \in S^2(T^{1,0})\simeq \Omega^{0,1}(T^{1,0})$.

Then by \cite{Don97}[proof of Proposition 9] and an integration by parts calculation, 
\begin{align*}
(P(v),\mu)_{g_\vphi} &= (\mathfrak{L}_{V_v}J,\mu)   = -(\bar\partial V_v^{1,0}, \mu)_{g_\vphi} \\
&= \int_X  -(V^{1,0}_v, \bar\partial^*\mu) g_\vphi \omega^n + \int_X (V_v^{1,0}\odot V_{f,\vphi}, \mu) g_\vphi \omega^n \\
&= \int_X v R_\vphi' g_\vphi \omega^n +   + \int_X v \sum_{\alpha,\beta} f_{\alpha\beta}  (\xi_\alpha\odot \xi_\beta , \mu) g_\vphi \omega^n  + 2\int_X (\mu, V^{1,0}_v\odot V_{f,\vphi})g_\vphi \omega^n\\
&+\int_X v (V_{f,\vphi}\odot V_{f,\vphi}, \mu) g_\vphi \omega^n \\
\end{align*}
And
\begin{align*}
(-\int_X v \Delta f_\vphi  g_\vphi \omega^n)' &= \bigg( \int_X \Big( \omega(J dv, d f_\vphi ) + v \omega(J d f_\vphi ,d f_\vphi )\Big) g_\vphi \omega^n \bigg)'\\
&= \int_X (\mu, V_{f,\vphi}\odot V_v^{1,0}) g_\vphi \omega^n + \int_X v(\mu, V_{f,\vphi}\odot V_{f,\vphi}) g_\vphi \omega^n,
\end{align*}
\begin{align*}
(-\sum_\alpha \int_X v  f_\alpha  \Delta\theta_\alpha g_\vphi \omega^n)' &= \Big(\int_X \sum_{\alpha\beta} v  f_{\alpha\beta}  \omega(J d f_\beta, d f_\alpha ) g_\vphi \omega^n \\
&+ \sum_{\alpha,\beta} v f_\alpha  f_\beta \omega(J d f_\beta, d  f_\alpha ) g_\vphi \omega^n
+ \sum_\alpha  f_\vphi  \omega(J d f_\alpha , dv) g_\vphi \omega^n \Big)' \\
&= \sum_{\alpha,\beta} \int_X  f_{\alpha\beta}  (\mu, \xi_\alpha\odot \xi_\beta)g_\vphi\omega^n
+\int_X v (\mu, V_{f,\vphi}\odot V_{f,\vphi})g_\vphi \omega^n \\
&+ \int_X (\mu, V_{f,\vphi}\odot V^{1,0}_v) g_\vphi \omega^n
\end{align*}
By summing up the results above, we have
\begin{align*}
&(v, R_\vphi'-(V_{f,\vphi}( f_\vphi ))'-(\Delta f_\vphi )'-\sum_\alpha ( f_\alpha \Delta\theta_\alpha)')_{g_\vphi}\\ 
&= \int_X v \big( R_\vphi'-(V_{f,\vphi}( f_\vphi ))'-(\Delta f_\vphi )'\big) g_\vphi \omega^n = (\mu, \mathfrak{L}_{v}J)_{g_\vphi}.
\end{align*}
The proof is concluded.
Then the dual of $P$ is
$R_{f,\vphi}'-(V_{f,\vphi}( f_\vphi ))'$. This concludes the proof.

\end{proof}

We have the following formal picture. By Kempf-Ness, the stable points in GIT quotient $\mcG_\CC\sslash \mcG$ corresponds to the symplectic quotient
$\bfm^{-1}(0)/\mcG$. We should expect the solution to \eqref{MA_equation} is equivalent to a stability condition.
Meanwhile, the generalized Mabuchi functional can by considered as a Kempf-Ness functional (compatible with \eqref{Mabuchi-double-integral})
\begin{eqnarray*}
&&\bfM(\vphi) = \int_0^1dt \\
&&\hskip 5mm \int_X \dot{\vphi}\Big(-R_\vphi +\underline{R} +\Delta f_\vphi  +\frac{\underline{R}}{n}\sum_\alpha  f_\alpha  \mathfrak{L}_{\xi_\alpha}\vphi +V_{f,\vphi}( f_\vphi ) + \sum_\alpha  f_\alpha  \mathfrak{L}_{\xi_\alpha}(\log((\ddc\vphi)^n))\Big)g_\vphi \frac{(\ddc\vphi)^n}{n!} 
\end{eqnarray*}
Since the Kempf-Ness functional is ``convex''(the Hamiltonian of $\bfm$ can be considered as the K\"ahler potential of the
moduli space), we should expect the generalized Mabuchi-functional is convex along the weak geodesic.

When $L = -K_X$, let $h_\vphi = -\log((\ddc\vphi)^n)-\vphi$ be the Ricci potential.
Then ${\bf\mathfrak{m}} = \Delta (h_\vphi- f_\vphi ) + V_{f,\vphi}(h_\vphi- f_\vphi ) = 0$ implies $h_\vphi- f_\vphi  = 0$, which furthermore
implies the $g$-soliton equation $Ric_\vphi = \ddc\vphi +\ddc f_\vphi $.

\begin{remark}\label{rem-Mabuchi}
It seems to us that the formula for Mabuchi functionals used in \cite{His19} from
\cite[Definition 2.21, (2.47) ]{His19} is not correct,
since its variation does not give the correct soliton equation. 

Moreover, it appears that in \cite{BW14} to make some argument work, the Mabuchi functional from \cite[Page 23]{BW14} should be defined as $F_V(\MA_g(\vphi))$ instead of $F_V(\MA(\vphi))$.
\end{remark}

\section{Appendix: Non-Archimedean Entropy and generalized Mabuchi functional}
In this appendix, we will define the non-Archimedean entropy $\bfH^\NA_{g,\Theta}$ and generalized Mabuchi functional $\bfM^\NA$.
We will also show the slope formulas for them. For simplicity, we will only consider the case when $X$ is a smooth Fano manifold,
and $\Theta=0, D=0$.
In this case, $d\mu_0 = e^{-\vphi_0}$, and $\bfH_{g,\Theta}$ is reduced to $\bfH_g$. See \cite{WZZ16, Ino20} for related discussions.

\subsection{Polynomial $g$}
First, assume $g  = \sum_\vk a_\vk \prod_{\alpha=1}^r\theta_\alpha^{k_\alpha}$ is a polynomial, where $\vk$ is in a finite set.
\begin{align}
\begin{split}
\bfH_{g}(\vphi) &= \int_X \log(\frac{g_\vphi (\ddc\vphi)^n/n!}{d\mu_0}) g_\vphi  \frac{(\ddc\vphi)^n}{n!} \\
&= \sum_{\vk} a_{\vk} \bfH^{\bvk}(\vphi)
\end{split}
\end{align}
where
\begin{align}
\begin{split}
\bfH^{\bvk}(\vphi) &= \int_X \log(\frac{g_\vphi (\ddc\vphi)^n/n!}{d\mu_0}) \prod_\alpha \theta_\alpha^{k_\alpha} \frac{(\ddc\vphi)^n}{n!} \\
&= \int_{X^\bvk} \log(\frac{(\ddc\vphi)^n/n!}{d\mu_0}) \frac{((\ddc\vphi)^{\bvk})^{n+k}}{(n+k)!} + \int_{X^\bvk} \log(g_\vphi) \frac{((\ddc\vphi)^\bvk)^{n+k}}{(n+k)!}
\end{split}
\end{align}

Recall that, for a test configuration $(\mcX,\mcL)$ that dominates $X\times \CC^1$, for the entropy \\ $\int_X \log(\frac{(\ddc\vphi)^n/n!}{e^{-\vphi_0}})\frac{(\ddc\vphi)^n}{n!}$,
the corresponding non-Archimedean entropy is defined as \cite{BHJ17}
\begin{equation}
\int_{X^\div_\bQ} A_{X}(v) \MA^\NA(\phi) = \mcK^{\llg}_{\omcX/\PP^1}\cdot \omcL^{\cdot n} - \rho^*(K^\llg_{X_{\PP^1}/\PP^1})\cdot \omcL^{\cdot n}
\end{equation}
where $\rho: \mcX\dashrightarrow X\times\CC$, $(\overline{\mcX},\overline{\mcL})$ is the compactified test configuration.

Following the same idea, we give the non-Archimedean definition for the entropy functional in our setting.
\begin{definition}
\label{H_f^NA}
Let $(\mcX,\mcL)$ be a test configuration that dominates $X\times \CC^1$.
Define the non-Archimedean version entropies for $\bfH^{\bvk},\bfH_{g}$  as
\begin{align}
\label{bfH^NA}
\begin{split}
(\bfH^{\bvk})^\NA(\mcL^\bvk) 
&= (\mcK^{\llg}_{\omcX/\PP^1})^\bvk\cdot (\omcL^\bvk)^{\cdot (n+k)} - (\rho^\bvk)^*(K^{\llg}_{X_{\PP^1}/\PP^1})^\bvk \cdot (\omcL^{\bvk})^{\cdot (n+k)} \\
\bfH^\NA_g(\mcL) &= \sum_{\vk} a_{\vk} (\bfH^\bvk)^\NA(\mcL^\bvk)
\end{split}
\end{align}
\end{definition}
\begin{remark}
$K^\bvk_X$ and $K_{X^\bvk}$ are two different Cartier divisors in general.
$-K^\bvk_X$ is ample under our construction. However, $X^\bvk$ is in general not a Fano variety.
\end{remark}

Recall the generalized Mabuchi functional
\begin{align}
\begin{split}
\bfM(\phi) &= \bfH_{g}(\vphi) + \int_X (\vphi-\vphi_0)g_\vphi \frac{(\ddc\vphi)^n}{n!} - \bfE_g(\vphi) 
\end{split}
\end{align}
and
\begin{align*}
\int_X(\vphi - \vphi_0) g_\vphi \frac{(\ddc\vphi)^n}{n!} &= \sum_{\vk}a_{\vk}\int_X (\vphi-\vphi_0)\prod_\alpha \theta_\alpha^{k_\alpha} \frac{(\ddc\vphi)^n}{n!}\\
&= \sum_{\vk}a_{\vk} \int_{X^\bvk} (\vphi^\bvk-\vphi_0^\bvk) \frac{((\ddc\vphi)^\bvk)^{n+k}}{(n+k)!}
\end{align*}
and
\begin{align*}
&\int_{X^\bvk} \log(\frac{(\ddc\vphi)^n}{e^{-\vphi_0}}) \frac{((\ddc\vphi)^\bvk)^{n+k}}{(n+k)!} + \int_{X^\bvk} (\vphi^\bvk-\vphi_0^\bvk) \frac{(\ddc\vphi)^{n+k}}{(n+k)!}
- \bfE^\bvk(\phi^\bvk) =\\
& \la(\log((\ddc\vphi)^n))^\bvk,\vphi^\bvk,\cdots,\vphi^\bvk\ra_{X^\bvk} - \la\vphi_0^\bvk,\cdots,\vphi_0^\bvk\ra_{X^\bvk} \\
& +(\la\vphi^\bvk,\cdots,\vphi^\bvk\ra_{X^\bvk}-\la\vphi_0^\bvk,\cdots,\vphi_0^\bvk\ra_{X^\bvk}) - \bfE^\bvk(\vphi^\bvk)
\end{align*}
where $\la\cdots\ra_{X^\bvk}$ denotes the metric on the Deligne pairing.
Since we have chosen $\vphi_0$ as the reference metric, $\la \vphi_0^\bvk,\cdots,\vphi_0^\bvk \ra_{X^\bvk} = 0$.
The calculation above inspires the following definition.
\begin{definition}
Let $(\mcX,\mcL)$ be a test configuration.
We define the non-Archimedean version of the generalized Mabuchi functional as
\begin{equation}
(\bfM^{\bvk})^\NA(\mcL^\bvk) = \bfH^\bvk(\mcL^\bvk) - \bfI^\bvk(\mcL^\bvk)+\bfJ^\bvk(\mcL^\bvk)
\end{equation}
which is equivalent to 
\begin{align}
\label{M_k}
\begin{split}
(\bfM^{\bvk})^\NA(\mcL^\bvk) &= \bfH^\bvk(\mcL^\bvk) +  (D^\bvk \cdot (\mcL^\bvk)^{\cdot n+k} 
- \bfE^{\bvk}(\mcL^\bvk)\\
&= \bfH^\bvk(\mcL^\bvk) + (\omcL^\bvk)^{\cdot n+k+1} + (\rho^\bvk)^*K_X^\bvk\cdot (\omcL^\bvk)^{\cdot n+k}
- \bfE^{\bvk}(\mcL^\bvk)\\
&= (\mcK^\llg_{\omcX/\PP^1})^\bvk\cdot(\omcL^\bvk)^{\cdot n+k} + (n+k)\bfE^\bvk(\omcL^\bvk) 
\end{split}
\end{align}
\begin{align}
\label{bfM_f^NA}
\bfM^\NA_g (\mcL) &= \sum_{\vk} a_\vk (\bfM^\bvk)^\NA(\mcL^\bvk)
\end{align}
\end{definition}

\subsection{continuous $g$}
In this subsection, we will first show several estimates that will be used later, then define $\bfH_{g}^\NA,\bfM^\NA$ for continuous $g$.
At last, we will prove the corresponding slope formulas.
Let $g_i$ be a sequence of polynomials that converges to $g$ in $C^0(P)$-norm.
Let $(\mcX,\mcL)$ be an ample normal test configuration
that dominates $X\times \CC$. 
Let $e^{\Psi_{\reff}}$ be a smooth metric on $\mcK^\llg_{\mcX/\CC}$. This auxilliary metric is introduced in \cite{BHJ17} in the proof of the slope formulas.
We will use it in the proof of the estimates below.

\begin{lemma}
\label{psi_reff}
There exists a $C>0$, such that 
\begin{equation}
|\int_X \log(\frac{(\ddc\vphi)^n/n!}{e^{\psi_{\reff}}}) g_{i\vphi} \frac{(\ddc\vphi)^n}{n!}| < C (\log|t|+1)
\end{equation}
\end{lemma}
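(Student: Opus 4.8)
The goal is to bound the weighted, metric-normalized entropy integral $\int_X \log\bigl(\tfrac{(\ddc\vphi)^n/n!}{e^{\psi_{\reff}}}\bigr) g_{i,\vphi}\tfrac{(\ddc\vphi)^n}{n!}$ by $C(\log|t|+1)$, uniformly in the approximating polynomials $g_i$, along a ray coming from the ample normal test configuration $(\mcX,\mcL)$. The plan is to reduce to the unweighted case that is already controlled in the work of Boucksom-Hisamoto-Jonsson, and then to carry the weight $g_{i,\vphi}$ through by the same fibration/bundle construction $\tX^{[\vec k]}$ used throughout Section \ref{sec-gMA} and the non-Archimedean sections.

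First I would recall that $\psi_{\reff}$ is a fixed smooth reference metric on $\mcK^{\llg}_{\mcX/\CC}$, so that $\log\bigl(\tfrac{(\ddc\vphi)^n/n!}{e^{\psi_{\reff}}}\bigr)$ differs from the genuine relative entropy density by a smooth bounded function on the total space; along the ray $\vphi = \vphi(t)$ this contributes at most a term linear in $\log|t|$ because the metric $\ddc\vphi$ has at worst logarithmic growth along the degeneration (this is exactly the estimate established for the unweighted entropy in \cite[Theorem 3.2 and its proof]{BHJ17}, to which this lemma is a weighted analogue). The key point is then to insert the weight. Since $g_i = \sum_{\vec k} a_{\vec k}\prod_\alpha \theta_\alpha^{k_\alpha}$ is a finite sum of monomials, it suffices by linearity to treat a single monomial $g_i = \prod_\alpha \theta_\alpha^{k_\alpha}$, and for this I would use the identity from the proof of Proposition \ref{MA_eta}, namely
\begin{equation}
\int_X w\, g_{i,\vphi}\,\frac{(\ddc\vphi)^n}{n!} = \int_{\tX^{[\vec k]}} w^{[\vec k]}\,\frac{(\ddc\vphi^{[\vec k]})^{n+k}}{(n+k)!},
\end{equation}
applied with $w = \log\bigl(\tfrac{(\ddc\vphi)^n/n!}{e^{\psi_{\reff}}}\bigr)$. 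This converts the weighted integral on $X$ into an ordinary (unweighted) entropy-type integral on the higher-dimensional space $\tX^{[\vec k]}$ with the induced metric $\ddc\vphi^{[\vec k]} = \ddc\vphi + \sum_\alpha \theta_\alpha \ddc\vphi^{\FS}_\alpha$.

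On $\tX^{[\vec k]}$ I would then apply the unweighted BHJ estimate directly: the ray $\vphi^{[\vec k]}(t)$ is associated to the ample test configuration $(\mcX^{[\vec k]},\mcL^{[\vec k]})$ (constructed in \eqref{eq-mcXvk}), whose total space is again a normal ample test configuration, so $\log\bigl(\tfrac{(\ddc\vphi)^n/n!}{e^{\psi_{\reff}}}\bigr)^{[\vec k]}$ grows at most like $C(\log|t|+1)$ against the Monge-Amp\`ere mass $(\ddc\vphi^{[\vec k]})^{n+k}$, with the constant depending only on $(\mcX^{[\vec k]},\mcL^{[\vec k]})$ and the fixed reference data. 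The essential subtlety, and what I expect to be the main obstacle, is the uniformity of the constant $C$ \emph{in $i$}: the monomial exponents $\vec k$ and the number of terms may both grow as $g_i\to g$ in $C^0(P)$, so one cannot naively sum the monomial bounds. To handle this I would instead factor out the uniform $L^\infty$ control $|g_{i,\vphi}|\le \sup_P|g_i| \le C_0$ (valid since the moment polytope $P$ is compact and $g_i\to g$ uniformly), splitting the integrand as $\log(\cdots)\cdot g_{i,\vphi}$ and bounding $\int_X |\log(\cdots)|\,\MA_{g_i}(\vphi)$ by $C_0\int_X|\log(\cdots)|\,(\ddc\vphi)^n/n!$ together with the compactness of $P$; the monomial/fibration identity is then used only to justify that each $\MA_{g_i}(\vphi)$ is a well-defined probability measure comparable to $(\ddc\vphi)^n$, and the final logarithmic bound comes from the single unweighted estimate for $\int_X|\log(\tfrac{(\ddc\vphi)^n/n!}{e^{\psi_{\reff}}})|\,(\ddc\vphi)^n/n!$ on $X$ itself. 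This way the constant $C$ depends only on $C_0 = \sup_i\sup_P|g_i|$ and on $(\mcX,\mcL)$, and is therefore uniform in $i$, which is precisely what is needed for the subsequent passage to the limit defining $\bfH^\NA_g$ and $\bfM^\NA$ for continuous $g$.
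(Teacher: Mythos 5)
Your final argument is essentially the paper's: you abandon the fibration detour through $\tX^{[\vec k]}$ (correctly, since summing monomial bounds cannot give a constant uniform in $i$) and instead use the pointwise bound $0\le g_{i,\vphi}\le \sup_P g_i\le C_0$ to compare the weighted integral with the unweighted one, then invoke the entropy estimate of Boucksom--Hisamoto--Jonsson along the ray; this is exactly what the paper does. One step as you wrote it needs a small repair: you reduce to bounding $\int_X \bigl|\log\tfrac{(\ddc\vphi)^n/n!}{e^{\psi_\reff}}\bigr|\,\tfrac{(\ddc\vphi)^n}{n!}$, whereas the cited lemma controls the \emph{signed} quantity $\bigl|\int_X \log\bigl(\tfrac{(\ddc\vphi)^n/n!}{e^{\psi_\reff}}\bigr)\tfrac{(\ddc\vphi)^n}{n!}\bigr|$, and these are not interchangeable without further input. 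The paper supplies the missing input explicitly: the same proof in \cite{BHJ17} gives a uniform one-sided bound $\log\bigl(\tfrac{(\ddc\vphi)^n/n!}{e^{\psi_\reff}}\bigr)<C$ for all $t$, so subtracting $C$ makes the integrand nonpositive, the comparison with $\sup_P g_i$ against the nonnegative weight becomes legitimate, and the discarded term $C\int_X g_{i,\vphi}\,\tfrac{(\ddc\vphi)^n}{n!}$ is $O(1)$. With that one-sided bound your absolute-value version also closes, since for $h\le C$ one has $\int|h|\,d\mu\le 2C\mu(X)+|\int h\,d\mu|$; without stating it, the passage from the signed estimate to your $\int|\log(\cdots)|$ bound is unjustified. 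Everything else, including the observation that the constant depends only on $C_0=\sup_i\sup_P|g_i|$ and on the fixed reference data, matches the paper.
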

\begin{proof}
The constant $C$ may change from line by line in the proof below.
By the proof of \cite[Lemma 3.10]{BHJ17}, there exists a $C>0$, such that
$\log(\frac{(\ddc\vphi)^n/n!}{e^{\psi_\reff}})<C$ uniformly for all $t$. Then
\begin{align*}
&|\int_X \log(\frac{(\ddc\vphi)^n/n!}{e^{\psi_{\reff}}}) g_{i\vphi} \frac{(\ddc\vphi)^n}{n!}| \\
&\leq |\int_X (\log(\frac{(\ddc\vphi)^n/n!}{e^{\psi_{\reff}}}) -C) g_{i\vphi} \frac{(\ddc\vphi)^n}{n!}| + C \int_X g_{i\vphi}\frac{(\ddc\vphi)^n}{n!} \\
&\leq \sup_P(g_i) |\int_X (\log(\frac{(\ddc\vphi)^n/n!}{e^{\psi_{\reff}}}) -C)\frac{(\ddc\vphi)^n}{n!}| + C \\
&\leq C( |\int_X \log(\frac{(\ddc\vphi)^n/n!}{e^{\psi_{\reff}}}) \frac{(\ddc\vphi)^n}{n!}| + 1) \\
&< C(\log|t|+1)
\end{align*}
The last inequality is by \cite[Lemma 3.10]{BHJ17}.
\end{proof}

\begin{lemma}
\label{g_i}
For any $\epsilon>0$, there exists an $i_0>0$, such that for any $i,j>i_0$,
\begin{equation}
|\int_X \log(\frac{e^{\psi_\reff}}{d\mu_0})(g_{i\vphi}-g_{j\vphi}) \frac{(\ddc\vphi)^n}{n!}| < \epsilon (t+1)
\end{equation}
\end{lemma}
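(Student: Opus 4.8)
The statement to prove is Lemma \ref{g_i}: for any $\epsilon>0$ there exists $i_0$ such that for all $i,j>i_0$,
\begin{equation*}
\left|\int_X \log\left(\frac{e^{\psi_\reff}}{d\mu_0}\right)(g_{i\vphi}-g_{j\vphi})\,\frac{(\ddc\vphi)^n}{n!}\right| < \epsilon(t+1).
\end{equation*}

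The plan is to exploit the uniform convergence $g_i\to g$ on the compact moment polytope $P$ together with a linear-in-$t$ bound on the integral of $\bigl|\log(e^{\psi_\reff}/d\mu_0)\bigr|$ against the Monge-Amp\`ere measure. First I would note that since $P$ is compact and $g_i\to g$ uniformly on $P$ by the Stone-Weierstrass construction, for any $\delta>0$ there is $i_0$ so that $\sup_P|g_i-g_j|<\delta$ for all $i,j>i_0$; consequently $|g_{i\vphi}(x)-g_{j\vphi}(x)|<\delta$ pointwise on $X$ for every $\vphi$, because $g_{i\vphi}(x)=g_i(\bfm_\vphi(x))$ with $\bfm_\vphi(x)\in P$. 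This pulls the difference $|g_{i\vphi}-g_{j\vphi}|$ out as a uniform constant $\delta$.

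The key estimate is then to control
\begin{equation*}
\int_X \left|\log\left(\frac{e^{\psi_\reff}}{d\mu_0}\right)\right|\frac{(\ddc\vphi)^n}{n!}
\end{equation*}
linearly in $t$. I would split the log as $\log(e^{\psi_\reff}/d\mu_0)=\log\bigl((\ddc\vphi)^n/(n!\,d\mu_0)\bigr)-\log\bigl((\ddc\vphi)^n/(n!\,e^{\psi_\reff})\bigr)$. The second piece is exactly the quantity bounded in the previous Lemma \ref{psi_reff} (and in \cite[Lemma 3.10]{BHJ17}), giving a $C(\log|t|+1)$-type bound, which is certainly $\le C(t+1)$ after absorbing the $\log$. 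For the first piece, $\int_X\log((\ddc\vphi)^n/(n!\,d\mu_0))\frac{(\ddc\vphi)^n}{n!}$ is the (unweighted) entropy term, whose linear growth in $t$ along the psh ray associated to an ample normal test configuration is standard from \cite{BHJ17}; here $d\mu_0=e^{h_0}\omega_0^n/n!$ is a fixed smooth reference measure and $\psi_\reff$ a smooth metric on $\mcK^\llg_{\mcX/\CC}$, so their difference and the difference of the two entropy expressions is controlled uniformly. Combining, there is a constant $C_0>0$, independent of $i,j$ and $t$, with
\begin{equation*}
\int_X\left|\log\left(\frac{e^{\psi_\reff}}{d\mu_0}\right)\right|\frac{(\ddc\vphi)^n}{n!}\le C_0(t+1).
\end{equation*}

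Putting these together,
\begin{equation*}
\left|\int_X \log\left(\frac{e^{\psi_\reff}}{d\mu_0}\right)(g_{i\vphi}-g_{j\vphi})\frac{(\ddc\vphi)^n}{n!}\right|
\le \delta\int_X\left|\log\left(\frac{e^{\psi_\reff}}{d\mu_0}\right)\right|\frac{(\ddc\vphi)^n}{n!}
\le \delta\,C_0(t+1),
\end{equation*}
so choosing $\delta<\epsilon/C_0$ (equivalently taking $i_0$ large enough) yields the claimed bound $\epsilon(t+1)$. The main obstacle I anticipate is making the linear-in-$t$ bound on the full integrand genuinely uniform in $i,j$: one must be sure that the weight $g_{i\vphi}$, which multiplies the log, does not spoil the growth estimate. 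This is handled exactly as in Lemma \ref{psi_reff}, using that $g_i$ is uniformly bounded on $P$ (hence $g_{i\vphi}$ uniformly bounded on $X$), so the weighted integral is controlled by $\sup_P g_i$ times the unweighted one; the uniform upper bound on $\log((\ddc\vphi)^n/(n!\,e^{\psi_\reff}))$ from \cite{BHJ17} is what allows this passage despite the sign of the log. A minor point to check is that the $d\mu_0$ versus $e^{\psi_\reff}$ discrepancy contributes only a bounded (in fact smooth, $t$-independent on each fiber) correction, so it does not affect the uniformity in $i,j$.
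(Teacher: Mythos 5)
Your proof is correct and follows essentially the same route as the paper's: pull out $\sup_P|g_i-g_j|$ by uniform convergence of the polynomial approximations, then bound $\int_X|\log(e^{\psi_\reff}/d\mu_0)|\,\frac{(\ddc\vphi)^n}{n!}$ linearly in $t$ using the one-sided uniform bound on the integrand together with the Deligne-pairing asymptotics of \cite{BHJ17}. The paper obtains the linear bound slightly more directly --- the uniform upper bound $\log(e^{\psi_\reff}/d\mu_0)<C$ plus \cite[Lemma 3.9]{BHJ17} applied to $\int_X\log(e^{\psi_\reff}/d\mu_0)\frac{(\ddc\vphi)^n}{n!}$ itself --- rather than via your detour through the unweighted entropy, but the substance is identical.
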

\begin{proof}
Since $g_i$ converges to $g$ in $C^0(P)$, for any $\epsilon'>0$, there exists an $i_0$, such that for any $i,j>i_0$, $|g_i-g_j|<\epsilon'$.
Since we restrict to the case that $X$ is a Fano manifold, $d\mu_0 = e^{-h_0}\frac{\ddc\vphi_0^n}{n!}$ is smooth and non-degenerate.
Since $e^{\psi_\reff}$ is a smooth metric on $\mcK^\llg_{\omcX/\PP^1}$, there exists a $C>0$ such that $\log(\frac{\psi_\reff}{d\mu_0})<C$ uniformly.
Then
\begin{align*}
&|\int_X \log(\frac{e^{\psi_\reff}}{d\mu_0})(g_{i\vphi}-g_{j\vphi}) \frac{(\ddc\vphi)^n}{n!}|\\ 
&\leq |\int_X (\log(\frac{e^{\psi_\reff}}{d\mu_0})-C)(g_{i\vphi}-g_{j\vphi}) \frac{(\ddc\vphi)^n}{n!}| + C\epsilon'\\ 
&\leq C \epsilon' (|\int_X \log(\frac{e^{\psi_\reff}}{d\mu_0}) \frac{(\ddc\vphi)^n}{n!}|+1)\\
&\leq C \epsilon' (C't + 1)
\end{align*}
where the last line is by \cite[Lemma 3.9]{BHJ17}.
\end{proof}

\begin{lemma}
\label{H_NA_infty}
$(\bfH^\bvk)^{'\infty}(\Phi) = (\bfH^\bvk)^\NA(\phi)$,
$(\bfM^\bvk)^{'\infty}(\Phi) = (\bfM^\bvk)^\NA(\phi)$.
\end{lemma}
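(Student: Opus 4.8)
The plan is to reduce both slope formulas to the already-established slope formulas for the ordinary entropy and energy functionals on the $(n+k)$-dimensional total space $\mcX^{\bvk}$, exploiting the fibration construction of Proposition \ref{MA_eta}. The key preliminary observation is that $\bfH^{\bvk}$ is, for the correct choice of smooth reference measure, nothing but the ordinary relative entropy on $\tX^{\bvk}$. Indeed, from the decomposition $\ddc\vphi^{\bvk}=\ddc\vphi+\sum_\alpha\theta_\alpha\ddc\vphi_\alpha^{\FS}$ one has, as in the proof of Proposition \ref{MA_eta}, the identity $\frac{(\ddc\vphi^{\bvk})^{n+k}}{(n+k)!}=\frac{(\ddc\vphi)^n}{n!}\wedge\prod_\alpha\frac{(\theta_\alpha\ddc\vphi_\alpha^{\FS})^{k_\alpha}}{k_\alpha!}$. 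Setting $\mu_{\reff}:=d\mu_0\wedge\prod_\alpha\frac{(\ddc\vphi_\alpha^{\FS})^{k_\alpha}}{k_\alpha!}$, a smooth positive measure on $\tX^{\bvk}$ since $d\mu_0=e^{-\vphi_0}$ is smooth in the Fano case, the density satisfies $\log\frac{(\ddc\vphi^{\bvk})^{n+k}/(n+k)!}{\mu_{\reff}}=\log\frac{(\ddc\vphi)^n/n!}{d\mu_0}+\log g_\vphi$, so integration against $\MA(\vphi^{\bvk})$ gives exactly $\bfH^{\bvk}(\vphi)=\Ent\!\big(\MA(\vphi^{\bvk})\,\big|\,\mu_{\reff}\big)$, matching the two-term expression in the definition of $\bfH^{\bvk}$.

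With this identification, I would apply the slope formula for the ordinary entropy functional from \cite{BHJ17} to the $T_\bC\times\bC^*$-equivariant test configuration $(\mcX^{\bvk},\mcL^{\bvk})$ of \eqref{eq-mcXvk}, together with the smooth psh ray $\Phi^{\bvk}$ lifted from $\Phi$. Since $\Phi$ is a smooth psh ray associated to $(\mcX,\mcL)$, its lift $\Phi^{\bvk}$ is a smooth psh ray associated to $(\mcX^{\bvk},\mcL^{\bvk})$, and $\mcL^{\bvk}$ is relatively ample after the shift of $\theta_\alpha$ used in Section \ref{sec-gMA}. The point is that the \cite{BHJ17} slope formula holds for an arbitrary smooth positive reference measure, with non-Archimedean limit $\int A_{\tX^{\bvk},\mu_{\reff}}(v)\,\MA^{\NA}(\phi^{\bvk})$, the log discrepancy being taken relative to $\mu_{\reff}$. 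The auxiliary metric $e^{\Psi_{\reff}}$ on $\mcK^{\llg}_{\mcX/\bC}$ and the bound of Lemma \ref{psi_reff} are used exactly as in \cite[Section 3]{BHJ17} to control the two pieces of the entropy separately and to justify the limit $t\to+\infty$. Because $\mu_{\reff}$ is assembled from the pullback of $d\mu_0$ (corresponding to $-K_X$, hence to $K_X^{\bvk}$ and not to the honest $K_{\tX^{\bvk}}$) and fibrewise Fubini-Study volumes, the density of $\mu_{\reff}$ against the canonical measure of $\tX^{\bvk}$ converts the relative log discrepancy into $(\mcK^{\llg}_{\omcX/\PP^1})^{\bvk}\cdot(\omcL^{\bvk})^{\cdot(n+k)}-(\rho^{\bvk})^*(K^{\llg}_{X_{\PP^1}/\PP^1})^{\bvk}\cdot(\omcL^{\bvk})^{\cdot(n+k)}$, which is the right-hand side of \eqref{bfH^NA}. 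This yields $(\bfH^{\bvk})'^\infty(\Phi)=(\bfH^{\bvk})^{\NA}(\phi)$.

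For the generalized Mabuchi functional I would use the decomposition $\bfM^{\bvk}(\vphi)=\bfH^{\bvk}(\vphi)-\bfI^{\bvk}(\vphi)+\bfJ^{\bvk}(\vphi)$, the monomial specialization of \eqref{Mabuchi_2}, which is mirrored at the non-Archimedean level by the definition \eqref{M_k}, namely $(\bfM^{\bvk})^{\NA}=(\bfH^{\bvk})^{\NA}-(\bfI^{\bvk})^{\NA}+(\bfJ^{\bvk})^{\NA}$. Since slopes are additive, it then suffices to combine the entropy slope formula just obtained with the slope formulas for $\bfI^{\bvk}$ and $\bfJ^{\bvk}$, which are the monomial specializations (to $g=\prod_\alpha y_\alpha^{k_\alpha}$) of Propositions \ref{bfII} and \ref{F_slope}. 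Adding these three gives $(\bfM^{\bvk})'^\infty(\Phi)=(\bfM^{\bvk})^{\NA}(\phi)$.

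The hard part will be the matching in the entropy step: one must verify that the \cite{BHJ17} slope formula genuinely applies on $\tX^{\bvk}$, which is \emph{not} Fano, with the non-canonical smooth measure $\mu_{\reff}$, and that the resulting relative log discrepancy reassembles into the class $K_X^{\bvk}$ occurring in \eqref{bfH^NA} rather than into $K_{\tX^{\bvk}}$. The delicate analytic input is the uniform control (in $t$) of the unbounded factor $\log\frac{(\ddc\vphi)^n/n!}{d\mu_0}$ along the ray provided by Lemma \ref{psi_reff}, ensuring that the two contributions to $\bfH^{\bvk}$ do not interfere when taking the limit and that the convergence to the intersection number is legitimate.
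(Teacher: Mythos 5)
Your proposal is correct in substance and rests on the same two pillars as the paper's argument: the fibration identity $\frac{(\ddc\vphi^{\bvk})^{n+k}}{(n+k)!}=\prod_\alpha\theta_\alpha^{k_\alpha}\frac{(\ddc\vphi)^n}{n!}\wedge\prod_\alpha\frac{(\ddc\vphi_\alpha^{\FS})^{k_\alpha}}{k_\alpha!}$, and the $O(\log|t|)$ control of Lemma \ref{psi_reff} that kills the residual entropy term in the slope; the Mabuchi half is obtained in both cases by additivity of slopes from the $\bfE^{\bvk}$, $\bfI^{\bvk}$, $\bfJ^{\bvk}$ formulas. The packaging differs in one meaningful way. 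You absorb everything into a single relative entropy $\Ent(\MA(\vphi^{\bvk})\,|\,\mu_{\reff})$ against the fixed absolute measure $\mu_{\reff}=d\mu_0\wedge\prod_\alpha\frac{(\ddc\vphi_\alpha^{\FS})^{k_\alpha}}{k_\alpha!}$ and then invoke the \cite{BHJ17} entropy slope formula on the total space $\tX^{\bvk}$ as a black box; this forces you to identify $\int A_{\tX^{\bvk}}(v)\,\MA^{\NA}(\phi^{\bvk})$ with the intersection numbers $(\mcK^{\llg}_{\omcX/\PP^1})^{\bvk}\cdot(\omcL^{\bvk})^{\cdot(n+k)}-(\rho^{\bvk})^*(K^{\llg}_{X_{\PP^1}/\PP^1})^{\bvk}\cdot(\omcL^{\bvk})^{\cdot(n+k)}$ of \eqref{bfH^NA}, i.e.\ to check that the discrepancy between $K_{\tX^{\bvk}}$ and $K_X^{\bvk}$ (a pullback from $\PP^{\bvk}$) cancels in the difference — you flag this but do not carry it out. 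The paper sidesteps that matching entirely: it splits the entropy using the relative reference metric $e^{\psi_{\reff}}$ on $\mcK^{\llg}_{\omcX/\PP^1}$, discards the first piece by Lemma \ref{psi_reff}, and writes the second piece as a difference of Deligne pairings $\la\psi^{\bvk}_{\reff},\vphi^{\bvk},\dots\ra-\la\vphi_0^{\bvk},\vphi^{\bvk},\dots\ra$, whose slopes are \emph{by construction} the two intersection numbers appearing in \eqref{bfH^NA} via \cite[Lemma 3.9]{BHJ17}. Your route is viable, but to make it complete you should either supply the class-matching argument (both $\mcK^{\llg}_{\overline{\mcX^{\bvk}}/\PP^1}-(\mcK^{\llg}_{\omcX/\PP^1})^{\bvk}$ and its counterpart on the trivial family are the same pullback from $\PP^{\bvk}$, hence cancel) or switch to the Deligne-pairing formulation, which delivers the classes of \eqref{bfH^NA} directly. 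One small bonus of your formulation: the bounded term $\int\log(g_\vphi)\,\MA(\vphi^{\bvk})$ is automatically absorbed into the entropy density rather than having to be discarded separately.
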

\begin{proof}
Since $e^{\psi_\reff},e^{-\vphi_0}$ are smooth metrics on $\mcK^\llg_{\omcX/\PP^1}, (\rho)^*K^\llg_{X_{\PP^1}/\PP^1}$, by \cite[Lemma 3.9]{BHJ17}, we have
\begin{equation}
\la \psi^\bvk_\reff,\vphi^\bvk,\cdots,\vphi^\bvk \ra - \la \psi^\bvk_{\reff,0},\vphi^\bvk_0,\cdots,\vphi^\bvk_0 \ra = t (\mcK^\llg_{\omcX/\PP^1})^\bvk \cdot (\mcL^\bvk)^{\cdot (n+k)} + O(1)
\end{equation}
\begin{equation}
\la \vphi^\bvk_0,\vphi^\bvk,\cdots,\vphi^\bvk \ra - \la \vphi^\bvk_0,\vphi^\bvk_0,\cdots,\vphi^\bvk_0 \ra = t (\rho^\bvk)^*(K^\llg_{X_{\PP^1}/\PP^1})^\bvk \cdot (\mcL^\bvk)^{\cdot (n+k)} + O(1)
\end{equation}
Then
\begin{align*}
& (\bfH^\bvk)^{'\infty}(\Phi) = \lim_{t\to\infty} \frac{1}{t}\int_{X^\bvk} \log(\frac{(\ddc\vphi)^n/n!}{e^{-\vphi_0}})\frac{(\ddc\vphi^\bvk)^{n+k}}{(n+k)!}\\
&= \lim_{t\to\infty}\frac{1}{t}\big( \int_{X^\bvk} \log(\frac{(\ddc\vphi)^n/n!}{e^{\psi_\reff}})\frac{(\ddc\vphi^\bvk)^{n+k}}{(n+k)!} 
+ \int_{X^\bvk} \log(\frac{e^{\psi_\reff}}{e^{\vphi_0}})\frac{(\ddc\vphi^\bvk)^{n+k}}{(n+k)!} \big)\\
&=
\lim_{t\to\infty}\frac{1}{t}\big( \la \psi^\bvk_\reff,\vphi^\bvk,\cdots,\vphi^\bvk \ra - \la \psi^\bvk_{\reff,0},\vphi^\bvk_0,\cdots,\vphi^\bvk_0 \ra -
(\la \vphi^\bvk_0,\vphi^\bvk,\cdots,\vphi^\bvk \ra - \la \vphi^\bvk_0,\vphi^\bvk_0,\cdots,\vphi^\bvk_0 \ra) \big) \\
&=(\mcK^\llg_{\omcX/\PP^1})^\bvk \cdot (\mcL^\bvk)^{\cdot (n+k)} - (\rho^\bvk)^*(K^\llg_{X_{\PP^1}/\PP^1})^\bvk \cdot (\mcL^\bvk)^{\cdot (n+k)}
\end{align*}
where the equality of the second and third line is because of Lemma \ref{psi_reff}. 
The proof for $(\bfM^\bvk)^\NA$ follows similarly.
\end{proof}

\begin{definition}
For a continuous function $g$, we define the non-Archimedean entropy and generalized Mabuchi functional as
\begin{equation}
\label{Hg_NA}
\bfH^\NA_g(\mcL) = \lim_{i\to\infty} \bfH^\NA_{g_i}(\mcL)
\end{equation}
\begin{equation}
\label{Mg_NA}
\bfM^\NA(\mcL) = \lim_{i\to\infty} \bfM^\NA_{g_i}(\mcL)
\end{equation}
where $\bfM^\NA_{g_i}$ is the generalized Mabuchi functional with respect to $g_i$.
\end{definition}

\begin{proposition}
The limits in \eqref{Hg_NA},\eqref{Mg_NA} converge. And we have the slope formulas
\begin{equation}
\bfH_{g}^{'\infty}(\Phi) = \bfH_{g}^\NA(\phi)
\end{equation}
\begin{equation}
\bfM^{'\infty}(\Phi) = \bfM^\NA(\phi)
\end{equation}
\end{proposition}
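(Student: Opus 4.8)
The plan is to reduce the continuous case to the polynomial case already handled in Lemma \ref{H_NA_infty}, exactly mirroring the strategy used for $\bfE_g$ in Proposition \ref{F_slope} and for $\bfI_g$ in Proposition \ref{bfII}. The two assertions to prove are (a) that the limits defining $\bfH^\NA_g(\mcL)$ and $\bfM^\NA(\mcL)$ in \eqref{Hg_NA} and \eqref{Mg_NA} exist, and (b) the two slope identities $\bfH_g'^\infty(\Phi)=\bfH^\NA_g(\phi)$ and $\bfM'^\infty(\Phi)=\bfM^\NA(\phi)$. Since $\bfM=\bfH_{g,\Theta}+\bfJ_g-\bfI_g$ (in the present appendix $\Theta=D=0$), and the slope formulas for $\bfJ_g$ and $\bfI_g$ are already established in Proposition \ref{F_slope} and Proposition \ref{bfII}, the whole statement reduces to the single entropy functional $\bfH_g$; the Mabuchi claim then follows by adding the three slope identities termwise.

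First I would establish convergence of the limit \eqref{Hg_NA}. Fix an ample normal test configuration $(\mcX,\mcL)$ dominating $X\times\CC$ and a reference metric $e^{\psi_\reff}$ on $\mcK^\llg_{\mcX/\CC}$. For a smooth psh ray $\Phi$ associated to $(\mcX,\mcL)$, I would split the entropy integrand using $\psi_\reff$ as an intermediary,
\begin{equation}
\bfH_{g_i}(\vphi(t))=\int_X \log\Big(\frac{(\ddc\vphi)^n/n!}{e^{\psi_\reff}}\Big)g_{i,\vphi}\frac{(\ddc\vphi)^n}{n!}+\int_X\log\Big(\frac{e^{\psi_\reff}}{d\mu_0}\Big)g_{i,\vphi}\frac{(\ddc\vphi)^n}{n!}.
\end{equation}
Lemma \ref{psi_reff} bounds the first term by $C(\log|t|+1)$ uniformly in $i$, and Lemma \ref{g_i} controls the difference of second terms for $i,j$ large by $\epsilon(t+1)$; together these give that $\tfrac1t\bfH_{g_i}(\vphi(t))$ is uniformly Cauchy in $i$ (up to $o(1)$ as $t\to\infty$). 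This is precisely the estimate needed to interchange the limits $\lim_{i\to\infty}$ and $\lim_{t\to\infty}$, just as in \eqref{bound_1} and \eqref{bound_2}. Combined with $\lim_{i\to\infty}\bfH_{g_i}(\vphi(t))=\bfH_g(\vphi(t))$ for each fixed $t$ (by $C^0$-convergence of $g_i$ and boundedness of the densities), I obtain that $\bfH^\NA_{g_i}(\mcL)=\sum_\vk a_\vk(\bfH^\bvk)^\NA$ is Cauchy in $i$, so \eqref{Hg_NA} converges.

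For the slope formula I would then compute
\begin{equation}
\bfH_g'^\infty(\Phi)=\lim_{t\to\infty}\frac{\bfH_g(\vphi(t))}{t}=\lim_{t\to\infty}\lim_{i\to\infty}\frac{\bfH_{g_i}(\vphi(t))}{t}=\lim_{i\to\infty}\lim_{t\to\infty}\frac{\bfH_{g_i}(\vphi(t))}{t}=\lim_{i\to\infty}\bfH^\NA_{g_i}(\phi)=\bfH^\NA_g(\phi),
\end{equation}
where the exchange of limits in the middle is justified by the uniform estimates of the previous paragraph, and the fourth equality is the polynomial slope formula $(\bfH^\bvk)'^\infty=(\bfH^\bvk)^\NA$ of Lemma \ref{H_NA_infty} summed against the coefficients $a_\vk$. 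Finally, writing $\bfM=\bfH_g+(\Lam_g-\bfE_g)-(\bfI_g-\bfJ_g)$ and invoking the already-proven slope formulas for $\bfE_g,\bfJ_g,\bfI_g$, I add the four identities to get $\bfM'^\infty(\Phi)=\bfM^\NA(\phi)$ and the convergence of \eqref{Mg_NA}.

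The main obstacle is the justification of the interchange of the iterated limits $\lim_t\lim_i$ versus $\lim_i\lim_t$ for the entropy, since the entropy integrand is only controlled up to a $\log|t|$ growth rather than being uniformly bounded. The delicate point is that the logarithmic slack in Lemma \ref{psi_reff} contributes $O(\log|t|/t)\to 0$ and hence does not affect the slope, while the genuinely linear term is handled by the $C^0$-approximation estimate of Lemma \ref{g_i}; making the $\epsilon$-$t$ bookkeeping uniform in $i$ is where the care is needed. The remaining steps — the termwise addition using Propositions \ref{F_slope} and \ref{bfII} — are routine once the entropy case is in hand.
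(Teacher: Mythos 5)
Your proposal is correct and follows essentially the same route as the paper's proof: the Cauchy estimate $|\bfH_{g_i}(\vphi(t))-\bfH_{g_j}(\vphi(t))|\le C(\epsilon t+\log t+1)$ obtained from Lemmas \ref{psi_reff} and \ref{g_i}, the resulting uniform convergence of $t^{-1}\bfH_{g_i}(\vphi(t))$ that justifies exchanging $\lim_{i}$ with $\lim_{t}$, and the polynomial slope formula of Lemma \ref{H_NA_infty}; the paper disposes of $\bfM$ with ``proved similarly,'' which is precisely your termwise reduction using the already-established slope formulas for $\bfE_g$, $\bfJ_g$, $\bfI_g$. One trivial slip: in your final paragraph the decomposition $\bfM=\bfH_g+(\Lam_g-\bfE_g)-(\bfI_g-\bfJ_g)$ double-counts $\bfJ_g$; the correct identity $\bfM=\bfH_g+\bfJ_g-\bfI_g$, which you state in your first paragraph, is the one to add termwise.
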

\begin{proof}
By Lemma \ref{H_NA_infty}, for each polynomial $g_i$, $\bfH_{g_i}^{'\infty}(\Phi) = \bfH_{g_i}^\NA(\phi)$.
Let $i_0$ be sufficiently large, and $i,j>i_0$.
By Lemma \ref{psi_reff}, Lemma \ref{g_i}, 
\begin{align*}
& |\bfH_{g_i}(\vphi)-\bfH_{g_j}(\vphi)| \leq \\
&|\int_X \log(\frac{(\ddc\vphi)^n/n!}{e^{\psi_\reff}})g_{i\vphi}\frac{(\ddc\vphi)^n}{n!}|+
|\int_X \log(\frac{(\ddc\vphi)^n/n!}{e^{\psi_\reff}})g_{j\vphi}\frac{(\ddc\vphi)^n}{n!}|\\
&+|\int_X \log(\frac{e^{\psi_\reff}}{d\mu_0})(g_{i\vphi}-g_{j\vphi})\frac{(\ddc\vphi)^n}{n!}|\leq C(\epsilon t + \log(t) + 1)
\end{align*}
Then
$\frac{1}{t}\bfH_{g_i}(\vphi)$ converges to $\bfH_{g_i}^{'\infty}(\Phi) = \bfH_{g_i}^\NA(\phi)$ uniformly.
Specifically, $|\bfH_{g_i}^\NA(\phi)-\bfH_{g_j}^\NA(\phi)|<C\epsilon$.
Then $\bfH_{g_i}^\NA(\phi)$ is a Cauchy sequence, which converges to a limit $\bfH_{g}^\NA(\phi)$.
And by the dominated convergence theorem, $\bfH_{g}^{'\infty}(\Phi) = \bfH_{g}^\NA(\phi)$.

The statement for $\bfM^\NA$ can be proved similarly.
\end{proof}


\end{document}